\documentclass[a4paper]{article}
\usepackage{amsmath, amsthm, amsfonts, amssymb, mathrsfs}
\usepackage[english]{babel}
\usepackage[utf8]{inputenc}
\usepackage[margin=1 in]{geometry}

\usepackage{graphicx}
\usepackage{cancel}

\DeclareMathOperator{\rank}{rank}
\DeclareMathOperator{\im}{im}

\def\C{\mathbb{C}}
\def\F{\mathbb{F}}
\def\Q{\mathbb{Q}}
\def\N{\mathbb{N}}
\def\R{\mathbb{R}}
\def\SW{\mathbb{SW}}

\def\dgm{\mathrm{dgm}}
\def\bcd{\mathrm{bcd}}

\usepackage{enumitem}
\usepackage{subfig}
\usepackage{colortbl}
\usepackage{array}
\usepackage{tikz-cd}
\usepackage{algorithm}
\usepackage{algpseudocode}
\usepackage[colorlinks = true, citecolor = blue, urlcolor = blue]{hyperref}

\definecolor{lightgray}{RGB}{230, 230, 230}
\definecolor{darkgray}{RGB}{204, 204, 204}
\definecolor{navyblue}{RGB}{137, 207, 240}

\newtheorem{theorem}{Theorem}[section]
\newtheorem{lemma}[theorem]{Lemma}
\newtheorem{corollary}[theorem]{Corollary}
\newtheorem{definition}[theorem]{Definition}
\newtheorem{example}[theorem]{Example}
\newtheorem{proposition}[theorem]{Proposition}

\newtheoremstyle{remarkstyle}
  {3pt} 
  {3pt} 
  {} 
  {} 
  {\itshape} 
  {.} 
  {.5em} 
  {} 

\theoremstyle{remarkstyle}
\newtheorem{remark}{Remark}
\usepackage{appendix}

\title{Estimation of Persistence Diagrams Via the Three Gap Theorem\thanks{ This work was partially supported by the National Science Foundation through   CAREER award \# DMS-2415445.}}

\author{Luis Suarez Salas \ \ \ \ \ \ \ \ \ \ \ \ Jose A. Perea}

\date{ }
\begin{document}
\maketitle

\begin{abstract}
The time delay (or Sliding Window) embedding is a technique from dynamical systems to reconstruct attractors from time series data.
Recently, descriptors from Topological Data Analysis (TDA) —
specifically, persistence diagrams — have been used to measure the
shape of said reconstructed attractors in applications including periodicity and
quasiperiodicity quantification. Despite their utility, the fast computation
of persistence diagrams of sliding window embeddings
is still poorly understood. In this work, we present theoretical and
computational schemes to approximate the persistence diagrams of
sliding window embeddings from quasiperiodic functions. We do
so by combining the Three Gap Theorem from number theory with
the Persistent K\"unneth formula from TDA, and derive fast and provably
correct persistent homology approximations. The input to our
procedure is the spectrum of the signal, and we provide numerical as well as theoretical
evidence of its utility to capture the shape of toroidal attractors.
\end{abstract}

\section{Introduction}
\label{sec:intro}
The Sliding Window embedding is a powerful technique for reconstructing attractors of a dynamical system from observed time series data.
When combined with tools from Topological Data Analysis (TDA), such as persistence diagrams, one can measure the topological properties of the state space
to detect patterns in the original signal.  In general, combining sliding windows with persistence can be used to probe for the presence of non-trivial recurrent behavior. For example, Figure \ref{fig:IlustPendulum} depicts a system with behavior more complex than periodicity.
\begin{figure}[!htb]
    \centering
    \includegraphics[width=0.44\textwidth]{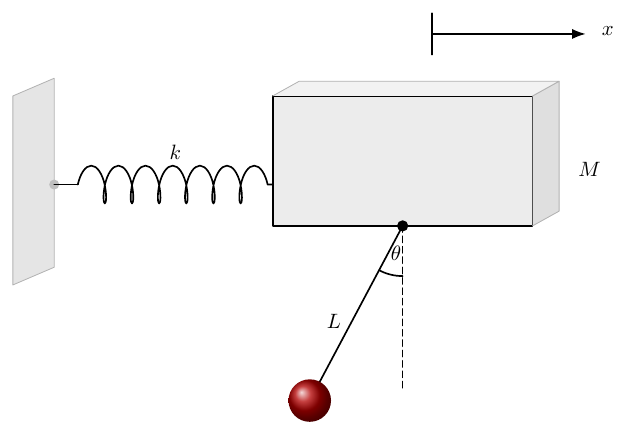} \ \ \ \ \ \
    \includegraphics[width=0.45\textwidth]{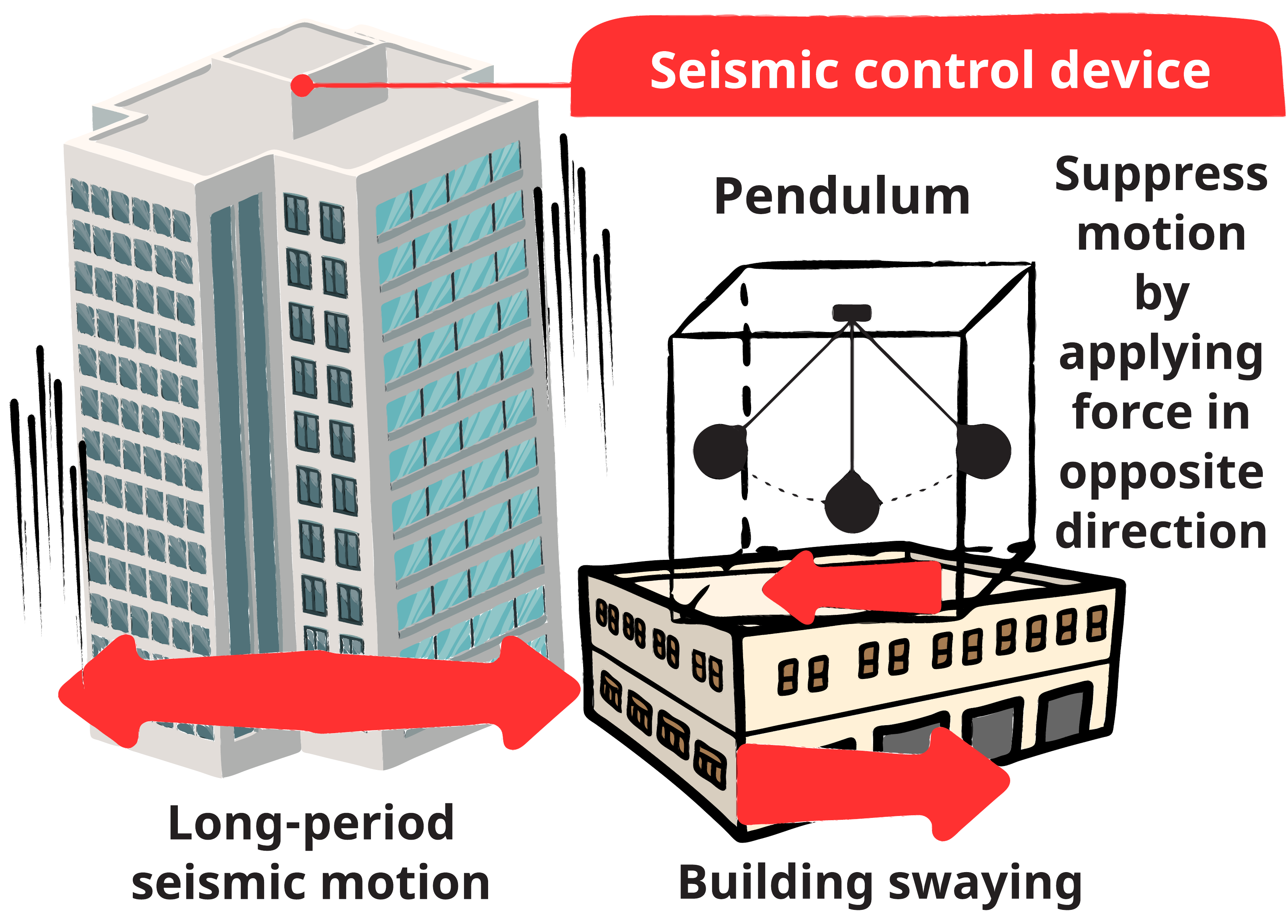}
    \caption{Left: Schematic model for a pendulum on a sliding block. Right: Illustration of anti-earthquake technology (see also  \cite{PhysOrg2013Quake}), whose dynamics are captured by the model on the left.}
\label{fig:IlustPendulum}
\end{figure}

\noindent Indeed, leveraging measurements of the system, such as the horizontal position $x(t)$ of the block or the angular position $\theta(t)$ of the pendulum, one can determine (as we will explain below) that the system exhibits a type of recurrent behavior known as \emph{quasiperiodicity} (Figure \ref{fig:pipeline} and Section \ref{sec:dynamical}).
In dynamical systems this behavior emerges when  oscillators are superimposed, also in the transition between stable and chaotic dynamics \cite{weixing1993quasiperiodic}, and has been observed, for instance, in fMRI scans obtained from mice \cite{belloy2017dynamic}, in the formation of quasicrystals \cite{levine1984quasicrystals}, and in the study of biphonation in  mammal vocalization \cite{wilden1998subharmonics}.

Mathematically, we say that $f:\mathbb{R} \rightarrow \mathbb{C}$ is quasiperiodic (see Definition~\ref{def:quasiperiodic}) with frequency vector $\omega = (\omega_1,\ldots, \omega_N)$, if $\{\omega_i \}^N_{i=1}$ are positive real numbers which are linearly independent over $\mathbb{Q}$ (i.e., incommensurate)  and
\[
f(t)=F(\omega_1t,\dots,\omega_Nt), \] where $F: \mathbb{T}^N \rightarrow \mathbb{C}$ is a complex-valued continuous function
on the $N$-torus $\mathbb{T}^N =  \mathbb{R}^N / \mathbb{Z}^N$, called the parent function of $f$ \cite{gakhar2021sliding}.
Note that in the case of a single frequency, $f$ is just a periodic function.

Periodicity or quasiperiodicity in a signal with frequency vector of length $N$  corresponds  to observing the traversal of a (topological) circle or an $N$-torus attractor, respectively, in the phase space of the underlying dynamics \cite{skraba2012topological}.
Takens’ embedding theorem \cite{takens1981detecting}, on the other hand, and under appropriate conditions \cite{xu2019twisty}, ensures that the sliding window embedding of the signal (Section \ref{sec:dynamical} and middle panels of Figure \ref{fig:SW})  provides a topological reconstruction of said attractors. Finally, periodicity/quasiperiodicity can be quantified with shape
descriptors from TDA: the Rips persistence diagrams (defined in Section \ref{sub: PH} and shown in the rightmost panels of Figure \ref{fig:SW}) of the reconstructed attractors via sliding window embeddings.
In short, they measure the prominence and dimension of holes in a point cloud, providing a convenient summary of its multi-scale geometry and topology.
\begin{figure}[htb!]
    \centering
    \includegraphics[width=0.32\textwidth]{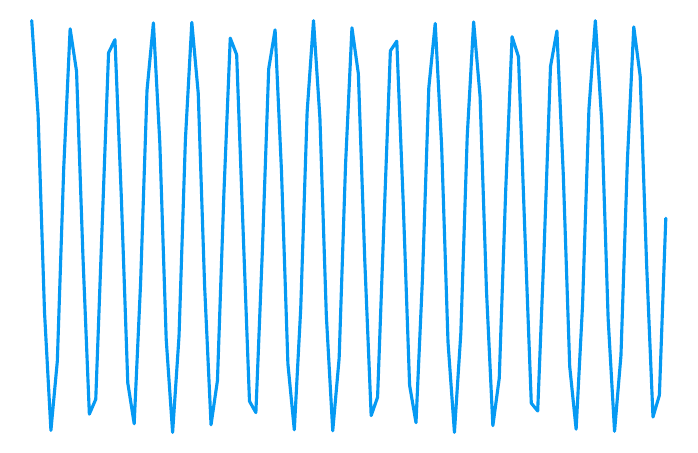}
    \hfill
    \includegraphics[width=0.22\textwidth, trim=9.9cm 10.5cm 9.9cm 9.9cm, clip]{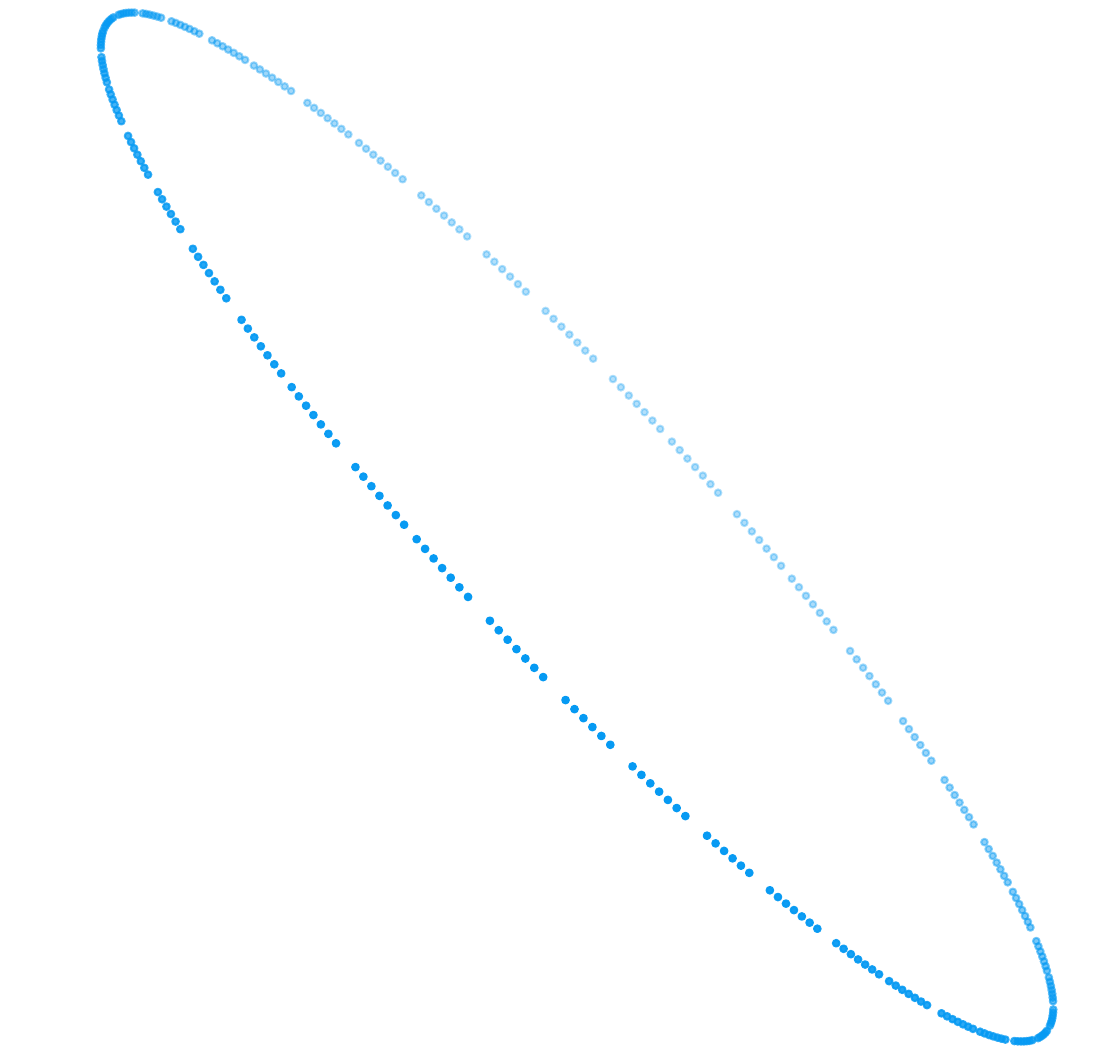}
    \hfill
    \includegraphics[width=0.29\textwidth]{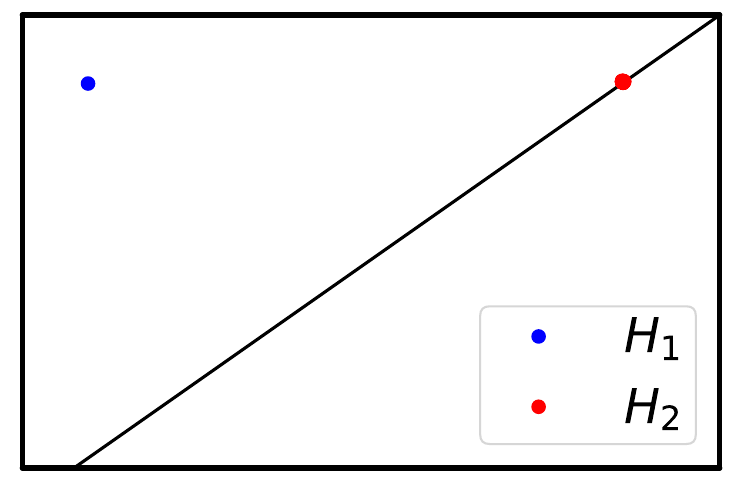}
    \includegraphics[width=0.32\textwidth]{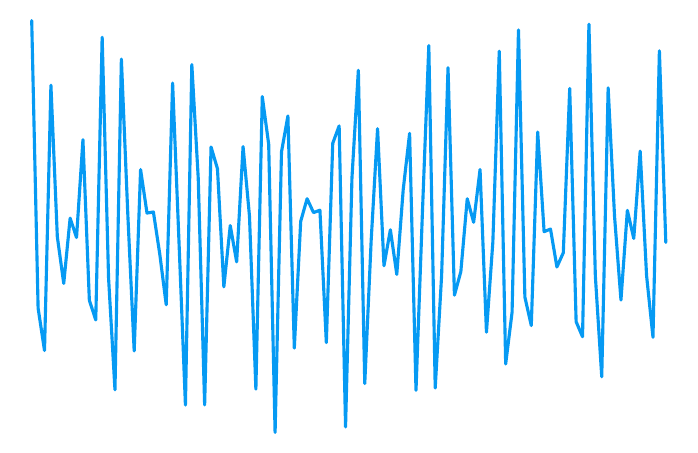}
    \hfill
    \includegraphics[width=0.28\textwidth, trim=8cm 9.9cm 8cm 8cm, clip]{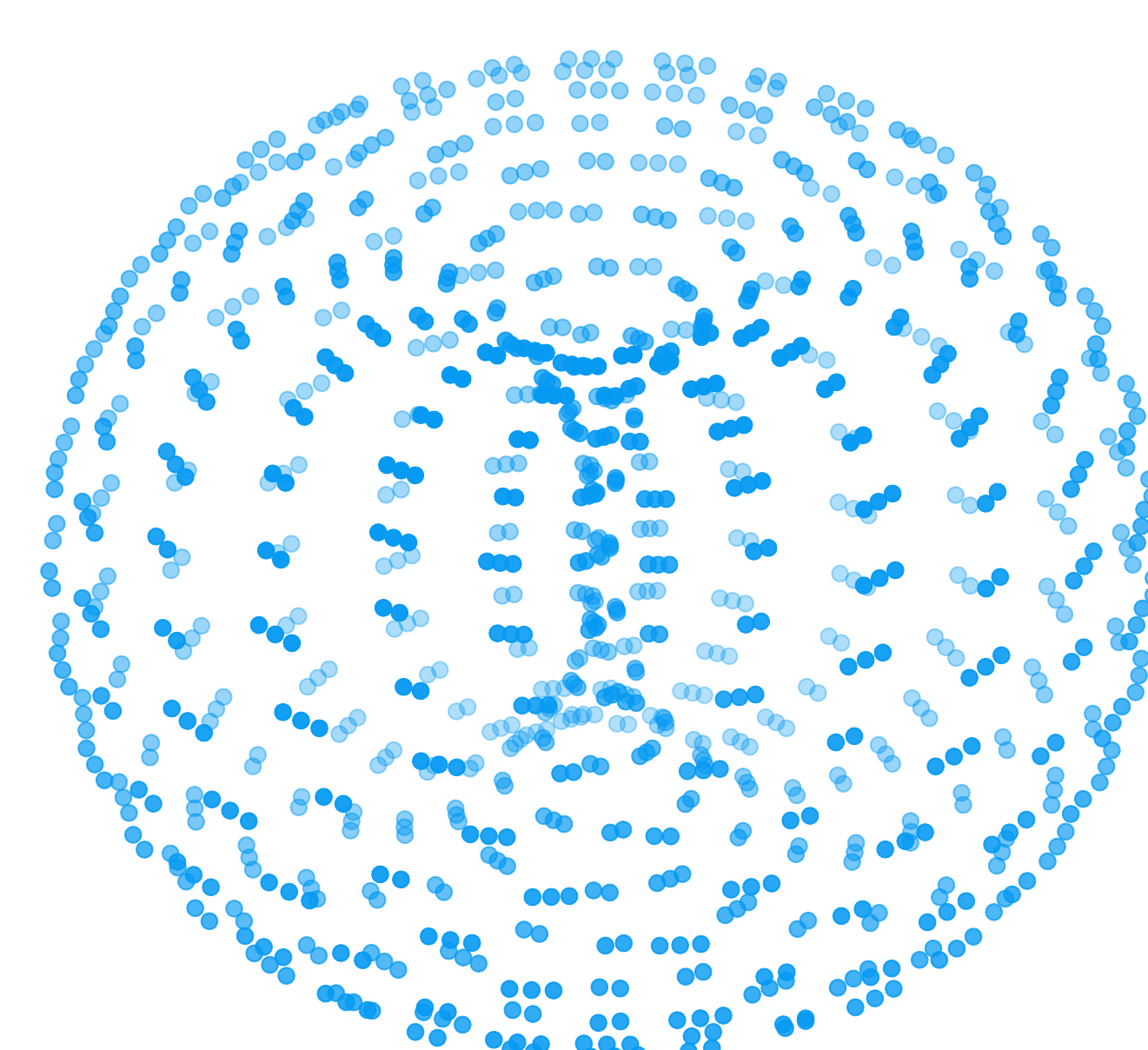}
    \hfill
    \includegraphics[width=0.29\textwidth]{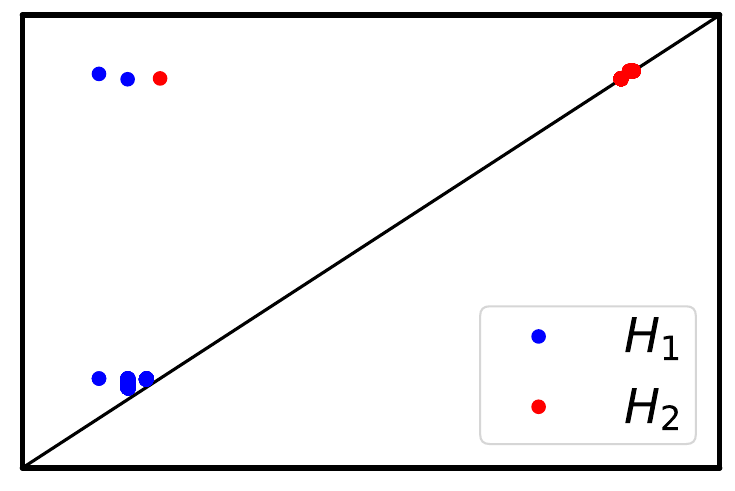}
    \caption{Given a time series (left column), we reconstruct the underlying attractor via its
    sliding window embedding (middle column), and then compute its persistent homology (right column). Top row: Signal from the periodic motion of an ideal pendulum (left). The sliding window point cloud parametrizes a circle (middle), so its persistence diagram exhibits exactly one nontrivial 1-dimensional homology class (right). Bottom row: A quasiperiodic function with two incommensurate frequencies (left); its sliding window embedding (center) recovers a 2-torus. The corresponding persistence diagrams (right) confirm this topology by showing two independent 1-dimensional classes and one 2-dimensional class.}
\label{fig:SW}
\end{figure}

Much work has been done to establish the usefulness of sliding windows and persistence, with
applications including   parameter estimation in damping mechanisms \cite{myers2022damping}, stability of stochastic delay equations \cite{khasawneh2016chatter}, quantifying the presence of biphonation in vibrating vocal folds \cite{tralie2018quasi} and periodicity detection in gene expression data \cite{perea2015sw1pers}.
Furthermore, there are well-defined methodologies for optimizing the embedding parameters of the sliding window embedding, in such a way that the topological features in the persistence diagrams are maximally amplified \cite{gakhar2021sliding}.
However, the algorithmic complexity of computing the persistent homology of Rips filtrations remains a challenge in real-world applications.
This is the case even with highly-optimized libraries like \verb"Ripser" \cite{bauer2021ripser}, which exploits cohomology duality, apparent pairs and clearing optimizations to achieve substantial speed-ups.
The main difficulty lies in that the worst-case complexity of the matrix-reduction algorithm  \cite{zomorodian2005computing} for computing persistent homology in degrees $j=0,\ldots, J$ is cubic in the number of simplices of dimensions $0,\ldots, J+1$ \cite{morozov2005}; the case relevant to this paper is $J= N$, i.e. the number of $\Q$-linearly independent frequencies in the signal $f$.
Moreover, the number of $j$-dimensional simplices for \verb"Ripser" grows as $\binom{n_{SW}}{ j+1}$, where $n_{SW}$ is the number of points in the sliding window embedding.
In practice, this proves to be computationally taxing even when $N= 2$, $n_{SW} = 1000$, and motivates our efforts to develop   approximation methods with known error bounds (see Figure \ref{fig:pipeline}), that would provide a  faster and computationally accessible alternative (see Figure \ref{fig:example 1 3G} and Table \ref{tab:RT}). Our method relies on the Three‑Gap Theorem. This theorem uses continued‑fraction expansions from number theory to describe how points—sampled at irrational rotation angles—are distributed around a circle. We also use the K\"{u}nneth formula in persistent homology. Thanks to the results in Section \ref{sec: FS}, our pipeline applies to a general quasiperiodic function $f$. It suffices to compute the spectrum of $f$ -- e.g. via the fast Fourier Transform (FFT) -- to obtain the input to our algorithm (Section \ref{sec: 3G}). Specifically, we replace the standard Ripser‑based step 3 by our two‑part alternative (3b) shown below (we keep the original procedure as step 3a for comparison).
\begin{center}
 \begin{tikzpicture}
  \draw[rounded corners] (0, 0) rectangle (10, -4.4);

  \node at (6.5, -2.2) {
    \begin{minipage}{0.8\linewidth}
      \begin{tabular}{l}
        \textbf{1.} Start with a time series \(f\). \\
        \textbf{2.} Reconstruct the phase space by computing \\
        \hspace{5mm} $\{SW_{d,\tau}f(t)\}_{t=0}^T$. \\
        \textbf{3a.} Compute dgm$^R_j(\{SW_{d,\tau}f(t)\}_{t=0}^T,d_2)$ using \verb"Ripser". \\
        \textbf{3b.} (i) Use the FFT to retrieve the frequencies of $f$ and \\
        \hspace{5mm} then compute their continued fraction expansion (CFE). \\
        \hspace{5mm} (ii) Use the CFE as shown in Section \ref{MR section} and then apply \\
        \hspace{5mm} the results from Section \ref{sec:AM} to approximate \\
        \hspace{5mm} dgm$^R_j(\{SW_{d,\tau}f(t)\}_{t=0}^T,d_2)$ for $j=1,2$.  \\
      \end{tabular}
    \end{minipage}
  };
\end{tikzpicture}
\end{center}
In step 3b (ii), the continued fraction expansions allow us to compute exact persistence diagrams on each frequency circle (via the Three Gap Theorem, Section \ref{MR section}), which we then assemble—using the Künneth formula as detailed in Section~\ref{sec:AM}—to approximate the persistence diagrams of interest.
\begin{figure}[tbh!]
    \centering
    \hfill
    \includegraphics[width=0.31\textwidth]{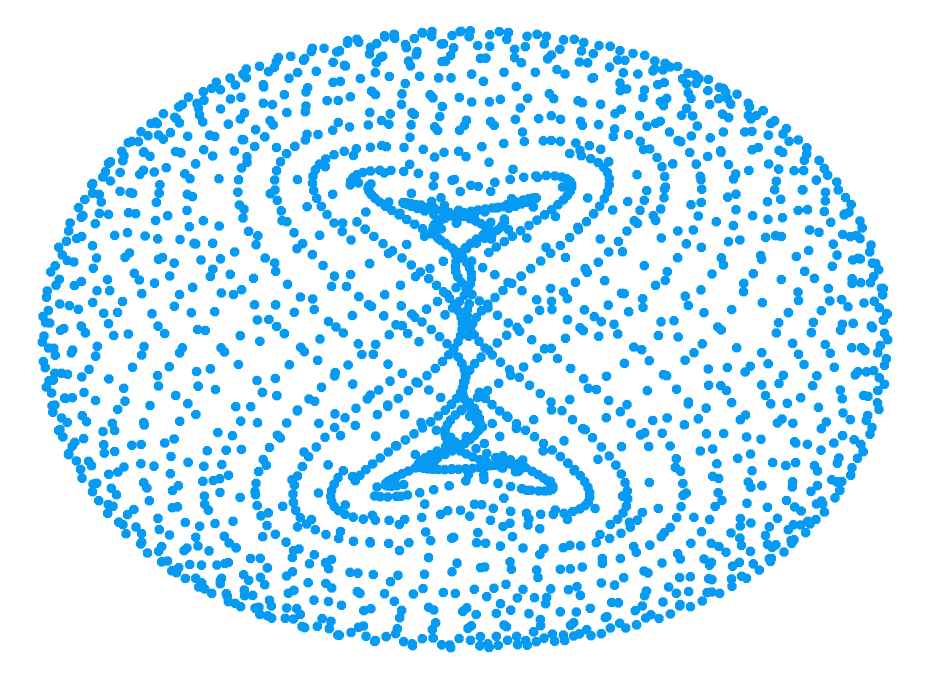}
    \hfill
    \includegraphics[width=0.32\textwidth]{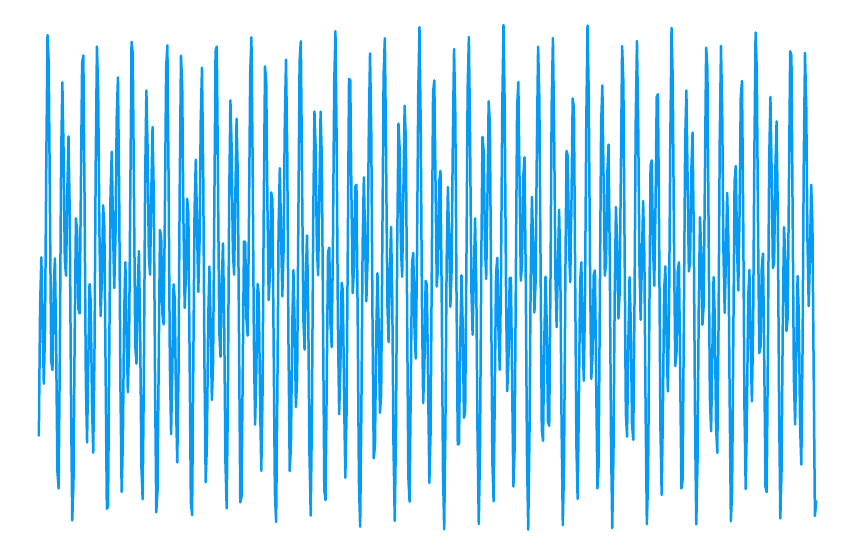}
    \hfill
    \includegraphics[width=0.32\textwidth]{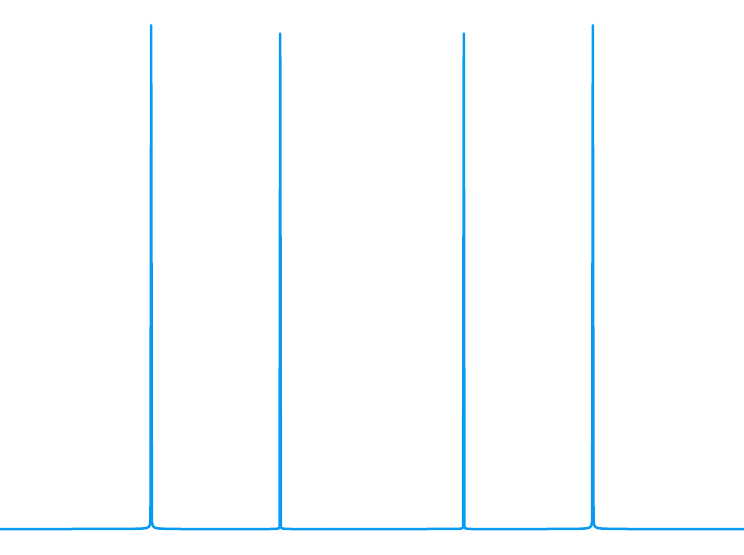}
    \hfill
    \includegraphics[width=0.48\textwidth]{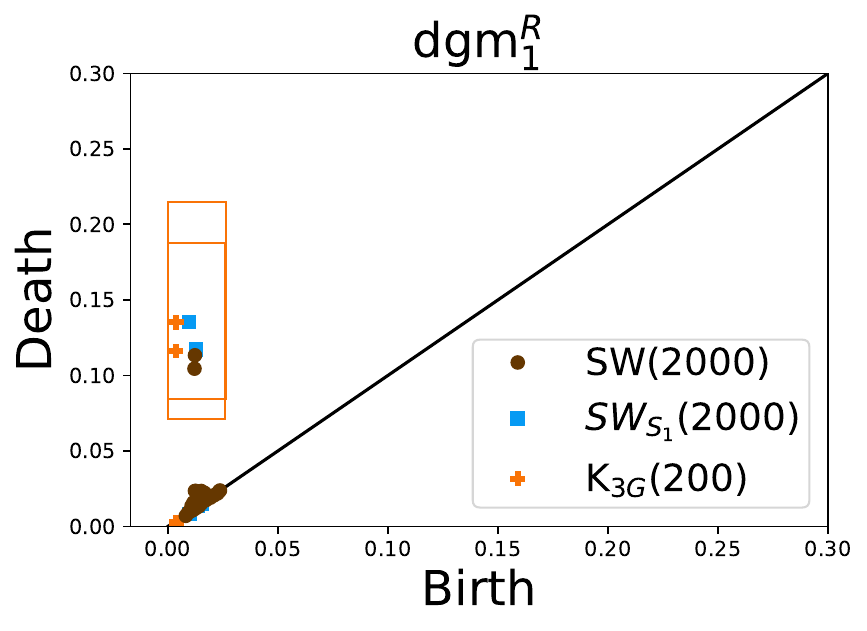}
    \hfill
    \includegraphics[width=0.48\textwidth]{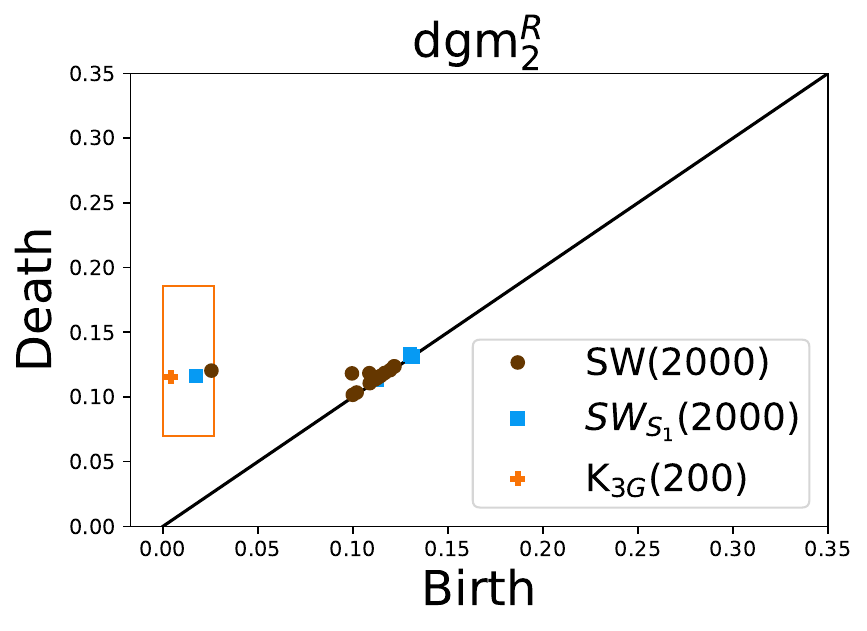}
    \label{fig:applications}
    \caption{Top row: The plot on the left is the phase space $(x,\dot{x}) $ from the pendulum attached to a sliding block shown in Figure \ref{fig:IlustPendulum}. The solution $x(t)$ we used for the sliding window embedding is plotted in the middle, followed by the modulus of its Discrete Fourier Transform (right). Bottom row: Persistence diagrams of the sliding window  embedding of $x(t)$(brown circles), of a truncated Fourier series of $x(t)$ (blue squares), and our proposed approximation $K_{3G}$ (orange crosses). The orange rectangles depict the theoretical approximation bound.}
\label{fig:pipeline}
\end{figure}

\begin{remark}
The idea of using Fourier analysis together with the K\"unneth formula to compute persistent homology is not unique to our work. Indeed, in \cite{kim2022exact}, these tools are combined to construct a multiparameter filtration whose persistent homology can be computed exactly. As the authors state:
\begin{quote}
“Our main idea is to transform time‑series data into a barcode through the Liouville torus without utilizing sliding‑window embedding.”
\end{quote}
Thus their method depends on reconstructing the full Liouville torus.

By contrast, our work approximates the persistence diagram via a single‑parameter sliding‑window Rips filtration:
\[
\begin{tikzcd}[
  column sep=small,   
  row sep=small,      
  every label/.append style={font=\footnotesize, align=center}  
]
  f
    \arrow[r,]
  & R_{\epsilon}(SW_{d,\tau}f)
    \arrow[r,]
  & \text{dgm}^{R}_{j}(SW_{d,\tau}f). \\[-6pt]
  (\mathrm{Time\text{-}series\ data}) & (\mathrm{Rips \ complex\ of \ sliding \ window\ embedding}) & (\mathrm{persistence \ diagram})
\end{tikzcd}
\]
Meanwhile, \cite{kim2022exact} provides exact formulas for the persistence diagram of the Liouville torus \(\Psi_f\), obtained from the multiparameter filtration
\[
\begin{tikzcd}[
  column sep=small,   
  row sep=small,      
  every label/.append style={font=\footnotesize, align=center}  
]
  f
    \arrow[r,]
  & R_{\epsilon_1}(\pi_1\Psi_f)\times\cdots\times R_{\epsilon_N}(\pi_N\Psi_f)
    \arrow[r,]
  & \text{dgm}^{R,\ell}_{j}( \Psi_f ). \\[-6pt]
  (\mathrm{Time \ series\ data}) & (\mathrm{Multi-parameter\ filtration\ of\ the\ Liouville\ torus}) & (\mathrm{persistence \ diagram})
\end{tikzcd}
\]
We summarize the distinguishing features of these two methods in Table \ref{tab:compare}.
\end{remark}

\begin{table}[h]
  \centering
  \caption{At‑a‑glance comparison of our 3G pipeline vs.\ the exact multiparameter method of \cite{kim2022exact}.}
  \label{tab:compare}
  \rowcolors{2}{lightgray}{darkgray}
  \renewcommand{\arraystretch}{1.5}
  \begin{tabular}{|l|c|c|}
    \hline
    \rowcolor{navyblue}
      {{Feature}} &
      {{Our 3G method}} &
      {{Method of \cite{kim2022exact}}} \\
    \hline
    FFT‑based decomposition          & Yes & Yes \\ \hline
    Künneth‑type product formula     & Yes & Yes \\ \hline
    Takens' Theorem     & Yes & Yes \\ \hline
    Sliding window embedding         & Yes & No  \\ \hline
    Liouville torus & No  & Yes \\ \hline
    Continued fraction expansion     & Yes & No \\ \hline
    Filtration type                  & 1‑parameter & Multi-parameter \\ \hline
    Type of signal & Quasiperiodic & Periodic \\ \hline
    Exactness vs.\ approximation     & Approximate (error bound) & Exact \\ \hline
  \end{tabular}
\end{table}

\subsection*{Acknowledgements}
The authors would like to thank Facundo Memoli and  the anonymous
referees for their detailed comments; they  were invaluable in fixing multiple imprecisions found in the original draft.

\section{Background and Definitions}
\label{sec:Back}
In this section we cover the mathematical concepts needed for our Three Gap (3G) method. We first define persistence modules, their barcodes and stability theory, as well as the special case of persistent homology for filtered simplicial complexes.
Next, we introduce dynamical systems and highlight Takens’ reconstruction theorem, which underlies the sliding window embedding. After that, we show how the Fourier transform recovers the frequency vector of a general quasiperiodic signal. We then recall continued fraction expansions and their fundamental properties, leading directly to the Three Gap Theorem --- the number‑theoretic pillar of our work.  We conclude with an overview of our 3G method (named for its use of the Three Gap Theorem), emphasizing how these components fit together.
 \subsection{Persistent Homology}
 \label{sub: PH}
Our presentation of persistence modules and their interleaving distance follows \cite[Chapters~1, 3, and 4]{chazal2016structure}. These concepts provide an algebraic account of stability, which we use in this work.

\subsubsection{Persistence Modules}
\begin{definition}
A persistence module $ \mathbb{V} $ over the   real numbers $\mathbb{R} $ is  an indexed family of vector spaces
\[
\left\{V_t \mid t \in \mathbb{R} \right\},
\]
together with a doubly-indexed family of linear maps
\[
\left\{v_s^t \colon V_s \to V_t \mid s \le t\right\}
\]
which satisfy the composition law
\[
v_s^t \circ v_r^s = v_r^t
\]
whenever \(r \le s \le t\), and where \(v_t^t\) is the identity map on \(V_t\) for every $t\in \mathbb{R}$.
\end{definition}

\begin{definition}
A morphism \(\Phi\) between two persistence modules \(\mathbb{U}, \mathbb{V}\) is a collection of linear maps
\[
\varphi_t : U_t \to V_t
\]
such that the diagram
\[
\begin{tikzcd}
U_s \arrow[r, "u_s^t"] \arrow[d, "\varphi_s"'] & U_t \arrow[d, "\varphi_t"] \\
V_s \arrow[r, "v_s^t"] & V_t
\end{tikzcd}
\]
commutes for all \(s \le t\). Composition is defined in the obvious way, as are identity  morphisms. This makes the collection of persistence modules into a category. The set of all  morphisms from \(\mathbb{U}\) to \(\mathbb{V}\) is denoted
\[
  \mathrm{Hom}\bigl(\mathbb{U},\mathbb{V}\bigr).
\]
When \(\mathbb{U}=\mathbb{V}\), these are called \emph{endomorphisms}, and we write
\[
  \mathrm{End}\bigl(\mathbb{V}\bigr) = \mathrm{Hom}\bigl(\mathbb{V},\mathbb{V}\bigr).
\]
\end{definition}

We have the following generalization of morphism,

\begin{definition}
Let \(\mathbb{U}=\{U_t, u_s^t\}\) and \(\mathbb{V}=\{V_t, v_s^t\}\) be persistence modules indexed by \(t\in \R \).
A \emph{morphism of degree \(\delta \geq 0\)} is a family of linear maps
\[
  \varphi_t \colon U_t \;\longrightarrow\; V_{t+\delta},
  \quad t\in \R,
\]
such that for all \(s\le t\) the square
\[
\begin{tikzcd}
U_s \arrow[r,"u_s^t"] \arrow[d,"\varphi_s"']
  & U_t \arrow[d,"\varphi_t"] \\
V_{s+\delta} \arrow[r,"v_{s+\delta}^{t+\delta}"]
  & V_{t+\delta}
\end{tikzcd}
\]
commutes.  We write
\[
  \operatorname{Hom}^\delta(\mathbb{U},\mathbb{V})
  = \{\varphi\text{ of degree }\delta\}.
\]
When \(\mathbb{U}=\mathbb{V}\), such a map is called a \emph{degree \(\delta\) endomorphism}, and we write
\[
  \operatorname{End}^\delta(\mathbb{V})
  = \operatorname{Hom}^\delta(\mathbb{V},\mathbb{V}).
\]
\end{definition}

\begin{definition}
For any persistence module \(\mathbb{V}=\{V_t,v_s^t\}\) and \(\delta\ge0\), the \emph{shift map}
\[
  1^\delta_{\mathbb{V}}
  = \{\,v_t^{t+\delta}\colon V_t\to V_{t+\delta}\}_{t\in \R}
\]
is a degree \(\delta\) endomorphism in \(\operatorname{End}^\delta(\mathbb{V})\).
\end{definition}

\begin{definition}
Let \(\mathbb{U}\) and \(\mathbb{V}\) be persistence modules and fix \(\delta\ge0\).  A \(\delta\)–\emph{interleaving} consists of
\[
  \Phi\in\operatorname{Hom}^\delta(\mathbb{U},\mathbb{V}),
  \quad
  \Psi\in\operatorname{Hom}^\delta(\mathbb{V},\mathbb{U}),
\]
such that
\[
  \Psi\Phi = 1^{2\delta}_{\mathbb{U}}
  \quad\text{and}\quad
  \Phi\Psi = 1^{2\delta}_{\mathbb{V}}.
\]
\end{definition}

\begin{definition}
The \emph{interleaving distance} between two persistence modules \(\mathbb{U}\) and \(\mathbb{V}\) is
\[
  d_I(\mathbb{U},\mathbb{V})
  = \inf\{\delta\ge0 \mid \mathbb{U}\text{ and }\mathbb{V}\text{ admit a }\delta\text{-interleaving}\}.
\]
If no finite \(\delta\) exists, set \(d_I(\mathbb{U},\mathbb{V})=\infty\).
\end{definition}

Under appropriate finitenes conditions, which we describe below, a persistence module can be uniquely determined up to isomorphism by an invariant called the \emph{barcode}.
Indeed, first
note that the direct sum of persistence modules can be defined component-wise; that is, $\mathbb{U\oplus V}$ has vector spaces $U_t \oplus V_t$ (direct sum of vector spaces) and linear maps $v_{s}^t \oplus u_s^t$.

\begin{definition}
\label{def:dgm-module}
Let $\mathbb{V}$ be a persistence module  such that  \(V_{t}\) is finite‑dimensional for each $t\in \R$ (i.e., $\mathbb{V}$ is pointwise finite).
By \cite{crawley2015decomposition}, there is a
unique multiset $\bcd(\mathbb{V})$ of intervals $I\subseteq \R$, called the barcode of $\mathbb{V}$,
so that
\[
  \mathbb{V}\;\cong\;\bigoplus_{I \in \bcd(\mathbb{V})} \mathbb{I}_I,
\]
where  $\mathbb{I}_I$ is the persistence module with the following vector spaces: the field $\F$ for each $t\in I$, the zero vector space otherwise, and the following linear maps: the identity whenever possible, and the zero map when not.
By multiset we mean that intervals may appear with finite repetition (multiplicity).
The persistence diagram of $\mathbb{V}$ is the following multiset of points in in the extended plane $\overline{\R}^2 = [-\infty, \infty] \times [-\infty, \infty]$,
\[
\dgm(\mathbb{V}) =
\left\{
\big(\inf(I), \sup(I)\big) \mid I \in \bcd(\mathbb{V})
\right\}.
\]
\end{definition}

The space of persistence diagrams can be endowed with an extended pseudo-metric called the bottleneck distance $d_B$ \cite{cohen2007stability, chazal2016structure}. In the following definition we consider the points in the diagonal $y=x$ as part of the persistence diagrams;
moreover, each point $(x,x)$
is included with  countable infinite multiplicity.
\begin{definition}
\label{bottleneck def}
Let $\dgm, \dgm' \subseteq \overline{\R}^2$ be two persistence diagrams.
Their bottleneck distance  is
\[
d_B\left(\dgm ,\dgm'\right)
=
\inf_{\phi:\dgm  \rightarrow \dgm'}\;
 \sup_{y\in \dgm }\{ \lVert y-\phi(y) \rVert_{\infty} \} ,
\]
where $\phi$ is a bijection of multisets, and  $\|(a,b)\|_\infty = \max\{|a|, |b|\}$ is the extended sup norm on $\overline{\R}^2$.
\end{definition}
Conceptually, given two persistence diagrams we consider all multiset bijections that pair points between them.
The distortion of a given pairing can be quantified using the largest infinity norm  over all paired points.
The bottleneck distance is then the smallest possible distortion over all bijective pairings. In essence, this captures how similar two persistence diagrams are to each other in the plane. We note that including all points \((x,x)\) along the diagonal  ensures that any two diagrams admit a bijective matching (so that unmatched off‑diagonal points may be paired with diagonal points).

The interleaving distance between persistence modules and the bottleneck distance between their persistence diagrams satisfy the celebrated isometry theorem \cite{lesnick2015theory},

\begin{theorem}
\label{isometry theorem}
 Let \(\mathbb{U}, \mathbb{V}\) be pointwise finite persistence modules. Then
\[
d_I(\mathbb{U}, \mathbb{V}) = d_B(\operatorname{dgm}(\mathbb{U}), \operatorname{dgm}(\mathbb{V})).
\]
\end{theorem}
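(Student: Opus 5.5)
The plan is to prove the two inequalities $d_B \le d_I$ and $d_I \le d_B$ separately, using the interval decomposition of Definition~\ref{def:dgm-module} (Crawley-Boevey) to reduce everything to interval modules.

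\emph{Converse stability, $d_I \le d_B$.} This is the easier direction. Fix $\delta > d_B$ and choose a bijection $\phi$ between the diagrams, augmented by the diagonal, with distortion less than $\delta$. For each matched pair of intervals $I$ and $J$ whose endpoints differ by at most $\delta$, I will write down explicit $\delta$-morphisms $\mathbb{I}_I \to \mathbb{I}_J$ and $\mathbb{I}_J \to \mathbb{I}_I$. Each map is the identity on $\F$ wherever both the source and target spaces are nonzero and the map is forced to be nonzero; otherwise it is zero. I then check that the composites equal the $2\delta$ shift maps. The key observation is that if $I$ is shortened by $\delta$ at both ends, it is contained in $J$, so the composite is the identity exactly where the $2\delta$ shift is nonzero.

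A point matched to the diagonal corresponds to an interval of length at most $2\delta$. For such a $\mathbb{I}_I$, the shift $1^{2\delta}$ is zero, so the zero maps form a $\delta$-interleaving with $0$. Taking direct sums of these pieces gives a $\delta$-interleaving of $\mathbb{U}$ and $\mathbb{V}$, using the decompositions from Definition~\ref{def:dgm-module}. The endpoint conventions (open versus closed ends, and infinite endpoints) need care, but only routine care.

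\emph{Algebraic stability, $d_B \le d_I$.} This is the main obstacle. Given a $\delta$-interleaving $(\Phi, \Psi)$, I need to produce a $\delta$-matching of the barcodes. I would follow the induced-matching approach of Bauer--Lesnick.

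\begin{enumerate}
\item Factor $\Phi$ as an epimorphism $\mathbb{U} \twoheadrightarrow \operatorname{im}\Phi$ followed by a monomorphism $\operatorname{im}\Phi \hookrightarrow \mathbb{V}(\delta)$, where $\mathbb{V}(\delta)$ denotes the shifted module.
\item For pointwise finite modules, show that a monomorphism $\mathbb{A} \hookrightarrow \mathbb{B}$ induces a canonical injection from the intervals of $\mathbb{A}$ to the intervals of $\mathbb{B}$ that have the same right endpoint and a smaller or equal left endpoint. Dually, an epimorphism induces an injection matching left endpoints, with right endpoints decreasing. The proof is a counting argument: for each fixed right endpoint, order the bars by left endpoint and compare ranks of the relevant subquotients. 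Pointwise finiteness is what makes these counts finite.
\item Compose the two injections to obtain a matching $\chi_\Phi$ between $\bcd(\mathbb{U})$ and $\bcd(\mathbb{V})$. It sends $[b,d)$ to some $[b',d')$ with $b' \le b+\delta$ and $d' \le d+\delta$.
\item Use the equation $\Psi\Phi = 1^{2\delta}$ to show that $\operatorname{im}\Phi$ cannot be too small. Every bar of $\mathbb{U}$ of length greater than $2\delta$ must be matched, and the reverse inequalities $b' \ge b-\delta$ and $d' \ge d-\delta$ must hold. Applying the same argument symmetrically to $\Psi$ gives the corresponding statement for bars of $\mathbb{V}$.
\end{enumerate}

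Unmatched bars have length at most $2\delta$, so they can be matched to the diagonal at cost at most $\delta$. Taking the infimum over $\delta$ gives $d_B \le d_I$.

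Step~4 is the step where I expect the real work: turning the rank bound coming from the interleaving into endpoint control of the induced matching. Since the paper cites \cite{lesnick2015theory} for this result, I would largely import that argument rather than reprove the structure theory.
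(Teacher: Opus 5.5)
The paper does not prove this statement; it quotes it from \cite{lesnick2015theory}, so there is no in-paper argument to compare against. Your outline is the standard proof of the cited result: interval-by-interval interleavings for $d_I\le d_B$, and Bauer--Lesnick induced matchings for $d_B\le d_I$. The converse half is fine. Choosing $\delta>d_B$ strictly is what makes endpoint types irrelevant. A bar matched to the diagonal then has length strictly less than $2\delta$, so $1^{2\delta}$ vanishes on it. For matched bars, the identity-on-overlap maps are valid $\delta$-morphisms because $J$ shifted down by $\delta$ starts and ends before $I$ does (strictly, for finite endpoints).

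Step~4, however, is wrong as stated, and it is where the real content lies.

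\emph{What the kernel bound gives.} The identity $\Psi\Phi=1^{2\delta}_{\mathbb{U}}$ only gives $\ker\Phi\subseteq\ker 1^{2\delta}_{\mathbb{U}}$, i.e.\ $\ker\Phi$ is $2\delta$-trivial. That controls the epimorphism $\mathbb{U}\twoheadrightarrow\operatorname{im}\Phi$. It shows that every bar of $\mathbb{U}$ of length $>2\delta$ is matched and that $d'\ge d-\delta$. It does not give $b'\ge b-\delta$.

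\emph{A counterexample.} Take $\delta=1$, $\mathbb{U}=\mathbb{I}_{[5,10)}$ and $\mathbb{V}=\mathbb{I}_{[5,10)}\oplus\mathbb{I}_{[0,10)}$. Let $\Phi$ be the diagonal map and $\Psi$ the projection to the first summand, each composed with the unit shift. Then $\Psi\Phi=1^{2}_{\mathbb{U}}$ and $\operatorname{im}\Phi\cong\mathbb{I}_{[5,9)}$. The mono-induced matching into $\mathbb{V}(1)=\mathbb{I}_{[4,9)}\oplus\mathbb{I}_{[-1,9)}$ pairs $[5,9)$ with the longest bar ending at $9$, namely $[-1,9)$. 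Hence $\chi_\Phi([5,10))=[0,10)$ and $b'=0<b-\delta$.

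\emph{The symmetric step.} ``Applying the same argument to $\Psi$'' produces a different matching $\chi_\Psi$, which says nothing about $\chi_\Phi$. Two partial matchings do not glue into one, and by the example neither would be a $\delta$-matching anyway.

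\emph{The missing ingredient is the cokernel.} The identity $\Phi\Psi=1^{2\delta}_{\mathbb{V}}$ gives $\operatorname{im}\bigl(v_t^{t+2\delta}\bigr)\subseteq\operatorname{im}\varphi_{t+\delta}$ for all $t$, i.e.\ $\operatorname{coker}\Phi$ is $2\delta$-trivial. This is what forces the mono-induced injection $\operatorname{im}\Phi\hookrightarrow\mathbb{V}(\delta)$ to cover every bar of $\mathbb{V}(\delta)$ of length $>2\delta$. It also forces that injection to lower left endpoints by at most $2\delta$, which gives $b'\ge b-\delta$. Apply both the kernel and the cokernel bound to the single matching $\chi_\Phi$; this is the content of Bauer--Lesnick's induced matching theorem, and it yields the required $\delta$-matching.
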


\subsubsection{Persistent Homology of Filtered Simplicial Complexes}
In this section we describe a formal notion of “shape” for discrete data sets.
Unless otherwise noted, definitions follow \cite{MR1867354,gakhar2021sliding}.
\begin{definition}
Given a set $X$, an abstract simplicial complex (or a simplicial complex, for short) with vertices in $X$  is a  subset $K\subseteq \mathcal{P}(X)$ of the power set of $X$, such that
\begin{enumerate}
\item Every $\sigma \in K$ is finite and nonempty,
\item For any $\sigma \in K$,  if  $\tau \neq \varnothing$  and $ \tau \subseteq \sigma$, then  $\tau \in K$.
\end{enumerate}
\end{definition}

The elements of  $K$ are called simplices and the dimension of a simplex is   one less than its cardinality: $\dim(\sigma)=|\sigma|-1.$
We also define $\dim(K)=\max\{\dim(\sigma) \mid \sigma \in K\}.$
A face of $\sigma$ is any nonempty proper subset of it.
The $n$-th skeleton of $K$ is denoted   $K^{(n)}:=\{\sigma \in K \mid \dim(\sigma) \leq n \}$ for $n\in\mathbb{N} =\{0,1,\ldots \}$;
in particular, $K^{(0)}$ is called the set of vertices of $K$ and $K^{(1)}\smallsetminus K^{(0)}$ is its set of edges.
A subcomplex of  $K$  is a subset   $L\subseteq K$ which is also an abstract simplicial complex.

We can construct an abstract simplicial complex from any  set of points in a metric space.
Indeed, this is achieved via the \emph{Rips complex} which treats each  point in a metric space $(X,d)$, say $x \in X $, as a vertex; edges $\{x_0,x_1\}$ are created between two vertices if their distance is at most  some fixed $\epsilon \geq 0$, i.e., $ d(x_0,x_1) \leq \epsilon$; connecting three vertices creates a triangle $\{x_0,x_1,x_2\}$, four a tetrahedron  $\{x_0,x_1,x_2,x_3\}$, and so on.
Formally,
\begin{definition}
Given a  metric space $(X,d)$ and a real number $\epsilon$, the Rips complex $R_\epsilon(X,d)$ is the simplicial complex
\[
R_\epsilon(X,d) = \big\{ \,\{x_0,\ldots,x_n\} \subseteq X \mid   d(x_i,x_j) \leq \epsilon, \; \forall \; 0\leq i,j \leq n \,  \big\}. \]
\end{definition}
With this construction we can leverage  a powerful shape descriptor from algebraic topology: simplicial homology, which assigns algebraic objects to an abstract simplicial complex in a deformation-invariant manner.
In particular, the 0-th dimensional homology measures the number of connected components, the $1$-st dimensional homology detects loops, and the $2$-nd dimensional homology detects cavities.
In general, the $n$-th dimensional homology detects $n$-dimensional holes.  The relevant definitions are below.

\begin{definition}
Let $K$ be a simplicial complex and $\mathbb{F}$ a field.
For $n\in \mathbb{N}$, let  $C_n(K;\mathbb{F})$ denote the $\mathbb{F}$-vector space with basis the set of $n$‑simplices of $K$:
\[
  C_n(K;\mathbb{F}) = \left\{\sum_{\sigma\in K^{(n)}\smallsetminus K^{(n-1)}} a_\sigma\,\sigma
    \;\bigg|\; a_\sigma\in \mathbb{F},\
    a_\sigma\neq0\text{ for only finitely many }\sigma\right\}.
\]
For $n < 0$, we let $C_n(K;\mathbb{F})$ be the zero vector space.
\end{definition}
We call an element $\tau\in C_n(K;\mathbb{F})$ an \emph{$n$‑chain}.
If
\[
  \tau = \sum_{\sigma\in K^{(n)}\smallsetminus K^{(n-1)}} a_\sigma\,\sigma
\]
and $\sigma_0$ is one of the simplices in the sum (i.e.\ $a_{\sigma_0}\neq0$), we write $\sigma_0 \in \tau$.

\begin{definition}
Fix a partial (e.g., a total) order $ \preceq$ on the vertices of $K$ so that each simplex $\sigma \in K$ is totally ordered.
The $n$-th boundary map $\partial_n:C_n(K;\mathbb{F}) \longrightarrow C_{n-1}(K;\mathbb{F})$ is the linear transformation defined for any $\sigma = \{v_0,\dots,v_n\} \in K^{(n)}\smallsetminus K^{(n-1)}$  as
\[
\partial_n(\sigma)= \sum_{i=0}^n (-1)^n\{v_0,\dots,\hat{v_i}, \dots, v_n\},
\]
where  $v_i \preceq v_{i+1}$ for $i=0,\ldots, n-1$, and $\{v_0,\dots,\hat{v_i}, \dots, v_n\}$ denotes the $(n-1)$-th face of $\sigma$ obtained by removing the $i$-th vertex $v_i$ from the totally ordered set $\{v_0,\dots,v_n\}.$
\end{definition}
One can show that $\partial_{n} \circ \partial_{n+1} = 0$ for all $n\in \N$.
\begin{definition}
For $n\in \N$, a field $\mathbb{F}$ and a simplicial complex $K$, let
\[
\ker(\partial_n) = \{\tau \in C_n(K;\mathbb{F}) \mid \partial_n(\tau) = 0\}
\hspace{.5cm}
\mbox{ and }
\hspace{.5cm}
\im(\partial_{n+1}) = \{\partial_{n+1}(\beta) \mid \beta \in C_{n+1}(K ;\mathbb{F})\} \subseteq \ker(\partial_{n}).
\]
The $n$-th homology of  $K$ with coefficients in $\mathbb{F}$ is the quotient vector space
\[
H_n(K;\mathbb{F}) = \frac{\ker(\partial_n)}{\im(\partial_{n+1})}.
\]
\end{definition}
We note that
the isomorphism type of $H_n(K;\F)$ is independent of the partial order on the vertices of $K$.
Moreover,
when $K=R_{\epsilon}(X,d)$ and $\epsilon \geq 0$, the nonzero elements of  $H_n\bigl(R_\epsilon(X,d);\mathbb{F}\bigr)$  correspond to nontrivial $n$‑cycles (i.e., elements of $\ker(\partial_n)$) not bounding any $(n+1)$‑chain in the Rips complex \(R_\epsilon(X,d)\), and thus measure  $n$-dimensional holes in $(X,d)$ at scale $\epsilon$.
We note that
\[
  R_{\epsilon}(X,d)\;\subseteq\;R_{\epsilon'}(X,d)
  \quad\text{whenever}\;\epsilon\le\epsilon',
\]
which is a particular case of the following definition.

\begin{definition}
\label{def fcom}
A filtered simplicial complex $\mathcal{K} = \{K_\epsilon\}_{\epsilon \in \R}$ is a collection of simplicial complexes $K_\epsilon$ such that
\[
K_\epsilon \subseteq K_{\epsilon'}
\]
whenever $\epsilon \leq \epsilon'$. We refer to $\mathcal{K}$ as a filtration.
\end{definition}

We note that the family
\[
  \mathcal{R}(X,d)=\{R_{\epsilon}(X,d)\}_{\epsilon\in \R}
\]
is a filtered complex in the sense of Definition \ref{def fcom}, called the \emph{Rips filtration} of the metric space $(X,d)$.
The main idea behind persistent homology is to track the evolution of homology classes in a filtration as $\epsilon$ changes.
For instance, as $\epsilon$ increases one may see several connected components (0‑cycles) merge into one, or an empty spherical cavity (a 2‑cycle) become filled.
The resulting multiscale summary is recorded in the persistence diagram of the following persistence module.

\begin{definition}
\label{def bcd}
Let  $\mathcal{K}=\{K_\epsilon\}_{\epsilon\in \R}$  be a filtration.  For each  $n\in \mathbb{N}$, define the  $n$-th persistence module
\[
H_n(\mathcal{K};\mathbb{F}) = \big\{  H_n(K_\epsilon;\mathbb{F}), \ T_{\epsilon,\epsilon'}:H_n(K_\epsilon;\mathbb{F}) \\ \longrightarrow H_n(K_{\epsilon'};\mathbb{F}), \ \epsilon \leq \epsilon' \ \big\}
\]
as the family of \ $\mathbb{F}$-vector spaces $H_n(K_\epsilon;\mathbb{F})$ and linear transformations $T_{\epsilon,\epsilon'}$ induced by the inclusion maps $K_\epsilon \hookrightarrow K_{\epsilon'}$, for $\epsilon \leq \epsilon'$. The $n$-th persistent homology groups are
\[
H^{\epsilon,\epsilon'}_n(\mathcal{K};\mathbb{F})=\im(T_{\epsilon,\epsilon'})
\]
and their dimension over $\mathbb{F}$ are the persistent Betti numbers
\[
\beta^{\epsilon,\epsilon'}_n(\mathcal{K};\F):=\rank(T_{\epsilon,\epsilon'}) = \dim_{\mathbb{F}} \big( H^{\epsilon,\epsilon'}_n(\mathcal{K};\mathbb{F}) \big).
\]
\end{definition}

If $\mathcal{K}=\{K_\epsilon\}_{\epsilon\in \R}$ and  $n \in \mathbb{N}$ are such that $ H_n(\mathcal{K};\mathbb{F})$
is pointwise finite‑dimensional, then
Definition \ref{def:dgm-module}
ensures the existence of the barcode
\[
\bcd_n(\mathcal{K})
= \bcd(H_n(\mathcal{K}; \F))
\]
and the persistence diagram
$\dgm_n(\mathcal{K}) = \dgm(H_n(\mathcal{K}; \F))$.

\begin{remark}
\label{rmk:diagramAltSum}
When the $n$-th homology of a filtration changes at only finitely many filtration values,
the resulting persistence diagram has a particularly simple description in terms of  persistent Betti numbers   (see \cite[Page 106]{cohen2007stability}).
Indeed, let $\mathcal{K} = \{K_\epsilon\}_{\epsilon \in \R}$
be a filtration whose $n$-th persistence module is pointwise finite and changes at only finitely many values of $\epsilon$,
namely $\epsilon_1 < \epsilon_2 < \cdots < \epsilon_J $ (i.e., finitely many homological critical values), then given interleaved values
$\epsilon_j < r_j < \epsilon_{j+1}$
with $r_0 = \epsilon_{-1} = -\infty$ and $r_{J+1} = \epsilon_{J+1} = \infty$,
the persistence diagram
$\dgm_n(\mathcal{K})$ is the collection of points
$(\epsilon_i , \epsilon_j)$ that have \emph{positive} multiplicity
\begin{equation}
\label{eq:BettiMult}
\beta^{r_{i-1}, r_{j}}_n(\mathcal{K} ; \F)
\;-\;
\beta^{r_{i}, r_{j}}_n(\mathcal{K}; \F)
\;+ \;
\beta^{r_{i}, r_{j-1}}_n(\mathcal{K} ; \F)
\;- \;
\beta^{r_{i-1}, r_{j-1}}_n(\mathcal{K}; \F).
\end{equation}
\end{remark}

Throughout this paper, and unless otherwise stated,  we assume that every metric space $(X,d)$ is finite.
Hence, for each $\epsilon\in \R$ the Rips complex $R_{\epsilon}(X,d)$ is a finite simplicial complex and the persistence module
\[
  H_n\bigl(\mathcal{R}(X,d);\mathbb{F}\bigr)
\]
is pointwise finite.
The corresponding barcode and persistence diagram are denoted  $\bcd_n^R(X,d)$ and $\dgm_n^R(X,d)$, respectively, and the intervals $I \in \bcd_n^R(X,d)$ are closed on the left and open on the right.
An interval $[a,b) \in \bcd_n^R(X,d)$  indicates that an $n$‑cycle is born at $\epsilon=a$ and becomes trivial at $\epsilon=b$; its persistence (lifetime)  \(b-a\)  provides a measure of robustness for this homological feature.
Plotting $\dgm^R_n(X,d)$ in the extended plane  $  \overline{\R}^2$ yields a visualization of the persistent homology of the Rips filtration $\mathcal{R}(X,d)$.
Points $(a,b) \in \dgm_n^R(X,d)$ that lie far from the diagonal \(y = x\) represent topological features of $(X,d)$ with greater robustness, as the Stability Theorem \ref{thm:Pstability}  implies. We describe this theorem next.

When working with metric spaces and the Rips filtration, we can connect the bottleneck distance of the corresponding persistence diagrams with the Gromov-Hausdorff distance. The latter measures the similarity between bounded metric spaces and is defined below.

\begin{definition}
Given two non-empty bounded subsets \(A\) and \(B\) of a metric space \( (X, d) \), the \emph{Hausdorff distance} \( d_H(A, B) \) is defined as:
\[
d_H(A, B) = \max \left\{ \sup_{a \in A} \inf_{b \in B} d(a, b), \sup_{b \in B} \inf_{a \in A} d(b, a) \right\},
\]
where \(d(a, b)\) denotes the distance between points \(a\) and \(b\) in \(X\).
\end{definition}

\begin{definition}
For two bounded metric spaces \((X_1,d_1)\) and \((X_2,d_2)\), the Gromov–Hausdorff distance
\[
  d_{GH}(X_1,X_2)
\]
is defined as follows. Take any common metric space \(Z\) and any pair of isometric embeddings
\(\phi_1\colon X_1\to Z\) and \(\phi_2\colon X_2\to Z\); compute the Hausdorff distance between
\(\phi_1(X_1)\) and \(\phi_2(X_2)\) in \(Z\). Then \(d_{GH}(X_1,X_2)\) is the infimum of
all such Hausdorff distances:
\[
d_{GH}(X_1, X_2) = \inf_{Z, \phi_1, \phi_2} d_H(\phi_1(X_1), \phi_2(X_2)).
\]
\end{definition}
The aforementioned connection is the content of the well-known Rips stability theorem \cite{cohen2007stability, chazal2016structure}.
\begin{theorem}
\label{thm:Pstability}
Let $X_1$ and $X_2$ be finite metric spaces. Then
\[
d_B(\dgm^R_n(X_1,d_1),\dgm^R_n(X_2,d_2)) \leq 2d_{GH}(X_1,X_2).
\]
\end{theorem}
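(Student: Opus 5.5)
The plan is to prove Theorem~\ref{thm:Pstability} by reducing it to the isometry theorem (Theorem~\ref{isometry theorem}). Concretely, I will build a $2\delta$-interleaving between the persistence modules $H_n(\mathcal{R}(X_1,d_1);\F)$ and $H_n(\mathcal{R}(X_2,d_2);\F)$ whenever $d_{GH}(X_1,X_2)<\delta$. Since both spaces are finite, these modules are pointwise finite, so the isometry theorem applies. It then gives $d_B\le 2\delta$, and letting $\delta\downarrow d_{GH}(X_1,X_2)$ yields the claim.

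\textbf{Step 1: from Hausdorff distance to correspondences.} I will use the standard reformulation of $d_{GH}$ via correspondences. A correspondence is a relation $C\subseteq X_1\times X_2$ whose projections onto $X_1$ and $X_2$ are both surjective. Its distortion is
\[
\operatorname{dis}(C)=\sup_{(x,y),(x',y')\in C}|d_1(x,x')-d_2(y,y')|.
\]
I only need one direction of this reformulation: if $d_{GH}<\delta$, there is a correspondence with $\operatorname{dis}(C)\le 2\delta$. To see this, take isometric embeddings $\phi_i\colon X_i\to Z$ with $d_H(\phi_1(X_1),\phi_2(X_2))<\delta$. Pair each point with a point of the other image lying within $\delta$ of it. The triangle inequality then bounds the distortion by $2\delta$.

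\textbf{Step 2: simplicial maps at shifted scales.} Using $C$, I choose functions $f\colon X_1\to X_2$ and $g\colon X_2\to X_1$ with $(x,f(x))\in C$ and $(g(y),y)\in C$. Since $\operatorname{dis}(C)\le 2\delta$, we have $d_2(f(x),f(x'))\le d_1(x,x')+2\delta$. Hence $f$ sends each simplex of $R_\epsilon(X_1)$ to a simplex of $R_{\epsilon+2\delta}(X_2)$, and the same holds for $g$. These vertex maps are therefore simplicial maps $R_\epsilon(X_1)\to R_{\epsilon+2\delta}(X_2)$ that commute on the nose with the inclusions. Applying $H_n$ gives degree-$2\delta$ morphisms $\Phi$ and $\Psi$ of persistence modules.

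\textbf{Step 3: the interleaving identities.} This is the main obstacle: showing $\Psi\Phi=1^{4\delta}$, and symmetrically $\Phi\Psi=1^{4\delta}$. The composite $g\circ f$ is generally not the inclusion $R_\epsilon(X_1)\hookrightarrow R_{\epsilon+4\delta}(X_1)$. I will prove instead that the two maps are \emph{contiguous}, which implies they induce the same map on homology. Take a simplex $\sigma\in R_\epsilon(X_1)$, and consider $\sigma\cup g f(\sigma)$. For $x,x'\in\sigma$, the pairs $(x,f(x))$ and $(gf(x'),f(x'))$ both lie in $C$. The distortion bound therefore gives
\[
d_1(x,gf(x'))\le d_2(f(x),f(x'))+2\delta\le \epsilon+4\delta.
\]
Pairs of points both in $\sigma$, or both in $gf(\sigma)$, are also within $\epsilon+4\delta$. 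So $\sigma\cup gf(\sigma)$ is a simplex of $R_{\epsilon+4\delta}(X_1)$, which is exactly contiguity. Contiguous simplicial maps are homotopic after geometric realization, so $H_n(g\circ f)$ equals the map induced by inclusion.

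Together these steps give a $2\delta$-interleaving, hence $d_I\le 2\delta$. By Theorem~\ref{isometry theorem}, $d_B(\dgm^R_n(X_1,d_1),\dgm^R_n(X_2,d_2))\le 2\delta$, and taking the infimum over $\delta$ finishes the proof.
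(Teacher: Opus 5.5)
Your proof is correct. The paper gives no proof of Theorem~\ref{thm:Pstability}; it quotes it as a known result from \cite{cohen2007stability, chazal2016structure}. Your argument is essentially the standard proof from that literature:
\begin{itemize}
\item a correspondence of distortion at most $2\delta$ gives vertex maps that are simplicial between Rips complexes at scales shifted by $2\delta$;
\item the composites $g\circ f$ and $f\circ g$ are contiguous to the inclusions at shift $4\delta$, so the induced maps form a $2\delta$-interleaving;
\item Theorem~\ref{isometry theorem} converts this interleaving into the bottleneck bound.
\end{itemize}

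Compared with the paper's black-box citation, your route has three advantages. It makes the section self-contained modulo Theorem~\ref{isometry theorem}. It yields the slightly stronger module-level inequality $d_I\bigl(H_n(\mathcal{R}(X_1,d_1);\F),H_n(\mathcal{R}(X_2,d_2);\F)\bigr)\le 2d_{GH}(X_1,X_2)$. And it uses the same mechanism the paper relies on later in Lemma~\ref{bound 1}, namely simplicial maps at rescaled parameters plus contiguity with the inclusion, there in logarithmic scale.

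The general versions in the cited literature cover broader (e.g.\ totally bounded) spaces, where one must work with $q$-tame rather than pointwise finite modules. You use finiteness only to guarantee pointwise finiteness, so that Theorem~\ref{isometry theorem}, as stated in the paper, applies. That restriction is harmless here, since the paper only applies the stability theorem to finite point clouds.
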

In addition to stability, one can also describe  the Rips persistence of metric products.
 \begin{definition}
Let $(X,d_X)$ and $(Y,d_Y)$ be metric spaces. The maximum metric $d_\infty$ is given by
\[
d_\infty\big((x,y),(x',y') \big) = \max\{d_X(x,x'),d_Y(y,y') \},\]
where $(x,y),(x',y') \in X \times Y.$
\end{definition}
 We note that $(X \times Y,d_\infty)$ is a metric space. Furthermore, its barcodes are given by \cite{gakhar2019k}:
 \begin{theorem}[\textbf{Persistent K{\"u}nneth Formula }]
 \label{kunneth formula}
Let $(X,d_X)$ and $(Y,d_Y)$ be finite metric spaces. Then, $$\bcd_n^R(X \times Y,d_\infty) = \bigcup_{i+j=n} \big\{I \cap J \ | \ I \in \bcd_{i}^R(X ,d_{X}), \ J \in \bcd_{j}^R(Y ,d_{Y}) \big\}, $$ for all $n\in \mathbb{N}.$
\end{theorem}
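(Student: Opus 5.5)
The statement to prove is the Persistent Künneth Formula (Theorem \ref{kunneth formula}): the $n$-th Rips barcode of $(X\times Y,d_\infty)$ is the union, over $i+j=n$, of all intersections $I\cap J$ with $I\in\bcd_i^R(X,d_X)$ and $J\in\bcd_j^R(Y,d_Y)$.

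The plan is to reduce the statement to an algebraic Künneth theorem for filtered chain complexes. The first step is the key observation about the max metric. A finite set $\sigma\subseteq X\times Y$ has $d_\infty$-diameter at most $\epsilon$ exactly when its projections $\pi_X(\sigma)$ and $\pi_Y(\sigma)$ each have diameter at most $\epsilon$. Hence
\[
R_\epsilon(X\times Y,d_\infty)=\{\sigma\subseteq X\times Y \text{ finite, nonempty} \mid \pi_X\sigma\in R_\epsilon(X),\ \pi_Y\sigma\in R_\epsilon(Y)\},
\]
which is the categorical product $R_\epsilon(X)\times R_\epsilon(Y)$ of simplicial complexes. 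Order the vertices of $X$ and of $Y$, and give $X\times Y$ the product partial order. Then the Eilenberg–Zilber theorem gives a chain homotopy equivalence
\[
C_\ast\bigl(R_\epsilon(X)\times R_\epsilon(Y)\bigr)\simeq C_\ast(R_\epsilon(X))\otimes C_\ast(R_\epsilon(Y)),
\]
realized by the shuffle map and the Alexander–Whitney map. Both maps are built functorially from the simplicial structure, so they commute with the inclusions induced by $\epsilon\le\epsilon'$. The same holds for the chain homotopies. We therefore obtain an isomorphism of persistence modules
\[
H_n(\mathcal{R}(X\times Y))\cong H_n\bigl(\mathcal{C}^X\otimes\mathcal{C}^Y\bigr),
\]
where $\mathcal{C}^X$ and $\mathcal{C}^Y$ are the filtered chain complexes and the tensor product is taken levelwise at equal parameters.

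The second step is an algebraic Künneth formula for filtered complexes. Since $X$ and $Y$ are finite, each filtered chain complex over $\F$ has finitely many critical values. Such a complex decomposes, by the standard pairing of the persistence algorithm, into a direct sum of elementary pieces. There are two kinds:
\begin{itemize}
\item single generators in degree $i$ born at $a$, which contribute an interval $[a,\infty)$;
\item two-term complexes $\F[b]\to\F[a]$ in degrees $i+1\to i$, which contribute $[a,b)$ in degree $i$.
\end{itemize}
This decomposition holds up to filtered isomorphism, not merely homotopy. Levelwise tensor product distributes over direct sums, so it suffices to compute the homology of the tensor product of two elementary pieces.

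The third step is to compute these tensor products case by case. Take pieces carrying $[a,b)$ in degree $i$ and $[c,d)$ in degree $j$. The product of the two two-term complexes is a square complex with generators born at $\max(a,c)$, $\max(a,d)$, $\max(b,c)$ and $\max(b,d)$. Its homology in degree $i+j$ is $[\max(a,c),\min(b,d))$ when that interval is nonempty, i.e.\ the interval $I\cap J$. Its homology in degree $i+j+1$ vanishes: that class would live on $[\max(a,d),\max(b,c))\cap\dots$, and one must check that the square's acyclicity kills it. The cases involving infinite pieces are analogous and easier.

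I expect this degree $i+j+1$ claim (the absence of a persistent Tor term) to be the main obstacle. Over a field the levelwise tensor product has no Tor at each fixed $\epsilon$. But the tensor product of the two modules $[a,b)$ and $[c,d)$ over $\F[t]$ does have a Tor term. We must verify that the levelwise (equal-index) construction, rather than a graded tensor over $\F[\R_{\ge0}]$, is what $R_\epsilon$ of the product computes. That is the case here. I would verify it directly with the small square complex, checking exactness of the square at each $\epsilon$ by a rank count.

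Finally, I would assemble the pieces. Summing over all pairs of elementary pieces and discarding empty intersections gives exactly the union in Theorem \ref{kunneth formula}. Uniqueness of barcodes then identifies the result, using Definition \ref{def:dgm-module} together with the naturality established in the first step.
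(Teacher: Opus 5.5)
The paper gives no proof of this statement; it quotes it from \cite{gakhar2019k}. Taken on its own, your outline is sound in its algebra, but the first step has a genuine gap.

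\textbf{The gap: Eilenberg--Zilber is applied to the wrong complex.} The Eilenberg--Zilber theorem concerns the product of simplicial sets. For ordered complexes, its nondegenerate simplices are the subsets of $X\times Y$ that are \emph{chains} in the product order and project to simplices. But $R_\epsilon(X\times Y,d_\infty)$ is the categorical product, which is strictly larger. Take $x_1<x_2$ and $y_1<y_2$ with $d_X(x_1,x_2)\le\epsilon$ and $d_Y(y_1,y_2)\le\epsilon$. Then the Rips complex contains the edge $\{(x_1,y_2),(x_2,y_1)\}$, whose vertices are incomparable in the product order. It also contains the $3$-simplex $\{x_1,x_2\}\times\{y_1,y_2\}$. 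By contrast, $\Delta^1\times\Delta^1$ as a simplicial set has only two nondegenerate $2$-simplices and no $3$-simplex. So the product partial order does not even orient the simplices of $R_\epsilon(X\times Y)$, and Eilenberg--Zilber says nothing directly about its chain complex.

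\textbf{What is missing.} You need a comparison lemma. Let $P_\epsilon\subseteq R_\epsilon(X\times Y,d_\infty)$ be the subcomplex of product-order chains, and orient all simplices by a lexicographic total order, so that $C_\ast(P_\epsilon)$ is the normalized chain complex of the simplicial-set product. The claim is that the inclusion $P_\epsilon\hookrightarrow R_\epsilon(X\times Y,d_\infty)$ is a chain homotopy equivalence, compatibly with $\epsilon$. Acyclic carriers prove this. Send $\sigma$ to the staircase triangulation of $\pi_X\sigma\times\pi_Y\sigma$ to build the retraction and the homotopy on $P_\epsilon$. Send $\sigma$ to the full simplex on $\pi_X\sigma\times\pi_Y\sigma$ to build the homotopy on the Rips complex. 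Both carriers are contractible. Both have the same projections as $\sigma$, so they lie in filtration level $\epsilon$ whenever $\sigma$ does, and all maps and homotopies are therefore filtered. Equivalently, the projections induce a natural homotopy equivalence $|K\times L|\to|K|\times|L|$ for the categorical product. This is the one genuinely topological input of the theorem, and your proposal skips it.

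\textbf{The Tor step is not an obstacle.} At fixed $\epsilon$, the levelwise complex $C^X_\epsilon\otimes C^Y_\epsilon$ is an ordinary tensor product of chain complexes over $\F$. The algebraic K\"unneth theorem gives $H_n(C^X_\epsilon\otimes C^Y_\epsilon)\cong\bigoplus_{i+j=n}H_i(C^X_\epsilon)\otimes H_j(C^Y_\epsilon)$ via $[c]\otimes[c']\mapsto[c\otimes c']$. This map is natural in chain maps, so it is already an isomorphism of persistence modules onto the pointwise tensor product. Since $\mathbb{I}_I\otimes\mathbb{I}_J\cong\mathbb{I}_{I\cap J}$ pointwise, and the pointwise tensor product distributes over direct sums, the barcode formula follows at once. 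You do not need to decompose the filtered complexes into elementary pieces.

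Your square computation is still correct, though the window you wrote for a possible degree-$(i+j+1)$ class is off. Such a class needs both degree-$(i+j+1)$ generators present (born at $\max(b,c)$ and $\max(a,d)$) and the top generator absent (born at $\max(b,d)$). That means $\max(a,b,c,d)\le\epsilon<\max(b,d)$, which is empty because $a\le b$ and $c\le d$.
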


Finally, we present an observation which we use repeatedly in this work.

\begin{proposition}
\label{isomorphic diagrams}
Let $(X,d_X)$ and $(Y,d_Y)$ be two finite metric spaces.
Suppose there exists a bijection $f: X \rightarrow Y$ such that for some $\lambda > 0$ $$d_X(t_1,t_2) = \lambda d_Y(f(t_1),f(t_2)). $$ Then,
$$ \dgm^R_n(X,d_X) = \left\{(\lambda a, \lambda b) \, : \, (a,b) \in \dgm_n^R(Y,d_Y) \right\} .$$
\end{proposition}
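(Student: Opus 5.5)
The plan is to show that the map $f$ carries Rips complexes of $Y$ isomorphically onto Rips complexes of $X$ after rescaling the filtration parameter by $\lambda$. The persistence modules then agree up to a reparametrization of $\R$, and the diagrams follow.

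\textbf{Step 1: simplicial isomorphisms at each scale.} First note that $f^{-1}$ is a bijection $Y\to X$. Moreover, for a finite subset $\{x_0,\dots,x_n\}\subseteq X$ we have $d_X(x_i,x_j)\le \lambda\epsilon$ if and only if $d_Y(f(x_i),f(x_j))\le \epsilon$. Hence $f$ induces a bijection of simplices $R_{\lambda\epsilon}(X,d_X)\to R_\epsilon(Y,d_Y)$ that preserves faces, so it is a simplicial isomorphism $f_\epsilon$ for every $\epsilon\in\R$. For $\epsilon<0$ both complexes are empty, so this case needs no separate treatment. These isomorphisms commute with the inclusions $R_{\lambda\epsilon}(X)\hookrightarrow R_{\lambda\epsilon'}(X)$ and $R_\epsilon(Y)\hookrightarrow R_{\epsilon'}(Y)$ for $\epsilon\le\epsilon'$, since all of them are induced by the same vertex map $f$.

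\textbf{Step 2: pass to homology.} To define the boundary maps compatibly, choose the vertex order on $Y$ to be the pushforward of the order on $X$. Then $f$ induces chain isomorphisms and hence isomorphisms $H_n(R_{\lambda\epsilon}(X))\cong H_n(R_\epsilon(Y))$ that commute with the structure maps $T_{\epsilon,\epsilon'}$. If we write $\mathbb{U}$ for the module $H_n(\mathcal{R}(X,d_X))$ and $\mathbb{U}^\lambda$ for the module with $U^\lambda_\epsilon=U_{\lambda\epsilon}$, this says $\mathbb{U}^\lambda\cong H_n(\mathcal{R}(Y,d_Y))$ as persistence modules.

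\textbf{Step 3: read off barcodes.} Because $\epsilon\mapsto\lambda\epsilon$ is an order-preserving bijection of $\R$ (this uses $\lambda>0$), the reparametrization sends interval modules to interval modules: $(\mathbb{I}_I)^\lambda=\mathbb{I}_{\lambda^{-1}I}$, and it commutes with direct sums. Uniqueness of the barcode (Definition~\ref{def:dgm-module}) then gives
\[
\bcd(H_n(\mathcal{R}(Y)))=\{\lambda^{-1}I: I\in\bcd(\mathbb{U})\}.
\]
Equivalently, each interval of $X$ is $\lambda$ times an interval of $Y$, with multiplicities preserved. Taking infima and suprema yields the stated identity of multisets of diagram points, with infinite endpoints unchanged.

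The only point requiring care is the naturality and bookkeeping in Steps 2 and 3: checking that the induced maps on homology form a morphism of persistence modules, and that reparametrization really transports the interval decomposition. Both follow directly from functoriality of simplicial homology and the fact that the reparametrization is an order isomorphism of $\R$, so no genuine obstacle is expected.
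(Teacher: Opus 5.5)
Your proof is correct and follows essentially the same route as the paper: the paper regards $f$ as an isometry $(X,d_X)\to(Y,\lambda d_Y)$ and uses the induced simplicial isomorphisms of Rips complexes to identify the persistence modules. Your Steps 2--3 also spell out the reparametrization $\epsilon\mapsto\lambda\epsilon$ that turns the diagram of $(Y,\lambda d_Y)$ into the $\lambda$-rescaled diagram of $(Y,d_Y)$, a step the paper leaves implicit.
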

\begin{proof}
Let us define the metric $d_Y^\lambda=\lambda d_Y$ on $Y$. By assumption, $f$ is an isometry between $(X,d_X)$ and $(Y,d_Y^\lambda)$. This implies that $f$ induces simplicial isomorphisms between $R_\epsilon(X,d_X)$ and $R_\epsilon(Y,d_Y^\lambda)$  for all $\epsilon \geq 0$. This, in turn, induces isomorphisms on the corresponding persistence modules, which implies they have the same persistence diagrams. \end{proof}

\subsection{Dynamical Systems}
 \label{sec:dynamical}
Having presented persistent homology as a tool  to measure shape in discrete data, we now introduce the framework that models the source of said data.
Indeed, several real-world scientific measurements arise from underlying deterministic mechanisms.
Although said mechanism may be unknown or highly complex, the theory of dynamical systems enables its mathematical description.
The framework consists of a phase space $M$ that represents all of the relevant states of the system and a function $\Phi$ that keeps track of the evolution of the system.
The following definitions are taken from \cite{perea2019topological}.
\begin{definition}
A global continuous time dynamical system is a pair $(M,\Phi)$, where $M$ is a topological space and $\Phi: \mathbb{R} \times M \longrightarrow M$ is a continuous map so that $\Phi(0,p) = p,$ and $\Phi(s,\Phi(t,p)) = \Phi (s+t,p)$ for all $p \in M$ and all $t,s \in \mathbb{R}$.
\end{definition}

Note that any autonomous system of ordinary differential equations
\[
  \dot x(t) \;=\; \Lambda \bigl(x(t)\bigr),
  \quad x(t)\in M,
\]
on a smooth manifold \(M\) defines a dynamical system.  Its flow
\(\Phi_t\colon M\to M\) is given by following the integral curves (solutions) of the ODE from each initial condition.  Hence the evolution of the system is fully determined by the vector field \(\Lambda\) and the chosen initial state.

Representing a model in this fashion allows for a topological understanding of the system. This is done by looking at trajectories in the phase space $M$; indeed,
although most systems can only be solved numerically, given equations,  a general understanding of the shape of trajectories in $M$ provides qualitative information of the system \cite{skraba2012topological}.
In particular, knowing the topology of an \emph{attractor} --- i.e., a subset of $M$ that pulls nearby trajectories into it --- is of great significance. Formally,
\begin{definition}
A set $A\subseteq M$ is called an attractor if
\begin{enumerate}
\item It is compact,
\item It is an invariant set, i.e., if $a\in A$ then $\Phi(t,a) \in A$ for all $t \geq 0,$
\item There is an invariant open neighborhood $U$ of $A$, so that $A=\bigcap\limits_{t \geq 0} \{ \ \Phi(t,p) \mid p \in U \ \}.$
\end{enumerate}
\end{definition}

\begin{example}
\label{ex:radial eq}
Consider the dynamical system given by the radial equation (in polar coordiantes) $$\dot{r}=r(1-r^2), \ \ \ \ \ \ \ \dot{\theta}=1$$ where $r\geq 0.$ This system has a circle as an attractor in the $xy$-plane, shown in red in Figure \ref{fig:radial 2} (Left).
\end{example}

\begin{example}
\label{ex:van 2}
Consider the van der Pol equation
$$ \ddot{x} + \mu (x^2-1)\dot{x}+x=0$$ where $\mu \geq 0$ is a parameter. This system also has an attractor in the $x \dot{x}$-plane as shown in Figure \ref{fig:radial 2} (Right). In this case, it is not a round unit circle, yet it is topologically equivalent to it.
\end{example}

\begin{figure}[!htb]
    \centering
    \includegraphics[width=0.37\textwidth, trim=3cm 3cm 3cm 3cm, clip]{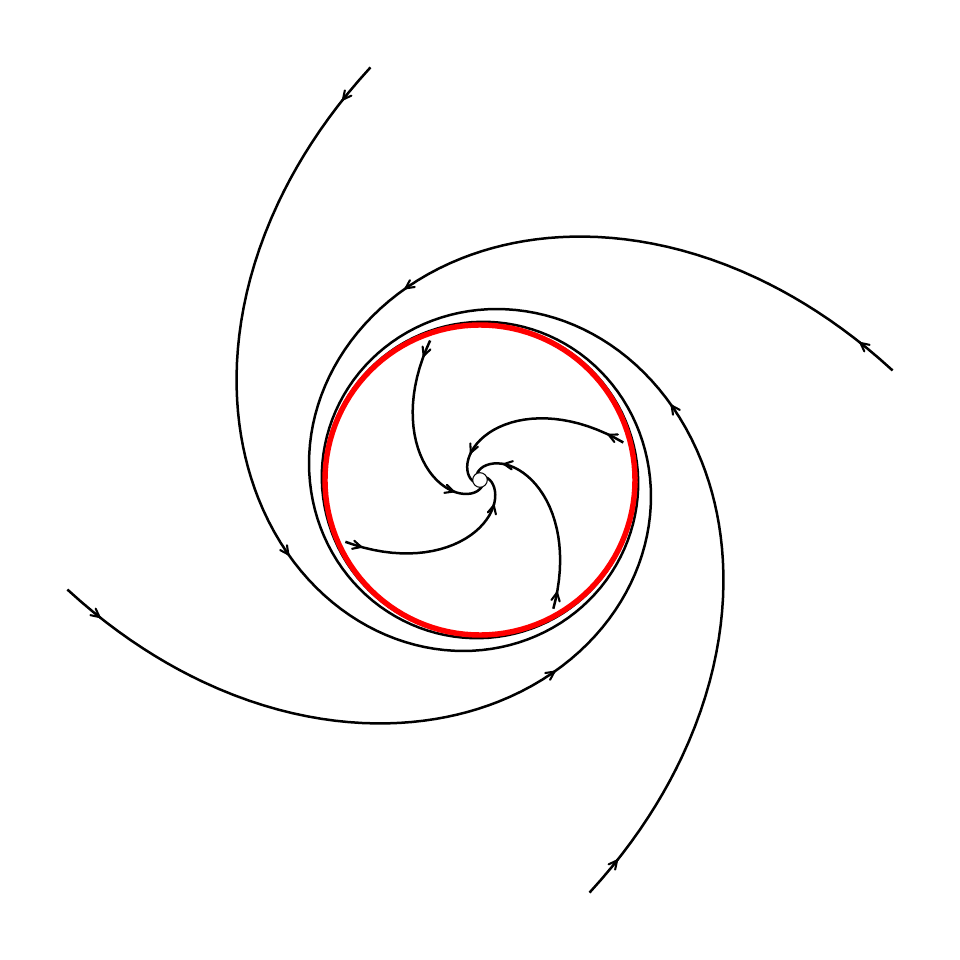} \ \ \ \ \ \ \ \ \ \ \ \ \
     \includegraphics[width=0.31\textwidth, trim=0cm 0cm 0cm 0cm, clip]{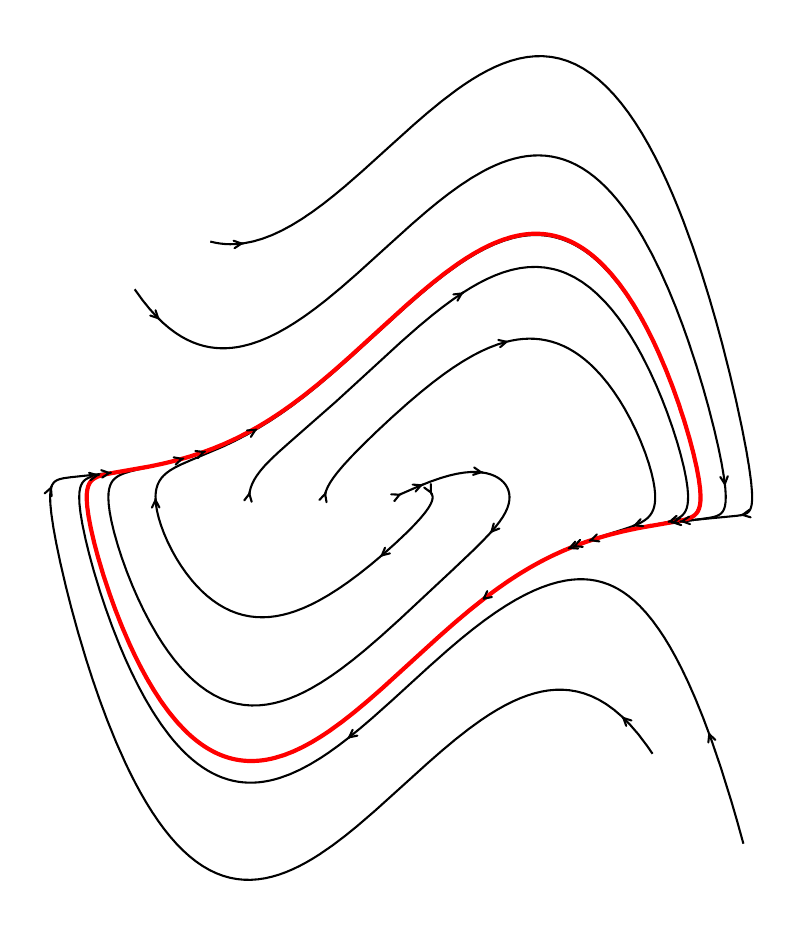}
    \caption{Trajectories of the system from Example \ref{ex:radial eq}, plotted in the \(xy\)-plane (Left) and from Example \ref{ex:van 2} in the $x\dot{x}$ plane (Right). In each case, the attractor is a topological circle highlighted in red.}
    \label{fig:radial 2}
\end{figure}

An attractor that is homeomorphic to a circle corresponds to a system exhibiting periodicity. In general, an attractor that is homeomorphic to an $N$-dimensional torus, a toroidal attractor, comes from a quasiperiodic system \cite{gakhar2021sliding}.
In practice, we often only  have a measurement of the dynamical system in the form of  time series data.
The underlying equations governing the system are unknown—i.e.\ neither \(M\) nor \(\Phi\) are given.
Nevertheless, there is a way to reconstruct attractors of this unknown system while preserving qualitative information. Concretely, Takens’ landmark 1981 theorem \cite{takens1981detecting} shows that a single time series measurement—sampled from a generic observation function—suffices to reconstruct the topology of the underlying attractor.

 \begin{theorem}[\textbf{Takens' Embedding }]
Let M be an $m$-dimensional compact Riemannian manifold. For pairs $(\phi,\bar{f})$, where $\phi \in C^2(M,M)$ and $\bar{f} \in C^2(M,\mathbb{R})$, it is a generic property that the map $\Psi_{\phi,\bar{f}}:M\rightarrow \mathbb{R}^{2m+1}$ defined as $$\Psi_{\phi,\bar{f}}(p)= \big(\bar{f}(p),\bar{f}(\phi(p)),\bar{f}(\phi^2(p)),\dots ,\bar{f}(\phi^{2m}(p)) \big) $$ is an embedding.
\end{theorem}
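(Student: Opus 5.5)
The statement is Takens' classical theorem, which the paper quotes from \cite{takens1981detecting} rather than proving. Here is the argument I would follow; it is essentially Takens' original one, in the form later streamlined by Sauer, Yorke and Casdagli. "Generic" means that the set of pairs $(\phi,\bar f)$ for which $\Psi_{\phi,\bar f}$ is an embedding contains an open and dense subset of $C^2(M,M)\times C^2(M,\R)$. The proof splits into openness and density.

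\textbf{Openness.} This is the routine half. Since $M$ is compact, a map is an embedding exactly when it is an injective immersion. Both being an immersion and being injective are $C^1$-open conditions on a compact manifold: rank is lower semicontinuous, and injectivity persists under small perturbations once it holds both near the diagonal and away from it. The assignment $(\phi,\bar f)\mapsto\Psi_{\phi,\bar f}$ is continuous into $C^1(M,\R^{2m+1})$, so the good set is open.

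\textbf{Density: first reduce $\phi$.} Fix $(\phi,\bar f)$ and perturb. By the Kupka--Smale theorem, after a $C^2$-small change of $\phi$ we may assume:
\begin{itemize}
\item the set $P$ of periodic points of period at most $2m$ is finite;
\item each such point is hyperbolic;
\item the derivative $D\phi^k$ at each such point (with $k$ its period) has distinct eigenvalues.
\end{itemize}

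\textbf{Density: the periodic points.} Near $P$ we perturb $\bar f$ by a linear combination of local coordinate functions and bump functions.
\begin{itemize}
\item \emph{Injectivity on $P$.} We make the values of $\bar f$ on distinct orbits in $P$ separate their delay vectors.
\item \emph{Immersion at $P$.} At a fixed point $p$, $D\Psi(p)$ is the matrix with rows $d\bar f_p\,(D\phi_p)^i$ for $i=0,\dots,2m$. Because $D\phi_p$ has distinct eigenvalues, a generic covector $d\bar f_p$ is cyclic for it. The rows then span $T_p^*M$, so $D\Psi(p)$ has rank $m$.
\item Points of period $k>1$ are handled the same way using $D\phi^k$.
\end{itemize}
By openness, $\Psi$ is then an injective immersion on a neighbourhood $U$ of $P$.

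\textbf{Density: away from the periodic points.} For $p\notin U$ the orbit points $p,\phi(p),\dots,\phi^{2m}(p)$ are distinct. Cover $M$ by finitely many small open sets and choose bump functions $g_1,\dots,g_r$. Set $\bar f_a=\bar f+\sum_i a_i g_i$ and apply parametric transversality in two places.
\begin{itemize}
\item \emph{Immersion.} Apply jet transversality to $(a,p)\mapsto D\Psi_a(p)$. The matrices in $\mathrm{Hom}(\R^m,\R^{2m+1})$ of rank below $m$ form a set of codimension $(2m+1)-m+1=m+2>m$. Hence for generic $a$ the image of $M$ avoids this set, and $\Psi_a$ is an immersion.
\item \emph{Injectivity.} Apply transversality to $(a,p,q)\mapsto\Psi_a(p)-\Psi_a(q)$ on $M\times M$ minus a neighbourhood of the diagonal. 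This map is transverse to $0$. Since $\dim(M\times M)=2m<2m+1$, for generic $a$ the value $0$ is never attained, so $\Psi_a$ is injective there.
\item \emph{Near the diagonal.} Injectivity close to the diagonal follows from the immersion property together with a compactness argument.
\end{itemize}

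\textbf{Main obstacle.} The hard step is verifying transversality in the injectivity map. It fails to be automatic when the orbits of $p$ and $q$ overlap, for instance $q=\phi^j(p)$ with $j\le 2m$. Then the same bump perturbation enters several coordinates of both $\Psi_a(p)$ and $\Psi_a(q)$, and the $a$-derivative might not be surjective. I would handle this by a triangular argument. Order the combined list $p,\phi(p),\dots,q,\phi(q),\dots$, and let each bump function be supported near a single orbit point not visited by the other orbit; perturbing at the first point where the two sequences differ then controls one coordinate at a time. This yields the required $2m+1$ independent directions, except when one of $p,q$ is close to $P$, and that case was already settled on $U$.

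Finally, the set of good $a$ is dense for each of finitely many compact pieces and open. Intersecting these finitely many open dense sets gives a small $a$ for which $\Psi_{\phi,\bar f_a}$ is an embedding, which proves density.
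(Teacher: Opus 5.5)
The paper gives no proof here; it quotes the result from \cite{takens1981detecting}. Your outline (Kupka--Smale reduction, cyclic covectors at low-period orbits, then parametric transversality with the counts $m+2>m$ and $2m+1>2m$) is the standard Takens / Sauer--Yorke--Casdagli route. It has a genuine gap, however. You set out to prove density in $C^2(M,M)\times C^2(M,\R)$, as the statement is written, but your argument uses at every stage that $\phi$ is a diffeomorphism:
\begin{itemize}
\item Kupka--Smale is a theorem about diffeomorphisms.
\item The immersion count needs $D\phi^i_p$ invertible. Row $i$ of $D\Psi_a(p)$ is $d(\bar f_a)_{\phi^i(p)}\,D\phi^i_p$, and otherwise it cannot be made arbitrary.
\item Your triangular argument assumes that two orbit segments which meet are shifts of one another, $q=\phi^j(p)$.
\end{itemize}
For non-injective $\phi$ the last point fails outright. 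If $p\neq q$ and $\phi(p)=\phi(q)$, then coordinates $1,\dots,2m$ of $\Psi_a(p)-\Psi_a(q)$ vanish for every $a$. So perturbing $\bar f$ supplies at most one direction, and transversality can at best cut the (typically $m$-dimensional) set of such pairs down to an $(m-1)$-dimensional set of coincidences, not to the empty set.

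In fact the statement as quoted is false. On $M=S^1$, every $\phi$ sufficiently $C^2$-close to $z\mapsto z^2$ is a two-fold covering with a continuous fixed-point-free deck involution $\iota$, $\phi\circ\iota=\phi$. For any $\bar f$, the function $g=\bar f-\bar f\circ\iota$ satisfies $g\circ\iota=-g$, hence vanishes at some $p_*$. Then $\Psi_{\phi,\bar f}(p_*)=\Psi_{\phi,\bar f}(\iota(p_*))$ with $p_*\neq\iota(p_*)$. So the set of pairs $(\phi,\bar f)$ for which $\Psi_{\phi,\bar f}$ is an embedding is disjoint from a nonempty open subset of $C^2(S^1,S^1)\times C^2(S^1,\R)$, hence not dense. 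Takens' theorem is stated for $\phi\in\mathrm{Diff}^2(M)$. With that hypothesis, orbit segments can only overlap by shifts, and your strategy is the right one.

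Even for diffeomorphisms, two steps need repair.
\begin{itemize}
\item Injectivity of $\Psi$ on $U$ only concerns pairs with both points in $U$. A pair with $p$ near $P$ and $q$ far from it is not ``already settled''. It must be handled by perturbing along the orbit segment of $q$, which for suitably nested neighbourhoods of $P$ stays away from that of $p$.
\item For $q=\phi^j(p)$, your triangular argument needs the points $\phi^l(p)$, $0\le l\le 2m+j$, to be distinct. This fails when $p$ is periodic of period between $2m+1$ and $2m+j$, a case not covered by $P$. For period exactly $2m+1$, the coordinates of $\Psi_a(p)-\Psi_a(\phi^j(p))$ sum to zero for every $a$, so the $a$-derivative has rank at most $2m$.
\end{itemize}
You can fix the second point in either of two ways. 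One is to treat the finitely many periodic orbits of period at most $4m$ as you treated $P$: make $\Psi(p_0)\neq\Psi(\phi^j(p_0))$ there and use continuity nearby. The other, as Sauer, Yorke and Casdagli do, is to note that the pairs $(p,\phi^j(p))$ form only an $m$-dimensional family, on which an $a$-derivative of rank greater than $m$ already suffices.
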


The reconstruction given by the image of $\Psi_{\phi, \bar{f}}$ may look different from the original state space $M$, but we are assured it will be topologically equivalent.
This is a powerful guarantee that validates the \emph{Sliding Window embedding} (defined below) as a means to reconstructing observed dynamics from  time series data.
\begin{definition}
\label{SW def}
For a function $f: \mathbb{R} \rightarrow \mathbb{C},$ an integer $d >0$ called the embedding dimension, and a real number $\tau > 0$ called the time delay, the sliding window embedding of $f$ at $t$ is given by:
$$ SW_{d,\tau}f(t) =
\begin{bmatrix}
          f(t) \\
          f(t+\tau) \\
           \vdots \\
           f(t+d\tau) \\
\end{bmatrix}   \in \mathbb{C}^{d+1}. $$
For \(T\subseteq\mathbb{R}\) a set of time points, the sliding window point cloud of $f$ sampled at \(T\) is the set
\[
\SW_{d,\tau}(T) =   \{\,SW_{d,\tau}f(t)\mid t\in T\}.
\]
\end{definition}

In the context of a dynamical system
$\Phi: \R\times M \longrightarrow M$, an observation function $\bar{f} : M \longrightarrow \R$,
and an initial condition $p_0 \in M$ so that the trajectory $t\mapsto \Phi(t,p_0)$ densely samples an attractor $A \subseteq M$, Takens' theorem implies that the sliding window of the
time series $t \mapsto \bar{f}\circ\Phi(t,p_0)$ (with appropriate parameters $\tau > 0$ and $d \geq \dim(M)$), produces a point cloud whose Rips persistence diagrams can be used to infer the homology of $A$.

In particular, for toroidal attractors and selecting \((d,\tau)\) according to \cite{gakhar2021sliding}, we are detecting homological features of an $N$-dimensional torus, as depicted in Figures \ref{fig:pipeline} and \ref{fig:SW 2}.
\noindent In summary, here are steps for detecting the presence of toroidal attractors given time series data:
\begin{enumerate}
\item Start with a time series pertaining to a measurement of a system,
\item Construct the sliding window point cloud from it,
\item Compute its persistence diagrams to determine if the point cloud samples an $N$-torus.
\end{enumerate}
Although sliding windows and persistence are well established and have found multiple applications \cite{tralie2018quasi, perea2015sw1pers, gakhar2019k}, computing persistence diagrams remains computationally taxing.
We now move onto the tools that will enable the fast approximation of Rips persistence diagrams of sliding window point clouds from quasiperiodic functions.

\subsection{Fourier Analysis of Quasiperiodic Signals}
\label{sec: FS}
\begin{definition}
\label{def:quasiperiodic}
We say that $f:\mathbb{R} \rightarrow \mathbb{C}$ is quasiperiodic   with frequency vector $\omega = (\omega_1,\ldots, \omega_N)$, if $\{\omega_i \}^N_{i=1}$ are positive real numbers which are linearly independent over $\mathbb{Q}$ (i.e., incommensurate)  and
\[
f(t)=F(\omega_1t,\dots,\omega_Nt), \] where $F: \mathbb{T}^N \rightarrow \mathbb{C}$ is a complex-valued continuous function
on the $N$-torus $\mathbb{T}^N =  \mathbb{R}^N / \mathbb{Z}^N$, called the parent function of $f$.
\end{definition}

As we will see below, one can leverage the spectrum of a quasiperiodic function  to infer the underlying frequency vector, as well as to approximate the Rips persistence diagrams of the sliding window point cloud.
This is enabled by the following   result  \cite[Theorem 1.7 and Corollary 1.13]{gakhar2021sliding}.

\begin{figure}[!htb]
    \centering
    \includegraphics[width=0.40\textwidth]{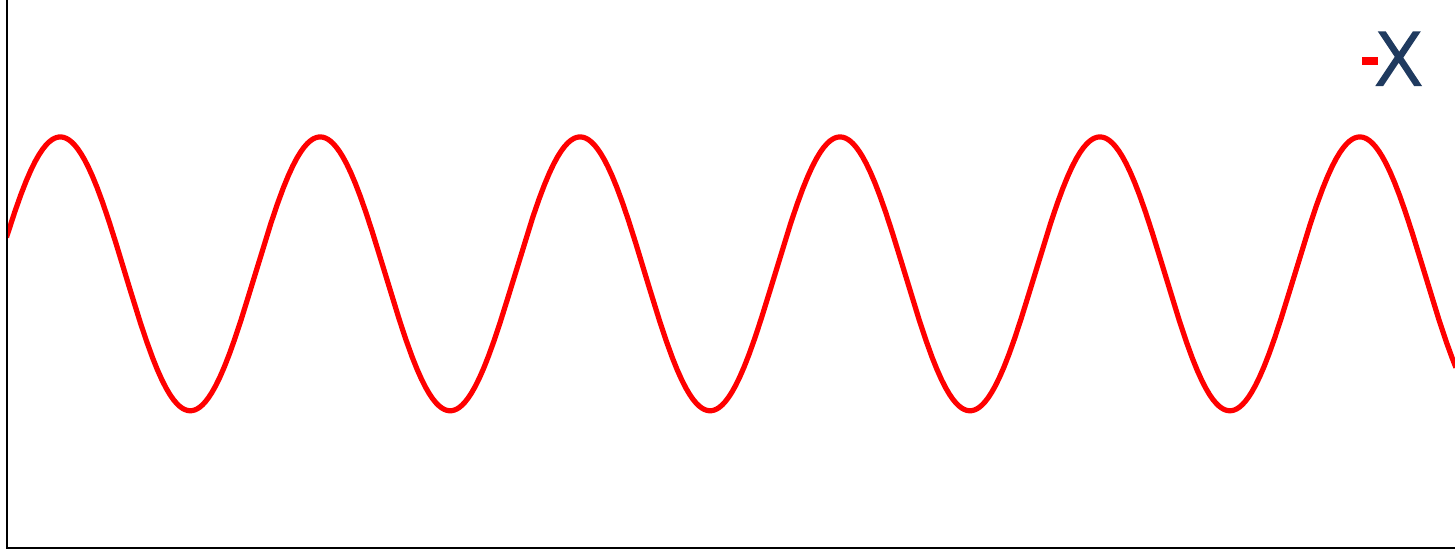}
    \hfill \
    \includegraphics[width=0.28\textwidth, trim=.2cm 2cm 2cm 2cm, clip]{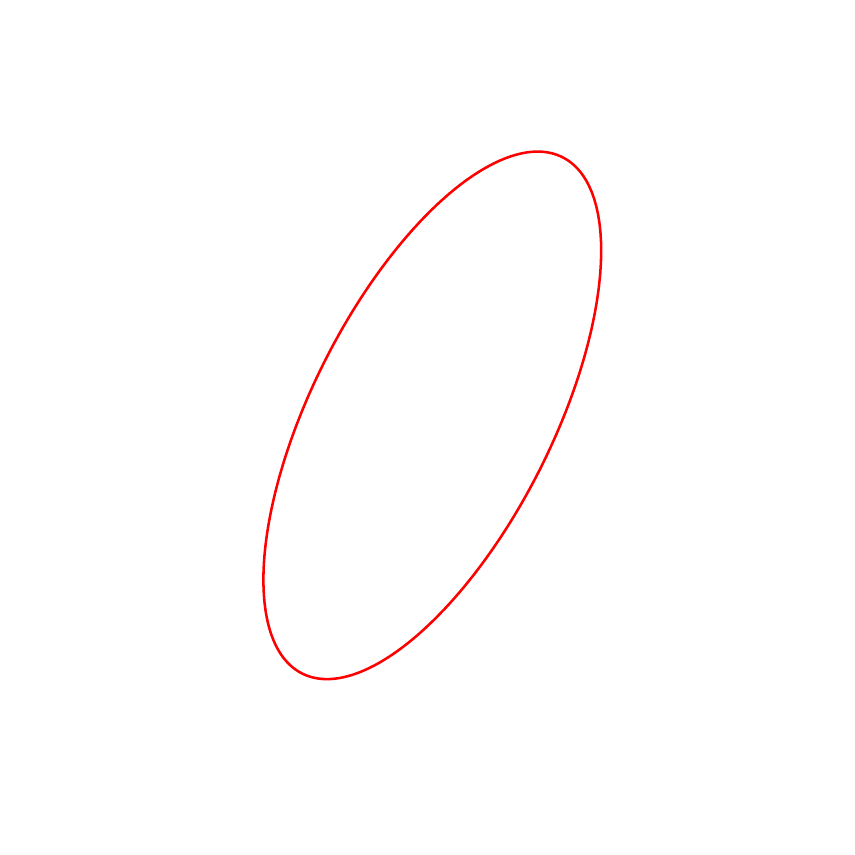}
    \hfill
    \includegraphics[width=0.27\textwidth]{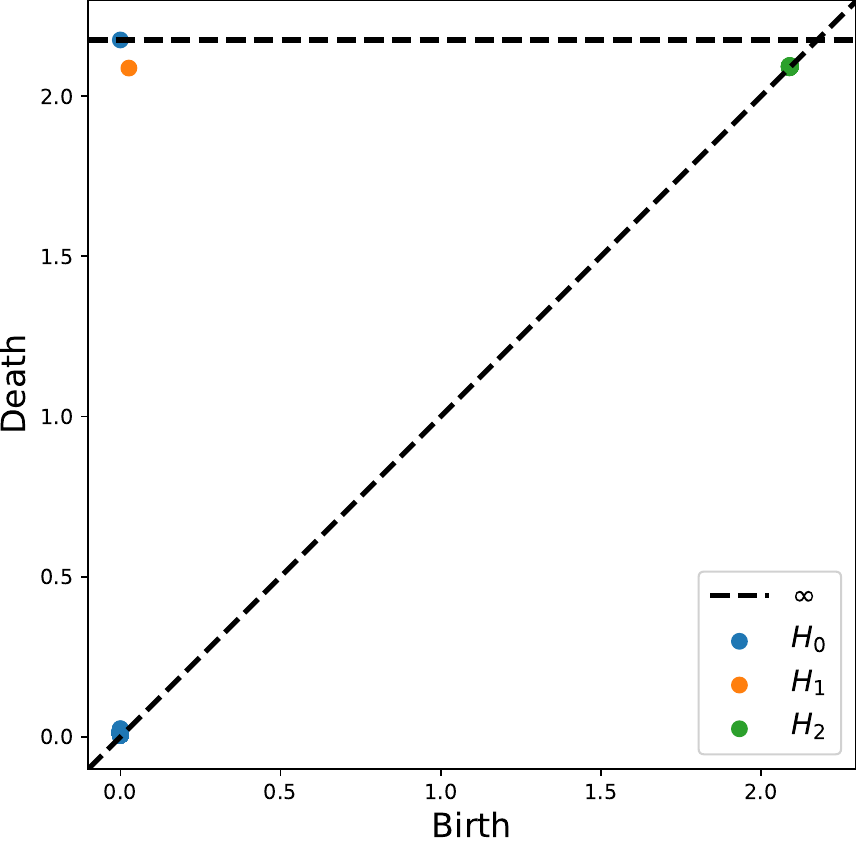}
    \includegraphics[width=0.40\textwidth]{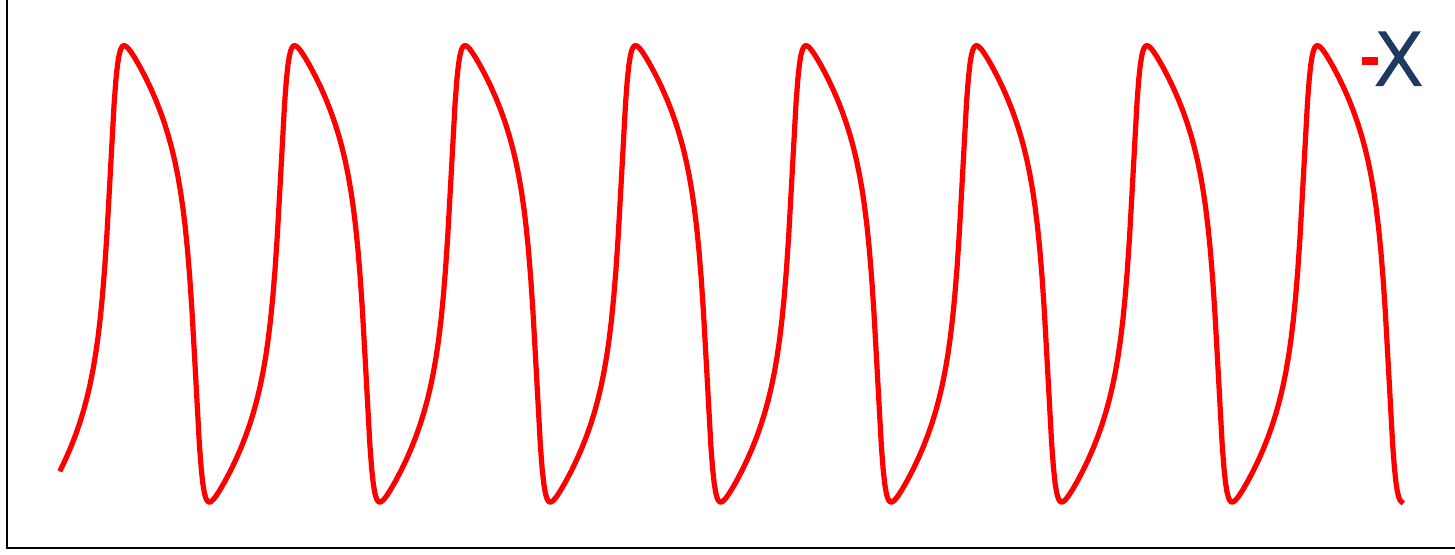}
    \hfill \
    \includegraphics[width=0.22\textwidth, trim=1.7cm 1.7cm 1.7cm 1.7cm, clip]{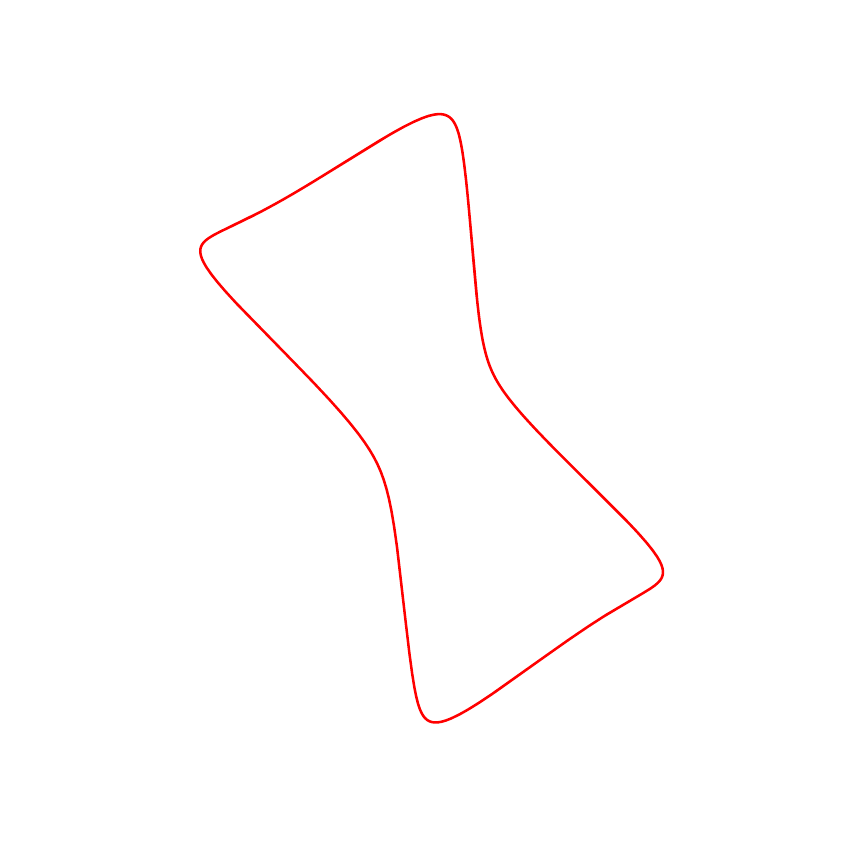}
    \hfill
    \includegraphics[width=0.27\textwidth]{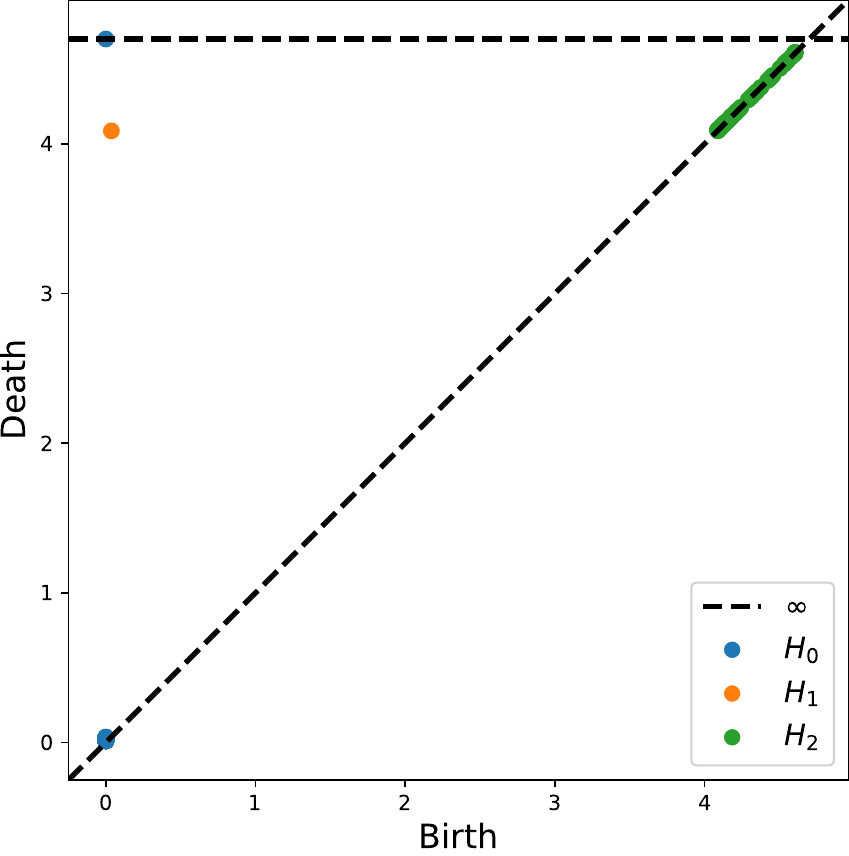}
    \caption{Left column: the \(x(t)\) coordinate of the solution to the radial system from Example \ref{ex:radial eq} (top row) and the van der Pol system from Example \ref{ex:van 2} (bottom row). Middle column: The sliding window embedding of $x(t)$ with appropriate parameters $d$ and $\tau$. Right column: The Rips persistence diagrams in dimensions 0,1, and 2 for the sliding window point cloud.}
    \label{fig:SW 2}
\end{figure}

\begin{theorem}
\label{thm:fourier-approx}
Let $f:\mathbb{R}\rightarrow\mathbb{C}$ be a quasiperiodic function with frequency vector $\mathbf{\omega}$ and parent function $F$.
For each  $\mathbf{k} \in \mathbb{Z}^N$,
the $\mathbf{k}$-th Fourier coefficient of $F$ can be computed as
\[
\hat{F}(\mathbf{k})
=
\lim_{\lambda \rightarrow \infty} \frac{2\pi}{\lambda}\int_{0}^{\lambda}
f(t)e^{- 2\pi i \langle \mathbf{k},t\mathbf{\omega} \rangle} dt,
\]
where $i=\sqrt{-1}$.
Furthermore, if
\[
S_{K}f(t)=  \sum_{\| \mathbf{k}\|_{\infty} \leq K } \hat{F}(\mathbf{k}) e^{2\pi i \langle \mathbf{k},t\mathbf{\omega} \rangle}
\;\;\; ,\;\;\;
K \in \N
\]
then  for each $ j,T \in \mathbb{N}$, $  \dgm^R_j(\SW_{d,\tau}f(T)) $ can be approximated in bottleneck distance by
\[
\dgm^R_j(\SW_{d,\tau}S_Kf(T))
\]
as $K\to \infty$.
The approximation is of order $O\big( K^{\frac{N}{2}-r} \big) $ when $|\hat{F}(\mathbf{k})| = O( \|\mathbf{k} \|^{-r}_2)$ and $r > N/2.$
\end{theorem}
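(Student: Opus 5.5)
The plan has three parts: the Fourier coefficient formula via Weyl equidistribution, then a uniform approximation of $f$ by $S_Kf$, then Rips stability. For the formula, write $g_{\mathbf{k}}(\theta)=F(\theta)e^{-2\pi i\langle \mathbf{k},\theta\rangle}$, which is continuous on $\mathbb{T}^N$, so that $f(t)e^{-2\pi i\langle \mathbf{k},t\omega\rangle}=g_{\mathbf{k}}(t\omega)$. Because the frequencies $\omega_1,\dots,\omega_N$ are $\Q$-linearly independent, the linear flow $t\mapsto t\omega \bmod \mathbb{Z}^N$ is uniquely ergodic with respect to Haar measure. This is checked first on characters $e^{2\pi i\langle \mathbf{m},\theta\rangle}$: the time average $\frac{1}{\lambda}\int_0^\lambda e^{2\pi i t\langle \mathbf{m},\omega\rangle}dt$ tends to $0$ for $\mathbf{m}\neq 0$, since $\langle \mathbf{m},\omega\rangle\neq 0$. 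It then extends to all continuous functions by Stone–Weierstrass density of trigonometric polynomials in $C(\mathbb{T}^N)$. Hence the time average converges to $\int_{\mathbb{T}^N}g_{\mathbf{k}}$, which is $\hat F(\mathbf{k})$ up to the normalization constant used in the statement.

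For the approximation statement, I would bound the bottleneck distance by Theorem \ref{thm:Pstability}. Both $\SW_{d,\tau}f(T)$ and $\SW_{d,\tau}S_Kf(T)$ are indexed by the same finite set $T$, so the correspondence $t\leftrightarrow t$ gives
\[
2d_{GH}\le \max_{t,s\in T}\bigl|\,\|SW f(t)-SW f(s)\|-\|SW S_Kf(t)-SW S_Kf(s)\|\,\bigr|\le 2\sup_{t}\|SW_{d,\tau}(f-S_Kf)(t)\|_2 .
\]
The right-hand side is at most $2\sqrt{d+1}\,\|F-S_KF\|_\infty$. It therefore suffices to show that $S_KF\to F$ uniformly on $\mathbb{T}^N$ and to obtain a rate.

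Under the decay hypothesis $|\hat F(\mathbf{k})|=O(\|\mathbf{k}\|_2^{-r})$, I would estimate the tail
\[
\|F-S_KF\|_\infty\le\sum_{\|\mathbf{k}\|_\infty>K}|\hat F(\mathbf{k})|.
\]
A cruder pointwise summation of this tail gives order $K^{N-r}$, which requires $r>N$. The stated exponent $N/2-r$ instead suggests a Cauchy–Schwarz argument. Split $|\hat F(\mathbf{k})|=|\hat F(\mathbf{k})|\,\|\mathbf{k}\|^{s}\cdot\|\mathbf{k}\|^{-s}$ and use a Sobolev-type bound $\sum|\hat F(\mathbf{k})|^2\|\mathbf{k}\|^{2s}<\infty$. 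Alternatively, bound the $L^2$ tail $\bigl(\sum_{\|\mathbf{k}\|_\infty>K}\|\mathbf{k}\|^{-2r}\bigr)^{1/2}\sim K^{N/2-r}$, which is finite exactly when $r>N/2$.

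\textbf{Main obstacle.} The difficulty is converting this $L^2$ rate into the sup-norm control that stability needs. I would first try to bound the Gromov–Hausdorff distance of the sampled clouds with an averaged (Plancherel) estimate combined with equidistribution of the sample. If that fails, I would accept the weaker rate $O(K^{N-r})$ and recover the stated $O(K^{N/2-r})$ by following the cited argument of Gakhar–Perea. The qualitative convergence as $K\to\infty$ for continuous $F$ needs more care, since square Fourier partial sums need not converge uniformly. There I would replace $S_K$ by Fejér means, or assume absolute summability, and note that for a finite $T$ pointwise convergence at the finitely many points $t+j\tau$ already suffices to make the bound above tend to zero.
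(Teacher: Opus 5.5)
The paper gives no proof of this statement; it quotes it from \cite{gakhar2021sliding}, so your proposal can only be checked against the statement itself. Your first two steps are sound. Unique ergodicity of the Kronecker flow gives the coefficient formula with $\frac{1}{\lambda}$, not $\frac{2\pi}{\lambda}$: the factor $2\pi$ in the display is inconsistent with $S_K$ being the Fourier partial sum of $F$ on $\R^N/\mathbb{Z}^N$, so say this rather than hedging. The correspondence $t\leftrightarrow t$ together with Theorem~\ref{thm:Pstability} then gives $d_B\le 2\sqrt{d+1}\,\|F-S_KF\|_\infty$.

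\textbf{The genuine gap is the rate.} The obstacle you flag is not a technicality. Under the literal hypothesis $|\hat F(\mathbf{k})|=O(\|\mathbf{k}\|_2^{-r})$ with $N/2<r\le N$, the claimed rate is false. So neither an averaged Plancherel bound nor deferring to the cited argument can produce it. An $L^2$ estimate cannot work for a fixed finite cloud, because displacing a single sample point can move an $H_0$ death time by the same amount. Concretely, let $N=1$, $\omega$ irrational, and
\[
F(\theta)=\sum_{k\ge 2}\frac{\cos 2\pi k\theta}{k\log^2 k}.
\]
This $F$ is continuous with $|\hat F(k)|=O(|k|^{-1})$, i.e.\ $r=1$. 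At $\theta=0$ the error is
\[
F(0)-S_KF(0)=\sum_{k>K}\frac{1}{k\log^2k}=\frac{1+o(1)}{\log K},
\]
whereas at any fixed $\theta\notin\mathbb{Z}$ Abel summation gives $|F(\theta)-S_KF(\theta)|=O\bigl(\frac{1}{K\log^2K}\bigr)$. Take sample times $\{0,1\}$ and generic $\tau$, and set
\[
D=\|SW_{d,\tau}f(0)-SW_{d,\tau}f(1)\|_2,\qquad D_K=\|SW_{d,\tau}S_Kf(0)-SW_{d,\tau}S_Kf(1)\|_2 .
\]
For large $K$, the bottleneck distance between the two $\dgm_0$'s is $|D-D_K|$, and
\[
D-D_K=\frac{F(0)-F(\omega)}{D}\cdot\frac{1+o(1)}{\log K},\qquad F(0)-F(\omega)>0 .
\]
This is not $O(K^{-1/2})$. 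The decay hypothesis alone only gives your $O(K^{N-r})$, and only for $r>N$.

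\textbf{What is missing} is the hypothesis that the exponent $N/2-r$ actually encodes, namely a Sobolev condition $\sum_{\mathbf{k}}\|\mathbf{k}\|_2^{2r}|\hat F(\mathbf{k})|^2<\infty$ with $r>N/2$ (for instance $F\in C^r$ with $r$ an integer). Under it, Cauchy--Schwarz gives
\[
\|F-S_KF\|_\infty\le\sum_{\|\mathbf{k}\|_\infty>K}|\hat F(\mathbf{k})|\le\Bigl(\sum_{\mathbf{k}}\|\mathbf{k}\|_2^{2r}|\hat F(\mathbf{k})|^2\Bigr)^{1/2}\Bigl(\sum_{\|\mathbf{k}\|_\infty>K}\|\mathbf{k}\|_2^{-2r}\Bigr)^{1/2}=O\bigl(K^{N/2-r}\bigr),
\]
and your stability chain then yields both convergence and the rate.

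The qualitative claim has the same defect. For merely continuous $F$ the square partial sums can diverge at a point (du Bois-Reymond). Multiplying by a smooth bump and using Riemann localization, you can make them diverge at $\theta=0$ only, so $D_K$ above is unbounded along a subsequence. Hence pointwise convergence at finitely many points is not available either, and switching to Fej\'er means proves a different theorem. With absolute summability (for convergence) or the Sobolev condition (for the rate) added as a hypothesis, your argument is complete.
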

Theorem \ref{thm:fourier-approx} shows that, in bottleneck distance, the Rips persistence diagrams of a quasiperiodic signal \(f\) can be arbitrarily closely approximated by those of its truncated Fourier series \(S_Kf(t)\).
Moreover,  the coefficients of \(S_Kf\) are determined by the spectrum of \(f\), which can be estimated via the Discrete Fourier Transform (implemented in the FFT algorithm).

This result justifies reducing general quasiperiodic functions to finite exponential sums of the form
\[
  f(t)=\sum_{j=1}^M c_j\,e^{2\pi i\,\omega_j t},
  \quad M\in\mathbb{N},\;c_j\in\mathbb{C}\smallsetminus\{0\},\;\omega_j\in\mathbb{R},
\]
where $\omega_1,\ldots, \omega_M$ are not necessarily $\Q$-linearly independent.
Set \(\tilde{c}_j=\sqrt{d+1}\,c_j\), choose an embedding dimension \(d\in\mathbb{N}\), a delay \(\tau > 0\) and let
\[
  A = \frac{1}{\sqrt{d+1}} \begin{pmatrix}
    1 & \cdots & 1 \\[3pt]
    e^{2\pi i\,\omega_1\tau} & \cdots & e^{2\pi i\,\omega_M\tau} \\[3pt]
    \vdots & \ddots & \vdots \\[3pt]
    e^{2\pi i\,\omega_1 d\tau} & \cdots & e^{2\pi i\,\omega_M d\tau}
  \end{pmatrix}
  \;\in\;\mathbb{C}^{(d+1)\times M},
  \quad
  v_t = \begin{pmatrix}
     \tilde{c}_1\,e^{2\pi i\,\omega_1 t} \\[3pt]
    \vdots \\[3pt]
    \tilde{c}_M\,e^{2\pi i\,\omega_M t}
  \end{pmatrix}
  \;\in\;\mathbb{C}^M.
\]
In this setting, the sliding window embedding can be written succinctly as
\[
  SW_{d,\tau}f(t)
  = A\,v_t,
\]
and according to \cite[Corollary 1.3]{gakhar2021sliding},
the set  $\{v_t\}_{t\in \R}$ is a dense subset of a space homeomorphic to the $N$-torus, for  $N \leq M$ equal to the dimension of $\mathrm{span}_\Q\{ \omega_1,\ldots, \omega_M\}$ as a $\Q$-vector space.
The multi-dimensional version of Kronecker's approximation theorem, as presented in \cite[Chapter 2, Theorem 2]{hlawka2012geometric}, allows one to take $t\in \N$ and retain density.
Furthermore, $d$ and $\tau$ can be chosen so that the same holds true for $\left \{SW_{d,\tau}f(t)\right\}_{t\in \N}$ \cite[Theorem 1.15]{gakhar2021sliding}.

With this in mind, the Cartesian product
\begin{equation}
\label{eq:Grid}
G_T= \prod_{j=1}^M
\left\{\tilde{c}_je^{2\pi  i\omega_jt} \mid t = 0, 1, \ldots, T\right\},
\end{equation}
offers a way to approximate $\{v_t\}_{t=0}^T $ in Hausdorff distance, and hence the resulting Rips persistence diagrams in bottleneck distance, by Theorem \ref{thm:Pstability}, as $T\to\infty$.
See also Figure \ref{fig:Torus}.
This is useful since the persistence diagrams of $G_T$ can be readily computed using the Persistent K\"{u}nneth Formula (Theorem \ref{kunneth formula}).
We will show in  Theorem \ref{AM theorem1} that this approach can   be used to approximate the Rips persistence diagrams of the sliding window embedding of $f$.

\subsection{Continued Fraction Expansion}
The continued-fraction expansion of a real number lies at the heart of our work. All definitions, propositions, and theorems in this section are classical and can be found in standard references—most notably \cite[Chapter 3]{fractions1963cd}, and for Theorem \ref{BA Lagrange} see, e.g., \cite[p.~22]{rockett1992continued}. As we will show, continued-fraction data enables us to infer persistent-homology information of sliding-window point clouds once the frequencies of the signal are estimated --- e.g. with the FFT.

\begin{definition}
\label{def:contFrac}
Let $\omega$ be a  real number. The continued fraction expansion of $\omega$ is given by $$ \omega=a_1+\frac{1}{a_2+\frac{1}{a_3+\frac{1}{a_{4}+\cdots}}} := [a_1,a_2,a_3,\cdots],$$ where $a_1 \in \mathbb{Z}$ and $a_k \in \mathbb{N}$, for $k >1.$
\end{definition}
Note that the expansion, which is obtained by repeated applications of the division algorithm, is infinite if and only if $\omega$ is irrational. From here onward, we let $\omega$ denote a positive irrational number.
\begin{definition}
Let $k \in \mathbb{N}.$ The $k$-th convergent of $\omega$ is given by
\[
\frac{p_k}{q_k}=[a_1,a_2,\cdots,a_k]
\in \Q.
\]
\end{definition}
The terms $p_k$ and $q_k$ play a special role in the theory, and can be obtained recursively as follows.
\begin{proposition}
\label{computation convergent}
The numerator $p_k$ and the denominator $q_k$ of the $k$-th convergent of $\omega$ satisfy the equations
\begin{equation*}
  \begin{split}
    p_k &=a_kp_{k-1}+p_{k-2},\\
    q_k &=a_kq_{k-1}+q_{k-2}. \\
  \end{split}
\end{equation*} for $k\geq 1$, where
\begin{equation*}
  \begin{split}
    p_0 &=1,\\
    q_0 &=0, \\
  \end{split}
  \ \ \ \ \ \ \ \
  \begin{split}
    p_{-1} &=0,\\
    q_{-1} &=1.
  \end{split}
\end{equation*}
\end{proposition}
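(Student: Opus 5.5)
Write $\omega=[a_1,a_2,\ldots]$ and $x_k=[a_k,a_{k+1},\ldots]$, so $x_1=\omega$ and $x_k=a_k+1/x_{k+1}$. The recursions in Proposition \ref{computation convergent} describe how the convergents of $\omega$ are built from the partial quotients. I will argue by induction on $k\geq 1$ using the matrix form of the continued fraction. The key identity to establish is
\[
\omega=[a_1,\ldots,a_{k},x_{k+1}]=\frac{x_{k+1}p_k+p_{k-1}}{x_{k+1}q_k+q_{k-1}},
\]
valid for every real (indeed formal) value $x_{k+1}>0$. The convergent statement then follows by formally replacing the tail $x_{k+1}$ with $a_{k+1}$: the convergent $[a_1,\ldots,a_{k+1}]$ equals $(a_{k+1}p_k+p_{k-1})/(a_{k+1}q_k+q_{k-1})$, and the recursion defines $p_{k+1},q_{k+1}$ as exactly that numerator and denominator.

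\textbf{Base cases.} For $k=1$, the recursion with the initial values gives $p_1=a_1\cdot 1+0=a_1$ and $q_1=a_1\cdot 0+1=1$. This matches $[a_1]=a_1/1$. With $p_0/q_0=1/0$ read as the formal value ``$\infty$'', the identity $(xp_1+p_0)/(xq_1+q_0)=(xa_1+1)/x=a_1+1/x=[a_1,x]$ also holds, which confirms that the initial values are the right ones.

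\textbf{Inductive step.} Assume that for all positive reals $x$,
\[
[a_1,\ldots,a_k,x]=\frac{xp_k+p_{k-1}}{xq_k+q_{k-1}}.
\]
Then $[a_1,\ldots,a_{k+1},y]=[a_1,\ldots,a_k,\,a_{k+1}+1/y]$. Substituting $x=a_{k+1}+1/y$ and multiplying numerator and denominator by $y$ gives
\[
\frac{y(a_{k+1}p_k+p_{k-1})+p_k}{y(a_{k+1}q_k+q_{k-1})+q_k}=\frac{yp_{k+1}+p_k}{yq_{k+1}+q_k}.
\]
Setting $y\to\infty$, or equivalently evaluating the truncated expansion directly, yields $[a_1,\ldots,a_{k+1}]=p_{k+1}/q_{k+1}$.

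The same computation can be phrased through matrices:
\[
\begin{pmatrix}p_k&p_{k-1}\\ q_k&q_{k-1}\end{pmatrix}=\prod_{j=1}^k\begin{pmatrix}a_j&1\\1&0\end{pmatrix}.
\]
This product form makes the recursion immediate, since multiplying on the right by one more factor appends $a_{k+1}$.

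\textbf{Main obstacle.} The delicate point is that the proposition asserts $p_k,q_k$ are ``the'' numerator and denominator, that is, the fraction is in lowest terms. The recursion alone only produces some representation of the convergent. I would settle this by taking determinants in the matrix product, which gives $p_kq_{k-1}-p_{k-1}q_k=(-1)^k$. Any common divisor of $p_k$ and $q_k$ therefore divides $\pm1$, so $\gcd(p_k,q_k)=1$. In addition, $q_k>0$ for $k\ge1$ by induction, since $a_k\ge1$ for $k>1$ and $q_1=1$. Together these show the representation is reduced with a positive denominator, so it is the canonical one.
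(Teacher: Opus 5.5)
Your proof is correct, and it is the standard classical argument. The paper gives no proof of its own and defers to Olds; your induction on the identity $[a_1,\ldots,a_k,x]=(xp_k+p_{k-1})/(xq_k+q_{k-1})$, which replaces $x$ by $a_{k+1}+1/y$ and then substitutes $x=a_{k+1}$, is the usual textbook route. Your determinant identity $p_kq_{k-1}-p_{k-1}q_k=(-1)^k$, which settles lowest terms, is likewise how the paper's next proposition (that $p_k$ and $q_k$ are coprime) follows from this recursion, as the paper itself remarks.
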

Proposition \ref{computation convergent} provides a convenient computational method for obtaining the \(k\)-th convergent of \(\omega\), and implies the following result.
\begin{proposition}
$p_k$ and $q_k$ are relatively prime.
\end{proposition}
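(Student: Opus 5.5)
The plan is to derive the classical determinant identity
\[
p_k q_{k-1} - p_{k-1} q_k = (-1)^{k}
\]
from the recursion in Proposition \ref{computation convergent}, and then read off coprimality from it. The sign convention should be fixed by the base case $k=0$: there $p_0 q_{-1} - p_{-1} q_0 = 1\cdot 1 - 0\cdot 0 = 1 = (-1)^0$. Any other parity convention would work equally well, as long as it is applied consistently.

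The inductive step is a direct substitution. Assume the identity holds at index $k-1$, and plug $p_k = a_k p_{k-1} + p_{k-2}$ and $q_k = a_k q_{k-1} + q_{k-2}$ into the left-hand side. The terms $a_k p_{k-1} q_{k-1}$ cancel, leaving
\[
p_{k-2} q_{k-1} - p_{k-1} q_{k-2} = -\bigl(p_{k-1} q_{k-2} - p_{k-2} q_{k-1}\bigr) = -(-1)^{k-1} = (-1)^k .
\]
This propagates the identity to every $k \ge 0$. Along the way I will note that every $p_k$ and $q_k$ is an integer. This holds because the initial values are integers, $a_1 \in \mathbb{Z}$, and $a_k \in \mathbb{N}$ for $k > 1$, so the recursion never leaves $\mathbb{Z}$.

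Coprimality then follows at once. Any common divisor $g$ of $p_k$ and $q_k$ divides the integer combination $p_k q_{k-1} - p_{k-1} q_k = \pm 1$, so $g = \pm 1$. In Bézout language, $(q_{k-1}, -p_{k-1})$ is an explicit pair of coefficients expressing $\pm 1$ as a combination of $p_k$ and $q_k$.

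There is no real obstacle here. The only point needing care is the indexing and sign bookkeeping, since the paper indexes convergents from $k=1$ with seeds at $k=0,-1$. I would check the base case directly at $k=0$, so the induction covers all $k \ge 1$ without relying on any particular sign convention.
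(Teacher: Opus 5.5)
Your proof is correct and follows essentially the paper's route. The paper gives no separate argument; it only says the proposition is implied by the recursion of Proposition~\ref{computation convergent}, citing the classical literature. Your inductive derivation of $p_kq_{k-1}-p_{k-1}q_k=(-1)^k$ from the seeds at $k=0,-1$, followed by the B\'ezout step, is the standard way that implication is made. Your base case and sign bookkeeping also check out: for instance, $k=1$ gives $a_1\cdot 0-1\cdot 1=-1$.
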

We can quantify the quality of the convergent \(p_k/q_k\) as an approximation to \(\omega\) by examining the absolute error
\[
  \bigl|\omega - \tfrac{p_k}{q_k}\bigr|.
\]
\begin{proposition}
Let $ k \geq 1$. Then $$\frac{1}{2q_kq_{k+1}} < \Big|\omega - \frac{p_k}{q_k} \Big| < \frac{1}{q_kq_{k+1}} < \frac{1}{q_k^2}.$$
\end{proposition}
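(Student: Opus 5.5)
The plan is to use the complete quotient $\omega_{k+1} = [a_{k+1}, a_{k+2}, \ldots]$, i.e. the tail of the expansion from the $(k+1)$-st partial quotient on. By construction $\omega_{k+1} > a_{k+1} \geq 1$ (strictly, since $\omega$ is irrational), and
\[
\omega = \frac{\omega_{k+1}p_k + p_{k-1}}{\omega_{k+1}q_k + q_{k-1}}.
\]
This identity follows by induction on $k$ from the recursion in Proposition \ref{computation convergent}: replacing $a_{k+1}$ by the real number $\omega_{k+1}$ in the recursion for $p_{k+1}, q_{k+1}$ gives exactly $\omega$.

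Combining this with the determinant identity $p_kq_{k-1} - p_{k-1}q_k = (-1)^{k}$ (up to a sign convention), which also follows by a one-line induction from Proposition \ref{computation convergent}, I get
\[
\Big|\omega - \frac{p_k}{q_k}\Big| = \frac{1}{q_k(\omega_{k+1}q_k + q_{k-1})}.
\]
All three inequalities then come from bounding the denominator $\omega_{k+1}q_k + q_{k-1}$ between $q_{k+1}$ and $2q_{k+1}$.

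For the upper bound, $\omega_{k+1} > a_{k+1}$ gives $\omega_{k+1}q_k + q_{k-1} > a_{k+1}q_k + q_{k-1} = q_{k+1}$, so the error is less than $1/(q_kq_{k+1})$. Since $q_{k+1} = a_{k+1}q_k + q_{k-1} \geq q_k$, with strict inequality for $k \ge 1$ once we check $q_{k-1}>0$ or $a_{k+1}\ge 2$, we get $1/(q_kq_{k+1}) < 1/q_k^2$. The small-index cases (e.g. $k=1$, where $q_1 = 1$, $q_0 = 0$) need care: there $q_2 = a_2$ could equal $q_1 = 1$, making the last inequality only non-strict. I would check this edge case directly and, if necessary, note that equality can only arise when $a_2 = 1$.

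For the lower bound, $\omega_{k+1} < a_{k+1} + 1$ gives $\omega_{k+1}q_k + q_{k-1} < q_{k+1} + q_k \le 2q_{k+1}$, so the error is greater than $1/(2q_kq_{k+1})$.

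The main obstacle is purely bookkeeping. The paper's indexing is nonstandard ($a_1$ is the integer part, with $p_0=1, q_0=0$), so the complete-quotient identity, the determinant sign, and the boundary case $k=1$ all have to be verified against that convention. The inequalities themselves are immediate once the exact error formula is in hand.
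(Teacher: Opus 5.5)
Your proposal is correct and is the standard complete-quotient argument; the paper gives no proof of its own and defers to the classical literature (Olds, Ch.~3), so there is no alternative route to compare against. Your edge-case worry is real, not hypothetical: for $k=1$ and $a_2=1$ (e.g.\ the golden ratio, where $q_1=q_2=1$) one has $\frac{1}{q_1q_2}=\frac{1}{q_1^2}$, so the final strict inequality in the statement fails. That last inequality is strict exactly when $k\ge 2$ or $a_2\ge 2$, and should be read as $\le$ otherwise. Both bounds on $\bigl|\omega-\frac{p_k}{q_k}\bigr|$ hold strictly for every $k\ge 1$, exactly as you prove them.
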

Furthermore, this result allows us to establish the existence and uniqueness of infinite continued fraction expansions.
Indeed, this follows by noting that $ q_{k} < q_{k+1}$ for all $k \geq 1$,
and  justifies the equality sign in Definition \ref{def:contFrac}.
The next result further characterizes their convergence behavior.
\begin{proposition}
\label{odd even}
Let $k \geq 1$ be odd. Then $$ \frac{p_k}{q_k} < \omega <\frac{p_{k+1}}{q_{k+1}}.$$
\end{proposition}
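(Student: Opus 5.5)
The plan is to write $\omega$ exactly as a ``convergent with a real last entry'' and compare it with $p_k/q_k$ through the determinant identity for consecutive convergents. With the indexing of Proposition~\ref{computation convergent} we have $p_1/q_1 = a_1$ (since $p_1 = a_1p_0+p_{-1}=a_1$ and $q_1 = a_1q_0+q_{-1}=1$), so odd indices should give lower approximations and even indices upper ones. It therefore suffices to show that
\[
\operatorname{sign}\Bigl(\omega - \tfrac{p_k}{q_k}\Bigr) = (-1)^{k+1}\quad\text{for every } k\ge 1,
\]
and to apply this once to $k$ (odd) and once to $k+1$ (even).

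\textbf{Step 1 (determinant identity).} From the recursions in Proposition~\ref{computation convergent}, subtracting gives
\[
p_kq_{k-1}-p_{k-1}q_k = -(p_{k-1}q_{k-2}-p_{k-2}q_{k-1}).
\]
The base value is $p_0q_{-1}-p_{-1}q_0 = 1$. By induction,
\[
p_kq_{k-1}-p_{k-1}q_k=(-1)^k \quad\text{for } k\ge 0.
\]
As a check, $k=1$ gives $0-1=-1$.

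\textbf{Step 2 (complete quotients).} Let $\xi_{k+1}=[a_{k+1},a_{k+2},\dots]$, the tail of the expansion. Since $\omega$ is irrational and $a_{k+1}\ge 1$, we have $\xi_{k+1}>1$, and $\omega=[a_1,\dots,a_k,\xi_{k+1}]$. The recursions of Proposition~\ref{computation convergent} are purely formal identities in the entries, so they remain valid when the last entry is a positive real. Hence, by induction on $k$,
\[
\omega=\frac{\xi_{k+1}p_k+p_{k-1}}{\xi_{k+1}q_k+q_{k-1}}.
\]
The one point needing care is justifying that the recursion holds with a real final entry; I expect this to be the main obstacle. I would handle it by proving, by induction on $k$, the statement
\[
[a_1,\dots,a_k,x]=\frac{xp_k+p_{k-1}}{xq_k+q_{k-1}}\quad\text{for all real } x>0.
\]
The inductive step replaces $x$ by $a_{k}+1/x$ at level $k-1$ and then simplifies using the recursion.

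\textbf{Step 3 (sign computation).} Subtracting and using Step 1,
\[
\omega-\frac{p_k}{q_k}=\frac{p_{k-1}q_k-p_kq_{k-1}}{q_k(\xi_{k+1}q_k+q_{k-1})}=\frac{(-1)^{k+1}}{q_k(\xi_{k+1}q_k+q_{k-1})}.
\]
The denominator is strictly positive, because $q_k\ge 1$ for $k\ge1$, $q_{k-1}\ge 0$, and $\xi_{k+1}>0$; these follow inductively from $q_{-1}=1$, $q_0=0$ and $a_j\ge1$ for $j\ge 2$. For $k$ odd the numerator is $+1$, so $\omega>p_k/q_k$. Applying the same formula at index $k+1$, which is even, gives numerator $-1$, so $\omega<p_{k+1}/q_{k+1}$. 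This proves both inequalities of the statement.
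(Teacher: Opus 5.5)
Your proof is correct: the determinant identity $p_kq_{k-1}-p_{k-1}q_k=(-1)^k$ together with the complete-quotient formula $\omega=(\xi_{k+1}p_k+p_{k-1})/(\xi_{k+1}q_k+q_{k-1})$ gives $\omega-p_k/q_k=(-1)^{k+1}/\bigl(q_k(\xi_{k+1}q_k+q_{k-1})\bigr)$. Your index bookkeeping also matches the paper's shifted convention ($p_0=1$, $q_0=0$, $p_{-1}=0$, $q_{-1}=1$, so $p_1/q_1=a_1$). The paper gives no proof of its own; it defers to the classical treatment \cite{fractions1963cd}, which argues in essentially this way, so your proposal is the standard argument.
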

\begin{example}
The results presented here can be used to argue that the golden ration $\varphi $ is the ``most" irrational number.
Indeed, by computing
\[
\varphi= \frac{1 + \sqrt{5}}{2}= 1+\frac{1}{1+\frac{1}{1+\frac{1}{1+\cdots}}} = [1,1,1,\cdots],
\]
one can show that this expression corresponds to the slowest possible rate of convergence.
The Klein diagram shown in  Figure \ref{fig:Klein 2} provides a geometric picture of this observation.
\end{example}
\begin{figure}[htb!]
    \centering
    \includegraphics[width=0.45\textwidth]{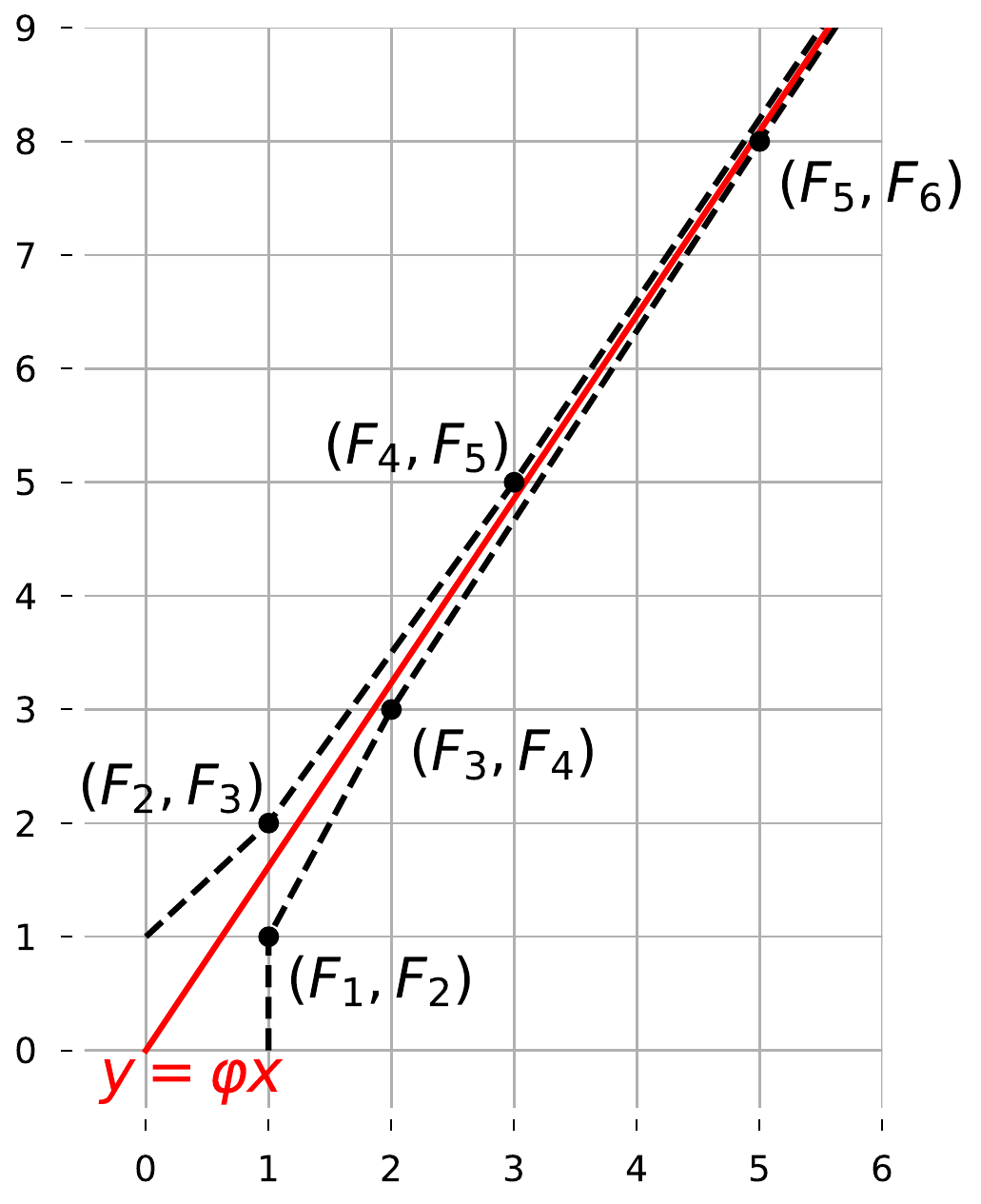}
    \caption{Klein diagram for the golden ratio \(\varphi = \frac{1 + \sqrt{5}}{2}\). Here \(F_k\) denotes the \(k\)-th Fibonacci number, which can be used to compute the $k$-th convergent $\frac{p_k}{q_k}$ of $\varphi$; specifically,   $\frac{p_k}{q_k} = \frac{F_k}{F_{k-1}}$.}
    \label{fig:Klein 2}
\end{figure}
We end this section by stating a critical property of the $k$-th convergent of $\omega$  attributed to Lagrange. Namely, that the $k$-th convergent provides a best rational approximation with bounded denominator to $\omega$ (see, \cite[p.~22]{rockett1992continued}).
\begin{theorem}
\label{BA Lagrange}
Let $a/b$ be different from $p_{k+1}/q_{k+1}$ with $ 0 < b \leq q_{k+1}.$ Then $$ |b \omega - a | \ \geq \ |q_k \omega -p_k | \ > \ |q_{k+1} \omega - p_{k+1}|.  $$
\end{theorem}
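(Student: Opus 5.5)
Since $\omega$ is irrational and $p_k,q_k$ are coprime positive integers, the three quantities $\|b\omega - a\|$, $\|q_k\omega - p_k\|$ and $\|q_{k+1}\omega - p_{k+1}\|$ are nonzero. The plan is to prove the two inequalities separately, both through the lattice generated by consecutive convergents.

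\textbf{Strict inequality on the right.} I would take this as the easy step. Using the standard identity $p_{k+1}q_k - p_kq_{k+1} = \pm 1$, which follows by induction from Proposition~\ref{computation convergent}, one gets
\[
q_k\omega - p_k = \frac{(-1)^{k}}{q_{k+1}\,\omega_{k+2} + q_k}\quad(\text{up to a fixed sign convention}),
\]
where $\omega_{k+2}=[a_{k+2},a_{k+3},\dots]$ is the complete quotient. Indeed, writing $\omega = \frac{p_{k+1}\omega_{k+2}+p_k}{q_{k+1}\omega_{k+2}+q_k}$ and clearing denominators leaves only the determinant. Comparing the resulting denominators for indices $k$ and $k+1$ gives
\[
q_{k+1}\omega_{k+2}+q_k \;<\; q_{k+2}\omega_{k+3}+q_{k+1} \;=\; q_{k+1}\omega_{k+2}+ q_k \cdot(\text{stuff}) \ldots
\]
More cleanly: $|q_k\omega-p_k| = \omega_{k+2}\,|q_{k+1}\omega - p_{k+1}|$ with $\omega_{k+2}>1$, since $a_{k+2}\ge1$ and the tail is irrational. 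This identity comes from the same complete-quotient formula, and the strict inequality follows.

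\textbf{Left inequality (main step).} Given $(a,b)$ with $0<b\le q_{k+1}$ and $a/b\neq p_{k+1}/q_{k+1}$, solve
\[
a = x p_k + y p_{k+1},\qquad b = x q_k + y q_{k+1}.
\]
The determinant is $\pm1$, so $x,y\in\mathbb{Z}$. Then $b\omega - a = x(q_k\omega-p_k) + y(q_{k+1}\omega - p_{k+1})$. There are three cases.
\begin{itemize}
\item If $y=0$, then $b=xq_k$ with $x\ge1$, so $|b\omega-a| = |x|\,|q_k\omega-p_k|\ge |q_k\omega-p_k|$.
\item If $x=0$, then $b=yq_{k+1}\le q_{k+1}$ forces $y=1$, hence $(a,b)=(p_{k+1},q_{k+1})$, which is excluded. (If $a/b$ is merely equal to $p_{k+1}/q_{k+1}$ in lowest terms, it is excluded by hypothesis as well.)
\item If $x,y\neq0$, then $0<b\le q_{k+1}$ forces $x$ and $y$ to have opposite signs: if both were positive then $b>q_{k+1}$, and if both were negative then $b<0$. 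By Proposition~\ref{odd even}, the errors $q_k\omega-p_k$ and $q_{k+1}\omega-p_{k+1}$ have opposite signs. Hence the two terms $x(q_k\omega-p_k)$ and $y(q_{k+1}\omega-p_{k+1})$ share a sign, so
\[
|b\omega-a| = |x||q_k\omega-p_k| + |y||q_{k+1}\omega-p_{k+1}| > |q_k\omega-p_k|.
\]
\end{itemize}

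The main obstacle is the sign bookkeeping in the last case, namely verifying that $x$ and $y$ have opposite signs under $0<b\le q_{k+1}$. When $k$ is small, for example $q_k=0$ or $q_k=1$ with the paper's indexing, some edge cases need separate checking.
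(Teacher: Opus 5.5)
The paper does not prove this statement; it quotes it as a classical fact from Rockett--Sz\"usz. Your argument is the standard textbook proof, and your case analysis for the left inequality is correct. That proof writes $(b,a)$ in the unimodular basis $(q_k,p_k),(q_{k+1},p_{k+1})$. It then observes that $0<b\le q_{k+1}$ forces one coefficient to vanish or the two to have opposite signs. Finally it uses that $D_j=q_j\omega-p_j$ alternates in sign. Two points need repair.

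First, your displayed formula for $q_k\omega-p_k$ is off by one index, and the comparison of denominators after it does not make sense. Substituting $\omega=(p_{k+1}\omega_{k+2}+p_k)/(q_{k+1}\omega_{k+2}+q_k)$ gives
\[
D_k=\frac{\omega_{k+2}\,(q_kp_{k+1}-p_kq_{k+1})}{q_{k+1}\omega_{k+2}+q_k},\qquad D_{k+1}=\frac{q_{k+1}p_k-p_{k+1}q_k}{q_{k+1}\omega_{k+2}+q_k},
\]
so what you wrote is, up to sign, $D_{k+1}$ and not $D_k$. Delete that display and the half-finished chain after it. Derive your identity $|D_k|=\omega_{k+2}|D_{k+1}|$, with $\omega_{k+2}>a_{k+2}\ge 1$, directly from these two formulas; that identity is correct. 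The same formulas also show that $D_k$ and $D_{k+1}$ have opposite signs for every $k$. This is cleaner than citing Proposition~\ref{odd even}, which is stated only for odd $k$ and would have to be applied separately across parities.

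Second, the small-$k$ issue is not an edge case that survives separate checking. The statement requires $k\ge1$ (equivalently $q_k\ge1$) and is false for $k=0$. With the paper's indexing $p_0=1$, $q_0=0$, $p_1=a_1$, $q_1=1$, take $b=1$ and $a=a_1+1\neq p_1$. Then $|b\omega-a|=1-(\omega-a_1)<1=|q_0\omega-p_0|$. In your decomposition this is $x=y=1$ with $b=q_1$, which is exactly where your step \emph{both positive implies $b\ge q_k+q_{k+1}>q_{k+1}$} fails. For $k\ge1$ that step holds as written, including when $q_k=1$ (e.g.\ $q_1=q_2=1$ when $a_2=1$), so no other special cases arise. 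Simply add $k\ge1$ as a hypothesis.
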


\subsection{The Three Gap Theorem Method}
\label{sec: 3G}
Thus far we have shown that exponential functions are fundamental in the study of quasiperiodic signals, and that their frequencies can be optimally approximated with their $k$-th convergents.
If we let
\[
  f(t) = e^{2\pi i \omega t},
\]
then viewed as a map \(f\colon\R\to S^1\subseteq \mathbb{C}\), \(f(t)\) parametrizes the unit circle. Fix a natural number \(T\in\mathbb{N}\) and consider the times \(t=0,1,\ldots,T\).  This yields the sample
\[
  \{\,f(t)\mid t=0,1,\dots,T\}
  = \left\{\,e^{2\pi i \omega t}\mid t=0,1,\dots,T\right\}
\]
which in flat coordinates (i.e., modulo 1), corresponds to
\[
  \{\,t\omega \bmod 1\mid t=0,1,\ldots,T\}
  \;\subseteq\;[0,1].
\]
\begin{definition}
For any real number \(x\in\mathbb{R}\), write
\[
  [x] \;=\; x \bmod 1
\]
for its equivalence class on the flat circle $[0,1]/0\sim 1$.
For \(T\in\mathbb{N}\) and \(\omega\in\mathbb{R}\smallsetminus\mathbb{Q}\) we let
\[
  S_{\omega,T}
  := \bigl\{[0],\,[\omega],\,[2\omega],\,\ldots,\,[T\omega]\bigr\}.
\]
\end{definition}

By identifying the endpoints of \([0,1]\), we view \(S_{\omega,T}\) as a finite sequence of points on the  circle (see Figure \ref{fig:3G circles}). To measure distances in this space we use the quotient metric
\[
  \bar{d}([x],[y])
  = \min\bigl\{|x-y|,\;|1-(y-x)|,\;|1-(x-y)|\bigr\},
\]
which turns \(\bigl(S_{\omega,T},\bar{d} \,\bigr)\) into a metric space.
When we write \(S_{\omega,T}\)  we will require  $\omega \in \R\smallsetminus \Q$ unless otherwise stated.
The asymptotic sampling properties of this set are well documented \cite{zhigljavskykronecker} and follow in large part from the Three Gap Theorem (Steinhaus conjecture) \cite{van1988three}, which we state below.

\begin{theorem}[\textbf{Three Gap Theorem}]
Let $T\in \mathbb{N}$ and $\omega \in \R\smallsetminus \Q$.
The points in $ S_{\omega,T}$ partition $[0,1]/0 \sim 1$ into $T+1$ intervals, such that their lengths take at least two and at most three different values.
If three distinct lengths occur --- denoted in increasing order  by \(\delta_A\), \(\delta_B\), and \(\delta_C\) --- then
\[
\delta_C \;=\;\delta_A + \delta_B.
\]
\end{theorem}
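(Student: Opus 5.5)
We will use the standard neighbour-map argument for the Three Gap Theorem. The idea is to identify, for each point of $S_{\omega,T}$, its right-hand neighbour on the circle, and to show that the index difference between a point and its neighbour takes only three possible values.

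\textbf{Step 1: the two extremal indices.} Let $u\in\{1,\dots,T\}$ minimise $[u\omega]$, so $[u\omega]$ is the point of $S_{\omega,T}\smallsetminus\{[0]\}$ closest to $[0]$ from the right. Let $v\in\{1,\dots,T\}$ maximise $[v\omega]$, so $[v\omega]$ is the closest point from the left. Set $\alpha=[u\omega]$ and $\beta=1-[v\omega]$. Note that $u\ne v$ and $u+v>T$. Indeed, if $u+v\le T$, then $[(u+v)\omega]=\alpha-\beta \bmod 1$ would lie strictly between $[v\omega]$ and $1$, or strictly between $0$ and $[u\omega]$, contradicting the choice of $u$ or $v$. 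These indices can be read off from the convergents $p_k/q_k$ via Theorem~\ref{BA Lagrange}, but only minimality is needed below.

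\textbf{Step 2: identify the successor of each point.} Fix $t\in\{0,\dots,T\}$. We split into three cases.

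\emph{Case $t+u\le T$.} The point $[(t+u)\omega]=[t\omega]+\alpha$ lies in $S_{\omega,T}$. No point $[s\omega]$ can lie strictly between $[t\omega]$ and $[t\omega]+\alpha$: otherwise $[|s-t|\omega]$ would be at distance less than $\alpha$ from $0$ on the appropriate side, contradicting minimality of $u$ or $v$. If $s<t$, we use the reflection $s-t\mapsto t-s$, which turns a right-gap into a left-gap, so the relevant extremal index is $v$. Hence the gap to the right of $[t\omega]$ has length $\alpha$.

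\emph{Case $t+u>T$ but $t\ge v$.} By the symmetric argument, the successor is $[(t-v)\omega]=[t\omega]+\beta$, and the gap has length $\beta$.

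\emph{Case $t+u>T$ and $t<v$.} Then $T-u<t<v$. We claim the successor is $[(t+u-v)\omega]$, which gives a gap of length $\alpha+\beta$. Since $u+v>T$, we have $0\le t+u-v\le T$ in this range, so the index is admissible. Emptiness of the intermediate region again follows from minimality, applied to differences of indices, all of which have absolute value at most $T$.

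\textbf{Step 3: conclude.} Every gap therefore has length in $\{\alpha,\beta,\alpha+\beta\}$. If all three lengths occur, then the largest is $\delta_C=\alpha+\beta=\delta_A+\delta_B$, with $\{\delta_A,\delta_B\}=\{\alpha,\beta\}$; these are distinct because $\omega$ is irrational. We also need at least two lengths. If all gaps were equal, the $T+1$ points would be equally spaced, so $(T+1)[\omega]\in\mathbb Z$, forcing $\omega\in\mathbb{Q}$, a contradiction. Finally, $T+1$ distinct points on the circle always produce exactly $T+1$ arcs.

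\textbf{Main obstacle.} The hardest part will be the third case of Step 2: checking rigorously that no sample point lies strictly inside the arc of length $\alpha+\beta$, and that the index $t+u-v$ is admissible. The approach will be as follows. Suppose a point $[s\omega]$ lay inside that arc. We compare it with both $[t\omega]$ and $[(t+u-v)\omega]$. One of the differences $s-t$ or $s-(t+u-v)$ then has absolute value at most $T$ and lands within $\alpha$ to the right of $0$ or within $\beta$ to the left of $0$, contradicting the extremality of $u$ or $v$. The bookkeeping uses $T-u<t<v$ to keep every index in range.
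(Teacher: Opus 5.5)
Your skeleton is the same three-range decomposition the paper uses: extremal indices $u,v$, the inequality $u+v>T$, and the split $t\le T-u$, $t\ge v$, $T-u<t<v$. In the paper, under its normalization $[q_k\omega]\le 1/2$, the continued fraction identifies $u=q_k$ and $v=rq_k+q_{k-1}$. The gap is in your emptiness arguments.

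\textbf{Where the argument fails.} Minimality of $u$ and maximality of $v$ only constrain $[j\omega]$ for $1\le j\le T$. Suppose a negative difference $j=s-t$ has $[j\omega]=\gamma$. After your reflection, this says that $[(t-s)\omega]$ lies at distance $\gamma$ to the \emph{left} of $0$. That contradicts maximality of $v$ only when $\gamma<\beta$. Consequences:
\begin{itemize}
\item In your first case, an intruder with $s<t$ and $\beta\le\gamma<\alpha$ is not excluded.
\item Symmetrically, in the second case an intruder with $s>t$ and $\alpha\le\gamma<\beta$ is not excluded.
\item Your plan for the third case fails for the same reason. Suppose $u<v$, $\beta<\alpha$, and $t+u-v<s<t$ with offset $\gamma\in[\beta,\alpha]$. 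Then $t-s$ and $s-(t+u-v)$ both lie in $[1,T]$. But they place points to the left of $0$ at distances $\gamma\ge\beta$ and $\alpha+\beta-\gamma\ge\beta$, which contradicts nothing.
\end{itemize}
This configuration is not vacuous. For $\omega=0.3+0.01\sqrt2$ and $T=35$ one has $u=16$, $v=35$, and $\alpha\approx0.0263>\beta\approx0.0050$. For $t=34$ there are eighteen integers strictly between $t+u-v=15$ and $t$.

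\textbf{The missing idea.} Compare an intruder with the \emph{out-of-range} virtual neighbours $[(t+u)\omega]=[t\omega]+\alpha$ and $[(t-v)\omega]=[t\omega]+\beta$. Do not compare it with $[t\omega]$ and the claimed successor.

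In the third case, suppose $[s\omega]=[t\omega]+\gamma$ with $0<\gamma<\alpha+\beta$. Put $j_1=s-t+v$ and $j_2=t+u-s$.
\begin{itemize}
\item $j_1\ge v-t\ge 1$ and $j_2\ge t+u-T\ge 1$.
\item $j_1+j_2=u+v\le 2T$, so at least one of them lies in $[1,T]$.
\item Their residues are $\gamma-\beta$ and $\alpha-\gamma$. Both lie in the open window $(-\beta,\alpha)$ modulo $1$.
\item That window contains no $[j\omega]$ with $1\le j\le T$, by extremality of $u,v$ and irrationality of $\omega$. This is the contradiction.
\end{itemize}

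The same device repairs the first two cases:
\begin{itemize}
\item For $s<t$ in the first case, use $t+u-s\in[u+1,T]$, whose residue is $\alpha-\gamma\in(0,\alpha)$.
\item For $s>t$ in the second case, use $s-t+v\in[v+1,T]$, whose residue is $\gamma-\beta\in(-\beta,0)$.
\end{itemize}

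With these replacements the argument is complete. Two minor points: your claim $u\ne v$ fails for $T=1$, but you never use it; and the ``at least two lengths'' step needs $T\ge 1$.
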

Importantly, the conclusion of the Three Gap Theorem --- that there are at most three distinct interval lengths --- holds for every irrational \(\omega\) and every sample size \(T\).
We illustrate the content of the Three Gap theorem in Figure \ref{fig:3G circles} for $\omega= \pi -1$ (left), $\omega = \sqrt{5}$ (right) and $T=17$.

\begin{figure}[!htb]
    \centering
    \includegraphics[width=0.30\textwidth]{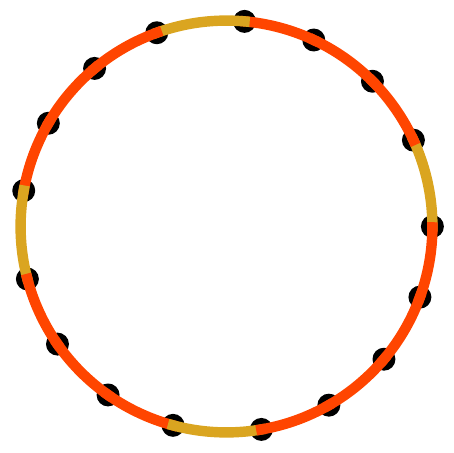} \ \ \ \ \ \ \ \ \ \ \ \ \ \ \ \ \ \ \ \
    \includegraphics[width=0.30\textwidth]{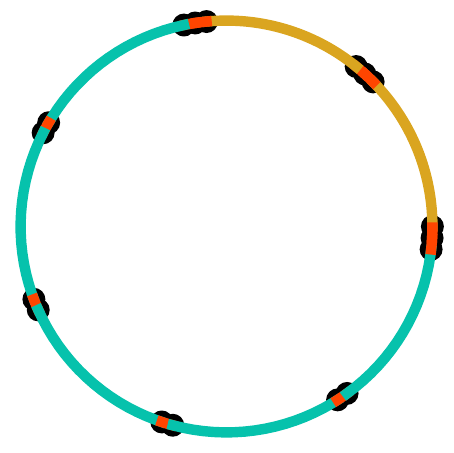}
    \caption{On the left, we depict \(S_{\pi-1,17}\) in black. Each gap size is shown in a different color; in this case, there are only two distinct gaps (red and yellow). On the right, we show \(S_{\sqrt{5},17}\) in black, which has three distinct gap sizes (green, red and yellow).}
    \label{fig:3G circles}
\end{figure}

The lengths $\delta_A,\delta_B$ and $\delta_C$ in the Three Gap theorem are in fact governed by the convergents of \(\omega\) \cite{leong2017sums}; see  Proposition \ref{three gaps computation} for explicit formulas for the gaps and their multiplicities.
This observation yields a complete description of pairwise distances between neighboring points in \(S_{\omega,T}\), and we will show in the next section how they can be used to compute the persistence diagrams of the Rips filtration $\mathcal{R}(S_\omega, \bar{d}\, )$; see Figure \ref{fig:3gap} for an example.
This leads to our approximation scheme for the persistence diagrams of sliding window point clouds from quasiperiodic functions: The Three Gap Theorem Method (3G), depicted  schematically in Figure \ref{fig:diagram}.

\begin{figure}[htb!]
    \centering
    \includegraphics[width=1\textwidth]{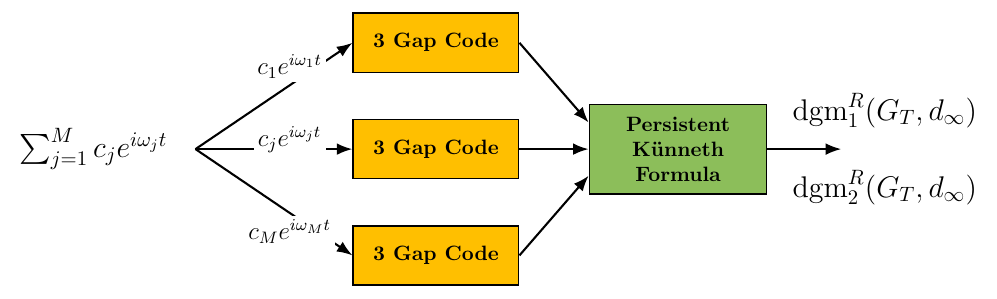}
    \hfill
    \caption{Schematic representation of the 3G method. Starting from a sum of exponentials approximating  a quasiperiodic function, we extract its frequency parameters via the FFT. We then apply the Three Gap construction independently to each frequency circle and compute the resulting persistence diagrams.  Finally, we assemble these individual diagrams using the Persistent K\"unneth Formula, yielding an approximation to the Rips persistence of the sliding window point cloud of the original signal.}
    \label{fig:diagram}

\end{figure}

In short, we first extract the independent frequencies $\omega_j$ of the input quasiperiodic signal; then, for each frequency $\omega_j$, we compute its continued fraction expansion and use its convergents to find the lengths $\delta_A, \delta_B$ (and $\delta_C = \delta_A + \delta_B$ when appropriate) in $(S_{\omega_j, T},\bar{d}\,)$, as well as their multiplicities (see Proposition \ref{three gaps computation}); the lengths and multiplicities of these gaps are used to derive the barcodes
$\bcd_n^R(S_{\omega_j, T}, \bar{d}\,)$ and persistence
diagrams $\dgm^R_n(S_{\omega_j ,T}, \bar{d}\,)$
 --- this is the content of Corollary \ref{0th bcd}, Corollary \ref{1st dgm}, and the output of Algorithm \ref{alg:3GC} below;
finally, we use the Persistent K\"unneth Theorem
to compute   $\dgm_n^R(G_T , d_\infty)$   --- see Equation (\ref{eq:Grid}) for the definition of $G_T$ and Theorem \ref{GT Kunneth} for the results.
The quality of the approximation of
$\dgm^R_n(\SW_{d,\tau} f(T), d_2)$
via $\dgm_n^R(G_T , d_\infty)$
is discussed in detail in Section \ref{sec:AM}, for $d_2$ the standard Euclidean distance in $\C^{d+1}$.
The relevant approximation results are given in
Theorem \ref{AM theorem1} and further clarified in Remark \ref{Ebound}.

\begin{algorithm}[!htb]
\caption{3 Gap Code} \label{alg:3GC}
\begin{algorithmic}[1]  
\State Input frequency $\omega_j$
\State Scale frequency $\omega_j=\omega_j/(2\pi)$
\State Obtain continued fraction expansion, C.F.E., of $\omega_j$
\State Use C.F.E. as shown in Proposition \ref{three gaps computation} to obtain gaps \& their multiplicities
\State Scale gaps as detailed in Theorem \ref{GT Kunneth} to obtain the barcode  of $\mathcal{R}(S_{\omega_j,T}, \bar{d}\,)$
\end{algorithmic}
\end{algorithm}

\section{Main Results}
\label{MR section}
We now move on to detailing the connection between the Three Gap Theorem and persistence diagrams.
The theorem allows us to compute persistence diagrams by leveraging information from the continued fraction expansion of the underlying frequencies estimated with the FFT of a quasiperiodic signal.
With this at hand, we show how, using the Persistent K\"unneth Formula, we can establish an approximation method for the persistence diagrams of sliding window embeddings from quasiperiodic functions.
Finally, we show how to obtain error bounds for our method.

\subsection{Persistence Diagrams}
Computing $\text{dgm}^R_0(S_{\omega,T},\bar{d})$ via the Three Gap Theorem
reduces to relating the lengths of the gaps to the convergents of $\omega$.
Although the connection between gap sizes and convergents is classical (see \cite{van1988three,beresnevich2017sums}), we include an alternative proof of the Three Gap Theorem that highlights how each continued fraction coefficient contributes to the $0$-th persistence diagram.
Throughout, let \(\{0,1,\ldots, T\}\subseteq\mathbb{N}\) denote the (discrete) set of time values, and let \(\omega\in\mathbb{R}\smallsetminus\mathbb{Q}\) be an irrational frequency.
We begin by finding the distance from a point in  $S_{\omega,T}$ to its nearest neighbor:
\[
Cl(j)=\min\big\{\bar{d}([j\omega],x) \ | \ x \in S_{\omega,T}\smallsetminus \{[j\omega]\}\big\}
\;\;\; ,\;\;\;
0 \leq j \le T
.
\]
\begin{lemma}
\label{closest point}
Let $ T\in \mathbb{N} $ and $ \omega \in \mathbb{R}\smallsetminus \mathbb{Q} $ with $i$-th convergent $\frac{p_i}{q_i}$. Fix $0 \leq j_0 \leq T$ and let $$ \bar{k}(j_0)=\max\{j\ |\ q_j \leq \max\{j_0,T-j_0\}\}. $$ If $0 \leq j_1 \leq T$ satisfies $$|j_0-j_1|=q_{\bar{k}(j_0)} $$ then $$ Cl(j_0)= \bar{d}([j_0\omega],[j_1\omega]).$$
\end{lemma}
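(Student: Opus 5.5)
The idea is to turn the nearest-neighbour problem on the circle into a problem of best rational approximation of $\omega$, and then read the answer off Theorem \ref{BA Lagrange}.

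For an integer $m$ write $\|m\omega\|=\min_{a\in\mathbb{Z}}|m\omega-a|$. By the definition of the quotient metric $\bar{d}$, for any $0\le j\le T$ we have
\[
\bar{d}([j_0\omega],[j\omega])=\|(j-j_0)\omega\|.
\]
Since $\omega$ is irrational, distinct indices give distinct points of $S_{\omega,T}$. Hence $Cl(j_0)$ is the minimum of $\|m\omega\|$ over the nonzero integers $m$ with $-j_0\le m\le T-j_0$. All such $m$ satisfy
\[
0<|m|\le M:=\max\{j_0,T-j_0\},
\]
and $\|m\omega\|=\|{-m}\omega\|$. It therefore suffices to show that
\[
\min_{0<b\le M}\|b\omega\| \quad\text{is attained at } b=q_{\bar{k}(j_0)} .
\]
By hypothesis $j_1$ exists with $|j_1-j_0|=q_{\bar{k}(j_0)}$, so this value is realised by an admissible $m$. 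That yields $Cl(j_0)\le \bar{d}([j_0\omega],[j_1\omega])$, and the lower bound then comes from the general minimum over $0<b\le M$. We assume $T\ge 1$, so that $M\ge 1=q_1$ and $k:=\bar{k}(j_0)\ge 1$ is well defined; the convention $q_0=0$ plays no role.

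For the lower bound, fix $0<b\le M$ and choose $a$ with $|b\omega-a|=\|b\omega\|$. By the maximality of $k$ and the monotonicity $q_k<q_{k+1}$, we have $b\le M<q_{k+1}$. Since $p_{k+1}$ and $q_{k+1}$ are coprime, $a/b\ne p_{k+1}/q_{k+1}$, because equality would force $q_{k+1}\mid b$. Theorem \ref{BA Lagrange} then gives
\[
\|b\omega\|=|b\omega-a|\ge |q_k\omega-p_k|\ge \|q_k\omega\|.
\]
Taking $b=q_k$, which is allowed because $q_k\le M$, gives the matching upper bound. So the minimum equals $\|q_k\omega\|=\bar{d}([j_0\omega],[j_1\omega])$.

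The main obstacle is not the estimate but the index bookkeeping. One has to check that Theorem \ref{BA Lagrange}, which is stated for $b\le q_{k+1}$ with the single exception $p_{k+1}/q_{k+1}$, really applies to every $b\le M$; this is exactly what the strict inequality $M<q_{k+1}$ and coprimality provide. A second point of care is that the signed range of admissible differences is asymmetric, $[-j_0,T-j_0]$. This is handled by passing to $|m|\le M$ for the lower bound and using the assumed existence of $j_1$ for attainment.
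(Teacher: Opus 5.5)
Your proof is correct and follows essentially the same route as the paper: use translation invariance of $\bar{d}$ to reduce to minimizing the distance from $0$ over differences $0<|m|\le \max\{j_0,T-j_0\}$, then invoke Lagrange's best-approximation Theorem~\ref{BA Lagrange}. You are more careful than the paper in two places: you check that the excluded fraction $p_{k+1}/q_{k+1}$ cannot occur (via $M<q_{k+1}$ and coprimality), and you treat the asymmetric range $[-j_0,T-j_0]$ separately for the lower bound and for attainment.
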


\begin{proof}
Let us first show the result in the case $j_0=0.$ In this case, $Cl(j_0)= \bar{d}([j_0\omega],[j_1\omega])$ if and only if $$ [j_1\omega] = \min \{ [j\omega],1-[j\omega] \ | 1 \leq j \leq T \}.$$ The latter is equivalent to finding $p \in \mathbb{N}$ such that for all $ p_0,q_0\in \mathbb{N}$, with $q_0 \leq T$ $$ |j_1\omega - p | \leq |q_0\omega - p_0 |.$$ By Theorem \ref{BA Lagrange} this is satisfied by the convergents of $\omega,$ namely by $j_1=q_{\bar{k}(0)}$ and $p=p_{\bar{k}(0)}.$ This shows the result for the case $j_0=0.$ For the general case, we note that since $$ \bar{d}([j_0\omega],[j_1\omega]) = \bar{d}(0,[|j_0-j_1|\omega]) $$ we can apply the previous argument but now restricting $p_0 \leq \max\{j_0,T-j_0\} $ since $|j_0-j_1| \leq \max\{j_0,T-j_0\}.$ Similarly, the minimum is achieved when $|j_0-j_1|= q_{\bar{k}_j(j_0)}$, and the result follows.

\end{proof}

\begin{proposition}
\label{three gaps computation}
Let $T\in \mathbb{N}$ and $\omega\in \mathbb{R}\smallsetminus \mathbb{Q}$ with continued fraction expansion $[a_1,a_2,a_3,\cdots]$,
$i$-th convergent $\frac{p_i}{q_i}$,
and arrange the elements
of $S_{\omega,T} = \{[0], [\omega], \ldots, [T\omega] \}$
in increasing order as
\[
  \lbrack n_i\omega\rbrack < \lbrack n_{i+1}\omega\rbrack
  \quad\text{for }0\le i< T.
\]
Let $k,r,s$ be the unique integers for which
$$ q_k+q_{k-1} \leq T < q_k + q_{k+1} $$
 $$ T=rq_k+q_{k-1}+s, \ \ \ \ \ 1\leq r \leq a_{k+1}, \ \ \ \ \ 0 \leq s \leq q_k-1$$
and let $  D_i = q_i\,\omega - p_i$ for each $i$.
If
$$
\begin{aligned}
    \delta_A &= |D_k|, \quad & \delta_B &= |D_{k+1}|+(a_{k+1}-r)|D_k|, \quad & \delta_C &= \delta_A+\delta_B,
\end{aligned}
$$
and  \(\oplus\)  denotes addition modulo \(T+1\),
then for $0\leq i \leq T$
\[
\left\{
\bar{d}([n_i\omega], [n_{i\oplus1}\omega])
\right\}
\subseteq
\{\delta_A,\delta_B,\delta_C\}.
\]
Furthermore, denoting the multiplicity of gaps by $N_I=\#\{i \ | \ \bar{d}([n_i\omega], [n_{i\oplus1}\omega]) = \delta_I \},$ we have
$$
\begin{aligned}
    N_A &= T+1-q_k, \quad & N_B &= s+1, \quad & N_C &= q_k-s-1.
\end{aligned}
$$

\end{proposition}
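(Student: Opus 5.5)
We prove both claims at once by a length-counting argument. For each point we exhibit a nearby point on a fixed side, which bounds the gap following it. Then we check that these bounds already add up to the full circumference $1$, so every bound must be attained.

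\textbf{Setup.} Orient the circle so that the positive direction is the sign of $D_k = q_k\omega - p_k$. Recall the classical facts:
\begin{itemize}
\item $D_{k-1}$ and $D_k$ have opposite signs (Proposition~\ref{odd even});
\item $D_{k+1} = a_{k+1}D_k + D_{k-1}$, which follows from Proposition~\ref{computation convergent};
\item $q_k|D_{k-1}| + q_{k-1}|D_k| = 1$, from $p_kq_{k-1}-p_{k-1}q_k=\pm1$.
\end{itemize}
Put $b := rq_k + q_{k-1}$, so $T = b+s$ and $b\le T$. From the recursion, $|D_{k-1}| = a_{k+1}|D_k| + |D_{k+1}|$. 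Hence $D_{k-1}+rD_k$ has the sign of $D_{k-1}$ and absolute value $|D_{k+1}|+(a_{k+1}-r)|D_k| = \delta_B$. Therefore, in our oriented circle:
\begin{itemize}
\item the index step $+q_k$ moves a point forward by $\delta_A$;
\item the step $-b$ moves it forward by $\delta_B$;
\item the step $q_k-b$ moves it forward by $\delta_A+\delta_B=\delta_C$.
\end{itemize}
None of these is a full turn, since all three lengths are less than $1$.

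\textbf{Candidate successors.} For each $0\le j\le T$ define a candidate successor $\sigma(j)\in\{0,\dots,T\}$:
\begin{itemize}
\item if $j\le T-q_k$, set $\sigma(j)=j+q_k$, at forward distance $\delta_A$;
\item if $j>T-q_k$ and $j\ge b$, set $\sigma(j)=j-b$, at forward distance $\delta_B$;
\item otherwise $T-q_k<j<b$, and we set $\sigma(j)=j+q_k-b$. This lies in $[0,T]$ because $j>T-q_k\ge b-q_k$ and $j<b\le T$. Its forward distance is $\delta_C$.
\end{itemize}
Since $T-q_k+1\le b$ (equivalent to $s\le q_k-1$), the three cases have exactly $T+1-q_k$, $T-b+1=s+1$ and $q_k-s-1$ members. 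These are the claimed values of $N_A,N_B,N_C$.

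\textbf{Comparison with the true gaps.} The true gap after $[j\omega]$, i.e.\ the forward distance to the next point of $S_{\omega,T}$, is at most the forward distance to $[\sigma(j)\omega]$. Summing over $j$, the true gaps add up to exactly $1$. The candidate distances add up to
\[
(T+1-q_k)\delta_A+(s+1)\delta_B+(q_k-s-1)(\delta_A+\delta_B)=(T-s)|D_k|+q_k\delta_B .
\]
Substituting $T-s=rq_k+q_{k-1}$ and the formula for $\delta_B$ gives
\[
q_{k-1}|D_k|+q_k\bigl(a_{k+1}|D_k|+|D_{k+1}|\bigr)=q_{k-1}|D_k|+q_k|D_{k-1}|=1 .
\]
A sum of termwise upper bounds equals the sum of the quantities it bounds, so every inequality is an equality. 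Thus $[\sigma(j)\omega]$ is the true successor of $[j\omega]$, and every gap lies in $\{\delta_A,\delta_B,\delta_C\}$ with the stated multiplicities. Each $j$ corresponds to exactly one gap, namely the one following $[j\omega]$, so no gap is double-counted.

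\textbf{Expected obstacle.} The delicate step is the sign and index bookkeeping, not any approximation inequality; in particular Theorem~\ref{BA Lagrange} is not needed. Concretely one must check:
\begin{itemize}
\item the orientation conventions for $D_{k-1},D_k,D_{k+1}$;
\item that $\delta_B>0$, with the boundary case $r=a_{k+1}$ giving $\delta_B=|D_{k+1}|$;
\item that the defining inequalities $q_k+q_{k-1}\le T<q_k+q_{k+1}$ force $1\le r\le a_{k+1}$ and $0\le s\le q_k-1$, which makes the case ranges above nonempty and correct.
\end{itemize}
When $s=q_k-1$ the third case is empty and only two gap lengths occur, consistent with the Three Gap Theorem. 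If the counting trick ran into trouble, the fallback would be to show directly, via Theorem~\ref{BA Lagrange} in the spirit of Lemma~\ref{closest point}, that no index lies strictly between $j$ and $\sigma(j)$ on the circle.
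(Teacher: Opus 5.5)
Your proof is correct, and it is a genuinely different argument from the paper's, even though the skeleton is the same. The paper uses the same three index ranges $[0,T-q_k]$, $[b,T]$ and the middle block. It also uses the same neighbor maps $n_j\mapsto n_j+q_k$, $n_j\mapsto n_j-b$ and $n_j\mapsto n_j-((r-1)q_k+q_{k-1})=n_j+q_k-b$. The difference is in how you certify that these candidates really are adjacent.

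\textbf{The paper's route.} It argues locally. It invokes Lemma~\ref{closest point} (hence Theorem~\ref{BA Lagrange}) and Proposition~\ref{odd even} to locate the two neighbors $[q_k\omega]$ and $[(q_{k-1}+rq_k)\omega]$ of $0$, using the monotone chain $[(q_{k-1}+iq_k)\omega]$. It transports this to other points by translation invariance of $\bar d$, handles the middle block by a ``next smallest gap'' argument, and closes by counting gaps, $N_A+N_B+N_C=T+1$.

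\textbf{Your route.} You count lengths instead. Each true gap is at most its candidate, and the candidates sum to $q_{k-1}|D_k|+q_k|D_{k-1}|=1$. Hence every inequality is an equality. This is shorter and more airtight: it needs only sign alternation, the recursion and the determinant identity. It also avoids the delicate point in the paper's translation step. The set $S_{\omega,T}$ itself is not translation invariant, so one must still exclude other sample points landing between $[n_j\omega]$ and its translated neighbor. The paper's count of gaps does not do this, while your count of lengths does.

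\textbf{What the paper's route buys.} It exhibits the nearest-neighbor and best-approximation structure (Lemma~\ref{closest point}) explicitly. The paper reuses this structure later, for instance in Theorem~\ref{0th homolgy}.

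Two minor remarks. First, your claim $\delta_C<1$ fails when $k=r=1$, where $\delta_C=|D_0|=1$. The third case is empty there, and your inequality only needs the forward distance to $[\sigma(j)\omega]$ to be at most the candidate value, so nothing breaks. Second, like the paper, you prove the result with gaps read as arc lengths between circularly consecutive points. These agree with $\bar d$ only for arcs of length at most $1/2$. For $k=1$ the arc $1-T|D_1|$ can exceed $1/2$, so this is a looseness in the statement itself, not in your argument.
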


\begin{proof}
We first establish the existence and uniqueness of $k,r,$ and $s.$ For all $i \geq 0$ $$q_i+q_{i-1} < q_i+q_{i+1}.$$ Thus, $T\in [q_k+q_{k-1},q_k+q_{k+1})$ for some $k,$ that this $k$ is unique follows by the monotonicity of $\{q_i+q_{i-1} \}_i.$
We also note that by assumption $T-q_{k-1}\geq q_k,$ thus by the division algorithm there exists unique $r,s$ such that $$T-q_{k-1} = rq_k +s, $$ where $1\leq r$ and $0 \leq s \leq q_k.$ On the other hand, since $T-q_{k-1} < q_k + q_{k+1} - q_{k-1} = (a_{k+1}+1)q_k$, by Proposition \ref{computation convergent}, we conclude $r \leq a_{k+1}.$ This shows the existence and uniqueness of $k,r,$ and $s$ as stated in the result.

For convenience of presentation in what follows, we define the notion of to the ``right" and to the ``left" of a point.
For $x,y \in S_{\omega,T}$, we say that $y$ is to the right of $x$ if $\bar{d}(x,y)$ is achieved by traversing from $x$ clockwise to $y$ in this representation. Similarly, achieving the distance while traversing counterclockwise is used to define to the left of. Since translation is an isometry under $\bar{d}$ we note $[i_1\omega]$ is to the right or to the left of $[i_2\omega]$ if and only if for all $j \geq 0,$ \ $[(i_1+j)\omega]$ is to the right or to the left, respectively, of $[(i_2+j)\omega].$ We make use of this fact repeatedly.

Let us now show that $$\{\bar{d}(0,[n_1\omega]),\bar{d}(0,[n_T\omega]) \} = \{ \delta_A,\delta_B\}.$$ This equality will apply to general points in $S_{\omega,T}$ by the isometry property of the translation map. In particular, it will allow us to compute $ N_A,N_B$, and find the explicit pairs that achieve the corresponding gap. We assume W.L.O.G. that $$[q_k\omega] \leq 1/2,$$ i.e., $[q_k\omega]$ is to the right of $0.$ By Proposition \ref{odd even} $[q_{k-1}\omega]$ and $[q_{k+1}\omega]$ are to the left of $0,$ i.e., $[q_k\omega] < [q_{k-1}\omega],[q_{k+1}\omega].$ Thus, by Lemma \ref{closest point} $n_1=q_k$ and thus $$\bar{d}(0,[n_1\omega]) = [q_k\omega] = \delta_A.$$ In fact, Lemma \ref{closest point} also shows $$ [q_k\omega] < [q_{k-1}\omega] < [q_{k+1}\omega].$$ Moreover, since $$\bar{d}([q_{k-1}\omega],[(q_{k-1}+q_k)\omega])=\bar{d}([q_{k}\omega],0)=[q_k\omega] $$ and, by Theorem \ref{BA Lagrange}, $$\bar{d}([q_{k}\omega],0) < \bar{d}([q_{k-1}\omega],0)$$ we conclude $$ [q_{k-1}\omega] < [(q_{k-1}+q_k)\omega].$$ Repeating this argument and noting $q_{k+1}=a_{k+1}q_k+q_{k-1}$ we can say that for $0 \leq i \leq a_{k+1}$ $$ [q_{k-1}\omega] \leq [(q_{k-1}+iq_k)\omega]  \leq [(q_{k-1}+(i+1)q_k)\omega] \leq  [q_{k+1}\omega].$$ Thus, $$ \bar{d}([(q_{k-1}+iq_k)\omega],0)= \bar{d}([q_{k+1}\omega],0) + \sum_{j=0}^{a_{k+1}-i-1} \bar{d}([(q_{k-1}+jq_k)\omega],[(q_{k-1}+(j+1)q_k)\omega])$$ $$ = |D_{k+1}| + (a_{k+1}-i)|D_k|.$$ By Lemma \ref{closest point} we can conclude $n_T = q_{k-1} + rq_{k} $ and the above computation shows $$ \bar{d}([(q_{k-1}+rq_k)\omega],0)= |D_{k+1}| + (a_{k+1}-r)|D_k| = \delta_B.$$ Thus, we have shown $$\{\bar{d}(0,[n_1\omega]),\bar{d}(0,[n_T\omega]) \} = \{ \delta_A,\delta_B\}.$$

Now consider $n_{j} \in [0,(r-1)q_k+q_{k-1}+s].$ Since $n_j+q_k \leq T$ and $$ \bar{d}([n_j\omega],[(n_j+q_k)\omega])= \bar{d}([q_k\omega],0)=[q_k\omega]=\delta_A $$ we conclude, by the same reasoning as with $n_j=0$, that $[(n_j+q_k)\omega]$ is the closest point to the right of $[n_j\omega],$ i.e., $n_{j\oplus1}=n_j+q_k.$ This shows there are $$(r-1)q_k+q_{k-1}+s+1=(rq_k+q_{k-1}+s)+1-q_k=T+1-q_k$$ different gaps of length $\delta_A$ i.e., $N_A=T+1-q_k.$

Similarly, for $n_{j} \in [rq_k+q_{k-1},T]$ we note $$n_{j}-(rq_k+q_{k-1}) \in [0,s]$$ and thus $$ \bar{d}([n_j\omega],[(n_j-(rq_k+q_{k-1}))\omega])= \bar{d}([(rq_k+q_{k-1})\omega],0)=|D_{k+1}| + (a_{k+1}-r)|D_k|=\delta_B.$$ Carrying over the arguments for the case  $n_j=rq_k+q_{k-1}$, we conclude $[(n_j-(rq_k+q_{k-1}))\omega]$ is the closest point to the left of $[n_j\omega],$ i.e., $n_{j-1}=n_{j}-(rq_k+q_{k-1}).$ Thus, there are $s+1$ different gaps of length $\delta_B$ i.e., $N_B=s+1.$

What remains to consider is the case $n_j\in ((r-1)q_k+q_{k-1}+s,rq_k+q_{k-1}).$ If $r > 1,$ we note $$ \bar{d}([n_j\omega],[(n_j-q_k)\omega])= \bar{d}([q_k\omega],0)=[q_k\omega]=\delta_A.$$ Thus, $n_{j-1}=n_j-q_k$. Since $n_j-q_k < (r-1)q_k+q_{k-1}+s $, we have already accounted for this gap. On the other hand, we note $$\bar{d}([n_j\omega],[n_{j\oplus1}\omega]) \neq \delta_A,\delta_B$$ since $n_j+q_k,n_j+(rq_k+q_{k-1}) > T.$ As a consequence of the inequality shown before
$$ [q_{k-1}\omega] \leq [(q_{k-1}+iq_k)\omega]  \leq [(q_{k-1}+(i+1)q_k)\omega] \leq  [q_{k+1}\omega],$$ we conclude the next smallest gap length is $$ |D_{k+1}| + (a_{k+1}-(r-1))|D_k| =  |D_{k+1}| + (a_{k+1}-r+1)|D_k| = \delta_A+\delta_B=\delta_C.$$ This gap is achieved when $n_{j+1}=n_j-((r-1)q_k+q_{k-1}).$ Thus, there are $q_k-s-1$ different gaps of length $\delta_C,$ i.e., $N_C=q_k-s-1.$

We have shown there are distinct gaps, distances between adjacent points, of length $\delta_A,\delta_B,$ and $\delta_C.$ Furthermore, since $$ N_A+N_B+N_C=T+1$$ we conclude all gaps have been accounted for, i.e.,
$$ \{\bar{d}([n_i\omega], [n_{i\oplus1}\omega]) \} \subseteq \{\delta_A,\delta_B,\delta_C\}.$$ The result follows.
\end{proof}

It is clear that the Three Gap Theorem follows from Proposition \ref{three gaps computation}. Furthermore, Proposition \ref{three gaps computation} details the relation between the length of the gaps and the $k$-th convergents of $\omega$. The next theorem translates what this means in terms of the $0$-th dimensional persistence diagram of $S_{\omega,T}$. We assume \(r < a_{k+1}\) and \(s+1 < p_k\) to streamline the exposition; the other parameter regimes lead to analogous conclusions:
\begin{itemize}
  \item \textbf{Case \(r \ge a_{k+1}\).} In this regime one shows that \(\delta_A > \delta_B\). In Theorem \ref{0th homolgy} below, one then swaps the labels \(\delta_A\) and \(\delta_B\) so that the combinatorial pattern of gap lengths remains identical.
  \item \textbf{Case \(s+1 \ge p_k\).} Here \(N_C = 0\), meaning no third gap appears. Equivalently, the final merge of adjacent points occurs at \(\epsilon = \max\{\delta_A,\delta_B\}\), which then plays the role of \(\delta_C\) in the theorem’s description.
\end{itemize}
In each scenario, relabeling or omitting the appropriate gap lengths shows that the proof of Theorem \ref{0th homolgy} carries over without change.
\begin{theorem}
\label{0th homolgy}
Let $T\in \mathbb{N}$ and $\omega\in \mathbb{R}\smallsetminus \mathbb{Q}$ with continued fraction expansion $[a_1,a_2,a_3,\cdots]$, $i$-th convergent $\frac{p_i}{q_i}$, and let $k,r,s$ be the unique integers for which
$$ q_k+q_{k-1} \leq T < q_k + q_{k+1} $$
and $$ T=rq_k+q_{k-1}+s, \ \ \ \ \ 1\leq r \leq a_{k+1}, \ \ \ \ \ 0 \leq s \leq q_k-1. $$ Let
\[
  D_i = q_i\,\omega - p_i,
\]
and suppose
\[
  r < a_{k+1}
  \quad\text{and}\quad
  s + 1 < p_k.
\]
Then
\[  H_0(R_\epsilon(S_{\omega,T},\bar{d});\mathbb{F}) = \left\{
\begin{array}{ll}
      \mathbb{F}^{^{T+1}} & 0 \leq \epsilon < |D_k| \\ \\
       \mathbb{F}^{^{q_k}} & |D_k| \leq \epsilon < |D_{k+1}|+(a_{k+1}-r)|D_k| \\ \\
     \mathbb{F}^{^{q_k-s-1}} & |D_{k+1}|+(a_{k+1}-r)|D_k| \leq \epsilon  \\ & < |D_{k+1}|+(a_{k+1}-r+1)|D_k| \\ \\
    \mathbb{F} & |D_{^{k+1}}|+(a_{k+1}-r+1)|D_k| \leq \epsilon  \\
\end{array}
\right. \]

\end{theorem}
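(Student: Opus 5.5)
The plan is to reduce $H_0$ of the Rips filtration on the circle to a count of the gaps between cyclically adjacent points. For a finite subset $S$ of the circle with the quotient metric $\bar d$, I will show that for every $\epsilon$ the $1$-skeleton of $R_\epsilon(S,\bar d)$ has the same connected components as the cyclic path graph. That graph joins $[n_i\omega]$ to $[n_{i\oplus1}\omega]$ exactly when their gap $\bar d([n_i\omega],[n_{i\oplus1}\omega])\le\epsilon$. Since $H_0$ depends only on the $1$-skeleton, $\dim H_0(R_\epsilon)$ equals the number of components of this cyclic graph. If $m(\epsilon)$ gaps are $\le \epsilon$, the count is $T+1-m(\epsilon)$ when $m(\epsilon)<T+1$, and $1$ when every gap is $\le\epsilon$.

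\textbf{Step 1: reduction to adjacent gaps.} One direction is immediate: if adjacent points are within $\epsilon$, they are joined by an edge. For the converse, take two points $x,y$ with $\bar d(x,y)\le\epsilon$. The distance is realised by one of the two arcs between them, and every intermediate point of $S$ on that arc splits it into consecutive gaps. Each of those gaps has length at most the arc length, hence at most $\epsilon$. So $x$ and $y$ are already connected through adjacent edges, and the two graphs have the same components.

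\textbf{Step 2: counting with Proposition~\ref{three gaps computation}.} That proposition says every gap lies in $\{\delta_A,\delta_B,\delta_C\}$, with multiplicities $N_A=T+1-q_k$, $N_B=s+1$ and $N_C=q_k-s-1$. I then check the ordering $\delta_A<\delta_B<\delta_C$. Here $\delta_C=\delta_A+\delta_B$ exceeds both, so it remains to show $\delta_A<\delta_B$. This uses the hypothesis $r<a_{k+1}$: then $a_{k+1}-r\ge1$, so $\delta_B=|D_{k+1}|+(a_{k+1}-r)|D_k|\ge |D_{k+1}|+|D_k|>|D_k|=\delta_A$. The hypothesis involving $s$ is used to ensure $N_C>0$, so the third regime is nonempty. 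I read it as $s+1<q_k$, since $N_C=q_k-s-1$; the statement writes $p_k$, which I take to be a typo.

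\textbf{Step 3: reading off the four intervals.} Sweeping $\epsilon$ upward:
\begin{itemize}
\item For $\epsilon<\delta_A$ no gap is closed, giving $T+1$ components.
\item For $\delta_A\le\epsilon<\delta_B$ the $N_A$ gaps are closed, giving $T+1-N_A=q_k$ components.
\item For $\delta_B\le\epsilon<\delta_C$ the $N_B$ gaps are also closed, giving $q_k-(s+1)$ components. This count is positive, so the formula is valid here.
\item For $\epsilon\ge\delta_C$ all $T+1$ gaps are closed and the graph is connected, giving $\mathbb{F}$.
\end{itemize}
Substituting the formulas for $\delta_A,\delta_B,\delta_C$ in terms of $D_k,D_{k+1}$ gives exactly the stated thresholds.

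\textbf{Main obstacle.} Step 1 is where care is needed. The point is that closing $m<T+1$ gaps in a cycle of $T+1$ vertices leaves exactly $T+1-m$ components, not $T-m$: a cycle stays connected when one gap is open, so the count only drops to $1$ once every gap is closed. That is why the last regime starts at $\delta_C$ and not earlier. The rest of the argument is bookkeeping.
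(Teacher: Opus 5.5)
Your proposal is correct and follows the paper's argument: both count the components of $R_\epsilon$ by which of the gaps $\delta_A<\delta_B<\delta_C$ from Proposition~\ref{three gaps computation} are closed, and your reading of $s+1<p_k$ as $s+1<q_k$ matches what the paper's proof actually uses. Your Step 1 spells out the reduction to adjacent gaps that the paper leaves implicit, and your direct estimate $\delta_B\ge|D_{k+1}|+|D_k|>\delta_A$ replaces the paper's appeal to Lemma~\ref{closest point}; the only slip is the aside that the count ``drops to $1$ once every gap is closed'' (it already equals $1$ when a single gap remains open), which does not affect the proof.
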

\begin{proof}
Let $\delta_I,N_I,$ where $I\in \{A,B,C\},$ be as defined in Proposition  \ref{three gaps computation}. Since $ r < a_{k+1} $ we note $\bar{k}(T)=k$ and thus by Lemma \ref{closest point}  $$\delta_A < \delta_B.$$ Furthermore, since $ s+1 < q_k,$ there are three possible lengths for the gaps i.e., $N_C > 0.$

At $\epsilon=0$ $$H_0(R_0(S_{\omega,T},\bar{d});\mathbb{F})=\mathbb{F}^{^{T+1}}$$ by definition. We can interpret this as connected components that are separated by gaps, namely $ N_A+N_B+N_C=T+1$ of them. Since the Rips complex connects two points when $\epsilon$ is greater or equal to their distance, the first time points will be connected is at $\epsilon = \delta_A.$ Now, the newly formed connected components will be separated from each other only at the gaps of length $\delta_B$ and $\delta_C$ i.e., $$H_0(R_{\delta_A}(S_{\omega,T},\bar{d});\mathbb{F})=\mathbb{F}^{^{N_B+N_C}}=\mathbb{F}^{^{q_k}}.$$ Similarly, the next time we connect points is at $\epsilon=\delta_B,$ now the remaining connected components will be separated by only $N_C$ gaps and thus
$$H_0(R_{\delta_B}(S_{\omega,T},\bar{d});\mathbb{F})=\mathbb{F}^{^{N_C}}=\mathbb{F}^{^{q_k-s-1}}.$$ The last time we connect points is at $\delta_C,$ since this means all adjacent points are connected we conclude there is only one connected component i.e.,
$$H_0(R_{\delta_C}(S_{\omega,T},\bar{d});\mathbb{F})=\mathbb{F}.$$ The result follows.
\end{proof}
\begin{corollary}
\label{0th bcd}
Let $T\in \mathbb{N}$ and $\omega\in \mathbb{R}\smallsetminus \mathbb{Q}$ with continued fraction expansion $[a_1,a_2,a_3,\cdots]$, $i$-th convergent $\frac{p_i}{q_i}$, and let $k,r,s$ be the unique integers for which
$$ q_k+q_{k-1} \leq T < q_k + q_{k+1} $$
and $$ T=rq_k+q_{k-1}+s, \ \ \ \ \ 1\leq r \leq a_{k+1}, \ \ \ \ \ 0 \leq s \leq q_k-1. $$ Let
\[
  D_i = q_i\,\omega - p_i,
\]
and suppose
\[
  r < a_{k+1}
  \quad\text{and}\quad
  s + 1 < p_k.
\]
If
$$
\begin{aligned}
    \delta_A &= |D_k|, \quad & \delta_B &= |D_{k+1}|+(a_{k+1}-r)|D_k|, \quad & \delta_C &= \delta_A+\delta_B,
\end{aligned}
$$
then
$$ \dgm^R_0(S_{\omega,T},\bar{d})=
\left\{(0,\delta_A)^{T+1-q_k}, (0,\delta_B)^{s+1}, (0,\delta_C)^{q_k-s-2}, (0,\infty)  \right\} $$
where the upper indices indicate the multiplicity of each point in the diagram.

\end{corollary}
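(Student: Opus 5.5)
The plan is to read off the $0$-th diagram from the step function in Theorem~\ref{0th homolgy}. We also use the fact that in the Rips filtration every vertex is born at $\epsilon=0$. The argument is then bookkeeping of deaths, combined with the alternating-sum formula of Remark~\ref{rmk:diagramAltSum}.

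First, all $T+1$ classes in degree $0$ are born at $\epsilon=0$, since $R_0(S_{\omega,T},\bar d)$ is the discrete vertex set and no vertices are added later. Hence every point of $\dgm^R_0$ has the form $(0,b)$. Because all births coincide, each structure map $T_{\epsilon,\epsilon'}$ in degree $0$ is surjective. The persistent Betti number $\beta_0^{\epsilon,\epsilon'}$ therefore equals $\dim H_0(R_{\epsilon'})$, and the multiplicity of $(0,b)$ equals the drop in $\dim H_0$ at $b$. By Theorem~\ref{0th homolgy} the homological critical values are $0<\delta_A<\delta_B<\delta_C$; the strict inequalities use $r<a_{k+1}$, exactly as in that proof. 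Applying Remark~\ref{rmk:diagramAltSum} with $\epsilon_1=0$ gives the following multiplicities:
\begin{itemize}
\item at $\delta_A$: $(T+1)-q_k$;
\item at $\delta_B$: $q_k-(q_k-s-1)=s+1$;
\item at $\delta_C$: $(q_k-s-1)-1=q_k-s-2$.
\end{itemize}
The single class alive for all $\epsilon\ge\delta_C$ contributes $(0,\infty)$. As a sanity check, the total is $T+1$, matching the number of vertices.

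The only real subtlety is the multiplicity at $\delta_C$. Proposition~\ref{three gaps computation} gives $N_C=q_k-s-1$ gaps of length $\delta_C$, but on a circle closing the last gap merges nothing. Equivalently, $N_C$ components become $1$, so only $N_C-1$ bars die there. We also need $q_k-s-2\ge0$, i.e.\ $N_C\ge1$. This is guaranteed by the hypothesis $s+1<p_k$, used (as in the theorem) to ensure a third gap is present. If needed, I would note that the proof of Theorem~\ref{0th homolgy} actually uses $s+1<q_k$.

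Finally, I would remark that the counts can be double-checked against a single-linkage or Kruskal picture. Merges at threshold $\delta$ correspond to the gaps of length $\delta$ in a cyclic arrangement, and a cycle of $T+1$ edges has a spanning tree omitting exactly one edge of maximal length $\delta_C$.
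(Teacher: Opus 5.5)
Your argument is the paper's argument. The vertex set is constant, so every degree-$0$ structure map is surjective and $\beta_0^{\epsilon,\epsilon'}=\dim_{\F}H_0(R_{\epsilon'}(S_{\omega,T},\bar d);\F)$ for $0\le\epsilon\le\epsilon'$. Applying Remark~\ref{rmk:diagramAltSum} to the step function of Theorem~\ref{0th homolgy} then turns the drops $T+1-q_k$, $s+1$, $q_k-s-2$ into multiplicities. Your bookkeeping is correct, including that only $N_C-1$ bars die at $\delta_C$ and that $r<a_{k+1}$ gives $\delta_A<\delta_B$.

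The one step that is wrong is your claim that $q_k-s-2\ge0$ (i.e.\ $N_C\ge1$) is guaranteed by $s+1<p_k$. Replacing $\omega$ by $\omega+m$ with $m\in\mathbb{Z}$ leaves $S_{\omega,T}$, all $q_i$, all $D_i$ and $k,r,s$ unchanged. It does, however, send $p_k$ to $p_k+mq_k$, so $s+1<p_k$ can always be arranged without affecting $N_C$.

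The paper's own example $\omega=\sqrt5$, $T=16$ shows this concretely. There $k=2$, $r=3<4=a_3$ and $s+1=4<9=p_2$, yet $N_C=q_2-s-1=0$. The displayed formula would give $(0,\delta_B)^{4}$ and $(0,\delta_C)^{-1}$, whereas the true diagram has $(0,\delta_B)^{3}$ and no point at $\delta_C$. So the statement itself fails under the literal $p_k$ hypothesis, and you cannot derive $N_C\ge1$ from it.

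The hypothesis has to be read as $s+1<q_k$. That is what the proof of Theorem~\ref{0th homolgy}, and hence the paper's proof of this corollary, actually uses. Your closing remark about $q_k$ is the right fix, but it should replace the claim about $p_k$ rather than sit alongside it. With that reading your proof is complete.
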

\begin{proof}
For $\epsilon \leq \epsilon'$ let
\[
T_{\epsilon,\epsilon'}:
H_0(R_\epsilon(S_{\omega, T}, \bar{d}\,); \F)
\longrightarrow
H_0(R_{\epsilon'}(S_{\omega, T}, \bar{d}\,); \F)
\]
denote the linear transformation induced by the inclusion of Rips complexes,
and let $\beta_0^{\epsilon, \epsilon'}$ be its rank.
Since $S_{\omega, T}$ is the vertex set of $R_\epsilon(S_{\omega, T}, \bar{d}\,)$ for all $\epsilon \geq 0$,
and a basis for $H_0(K ;\F)$ is given by choosing a vertex in each path-connected component of the simplicial complex $K$, then $T_{\epsilon, \epsilon'}$ is surjective whenever  $0 \leq \epsilon \leq \epsilon'$, and the zero map for $\epsilon < 0$.
Hence $\beta_0^{\epsilon ,\epsilon'}$ is equal to 0 if $\epsilon < 0$, and equal to
$\dim_{\mathbb{F}}(H_0(R_{\epsilon'}(S_{\omega,T},\bar{d}\,);\mathbb{F}))$ if $0 \leq \epsilon \leq \epsilon'$.
The result follows from applying Theorem \ref{0th bcd} to compute $\dim_{\mathbb{F}}(H_0(R_{\epsilon'}(S_{\omega,T},\bar{d}\,);\mathbb{F}))$, and combining the result with the formula from Equation \ref{eq:BettiMult} (Remark \ref{rmk:diagramAltSum}).
\end{proof}

\begin{example}
Let us consider the parameter value $\sqrt{5}$ shown in Figure \ref{fig:3gap} (top row, left column).
To compute the lengths of the gaps in $S_{\sqrt{5},16}$, we follow Proposition \ref{three gaps computation}. We note that $$ \sqrt{5}=[2,4,4,4,4,4,\dots] $$ and thus using Theorem \ref{computation convergent} we obtain $q_1=1, \ q_2=4, \ q_3=17,$ and $q_4=72.$ Since $$ q_1+q_2 \leq 16 < q_2 + q_3 $$ we conclude $k=2.$ One can readily check that $$16 = 3 q_2 +q_1 + 3$$ is the desired decomposition, i.e. $r=3$ and $s=3$.
Hence we have $N_A = 16 + 1 - 4 =13$ gaps of length $\delta_A=|4 \sqrt{5} - 9| \approx 0.05573,$ $N_B = 3 +1 = 4 $ gaps of length $\delta_B=|17 \sqrt{5} - 38| \ + \ (4-3)|4 \sqrt{5} - 9| \approx 0.06888,$ and $N_C=4-3-1=0$ gaps of length $\delta_C=|17 \sqrt{5} - 38|+(4-3+1)|4 \sqrt{5} - 9| \approx 0.12461.$ Thus, by using  Corollary \ref{0th bcd} in the case of two gaps, we have
\[
\dgm^R_0(S_{\sqrt{5},16},\bar{d}) = \{(0,0.05573)^{13},(0,0.06888)^{3},(0, \infty) \}.
\]
\end{example}

\begin{example}
Similarly, we consider the set $S_{\pi-1,16}$ shown in Figure \ref{fig:3gap} (bottom row, left column).
In this case $$\pi-1=[2,7,15,292,1,1,\dots] $$   $p_1=1, \ p_2=7 \ p_3=106,$ and $p_4=113.$
Thus, $k=2$ since $$ p_1 + p_2 \leq 16 < p_2 + p_3.$$ Furthermore, we note that the desired decomposition is given by $$16 = 2p_2 + p_1 + 1,$$ i.e. $r=2$ and $s=1.$
Thus,  we have $N_A = 16 + 1 - 7 =10$ gaps of length $\delta_A=|7 (\pi-1) - 15| \approx 0.00885,$
$N_B = 2 $ gaps of length $\delta_B=|106 (\pi-1) - 227| \ + \ (15-2)|7 (\pi-1) - 15| \approx 0.12389,$ and $N_C=7 -1 -1=5$ gaps of length $\delta_C=|106 (\pi-1) - 227| \ + \ (15-2+1)|7 (\pi-1) - 15| \approx 0.13274.$ Moreover, by Corollary \ref{0th bcd} $$ \dgm^R_0(S_{\pi-1,16},\bar{d}) = \{(0,0.00885)^{10},(0,0.12389)^{2},(0,0.13274)^{4},(0, \infty) \}.$$
\end{example}

\begin{figure}[!htb]
    \centering
    \includegraphics[width=0.25\textwidth]{3GC1fPDF.pdf}
    \hfill
    \includegraphics[width=0.35\textwidth]{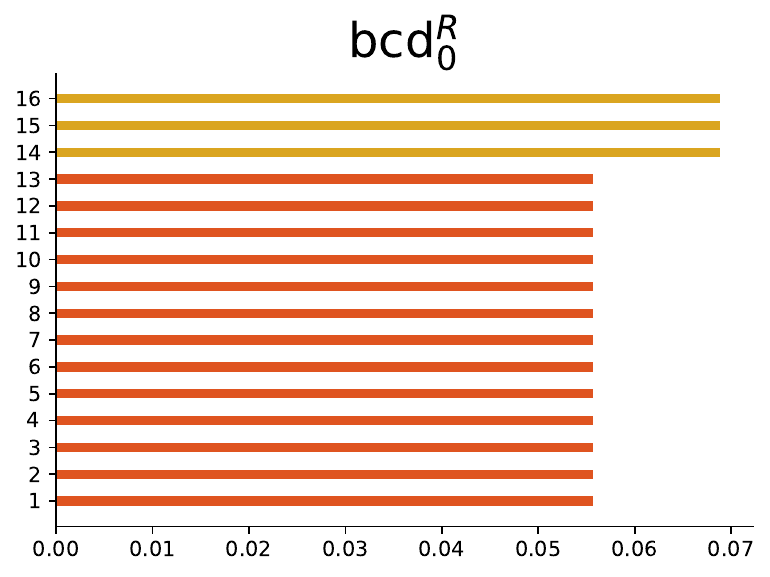}
    \hfill
    \includegraphics[width=0.37\textwidth]{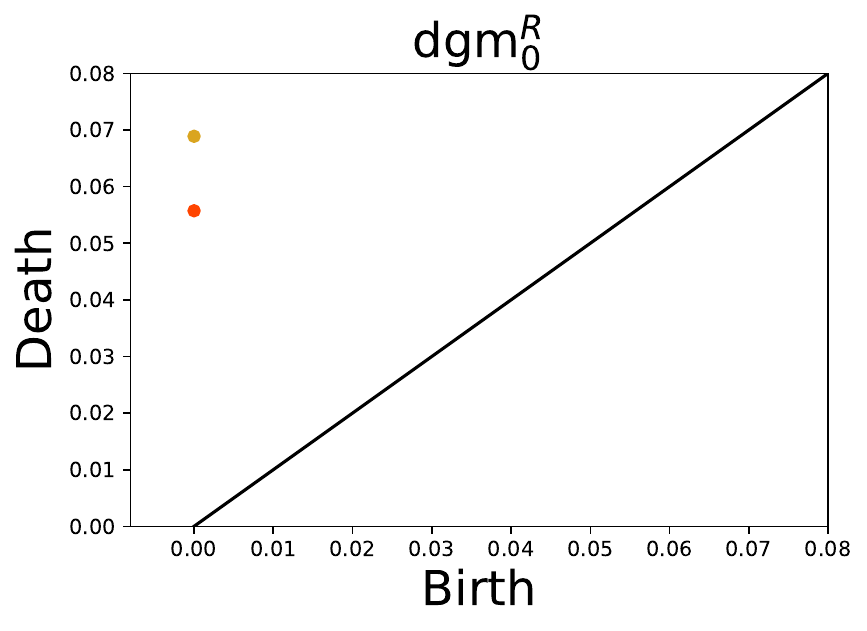}
    \includegraphics[width=0.25\textwidth]{3GC2fPDF.pdf}
    \hfill
    \includegraphics[width=0.35\textwidth]{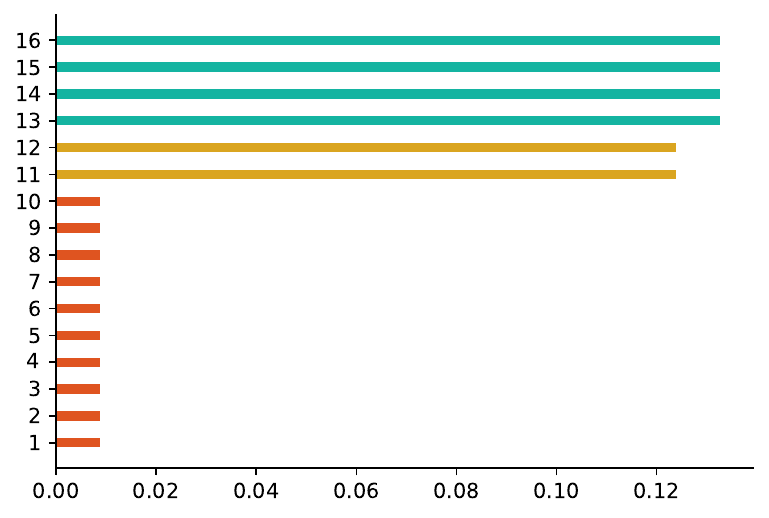}
    \hfill
    \includegraphics[width=0.37\textwidth]{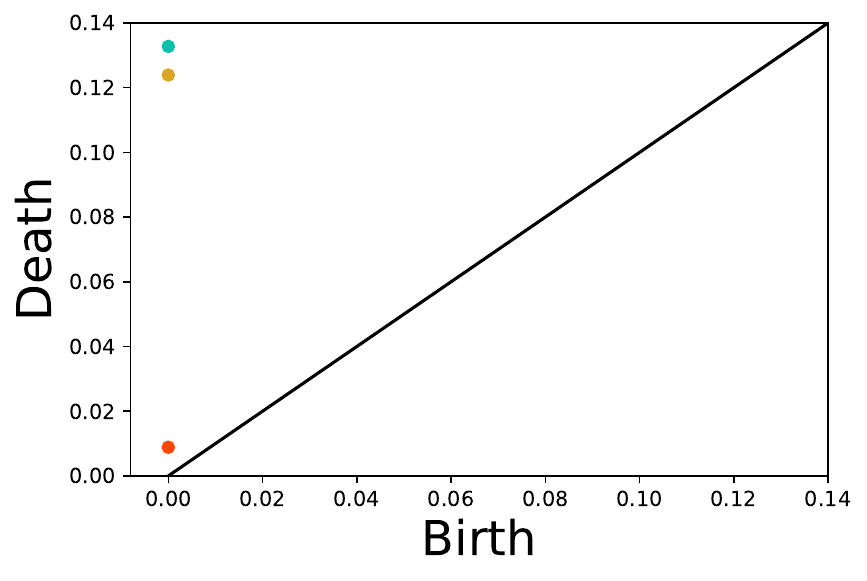}
    \caption{We illustrate above how the 0-th dimensional persistent homology can be computed using the Three Gap Theorem. At $\epsilon=0$, all points in $(S_{\omega,T},\bar{d})$ count as a basis in $H_0(R_{\epsilon}(S_{\omega,T},\bar{d});\mathbb{F})$ (birth time). At $\epsilon = \delta_A$, points get connected for the first time. The three possible gaps are the only instances when components are connected (death time). Top row: The set $S_{\sqrt{5},16}$ is illustrated with the two gaps it creates. These can be traced in the barcodes and persistent diagram as shown. Bottom row: The set $S_{\pi-1,16}$ creates three gaps.}
\label{fig:3gap}
\end{figure}

We also compute
\(\mathrm{dgm}^R_1\bigl(S_{\omega,T},\bar d\bigr)\).  In Theorem \ref{1st homology} we assume
\[
\begin{gathered}
  s + 1 < q_k,\\
  \delta_C < \tfrac13
\end{gathered}
\]
to simplify the presentation. If instead \(s+1 \ge q_k\), then \(N_C=0\) and only the two gap lengths \(\delta_A,\delta_B\) occur—so one may simply set \(\max\{\delta_A,\delta_B\}\) in place of \(\delta_C\). Likewise, if the largest gap \(\delta_M\) satisfies \(\delta_M\ge\tfrac13\), then by Lemma \ref{A5} we have \(\lambda\le\delta_M\), forcing every potential 1-cycle to be $0$ in \(H_1\), i.e. a trivial case. We denote addition modulo \(T+1\) by \(\oplus\). \\

\begin{theorem}
\label{1st homology}
Let $T\in \mathbb{N}$ and $\omega\in \mathbb{R}\smallsetminus \mathbb{Q}$ with continued fraction expansion $[a_1,a_2,a_3,\cdots]$, $i$-th convergent $\frac{p_i}{q_i}$, and let $k,r,s$ be the unique integers for which
$$ q_k+q_{k-1} \leq T < q_k + q_{k+1} $$
and $$ T=rq_k+q_{k-1}+s, \ \ \ \ \ 1\leq r \leq a_{k+1}, \ \ \ \ \ 0 \leq s \leq q_k-1. $$ Let
\[
  D_i = q_i\,\omega - p_i,
\]
and assume
\[
  s + 1 < q_k,
  \quad\text{and}\quad
  \lvert D_{k+1}\rvert + (a_{k+1} - r + 1)\,\lvert D_k\rvert < \tfrac13.
\]
Let $ \Gamma $ be the set containing the pairs $(x,y), \ x,y\in S_{\omega,T} \ \text{and} \ x<y$, for which there exists a $z \in ([0,x)\cup (y,1)) \cap S_{\omega,T}$ such that $ 1-\bar{d}(x,y) \leq 2\max\{\bar{d}(y,z),\bar{d}(x,z)\} \leq 2\bar{d}(x,y)$. If $$ \lambda = \min \{\bar{d}(x,y) | \ (x,y) \in \Gamma\},$$ then
\[   H_1(R_{\epsilon}(S_{\omega,T},\bar{d});\mathbb{F}) = \left\{
\begin{array}{ll}
     \mathbb{F} & |D_{k+1}|+(a_{k+1}-r+1)|D_k|\leq \epsilon < \lambda \\ \\
      0 & else \\
\end{array} .
\right. \]
\end{theorem}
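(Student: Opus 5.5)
We will use the structure of Vietoris–Rips complexes of finite subsets of the circle, with the gap data from Proposition \ref{three gaps computation}. Order the points as $[n_0\omega]<\cdots<[n_T\omega]$, so consecutive gaps lie in $\{\delta_A,\delta_B,\delta_C\}$ and the largest gap is $\delta_C=|D_{k+1}|+(a_{k+1}-r+1)|D_k|<\tfrac13$.

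\textbf{Step 1: the range $\epsilon<\delta_C$.} By Theorem \ref{0th homolgy}, $R_\epsilon(S_{\omega,T},\bar d)$ is disconnected here, since at least one gap of length $\delta_C$ is not yet bridged (using $s+1<q_k$, so $N_C>0$). Because $\delta_C<\tfrac13<\tfrac12$, an edge $\{x,y\}$ with $\bar d(x,y)\le\epsilon<\delta_C$ never jumps over a $\delta_C$-gap. Hence each component is the Rips complex of a set of points on an arc of the circle. On an arc, $\bar d$ agrees with the linear metric, so each component is the Rips complex of a finite subset of $\R$. Such a complex is a cone: a point $x$ and the leftmost vertex $v$ adjacent to it span a simplex with every clique containing $x$. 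Therefore $H_1=0$ for $\epsilon<\delta_C$.

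\textbf{Step 2: birth at $\delta_C$.} At $\epsilon=\delta_C$, every pair of consecutive points is joined, so the cycle $\gamma=\sum_i\{[n_i\omega],[n_{i\oplus1}\omega]\}$ exists. We will show that $H_1$ has rank at most one and is generated by $\gamma$. The approach is the standard fact (Adamaszek–Adams) that a Rips complex of points on a circle at scale below $\tfrac13$ of the circumference is homotopy equivalent to $S^1$, or else contractible. A more hands-on alternative is to collapse each edge $\{x,y\}$ that skips over points onto the path of consecutive edges. Such a collapse is possible when a triangle $\{x,w,y\}$ is present for the intermediate points $w$, and the arc-metric inequality $\bar d(x,w),\bar d(w,y)\le \bar d(x,y)$ guarantees this as long as the short arc is the one containing $w$. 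This reduces every 1-cycle to a multiple of $\gamma$, so $H_1$ is spanned by $[\gamma]$.

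\textbf{Step 3: death at $\lambda$.} This is the main obstacle. We must show that $[\gamma]\neq0$ exactly when $\epsilon<\lambda$. A clique whose vertices do not all lie within an arc of length less than $\tfrac12$ must have three vertices $x<y$ and $z$ outside the short arc $[x,y]$ that together surround the circle. Such a triangle has all three sides $\le\epsilon$, and its three arcs sum to $1$. The triangle $\{x,y,z\}$ with $z\in[0,x)\cup(y,1)$ is exactly what the defining condition of $\Gamma$ encodes. The arc from $y$ to $x$ passing through $z$ has length $1-\bar d(x,y)\le \bar d(x,z)+\bar d(z,y)\le 2\max\{\cdot\}$, and the requirement $\max\le\bar d(x,y)$ says that $\bar d(x,y)$ is the longest side. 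Thus the first scale at which a ``wrapping'' triangle appears is $\lambda$.

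For $\epsilon<\lambda$, every simplex lies in an open half-circle. We then plan to define a winding-number cocycle by lifting each simplex to $\R$ and counting crossings of a basepoint. This cocycle is well defined because simplices never wrap, it vanishes on boundaries, and it evaluates to $1$ on $\gamma$, so $[\gamma]\neq0$. For $\epsilon\ge\lambda$, the wrapping triangle $\{x,y,z\}$ is present. Its boundary is homologous to $\gamma$ by the collapsing argument of Step 2, since each of its edges is homologous to the consecutive path it spans. Hence $\gamma$ is a boundary and $H_1=0$.

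The delicate points are verifying that the collapsing triangles in Step 2 exist at scale $\epsilon$, and checking that $\lambda>\delta_C$. This is where Lemma \ref{A5} and the hypothesis $\delta_C<\tfrac13$ will be used: a wrapping triangle forces a side of length at least $\tfrac13$.
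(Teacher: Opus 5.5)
Your proposal is correct, and its skeleton is the paper's. Reducing every $1$-cycle to a multiple of $\gamma$ via fan triangles over short arcs is the content of Propositions A1--A2 and Lemma A4. Killing $\gamma$ at $\lambda$ with the wrapping triangle plus three fans is the paper's chain $\tau=\tau_1+\tau_2+\tau_3+\{x_{n_1},x_{n_2},x_{n_3}\}$.

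The real difference is how you prove $[\gamma]\neq0$ on $[\delta_C,\lambda)$. The paper argues at chain level and in two pieces. At $\epsilon=\delta_C$, the edge spanning a $\delta_C$-gap lies in no triangle. On $(\delta_C,\lambda)$, Lemma A5 takes a putative filling, looks at its longest edge, and argues (rather informally, via the ``this process won't end'' step) that it produces a pair in $\Gamma$. You instead show that, absent wrapping triangles, every simplex lies in an arc of length $\le\epsilon<\tfrac12$. The signed count of basepoint crossings along short arcs is then a $1$-cocycle taking the value $1$ on $\gamma$. This handles $\epsilon=\delta_C$ and $\epsilon\in(\delta_C,\lambda)$ uniformly, and it makes transparent exactly where $\epsilon<\lambda$ enters.

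Your Step 1 also makes explicit what the paper leaves implicit, but two details there need correcting.
\begin{enumerate}
\item The complex need not be disconnected for $\epsilon<\delta_C$. If $N_C=1$, it is connected for $\max\{\delta_A,\delta_B\}\le\epsilon<\delta_C$. All you actually use is one unbridged $\delta_C$-gap.
\item ``Is a cone'' should read ``the leftmost vertex is dominated by any neighbour, hence removable up to homotopy.'' Alternatively, note that your Step 2 reduction lands in the consecutive-edge graph, which is a forest below $\delta_C$.
\end{enumerate}

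One caveat is shared with the paper. Your claim that $\Gamma$ encodes exactly the longest sides of wrapping triangles holds only when the short arc between $x<y$ is $[x,y]$. If $y-x>\tfrac12$, two things go wrong:
\begin{enumerate}
\item Any $z\in[0,x)\cup(y,1)$ lies on the short arc, so $\{x,y,z\}$ cannot wrap, yet the literal inequalities can still hold (for $\bar d(x,y)\ge\tfrac13$ and $z$ near an endpoint).
\item A wrapping triangle whose longest side is such a pair has its third vertex in $(x,y)$, so it does not certify that pair as a member of $\Gamma$.
\end{enumerate}
You use both directions: wrapping $\Rightarrow\Gamma$ for nontriviality, and $\Gamma\Rightarrow$ wrapping for the filling at $\lambda$. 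Both therefore require reading $[0,x)\cup(y,1)$ as ``the arc complementary to the short arc from $x$ to $y$.'' You should state this reading explicitly; the paper also adopts it tacitly.
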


\begin{proof}
Let \(\{\,x_i\}_{i=0}^T\) be the points of \(S_{\omega,T}\) in ascending order, and consider the 1-chain
\[
  \sigma \;=\; \sum_{i=0}^{T} \{\,x_i,\;x_{\,i\oplus1}\},
\]
where \(i\oplus1\) denotes \((i+1)\bmod(T+1)\). By Proposition \ref{three gaps computation}, the maximum distance between consecutive points is
\[
  \delta_C \;=\; \lvert D_{k+1}\rvert \;+\;(a_{k+1}-r+1)\,\lvert D_k\rvert.
\]
Hence
\[
  \sigma \;\in\; C_1\bigl(R_{\epsilon}(S_{\omega,T},\bar d)\bigr)
  \quad\text{iff}\quad
  \epsilon \;\ge\;\delta_C. \]
One checks immediately that \(\partial\sigma = 0\). Lemma \ref{A4} then shows any non-boundary 1-cycle in \[C_1(R_{\delta_C}(S_{\omega,T},\bar d))\] is homologous to \(\sigma\). Consequently \([\sigma]\) generates \(H_1(R_{\delta_C}(S_{\omega,T},\bar d);\mathbb{F})\).

To show that \(\sigma\) is not a boundary, it suffices to prove there is no 2-simplex
\(\tau \in C_2\bigl(R_{\delta_C}(S_{\omega,T},\bar d)\bigr)\)
with \(\partial\tau = \sigma\). We argue by contradiction. Suppose there exists
\(\tau \in C_2\bigl(R_{\delta_C}(S_{\omega,T},\bar d)\bigr)\)
such that \(\partial\tau = \sigma.\)
Let us first show we may assume, without loss of generality, that
\[
  \bar d(0,x_1) = \delta_C.
\]
To see this, we construct an explicit isometry on \((S_{\omega,T},\bar d)\) which reindexes the points accordingly. By hypothesis, there is an index \(i_0\) with
\[
  \bar d\bigl(x_{i_0},x_{\,i_0\oplus1}\bigr)=\delta_C.
\]
Define
\[
  \phi(x_i)
  =
  \begin{cases}
    x_i - x_{i_0}, & i \ge i_0,\\[6pt]
    1 - \bigl(x_{i_0} - x_i\bigr), & i < i_0.
  \end{cases}
\]
A direct check shows
\[
  \bar d\bigl(\phi(x_i),\phi(x_j)\bigr)
  = \bar d(x_i,x_j)
  \quad\text{for all }i,j,
\]
so \(\phi\) is an isometry. By Theorem \ref{isomorphic diagrams}, it induces identical persistence diagrams. Finally, set
\[
  y_j = \phi\bigl(x_{\,i_0\oplus j}\bigr),
  \quad j=0,1,\dots,T,
\]
so that
\[
  0 = y_0 < y_1 < \cdots < y_T < 1
  \quad\text{and}\quad
  \bar d(y_0,y_1)=\delta_C.
\]
This relabeled sequence \(\{y_j\}\) therefore has the desired property; we thereafter write \(x_j := y_j\) for all \(j\).

Now, since \(\partial\tau = \sigma\), \(\tau\) must contain a 2-simplex of the form \(\{0,x_1,x_{i_0}\}\) for some \(i_0>1\). If \(x_{i_0}\le\tfrac12\), then
\[
  \bar d(0,x_{i_0})
  = \bar d(0,x_1) + \bar d(x_1,x_{i_0})
  > \delta_C.
\]
If instead \(x_{i_0}>\tfrac12\) and \(\bar d(x_1,x_{i_0})\le\delta_C\), then
\[
  \bar d(0,x_{i_0})
  = \min\bigl\{\bar d(0,x_1)+\bar d(x_1,x_{i_0}),\,
                1 - [\bar d(0,x_1)+\bar d(x_1,x_{i_0})]\bigr\}
  > \delta_C,
\]
since \(\delta_C<\tfrac13\). In either case \(\{0,x_1,x_{i_0}\}\notin\tau\), contradicting \(\partial\tau=\sigma\). Therefore no such \(\tau\) exists, and \(\sigma\) is not a boundary.

To conclude the proof, it suffices to show that for any
\(\epsilon\) with \(\delta_C<\epsilon\le\lambda\), a 2-simplex
\[\tau\in C_2\bigl(R_\epsilon(S_{\omega,T},\bar d)\bigr)\] satisfying
\(\partial\tau=\sigma\) appears precisely at \(\epsilon=\lambda\).
In particular, no such \(\tau\) exists when \(\epsilon<\lambda\), but at
\(\epsilon=\lambda\) one does.  Hence for all
\(\epsilon\ge\lambda\), the class \([\sigma]\) is trivial in
\(
  H_1\bigl(R_\epsilon(S_{\omega,T},\bar d);\mathbb{F}\bigr).
\)

We now demonstrate by explicit construction that at \(\epsilon = \lambda\) there is a 2-chain
\(\tau\in C_2\bigl(R_{\lambda}(S_{\omega,T},\bar d)\bigr)\)
with \(\partial\tau=\sigma\). Choose \((x_{n_1},x_{n_2})\in\Gamma\) so that
\(\lambda = \bar d(x_{n_1},x_{n_2})\), and let \(x_{n_3}\) be the third point paired with \((x_{n_1},x_{n_2})\). By assumption
\[
\max\bigl\{\bar d(x_{n_2},x_{n_3}),\,\bar d(x_{n_1},x_{n_3})\bigr\} \leq \bar d(x_{n_2},x_{n_3}) = \lambda.
\]
Hence \(\{x_{n_1},x_{n_2},x_{n_3}\}\) lies in \(R_{\lambda}(S_{\omega,T},\bar d)\).

Next define the 2-chain
\[
  \tau_1 = \sum_{i=n_1}^{\,n_2-1} \{x_i,\;x_{i+1},\;x_{n_2}\}.
\]
Because for $n_1 \leq i < n_2$, $$\bar{d}(x_i,x_{i+1}) < \bar{d}(x_{n_1},x_{n_2})$$ $\tau_1 \in C_2(R_{\lambda}(S_{\omega,T},\bar{d})).$

To define the analogous chains \(\tau_2\) and \(\tau_3\), we use the fact that \(S_{\omega,T}\) can be identified with the circle.  Let \(\{x^2_i\}_{i=1}^{A_2}\) be the sequence of points obtained by starting at \(x_{n_2}\) and moving clockwise until \(x_{n_3}\), inclusive—so \(x^2_1 = x_{n_2}\) and \(x^2_{A_2} = x_{n_3}\).  We then set
\[
  \tau_2 = \sum_{i=1}^{A_2-1} \{x^2_i,\;x^2_{i+1},\;x_{n_3}\},
\]
interpreting \(\tau_2=0\) if \(A_2=2\).

Similarly, let \(\{x^3_i\}_{i=1}^{A_3}\) be the sequence of points starting at \(x_{n_3}\) and moving clockwise until \(x_{n_1}\), with \(x^3_1 = x_{n_3}\) and \(x^3_{A_3} = x_{n_1}\).  We define
\[
  \tau_3 = \sum_{i=1}^{A_3-1} \{x^3_i,\;x^3_{i+1},\;x_{n_1}\},
\]
again taking \(\tau_3=0\) if \(A_3=2\).  In all cases, each \(\tau_j\) lies in \(C_2\bigl(R_{\lambda}(S_{\omega,T},\bar d)\bigr)\).

Finally set
\[
  \tau = \tau_1 + \tau_2 + \tau_3 + \{x_{n_1},x_{n_2},x_{n_3}\}.
\]
By construction \(\tau\in C_2\bigl(R_{\lambda}(S_{\omega,T},\bar d)\bigr)\), and one checks directly that \(\partial\tau=\sigma\).  This completes the existence argument at \(\epsilon=\lambda\).

To complete the proof, let \(\epsilon\in(\delta_C,\lambda)\).  If one assumes the existence of a 2-simplex
\(\tau_0\in C_2\bigl(R_{\epsilon}(S_{\omega,T},\bar d)\bigr)\)
with \(\partial\tau_0=\sigma\), then by considering
\[
  \max\bigl\{\bar d(x,y),\,\bar d(y,z),\,\bar d(z,x)\;\big|\;\{x,y,z\}\in\tau_0\bigr\}
\]
one obtains a contradiction to the minimality of \(\lambda\); see Lemma \ref{A5}.

\end{proof}

\begin{corollary}
\label{1st dgm}
Let $T\in \mathbb{N}$ and $\omega\in \mathbb{R}\smallsetminus \mathbb{Q}$ with continued fraction expansion $[a_1,a_2,a_3,\cdots]$, $i$-th convergent $\frac{p_i}{q_i}$, and let $k,r,s$ be the unique integers for which
$$ q_k+q_{k-1} \leq T < q_k + q_{k+1} $$
and $$ T=rq_k+q_{k-1}+s, \ \ \ \ \ 1\leq r \leq a_{k+1}, \ \ \ \ \ 0 \leq s \leq q_k-1. $$ Let
\[
  D_i = q_i\,\omega - p_i,
\]
and assume
\[
  s + 1 < q_k,
  \quad\text{and}\quad
  \lvert D_{k+1}\rvert + (a_{k+1} - r + 1)\,\lvert D_k\rvert < \tfrac13.
\]
Let $ \Gamma $ be the set containing the pairs $(x,y), \ x,y\in S_{\omega,T} \ \text{and} \ x<y$, for which there exists a $z \in ([0,x)\cup (y,1)) \cap S_{\omega,T}$ such that $ 1-\bar{d}(x,y) \leq 2\max\{\bar{d}(y,z),\bar{d}(x,z)\} \leq 2\bar{d}(x,y)$. If $$ \lambda = \min \{\bar{d}(x,y) | \ (x,y) \in \Gamma\},$$ then
$$ \dgm^R_1(S_{\omega,T},\bar{d})=
\{(\lvert D_{k+1}\rvert + (a_{k+1} - r + 1)\,\lvert D_k\rvert,\lambda)\}.$$
\end{corollary}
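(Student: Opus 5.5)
The plan is to mirror the proof of Corollary \ref{0th bcd}: read the diagram off the ranks of the persistence module $H_1(\mathcal{R}(S_{\omega,T},\bar d);\mathbb{F})$, using Theorem \ref{1st homology} for the pointwise dimensions and Remark \ref{rmk:diagramAltSum} for the multiplicities. Write $\delta_C=\lvert D_{k+1}\rvert+(a_{k+1}-r+1)\lvert D_k\rvert$. By Theorem \ref{1st homology}, $H_1(R_\epsilon(S_{\omega,T},\bar d);\mathbb{F})\cong\mathbb{F}$ for $\delta_C\le\epsilon<\lambda$ and is zero otherwise. In particular, the only homological critical values in degree one are $\delta_C$ and $\lambda$. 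Since $S_{\omega,T}$ is finite, the module is pointwise finite and changes at finitely many values, so Remark \ref{rmk:diagramAltSum} applies.

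The key step is to compute the persistent Betti numbers $\beta_1^{\epsilon,\epsilon'}$ for $\delta_C\le\epsilon\le\epsilon'<\lambda$. I would show that $T_{\epsilon,\epsilon'}$ is an isomorphism $\mathbb{F}\to\mathbb{F}$ on this range. The proof of Theorem \ref{1st homology} shows that $H_1(R_\epsilon)$ is generated by the class of the same cycle $\sigma=\sum_i\{x_i,x_{i\oplus1}\}$ for every such $\epsilon$. The inclusion $R_\epsilon\hookrightarrow R_{\epsilon'}$ sends $\sigma$ to $\sigma$, and $[\sigma]\neq0$ in $H_1(R_{\epsilon'})$ because $\epsilon'<\lambda$. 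Hence $\beta_1^{\epsilon,\epsilon'}=1$ whenever $\delta_C\le\epsilon\le\epsilon'<\lambda$. In every other case either the domain or the codomain vanishes, so $\beta_1^{\epsilon,\epsilon'}=0$.

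Next I apply formula \eqref{eq:BettiMult} with the critical values $\epsilon_1=\delta_C<\epsilon_2=\lambda$ and interleaved values $r_0=-\infty<\delta_C<r_1<\lambda<r_2=\infty$. The multiplicity of $(\delta_C,\lambda)$ is $\beta^{r_0,r_1}-\beta^{r_1,r_1}+\beta^{r_1,r_0}-\beta^{r_0,r_0}=0-1+0-0$ up to sign conventions, so I would check the indexing carefully. The robust computation is that exactly one interval is born at $\delta_C$ (rank jumps from $0$ to $1$) and dies at $\lambda$ (rank drops back to $0$), giving multiplicity $1$.

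Alternatively, and more cleanly, the module is isomorphic to the interval module $\mathbb{I}_{[\delta_C,\lambda)}$, since all nonzero structure maps are isomorphisms and the support is exactly $[\delta_C,\lambda)$. By the uniqueness in Definition \ref{def:dgm-module}, this gives $\dgm^R_1(S_{\omega,T},\bar d)=\{(\delta_C,\lambda)\}$. All other pairs get multiplicity zero, because the rank function is constant off these two values.

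The main obstacle is justifying that the structure maps are nonzero on $[\delta_C,\lambda)$, not just that the pointwise dimensions are $1$. This requires extracting from the proof of Theorem \ref{1st homology}, together with Lemma \ref{A4}, that a single cycle $\sigma$ represents the generator at every scale in that range. I would state this explicitly before concluding, rather than relying only on the dimension count.
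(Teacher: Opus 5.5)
Your proposal is correct and follows the paper's own argument. The same cycle $\sigma$ from Theorem \ref{1st homology} is a persistent generator, so $\beta_1^{\epsilon,\epsilon'}=1$ exactly when $\delta_C\le\epsilon\le\epsilon'<\lambda$ and $0$ otherwise, and Remark \ref{rmk:diagramAltSum} then yields the single point $(\delta_C,\lambda)$. Your sign worry is only an indexing slip: for the point $(\epsilon_1,\epsilon_2)=(\delta_C,\lambda)$ the formula reads $\beta_1^{r_0,r_2}-\beta_1^{r_1,r_2}+\beta_1^{r_1,r_1}-\beta_1^{r_0,r_1}=0-0+1-0=1$, and your interval-module reformulation is an equally valid way to finish.
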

\begin{proof} The 1-cycle $\sigma$ constructed in the proof of Theorem \ref{1st homology} provides
a persistent generator for
\[
H_1(R_\epsilon(S_{\omega, T}, \bar{d}\,); \F )
\]
whenever
$ \lvert D_{k+1}\rvert + (a_{k+1} - r + 1)\,\lvert D_k\rvert = \delta_C \leq  \epsilon < \lambda$.
This shows
that the persistent Betti numbers $\beta_1^{\epsilon, \epsilon'}$ are: Zero for  $\epsilon < \delta_C$ or  $\lambda \leq \epsilon'$,
and equal to one for $\delta_C \leq \epsilon \leq \epsilon' < \lambda$.
The result follows from   Remark \ref{rmk:diagramAltSum}.
\end{proof}

\section{Approximation Method}
\label{sec:AM}
We now detail our approximation method for the sliding window embedding of quasiperiodic functions. To handle general quasiperiodic functions, it is sufficient to consider those that are sums of exponentials, as discussed in Section \ref{sec: FS}. When \(f\) takes this form, its sliding window embedding admits the matrix representation
\[
  SW_{d,\tau}f(t) \;=\; A\,v_t.
\]
Recall that
\[
  \SW_{d,\tau}f(T)
  = \{\,SW_{d,\tau}f(t)\mid t=0,\dots,T\},
\]
denotes the corresponding sliding window point cloud,
and let \(\phi_T\) be the trajectory \(\{v_t\mid t=0,\dots,T\}\);
we assume \(T'\le T\),
and let \(G_T\) be as defined in Equation \ref{eq:Grid}. Throughout  this section  let \( l_0,M,N \in \mathbb{N} \),
we use \( j \) and \( l \) as dummy variables, and we let \( i  = \sqrt{-1}\in \mathbb{C} \).
Our approximation method relates the following metric spaces, and their corresponding persistence diagrams:
$$(\SW_{d,\tau} f(T),d_2) \sim ( \phi_T,d_{\infty}) \sim (G_{T'},d_{\infty}).$$
The first relation is obtained using an eigenvalue argument. Let $\sigma_{min}$ and $\sigma_{max}$ be the smallest and largest singular values of $A$, respectively.

\begin{lemma}
\label{bound 1}
Let $$ f(t) = \sum^M_{j=1} c_je^{2\pi i\omega_jt}, \ \ \ \ \ \ \ \ c_j \in \mathbb{C}\smallsetminus\{0\}, \ \omega_j \in \mathbb{R}. $$
Define $ k = \max\{\sigma_{min}^{-1},\sigma_{max} \sqrt{M} \}$. Suppose $(a_1,b_1) \in \dgm_{l_0}^R( \phi_T,d_{\infty}).$
If $$ \frac{b_1}{a_1} > k^2, $$ then there exists a unique $(a_0,b_0) \in \dgm_{l_0}^R(\SW_{d,\tau} f(T),d_2)$ such that
$$ \frac{1}{k} < \frac{\max\{ b_0,b_1\}}{\min\{ b_0,b_1\} }, \frac{\max\{ a_0,a_1\}}{\min\{ a_0,a_1\}} < k.  $$
\end{lemma}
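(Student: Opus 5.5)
The plan is to compare the two metric spaces through the bijection $v_t \mapsto A v_t$. We show this map is bi-Lipschitz with constant $k$ (a multiplicative interleaving), and then read off the matching of diagram points in logarithmic coordinates.

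\textbf{Step 1: distortion bounds.} For $t,t'\in\{0,\dots,T\}$ put $w=v_t-v_{t'}\in\C^M$. Then
\[
\|Aw\|_2 \le \sigma_{max}\|w\|_2 \le \sigma_{max}\sqrt{M}\,\|w\|_\infty \le k\,\|w\|_\infty,
\]
and
\[
\|Aw\|_2 \ge \sigma_{min}\|w\|_2 \ge \sigma_{min}\|w\|_\infty \ge k^{-1}\|w\|_\infty .
\]
Here $\sigma_{min}$ is the smallest singular value of $A$ acting on $\C^M$; we need $\sigma_{min}>0$, i.e.\ $A$ has full column rank, or else $k=\infty$ and the statement is vacuous. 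Writing $d_2$ for the sliding window metric and $d_\infty$ for the metric on $\phi_T$, this gives
\[
k^{-1} d_\infty \le d_2 \le k\, d_\infty .
\]

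\textbf{Step 2: multiplicative interleaving.} Step 1 yields inclusions of Rips complexes
\[
R_{\epsilon}(\phi_T,d_\infty)\subseteq R_{k\epsilon}(\SW_{d,\tau}f(T),d_2)\subseteq R_{k^2\epsilon}(\phi_T,d_\infty),
\]
and symmetrically with the roles exchanged. Reparametrize both filtrations by $u=\log\epsilon$. In the new parameter the inclusions become an additive $\log k$-interleaving of the $l_0$-th persistence modules, in the sense of the paper's interleaving definition. Theorem \ref{isometry theorem} then gives
\[
d_B\bigl(\log\dgm_{l_0}^R(\phi_T),\log\dgm_{l_0}^R(\SW_{d,\tau}f(T))\bigr)\le \log k .
\]
Births at $\epsilon=0$ become $-\infty$ after taking logarithms. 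This is harmless: since the bijection $t\mapsto A v_t$ preserves the vertex sets, the matched births agree, and degree-$0$ points at birth $0$ satisfy the ratio condition trivially once we adopt the convention $0/0=1$. For $l_0\ge1$ all births are positive.

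\textbf{Step 3: extract the point.} Take $(a_1,b_1)$ with $\log b_1-\log a_1>2\log k$. Its distance in the $\ell^\infty$ norm to the diagonal is $(\log b_1-\log a_1)/2>\log k$, so under a matching of cost at most $\log k$ it cannot be paired with the diagonal. Hence it is matched to some $(a_0,b_0)$ with $|\log a_0-\log a_1|\le\log k$ and $|\log b_0-\log b_1|\le \log k$, which is the ratio bound. Strict inequality requires either an optimal matching with strict slack or a perturbation argument. Since the diagrams are finite, the bottleneck infimum is attained, and we would note that strictness holds generically and otherwise accept non-strict bounds.

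\textbf{Step 4: uniqueness (the main obstacle).} Uniqueness is the delicate step. Bottleneck matchings are not unique, and two points of $\dgm_{l_0}^R(\SW_{d,\tau}f(T))$ could both lie in the $k$-box around $(a_1,b_1)$. The approach I would try first is an induced-matching or rank argument: use the interleaving maps to compare the persistent Betti numbers $\beta^{\epsilon,\epsilon'}$ of Remark \ref{rmk:diagramAltSum} on the box $[a_1/k,a_1k]\times[b_1/k,b_1k]$. Because $b_1/a_1>k^2$, the intervals $[a_1/k,a_1k]$ and $[b_1/k,b_1k]$ are disjoint. The interleaving then identifies the multiplicity of points in the box with the multiplicity of the corresponding box around $(a_1,b_1)$ in the other diagram. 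So uniqueness should be read as the statement that the matched point is determined, counted with multiplicity, by $(a_1,b_1)$.
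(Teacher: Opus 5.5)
Your Steps 1--3 are the paper's proof. The map $v_t\mapsto Av_t$ distorts distances by at most a factor $k$ between $d_\infty$ and $d_2$. The resulting Rips inclusions become a $\ln k$-interleaving after reparametrizing by $\ln\epsilon$. Theorem \ref{isometry theorem} then forces a point with $\ln b_1-\ln a_1>2\ln k$ to be matched off the diagonal. Your two-sided chain of literal inclusions is, if anything, cleaner than the paper's version. The paper draws only the triangle through $\phi_T$ and appeals to contiguity, even though $\nu\circ\mu$ is the identity on vertices.

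Your caveats about strictness and uniqueness are justified, and the paper's proof does not resolve them either. It asserts both right after invoking the isometry theorem, but a $\ln k$-interleaving only gives $|\ln a_0-\ln a_1|\le\ln k$ and $|\ln b_0-\ln b_1|\le\ln k$. Likewise, $(a_0,b_0)$ is unique only as the partner of $(a_1,b_1)$ under one fixed optimal matching. The literal claims can fail:
\begin{itemize}
\item For $M=1$, the single column of $A$ has norm $1$, so $\sigma_{min}=\sigma_{max}=k=1$. The two point clouds are then isometric, and the required $1<\max\{b_0,b_1\}/\min\{b_0,b_1\}<1$ is impossible.
\item Two distinct points of $\dgm_{l_0}^R(\SW_{d,\tau}f(T),d_2)$ lying in the same $k$-box around $(a_1,b_1)$ both satisfy the ratio bounds. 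Nearly coincident pairs such as $(0.193,1.771)$ and $(0.194,1.753)$ in Section \ref{ssec:Ex} are exactly where this happens.
\end{itemize}
So state and prove the non-strict, matching-based version, which your Steps 1--3 already establish.

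The part to discard is the rank argument in Step 4. The box lemma does not identify the multiplicity in a box with that of the same box in the other diagram. It only bounds the count in a box by the count in its $\ln k$-thickening in the other diagram, so it cannot produce uniqueness.
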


\begin{proof}
Note that
$$ \sigma_{min} d_{2}(v_{t_1},v_{t_2}) \leq d_2(Av_{t_1},Av_{t_2}) \leq \sigma_{max}d_{2}(v_{t_1},v_{t_2}), $$ and
$$  d_{\infty}(v_{t_1},v_{t_2}) \leq d_{2}(v_{t_1},v_{t_2}) \leq \sqrt{M} d_{\infty}(v_{t_1},v_{t_2}).$$
We conclude that
$$  d_{\infty}(v_{t_1},v_{t_2}) \leq d_{2}(v_{t_1},v_{t_2}) \leq \frac{1}{\sigma_{min}} d_2(Av_{t_1},Av_{t_2})  \leq \frac{\sigma_{max}}{\sigma_{min}} d_2(Av_{t_1},Av_{t_2}) \leq \sqrt{M} d_{\infty}(v_{t_1},v_{t_2}).$$
Since $SW_{d,\tau}f(t)=Av_t$ and $A$ is full-rank (provided $d$ and $\tau$ are chosen according to \cite{gakhar2021sliding}), then
$$
R_{\epsilon}(\SW_{d,\tau}f(T),d_2)
\hookrightarrow
R_{\epsilon/\sigma_{min}}(\phi_T,d_{\infty})
\hookrightarrow
R_{\epsilon\frac{\sigma_{max}}{\sigma_{min}}}(\SW_{d,\tau}f(T),d_2)
$$
under the induced simplicial maps obtained from $\mu:\SW_{d,\tau}f(T) \longrightarrow \phi_T$, $\mu(Av_t)=v_t$, and
$\nu:\phi_T \longrightarrow \SW_{d,\tau}f(T)$, $\nu(v_t)=Av_t$. We have the following diagram:
\begin{center}
\begin{tikzpicture}
  \node (R1) at (0,0) {$R_{\epsilon}(\SW_{d,\tau}f(T),d_2)$};
  \node (R2) at (3,-2) {$R_{k\epsilon}(\phi_T,d_{\infty})$};
  \node (R3) at (6,0) {$R_{k^2\epsilon}(\SW_{d,\tau}f(T),d_2)$};

  \draw[->] (R1) -- (R3) node[midway, above ]{$\iota$};
  \draw[->] (R1) -- (R2) node[midway, left=0.5em] {$\mu$};
  \draw[->] (R2) -- (R3) node[midway, above] {$\nu$};
\end{tikzpicture}
\end{center}
where $\iota$ is the   inclusion $  R_\epsilon(\SW_{d,\tau} f(T), d_2) \hookrightarrow R_{k^2\epsilon}(\SW_{d,\tau} f(T), d_2)$.
Furthermore $\iota$ is contiguous to $\nu\circ \mu$;
that is, for $\sigma \in R_{\epsilon}(\SW_{d,\tau}f(T),d_2)$ we have $\iota(\sigma)\cup\nu\circ  \mu(\sigma)  \in R_{k^2\epsilon}(\SW_{d,\tau}f(T),d_2)$.
Hence we obtain a commutative diagram at the level of homology, which we re-parametrize in  logarithmic scale as
\begin{center}
\begin{tikzpicture}
  \node (R1) at (0,0) {$H_n(R_{\ln(\epsilon)}(\SW_{d,\tau}f(T),d_2);\mathbb{F})$};
  \node (R2) at (3,-2) {$H_n(R_{\ln(k)+\ln(\epsilon)}(\phi_T,d_{\infty});\mathbb{F})$};
  \node (R3) at (6,0) {$H_n(R_{2\ln(k)+\ln(\epsilon)}(\SW_{d,\tau}f(T),d_2);\mathbb{F})$};

  \draw[->] (R1) -- (R3) node[midway, above]{$\iota_*$};
  \draw[->] (R1) -- (R2) node[midway, left=0.5em] {$\mu_*$};
  \draw[->] (R2) -- (R3) node[midway, above, left=0.5em] {$\nu_*$};
\end{tikzpicture}
\end{center}
This gives us a $\ln(k)$ interleaving which implies by Theorem \ref{isometry theorem} that if for $(a_1,b_1) \in \text{dgm}_{l_0}^R( \phi_T,d_{\infty})$, $$\ln(b_1)-\ln(a_1) > 2\ln(k), $$ i.e. $\frac{b_1}{a_1} > k^2$, there exists a unique
$(a_0,b_0) \in \text{dgm}_{l_0}^R(\SW_{d,\tau}f(T),d_2)$ such that $$ | \ln(b_1)-\ln(b_0)| < \ln(k) \ \ \ \ \text{and} \ \ \ \ | \ln(a_1)-\ln(a_0)| < \ln(k).$$ This implies
$$ -\ln(k) < \ln\Big(\frac{\max\{b_0,b_1\}}{\min\{b_0,b_1\}}\Big), \ln\Big(\frac{\max\{a_0,a_1\}}{\min\{a_0,a_1\}}\Big) < \ln(k),$$ from which the result follows.
\end{proof}

To relate the two metric spaces
\[
  \bigl(\phi_T,\,d_{\infty}\bigr)
  \quad\text{and}\quad
  \bigl(G_{T'},\,d_{\infty}\bigr),
\]
we apply Proposition \ref{thm:Pstability} together with the definition of the bottleneck distance.
\begin{lemma}
\label{bound 2}
Let $ \lambda = d_{GH}(\phi_T,G_{T'}) $ and $(a_2,b_2) \in \dgm_{l_0}^R(G_{T'},d_{\infty})$. If $$b_2-a_2 > 4 \lambda,$$ then there exists a unique $(a_1,b_1) \in \dgm_{l_0}^R(\phi_T,d_\infty)$ such that $$|b_1-b_2| < 2\lambda  \ \ \text{and} \ \ |a_1-a_2| < 2\lambda. $$
\end{lemma}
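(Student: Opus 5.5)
The plan is to combine the Rips stability theorem (Theorem \ref{thm:Pstability}) with the definition of the bottleneck distance (Definition \ref{bottleneck def}), as the sentence preceding the lemma suggests, and then read off the partner of $(a_2,b_2)$ from an optimal matching.

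\textbf{Step 1: the stability bound.} Since $\phi_T$ and $G_{T'}$ are finite metric spaces under $d_\infty$, Theorem \ref{thm:Pstability} gives
\[
d_B\bigl(\dgm^R_{l_0}(G_{T'},d_\infty),\dgm^R_{l_0}(\phi_T,d_\infty)\bigr)\le 2\lambda .
\]
Both diagrams have finitely many off-diagonal points. Hence the infimum in Definition \ref{bottleneck def} is a minimum over finitely many combinatorial pairing patterns, where every unmatched point is sent to its nearest diagonal point. So there is a bijection $\Phi$ realizing $d_B$, and every $y$ satisfies $\|y-\Phi(y)\|_\infty\le 2\lambda$.

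\textbf{Step 2: the partner is off-diagonal.} The $\ell_\infty$-distance from $(a_2,b_2)$ to the diagonal is $(b_2-a_2)/2$, which exceeds $2\lambda$ by hypothesis. Therefore $\Phi(a_2,b_2)$ cannot be a diagonal point. It is an off-diagonal point $(a_1,b_1)\in\dgm^R_{l_0}(\phi_T,d_\infty)$ with $|a_1-a_2|\le 2\lambda$ and $|b_1-b_2|\le 2\lambda$.

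\textbf{Step 3: uniqueness.} I intend ``unique'' in the same sense as in Lemma \ref{bound 1}: $(a_1,b_1)$ is the image of $(a_2,b_2)$ under the fixed optimal matching $\Phi$. Since $\Phi$ is a bijection of multisets, distinct points of $\dgm^R_{l_0}(G_{T'},d_\infty)$ with persistence $>4\lambda$ receive distinct partners, so the assignment is well defined and injective. I will state this interpretation explicitly. A stronger statement, that no other diagram point lies in the $2\lambda$-box, cannot be expected in general because of multiplicities.

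\textbf{Step 4: strict inequalities (main obstacle).} Stability only delivers the non-strict bound $2\lambda$, so strictness needs a separate argument. I would first try to show that equality $\|\,(a_2,b_2)-\Phi(a_2,b_2)\,\|_\infty=2\lambda$ is impossible for the matched pair, in two parts:
\begin{enumerate}
\item Replace $\lambda$ by the minimal distortion of a correspondence between $\phi_T$ and $G_{T'}$. This minimum is attained because the spaces are finite.
\item Observe that Rips birth and death values are pairwise distances. So $a_1,a_2,b_1,b_2$ are realized distances, and $2\lambda$ equals a distortion $\bigl|d_\infty(x,x')-d_\infty(g,g')\bigr|$ for some pairs of points.
\end{enumerate}
If a coincidence $|b_1-b_2|=2\lambda$ cannot be excluded this way, I would fall back on a perturbation. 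Apply the argument with $\lambda$ replaced by $\lambda'$ slightly larger than $\lambda$ but still satisfying $b_2-a_2>4\lambda'$, which is possible since that inequality is strict. Then note that the matched partner is unchanged for $\lambda'$ close to $\lambda$, because both diagrams are finite. This yields strict bounds relative to $\lambda'$.

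I expect strictness to be the delicate point. Formally it may require either a genericity assumption or reading $\lambda$ as any upper bound for $d_{GH}$ rather than its exact value.
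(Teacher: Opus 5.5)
Your Steps 1--3 are the paper's proof. Theorem \ref{thm:Pstability} bounds the bottleneck distance by $2\lambda$. An optimal matching exists because the diagrams are finite. The hypothesis $b_2-a_2>4\lambda$ keeps the partner of $(a_2,b_2)$ off the diagonal. Your reading of ``unique'' as ``the partner under the fixed optimal matching'' is also the only sense in which the paper's argument supports uniqueness. These steps are correct, and they yield $|a_1-a_2|\le 2\lambda$ and $|b_1-b_2|\le 2\lambda$.

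Step 4 is where the proposal fails, and it cannot be completed, because the strict inequalities are false as stated. The paper obtains them by asserting $B<2\lambda$ from Theorem \ref{thm:Pstability} and then $|b_1-b_2|<B$. Neither is justified: stability only gives $B\le 2\lambda$, and an optimal matching only gives $|b_1-b_2|\le B$.

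Two examples show equality really occurs.
\begin{itemize}
\item Inside the paper's own setting, take $M=1$ and $T'=T$. Then $G_{T'}=\phi_T$, so $\lambda=0$, and for any finite off-diagonal point the required $|b_1-b_2|<0$ is impossible.
\item Equality also happens with $\lambda>0$. Two-point spaces with distances $1$ and $1.1$ have $d_{GH}=0.05$. Their $H_0$ points $(0,1.1)$ and $(0,1)$ are matched at distance exactly $0.1=2\lambda$, even though $1.1>4\lambda$.
\end{itemize}
So no argument that uses only finiteness of the two spaces can exclude equality. In particular, your attained-correspondence and realized-distance idea cannot. Your perturbation argument merely proves $|b_1-b_2|<2\lambda'$ for every $\lambda'>\lambda$, which is the non-strict bound restated.

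You should drop Step 4 and state the lemma with $\le 2\lambda$. Alternatively, keep the strict inequalities but require $\lambda>d_{GH}(\phi_T,G_{T'})$; this is consistent with how Section \ref{ssec:Ex} only uses an upper bound for $\lambda$. The non-strict version is all Theorem \ref{AM theorem1} needs. Its hypothesis is strict, so $b_1/a_1\ge (b_2-2\lambda)/(a_2+2\lambda)>\max\{k^2,1\}$ still holds, and its conclusions are non-strict anyway.
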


\begin{proof}
Let $B=d_B(\text{dgm}_{l_0}^R(G_{T'},d_{\infty}),\text{dgm}_{l_0}^R(\phi_T,d_\infty)).$ By Theorem \ref{thm:Pstability}, $B<2\lambda$. Thus, $$b_2-a_2 > 4\lambda > 2B$$ and hence by the definition of the bottleneck distance, Definition \ref{bottleneck def}, there exists a unique $(a_1,b_1) \in \text{dgm}_{l_0}^R(\phi_T,d_\infty)$ such that $$|b_1-b_2| < B < 2\lambda  \ \ \text{and} \ \ |a_1-a_2| < B < 2\lambda, $$ as desired.
\end{proof}

We now combine both results to obtain a direct relation between $\text{dgm}_{l_0}^R(\SW_{d,\tau}f(T),d_2)$ and $\text{dgm}_{l_0}^R(G_{T'},d_{\infty})$. We assume the notation used before.

\begin{figure}[t!]
    \centering
    \includegraphics[width=.33\textwidth, trim=3.3cm 3.3cm 3.3cm 3.3cm, clip]{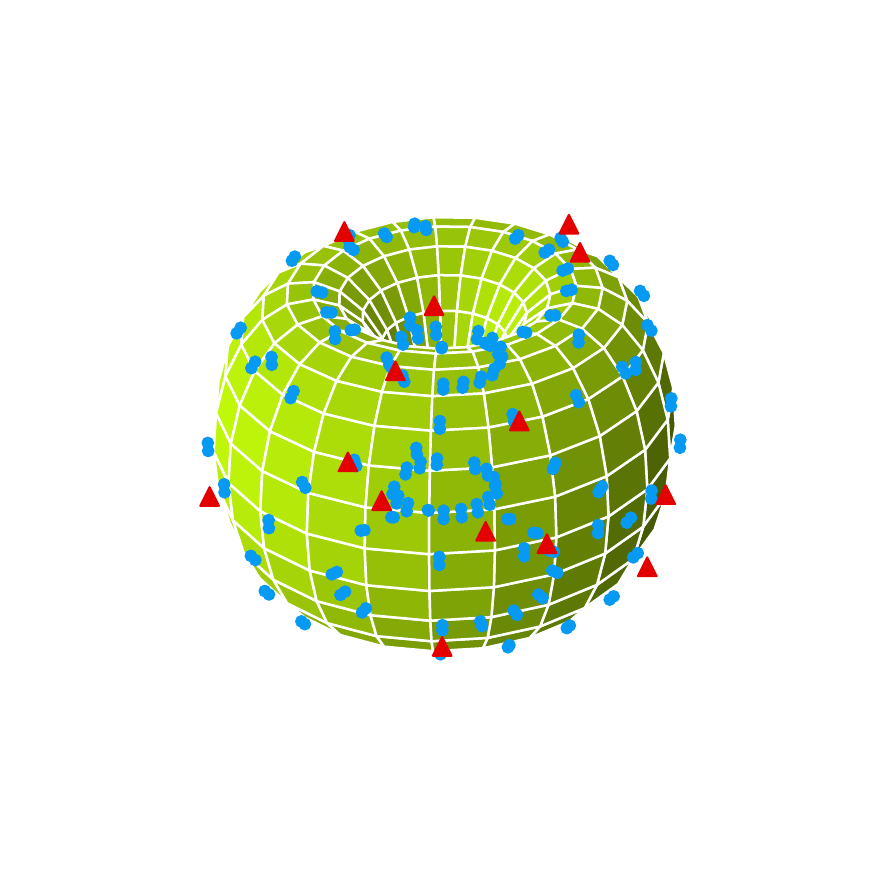}
    \hfill
    \includegraphics[width=0.4\textwidth]{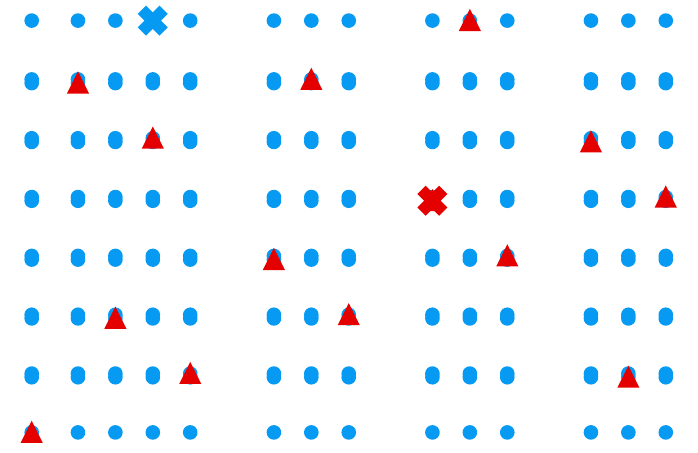}
\caption{Comparison of \(\phi_{13}\) (red triangles) and \(G_{13}\) (blue dots) for a two‐frequency torus with incommensurate \(\omega_1,\omega_2\). Left: Points plotted on the torus \(T^2\subset\C^2\). Right: Same points in the rectangular fundamental domain \([0,1)^2\). The pair marked \(\times\) realizes the Hausdorff distance between \(\phi_{13}\) and \(G_{13}\).}
\label{fig:Torus}
\end{figure}

\begin{theorem}
\label{AM theorem1}
If $(a_2,b_2) \in \dgm_{l_0}^R(G_{T'},d_{\infty}) $ satisfies
$$ \frac{b_2-2\lambda}{a_2+2\lambda} > \max\{k^2,1\}, $$
then there exists a unique $(a_0,b_0) \in \dgm_{l_0}^R(\{SW_{d,\tau}f(t)\}_{t=0}^T,d_2) $ satisfying
$$ \max \biggl\{ 0, \frac{b_2-2\lambda}{k} \biggl\} \leq \ b_0 \ \leq k \ (b_2 + 2\lambda), $$
$$ \max \biggl\{ 0, \frac{a_2-2\lambda}{k} \biggl\} \leq \ a_0 \ \leq k \ (a_2 + 2\lambda). $$
\end{theorem}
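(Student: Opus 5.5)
The plan is to chain Lemma \ref{bound 2} and Lemma \ref{bound 1}, going from $(G_{T'},d_\infty)$ to $(\phi_T,d_\infty)$ and then to $(\SW_{d,\tau}f(T),d_2)$.

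\textbf{Step 1: move from $G_{T'}$ to $\phi_T$.} Start with $(a_2,b_2)\in\dgm^R_{l_0}(G_{T'},d_\infty)$. The hypothesis
\[
\frac{b_2-2\lambda}{a_2+2\lambda}>\max\{k^2,1\}\geq 1
\]
gives $b_2-2\lambda>a_2+2\lambda$, that is, $b_2-a_2>4\lambda$. So Lemma \ref{bound 2} applies. It produces a unique $(a_1,b_1)\in\dgm^R_{l_0}(\phi_T,d_\infty)$ with $|a_1-a_2|<2\lambda$ and $|b_1-b_2|<2\lambda$. Consequently
\[
b_1>b_2-2\lambda \quad\text{and}\quad a_1<a_2+2\lambda .
\]

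\textbf{Step 2: move from $\phi_T$ to the sliding window cloud.} Rips birth times are nonnegative, and we may assume $a_1>0$ (if $a_1=0$ the ratio is infinite). Then
\[
\frac{b_1}{a_1}>\frac{b_2-2\lambda}{a_2+2\lambda}>k^2 .
\]
This is exactly the hypothesis of Lemma \ref{bound 1}, which yields a unique $(a_0,b_0)\in\dgm^R_{l_0}(\SW_{d,\tau}f(T),d_2)$ with $b_0/k<b_1<kb_0$ and $a_0/k<a_1<ka_0$. It gives this in the form $1/k<\max/\min<k$, which we unpack into these two-sided multiplicative bounds.

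\textbf{Step 3: combine the bounds.} For the upper bound,
\[
b_0<kb_1<k(b_2+2\lambda).
\]
For the lower bound,
\[
b_0>b_1/k>(b_2-2\lambda)/k,
\]
and $b_0\geq 0$ trivially, which gives the stated $\max\{0,\cdot\}$ form. The birth coordinate $a_0$ is handled identically.

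The main obstacle is the degenerate case $a_1=0$, which is typical in degree $0$ and can also occur when $a_2$ is small. The ratio argument then needs care, since Lemma \ref{bound 1} is phrased via logarithms. I would handle it by going back to the multiplicative interleaving in the proof of Lemma \ref{bound 1}. The maps $\mu,\nu$ are the identity at scale $0$, because both Rips complexes have the same vertex set. So a point born at $0$ corresponds to one born at $0$, and the claimed inequality $0\le a_0\le k(a_2+2\lambda)$ holds trivially.

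Uniqueness is the composite of the uniqueness statements in the two lemmas. I would take it as given in the same sense those lemmas assert it, without re-deriving matching arguments.
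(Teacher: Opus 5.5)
Your proposal is correct and follows the same route as the paper's proof. You deduce $b_2-a_2>4\lambda$ from the hypothesis, apply Lemma~\ref{bound 2} to get $(a_1,b_1)\in\dgm^R_{l_0}(\phi_T,d_\infty)$, use $b_1/a_1>(b_2-2\lambda)/(a_2+2\lambda)>k^2$ to invoke Lemma~\ref{bound 1}, and unpack the ratio bounds into $b_1/k<b_0<kb_1$ (and likewise for $a_0$). Your derivation of $b_2-a_2>4\lambda$ directly from $\max\{k^2,1\}\ge 1$ is cleaner than the paper's, and your treatment of the case $a_1=0$ covers a point the paper skips; that case can in fact only occur in degree $0$ for Rips filtrations of distinct points.
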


\begin{proof}
Let us note that the assumption implies
$$ b_2-a_2>b_2-a_2\max\{k^2,1\} > 2\lambda +2\lambda\max\{k^2,1\} > 4\lambda, $$ and thus by Lemma \ref{bound 2} there exists a unique $(a_1,b_1) \in \dgm_{l_0}^R(\phi_T,d_\infty)$ such that $$b_2-2\lambda < b_1 < b_2+2\lambda$$ and $$a_2-2\lambda < a_1 < a_2+2\lambda.$$ We conclude $$\frac{b1}{a_1} > \frac{b_2-2\lambda}{a_2+2\lambda} > k^2$$ and thus by Lemma \ref{bound 1} there exists a unique $(a_0,b_0) \in \dgm_{l_0}^R(\SW_{d,\tau}f(T) ,d_2)$ such that
$$ \frac{1}{k} < \frac{\max\{ b_0,b_1\}}{\min\{ b_0,b_1\} },\frac{\max\{ a_0,a_1\}}{\min\{ a_0,a_1\}} < k. $$
W.L.O.G. we assume $b_0=\max\{b_0,b_1\}$. Combining our inequalities, we get $$\frac{b_2-2\lambda}{k} < \frac{b_1}{k} < b_0 < kb_1 < k(b_2+2\lambda). $$ An analogous inequality can be obtained for $a_0$ completing the proof.
\end{proof}

There is a similar result that can be obtained for the case of ``symmetric" signals. The following shows it is sufficient to scale our coefficients and consider only half of the frequencies.

\begin{corollary}
\label{thm:3GT}
Let $$ f(t) = \sum^N_{j=1} c_j(e^{2\pi i\omega_jt}+e^{-2\pi i\omega_jt}), \ \ \ \ \ \ \ \ c_j \in \mathbb{C}\smallsetminus\{0\}, \ \omega_j \in \mathbb{R}.$$ Let \(\phi_T\)  denote the trajectory
\[
  \phi_T
  = \bigl\{\,(\tilde{c}_1 e^{2\pi i\omega_1 t}, \dots, \tilde{c}_N e^{2\pi i\omega_N t})^\top
    \;\big|\; t = 0,1, \ldots ,T\bigr\},
\]
and let \(G_{T'}\)   denote the grid
\[
  G_{T'}
  = \bigl\{\,(\tilde{c}_1 e^{2\pi i\omega_1 t_1}, \dots, \tilde{c}_N e^{2\pi i\omega_N t_N})^\top
    \;\big|\; t_n = 0, 1, \ldots , T', \; n =1, \ldots, N\bigr\}.
\]
Let $ k = \max\{\sigma_{min}^{-1},\sigma_{max} \sqrt{2N} \}$ and $\lambda = d_{GH}(\phi_T,G_{T'})$.
If $(a_2,b_2) \in \dgm_{l_0}^R(G_{T'},d_{\infty}) $ satisfies
$$ \frac{b_2-2\lambda}{a_2+2\lambda} > \max\{k^2,1\}, $$
then there exists a unique
$(a_0,b_0) \in \dgm_{l_0}^R(\SW_{d,\tau}f(T),d_2) $ satisfying
$$ \max \biggl\{ 0, \frac{b_2-2\lambda}{k} \biggl\} \leq \ b_0 \ \leq k \ (b_2 + 2\lambda), $$
$$ \max \biggl\{ 0, \frac{a_2-2\lambda}{k} \biggl\} \leq \ a_0 \ \leq k \ (a_2 + 2\lambda). $$
\end{corollary}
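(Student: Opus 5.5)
The plan is to reduce Corollary \ref{thm:3GT} to Theorem \ref{AM theorem1}. The signal is rewritten as an exponential sum with $2N$ terms, and a metric comparison is used to replace the $2N$-dimensional trajectory by the $N$-dimensional $\phi_T$.

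Write $f(t)=\sum_{j=1}^{2N} c'_j e^{2\pi i\omega'_j t}$ with $\omega'_j=\omega_j$, $\omega'_{N+j}=-\omega_j$ and $c'_j=c'_{N+j}=c_j$. Then $SW_{d,\tau}f(t)=A v'_t$, where $A\in\C^{(d+1)\times 2N}$ is the matrix of Section \ref{sec: FS} built from these $2N$ frequencies. The vector $v'_t$ has entries $\tilde c_j e^{2\pi i\omega_j t}$ and $\tilde c_j e^{-2\pi i\omega_j t}$. Lemma \ref{bound 1} applies with $M=2N$, which accounts for the constant $k=\max\{\sigma_{min}^{-1},\sigma_{max}\sqrt{2N}\}$. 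It compares $\dgm^R_{l_0}(\SW_{d,\tau}f(T),d_2)$ with $\dgm^R_{l_0}(\phi'_T,d_\infty)$, where $\phi'_T=\{v'_t\}_{t=0}^T$.

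The key step is to identify $(\phi'_T,d_\infty)$ with $(\phi_T,d_\infty)$. Consider the map $v'_t\mapsto v_t$, which forgets the conjugate-frequency coordinates. For each $j$ we have
\[
|\tilde c_j e^{-2\pi i\omega_j t_1}-\tilde c_j e^{-2\pi i\omega_j t_2}|=|\tilde c_j e^{2\pi i\omega_j t_1}-\tilde c_j e^{2\pi i\omega_j t_2}|,
\]
because the left side is the modulus of the complex conjugate of the right side, up to the unit factor $\tilde c_j/\overline{\tilde c_j}$. Taking the maximum over all $2N$ coordinates therefore gives exactly the maximum over the first $N$ coordinates. So the map is an isometry, and it is a bijection whenever the $v_t$ are distinct; if not, both point clouds collapse identically. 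By Proposition \ref{isomorphic diagrams} with $\lambda=1$, the two $d_\infty$-Rips diagrams coincide. Hence Lemma \ref{bound 1} holds verbatim with $\phi_T$ in place of the $2N$-dimensional trajectory.

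The remaining steps copy the proof of Theorem \ref{AM theorem1}. Lemma \ref{bound 2} with $\lambda=d_{GH}(\phi_T,G_{T'})$, where $G_{T'}$ is the $N$-fold grid, produces the unique $(a_1,b_1)$ satisfying $|a_1-a_2|,|b_1-b_2|<2\lambda$. The hypothesis $\frac{b_2-2\lambda}{a_2+2\lambda}>\max\{k^2,1\}$ gives both $b_2-a_2>4\lambda$ and $b_1/a_1>k^2$. The modified Lemma \ref{bound 1} then produces the unique $(a_0,b_0)$ whose ratios to $(a_1,b_1)$ lie in $(1/k,k)$. Chaining these inequalities, and using $a_0,b_0\ge 0$ for the $\max\{0,\cdot\}$ lower bounds, yields the stated bounds.

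The main obstacle I expect is the isometry step. The conjugate-coordinate equality needs care: it holds because $|\tilde c_j|$ is common to both coordinates and conjugation preserves moduli. Everything else is a direct transcription of the earlier argument, with $M=2N$ in the singular-value comparison.
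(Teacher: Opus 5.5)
Your proposal is correct and follows essentially the same route as the paper: view $f$ as a $2N$-term exponential sum so that Lemma \ref{bound 1} applies with $M=2N$, identify the resulting $2N$-dimensional trajectory with $\phi_T$ under $d_\infty$ via the conjugate-coordinate symmetry, and then rerun Lemma \ref{bound 2} and the inequality chain of Theorem \ref{AM theorem1}. The one difference is normalization: the paper puts a factor $2^{-1/2}$ into its auxiliary trajectory $\bar{\phi}_T$ and still asserts that $v\mapsto\sqrt{2}\,v$ is an isometry, although under $d_\infty$ that map would rescale the diagrams by $\sqrt{2}$; your unnormalized $v'_t$, with $\tilde c'_j=\sqrt{d+1}\,c_j=\tilde c_j$, makes the forgetful map a genuine isometry, so your version of that step is the cleaner one.
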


\begin{proof}
Let $$ \bar{\phi}_T = \{ 2^{-\frac{1}{2}}(\tilde{c}_1e^{2\pi i\omega_1t},\dots,\tilde{c}_{N}e^{2\pi i\omega_Nt},\tilde{c}_1e^{-2\pi i\omega_1t},\dots,\tilde{c}_{N}e^{-2\pi i\omega_Nt})^\intercal \}_{0\leq t \leq T }. $$
By Lemma \ref{bound 1}, if $(\bar{a}_1,\bar{b}_1) \in \dgm_{l_0}^R( \bar{\phi}_T,d_{\infty})$ and $$ \frac{\bar{b}_1}{\bar{a}_1} > k^2, $$ then there exists a unique $(a_0,b_0) \in \dgm_{l_0}^R(\SW_{d,\tau}f(T) ,d_2)$ such that
$$ \frac{1}{k} < \frac{\max\{ b_0,\bar{b}_1\}}{\min\{ b_0,\bar{b}_1\} },\frac{\max\{ a_0,\bar{a}_1\}}{\min\{ a_0,\bar{a}_1\}} < k.  $$
On the other hand, since
 $$  d_2(c_le^{i\omega_lt_1},c_le^{i\omega_lt_2}) = d_2(c_le^{-i\omega_lt_1},c_le^{-i\omega_lt_2}) $$ the maps $\mu:\bar{\phi}_T\longrightarrow \phi_T, \ \mu(v_t)=\sqrt{2}v_t$  and $\nu: \phi_T \longrightarrow \bar{\phi}_T, \nu(v_t)=2^{-\frac{1}{2}} v_t$ are isometries. Thus, by Proposition \ref{isomorphic diagrams}, $(\bar{a}_1,\bar{b}_1) \in \dgm_{l_0}^R( \bar{\phi}_T,d_{\infty})$ if and only if there exist a unique $(a_1,b_1) \in \dgm_{l_0}^R( \phi_T,d_{\infty})$ such that $$a_1=\bar{a}_1 \ \ \ \text{and} \ \ \ b_1=\bar{b}_1.$$ Combining this fact with Lemma \ref{bound 2}, we can say that if $(a_2,b_2) \in \text{dgm}_{l_0}^R(G_{T'},d_{\infty})$ and $$b_2-a_2 > 4 \lambda$$ then there exists a unique $(\bar{a}_1,\bar{b}_1) \in \text{dgm}_{l_0}^R(\bar{\phi}_T,d_\infty)$ such that $$|\bar{b}_1-b_2| < 2\lambda  \ \ \text{and} \ \ |\bar{a}_1-a_2| < 2\lambda. $$ By the equivalences that have now been shown, we can replicate the arguments in Theorem \ref{AM theorem1} to conclude the result.
\end{proof}

\begin{remark}[Error‐bound confidence region]
\label{Ebound}
Let
\[
  (a_2,b_2)\;\in\;\mathrm{dgm}^R_{l_0}\bigl(G_{T'},d_{\infty}\bigr).
\]
Under the hypotheses of Theorem \ref{AM theorem1}, there is a corresponding
\((a_0,b_0)\in\mathrm{dgm}^R_{l_0}\bigl(\SW_{d,\tau}f(T),d_2\bigr)\)
satisfying
$$ \max \biggl\{ 0, \frac{b_2-2\lambda}{k} \biggl\} \leq \ b_0 \ \leq k \ (b_2 + 2\lambda), $$
$$ \max \biggl\{ 0, \frac{a_2-2\lambda}{k} \biggl\} \leq \ a_0 \ \leq k \ (a_2 + 2\lambda). $$
It follows that \((a_0,b_0)\) lies in the rectangle
\begin{center}
\begin{tikzpicture}
    \draw  (0,0) coordinate (A) -- (3,0) coordinate (B) -- (3,1.5) coordinate (C) -- (0,1.5) coordinate (D) -- cycle;
    \draw (2,1) coordinate (AA) -- cycle;
    \node at (AA) [below left] {$(a_0,b_0)$};
    \node at (A) [below left] {$(x_0,y_0)$};
    \node at (B) [below right] {$(x_1,y_0)$};
    \node at (C) [above right] {$(x_1,y_1)$};
    \node at (D) [above left] {$(x_0,y_1)$};
\end{tikzpicture}
\end{center}
\noindent where $x_0=\max\left\{0,\frac{a_2-2\lambda}{k}\right\}$, $x_1=k(a_2+2\lambda)$, $y_0=\max\left\{0,\frac{b_2-2\lambda}{k}\right\}$, and $y_1=k(b_2+2\lambda)$. This rectangle serves as a confidence region for the approximation.
\end{remark}

We have shown explicitly how $\text{dgm}_{l_0}^R(G_T,d_{\infty})$ approximates $\text{dgm}_{l_0}^R(\SW_{d,\tau}f(T),d_2).$ In order to leverage our results from Section \ref{MR section} we use the K\"unneth Formula, see Theorem \ref{kunneth formula}. It allow us to apply Theorem \ref{0th homolgy} and Theorem \ref{1st homology} to compute $\text{dgm}_{l_0}^R(G_T,d_{\infty})$ for ${l_0}=1,2.$ This approach is detailed in the next result. For ease of presentation, we shall express persistence information using the barcode notation \(\mathrm{bcd}^R_i\) in the following result; these barcodes correspond exactly to the persistence diagrams \(\mathrm{dgm}^R_i\) as detailed in Section \ref{sec:Back}.

\begin{theorem}
\label{GT Kunneth}
Let $ c_j \in \mathbb{C}$ and $ \omega_j \in \mathbb{R}$. For $ a,b,s \in \mathbb{R} $, let $\bar{a} = 2 \  \sin(\pi a) $, $ \omega'_l= \frac{\omega_l}{2\pi} $, and for $I = [a,b)$ we let $\bar{I} = [\bar{a},\bar{b})$ and write $sI$ to denote $[sa,sb)$. Let
$$ G_T = \{ (c_1e^{i\omega_1t_1},\dots,c_{N}e^{i\omega_Nt_N})^\intercal \}_{0\leq t_i \leq T }. $$ Then,
$$ \bcd_{l_0}^R(G_T,d_{\infty}) = \bigcup_{\sum_l r_l={l_0}} \big\{|c_1|\bar{I}_1 \cap\cdots \cap |c_N|\bar{I}_N \ \big| \ I_l \in \bcd_{r_l}^R(S_{\omega'_l,T} ,\bar{d}) \big\}, $$
\end{theorem}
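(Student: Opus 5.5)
The plan is to reduce everything to the one-dimensional circles and then apply the Persistent K\"unneth Formula (Theorem \ref{kunneth formula}). The grid $G_T$ is, as a set, the Cartesian product
\[
G_T = X_1\times\cdots\times X_N,\qquad X_l=\{c_le^{i\omega_l t}\mid t=0,1,\ldots,T\}\subseteq\mathbb{C}.
\]
The norm on $\mathbb{C}^N$ that induces $d_\infty$ is the coordinatewise maximum of moduli. Hence $(G_T,d_\infty)$ is exactly the iterated maximum-metric product of the spaces $(X_l,|\cdot|)$. Applying Theorem \ref{kunneth formula} inductively on $N$ (the product of two finite metric spaces with $d_\infty$ is again finite, so the induction is legitimate) yields
\[
\bcd_{l_0}^R(G_T,d_\infty)=\bigcup_{\sum_l r_l=l_0}\bigl\{J_1\cap\cdots\cap J_N \mid J_l\in\bcd^R_{r_l}(X_l,|\cdot|)\bigr\}.
\]
The induction step only needs the observation that intersection of intervals is associative and that splitting $l_0$ into two parts and then further is the same as summing over compositions $\sum_l r_l=l_0$.

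The remaining task is to identify $\bcd^R_{r}(X_l,|\cdot|)$ with $|c_l|\,\overline{I}$ for $I\in\bcd^R_r(S_{\omega'_l,T},\bar d)$. Write $e^{i\omega_l t}=e^{2\pi i\omega'_l t}$, and consider the bijection $S_{\omega'_l,T}\to X_l$, $[t\omega'_l]\mapsto c_le^{2\pi i\omega'_l t}$. This is well defined and injective because $\omega'_l$ is irrational; if it were not, one would treat multiplicities, but the paper assumes irrationality. The chord length satisfies
\[
|c_le^{2\pi i x}-c_le^{2\pi i y}|=2|c_l|\sin\bigl(\pi\,\bar d([x],[y])\bigr),
\]
since $\bar d\le 1/2$ is the arc distance on the flat circle. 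Thus the distances in $X_l$ are $|c_l|\,g(\bar d)$ with $g(u)=2\sin(\pi u)$ strictly increasing on $[0,1/2]$.

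The key step is then a monotone-reparametrization lemma, a generalization of Proposition \ref{isomorphic diagrams}. If $d_Y=h\circ d_X$ under a bijection, with $h$ strictly increasing and $h(0)=0$, then $R_\epsilon(X,d_X)\cong R_{h(\epsilon)}(Y,d_Y)$ for $\epsilon$ in the range of $h$. Consequently each bar $[a,b)$ becomes $[h(a),h(b))$, with $h(\infty):=\infty$. Here the Rips complexes change only at the finitely many distance values, all lying in $[0,1/2]$, so the reindexing is harmless. Combining the reparametrization with the scaling by $|c_l|$ gives $\bcd^R_r(X_l)=\{|c_l|\overline{I}\}$, where $\overline{I}=[2\sin(\pi a),2\sin(\pi b))$. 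Substituting this into the K\"unneth expression gives the stated formula.

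I expect the main obstacle to be the careful statement of this reparametrization. Since $h$ is only defined on $[0,1/2]$, one must handle $\epsilon$ beyond the diameter and infinite endpoints. The argument has to check that between consecutive critical values the filtrations are constant, so that the barcode endpoints, which are distance values, transform exactly by $h$; this uses the half-open convention $[a,b)$. One must also note that the convention $\bar b$ for $b=\infty$ is $\infty$.
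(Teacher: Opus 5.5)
Your proposal is correct and follows the same route as the paper. You apply the Persistent K\"unneth Formula to reduce to the single circles $X_l$, then use the chord-length identity $|c_le^{2\pi i x}-c_le^{2\pi i y}|=2|c_l|\sin\bigl(\pi\,\bar d([x],[y])\bigr)$ to carry each bar of $(S_{\omega'_l,T},\bar d)$ to a bar of $(X_l,d_2)$. The differences are only in packaging. You do the transport in a single monotone-reparametrization step, which is in fact the right justification for the step the paper attributes to the isometry theorem. The paper instead passes through the geodesic metric $d_l$ and then rescales by $2\pi|c_l|$ via Proposition~\ref{isomorphic diagrams}. You also make explicit the induction on $N$ and the convention $\overline{\infty}=\infty$, both of which the paper leaves implicit.
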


\begin{proof}
For $0 \leq l \leq N$ let $$X_l=\{c_le^{i\omega_lt} \}_{t=0}^T. $$ By Theorem \ref{kunneth formula} $$ \text{bcd}_{l_0}^R(G_T,d_{\infty}) = \bigcup_{\sum_l r_l={l_0}} \big\{J_1 \cap\cdots \cap J_N \ | \ J_l \in bcd_{r_l}^R(X_l,d_2) \big\}. $$
Thus, it is sufficient to show that $ J_l \in \bcd_{r_l}^R(X_l,d_2) $ if and only if there exists a unique $ I_l \in \bcd_{r_l}^R(S_{\omega'_l,T'} ,\bar{d}) $ such that $$ J_l=|c_l|\bar{I_l}.$$
To do so, let us denote $x_j= \omega_lt_j \ \mod \ 2\pi$ and consider the metric $d_l$ on $X_l$ given by $$d_l(c_le^{i\omega_lt_1},c_le^{i\omega_lt_2}) = |c_l| \min \{ |x_1-x_2|, |2\pi-x_2+x_1|, |2\pi-x_1+x_2| \}.$$ It captures the geodesic distance between points along the circle of radius $|c_l|$ centered at $(0,0).$ By trigonometric identities
$$ d_2(c_le^{i\omega_lt_1},c_le^{i\omega_lt_2}) = \frac{|c_l|\sin(|c_l|^{-1}d_l(c_le^{i\omega_lt_1},c_le^{i\omega_lt_2}))}{\sin\big(\frac{\pi-|c_l|^{-1}d_l(c_le^{i\omega_lt_1},c_le^{i\omega_lt_2})}{2}\big)}=2|c_l|\sin\Big(\frac{|c_l|^{-1}d_l(c_le^{i\omega_lt_1},c_le^{i\omega_lt_2})}{2}\Big).$$
Thus, as a consequence of Theorem \ref{isometry theorem}, $ J_l \in \bcd_{r_l}^R(X_l,d_2) $ if and only if there exists a unique $[a,b) \in bcd_{r_l}^R(X_l,d_l)$ such that $$J_l = \Big[2|c_l|\sin\big(\frac{|c_l|^{-1}a}{2}\big),2|c_l|\sin\big(\frac{|c_l|^{-1}b}{2}\big)\Big).$$ Furthermore, noting $$d_l(c_le^{i\omega_lt_1},c_le^{i\omega_lt_2}) = 2\pi |c_l| \bar{d}([\omega_l' t_1],[\omega_l' t_2]) $$ we can similarly say $[a,b) \in \bcd_{r_l}^R(X_l,d_l)$ if and only if there exists a unique $[a_0,b_0) \in \bcd_{r_l}^R(S_{\omega_l',T},\bar{d})$ such that $$[a,b) = [2\pi |c_l| a_0,2\pi |c_l| b_0).$$ Combining these equivalences gives the desired result.
\end{proof}

\subsection{Example}
\label{ssec:Ex}

In this example we execute the workflow illustrated in Figure \ref{fig:diagram}, which takes as input a quasiperiodic function and produces the diagrams, $T'\leq T,$
\[
  \mathrm{dgm}^R_\ell\bigl(G_{T'},\,d_\infty\bigr),
  \qquad \ell = 1,2.
\]
Concretely, we set
\[
  f(t) = \tfrac1{\sqrt2}\,e^{i\sqrt3\,t}
       + \tfrac1{\sqrt2}\,e^{i\sqrt5\,t}.
\]

We then carry out three stages:

\begin{enumerate}
  \item \textbf{Algorithm 1 (3 Gap Code).}
    For each incommensurate frequency \(\omega_j\in\{\sqrt3,\sqrt5\}\), we run Algorithm \ref{alg:3GC} using the results from Section \ref{MR section} to obtain the exact diagrams
    \[
      \mathrm{dgm}^R_\ell\bigl(S_{\omega'_j,T'},\,\bar{d}\bigr),
      \quad \ell = 1,2,
      \quad \omega'_j=\frac{\omega_j}{2\pi}.
    \]

  \item \textbf{Persistent Künneth Formula.}
    We assemble these single‐frequency outputs via Theorem \ref{GT Kunneth} to obtain the diagrams
    \[
      \mathrm{dgm}^R_\ell\bigl(G_{T'},\,d_\infty\bigr),
      \quad \ell = 1,2.
    \]

  \item \textbf{Error Bounds.}
    Finally, Theorem \ref{AM theorem1} provides an explicit bottleneck‐distance bound showing that
    \(\mathrm{dgm}^R_\ell(G_{T'},d_\infty)\) approximates the true sliding‐window diagram
    \(\mathrm{dgm}^R_\ell\bigl(SW_{d,\tau}f(T),d_2\bigr)\)
    within the claimed error.
\end{enumerate}
\begin{remark}
In practical applications to real-world data, the frequencies \(\omega_j\) are first estimated via the FFT of the observed time series (see Section \ref{sec: App}).
\end{remark}
With the roadmap in place, we begin by constructing the sliding‐window embedding matrix as detailed at the end of Section \ref{sec: FS}. In our example one obtains
\[
SW_{d,\tau}f(t)
=\frac{1}{\sqrt{d+1}}
\begin{pmatrix}
1                   & 1                 \\[6pt]
e^{\,i\sqrt3\,\tau} & e^{\,i\sqrt5\,\tau} \\[3pt]
\vdots              & \vdots            \\[3pt]
e^{\,i\sqrt3\,d\tau} & e^{\,i\sqrt5\,d\tau}
\end{pmatrix}
\begin{pmatrix}
\tilde{c}_1\,e^{\,i\sqrt3\,t} \\[3pt]
\tilde{c}_2\,e^{\,i\sqrt5\,t}
\end{pmatrix},
\]
where \(\tilde{c}_1=\tilde{c}_2=\sqrt{d+1}/\sqrt2\).

We take
\[
  d = 1,
  \qquad
  \tau = \frac{\pi}{\sqrt3 - \sqrt5}.
\]
With these parameters \(A\) is orthonormal. Hence
\[
    k = \max\{\,\sigma_{min}^{-1},\,\sigma_{max}\sqrt M\}
      = \max\{\,1,\,\sqrt2\}
      = \sqrt2
      \quad(M=2).
\]

Finally, we approximate the sliding‐window point cloud
\[
\SW_{d,\tau}f(2000)
\]
using \(T'=500\). This sets the stage for fleshing out our 3G approximation method. We next apply Algorithm \ref{alg:3GC} to each frequency component in turn.

First, the scaled frequencies admit the continued‐fraction expansions
\[
\omega_1'=\frac{\sqrt3}{2\pi}=[0;3,1,1,1,2,\dots],
\qquad
\omega_2'=\frac{\sqrt5}{2\pi}=[0;2,1,4,3,1,\dots].
\]
By Corollaries \ref{0th bcd} and \ref{1st dgm}, the resulting persistence diagrams are

\[
\mathrm{dgm}^R_{0}\bigl(S_{\omega_1',500},\bar d\bigr)
= \bigl\{(0,1.577\times10^{-3})^{160},\,(0,2.077\times10^{-3})^{316},\,(0,3.654\times10^{-3})^{24},\,(0,\infty)\bigr\},
\]

\[
\mathrm{dgm}^R_{1}\bigl(S_{\omega_1',500},\bar d\bigr)
= \bigl\{(3.654\times10^{-3},\,334.551\times10^{-3})\bigr\},
\]

\[
\mathrm{dgm}^R_{0}\bigl(S_{\omega_2',500},\bar d\bigr)
= \bigl\{(0,0.368\times10^{-3})^{161},\,(0,2.637\times10^{-3})^{220},\,(0,3.005\times10^{-3})^{119},\,(0,\infty)\bigr\},
\]

\[
\mathrm{dgm}^R_{1}\bigl(S_{\omega_2',500},\bar d\bigr)
= \bigl\{(3.005\times10^{-3},\,334.846\times10^{-3})\bigr\}.
\]
With our single‐frequency persistence diagrams in hand, we now apply the Persistent Künneth Formula (Theorem \ref{GT Kunneth}). Recall
\[
  G_{500} = \bigl\{(e^{i\omega_1 t_1},\,e^{i\omega_2 t_2})\bigr\}_{0\le t_1,t_2\le500}.
\]
To verify the hypotheses of Theorem \ref{AM theorem1}, we identify the diagrams of maximal persistence. From
\(\mathrm{dgm}^R_{1}(S_{\omega_1',500},\bar d)\) the interval
\((a'_1,b'_1)=(3.654\times10^{-3},\,334.551\times10^{-3})\)
induces in \(\mathrm{dgm}^R_{1}(G_{500},d_\infty)\) the interval
\[
  (a''_1,b''_1)
  = \bigl(2\sin(a'_1\pi),\,2\sin(b'_1\pi)\bigr)
  = (0.02296,\,1.73586).
\]
Similarly, from
\(\mathrm{dgm}^R_{1}(S_{\omega_2',500},\bar d)\) with
\((a'_2,b'_2)=(3.005\times10^{-3},\,334.846\times10^{-3})\) we obtain
\[
  (a''_2,b''_2)
  = \bigl(2\sin(a'_2\pi),\,2\sin(b'_2\pi)\bigr)
  = (0.01888,\,1.73678).
\]
In \(\mathrm{dgm}^R_{2}(G_{500},d_\infty)\) the most persistent interval is
\((a''_3,b''_3)=(0.02296,\,1.73586)\).

Since \(\lambda \le d_H\bigl(\phi_{2000},G_{500}\bigr)=0.20975\) and \(\max\{k^2,1\}=2\), we compute
\[
  \frac{b''_1 - 2\lambda}{a''_1 + 2\lambda}
    = \frac{1.73586 - 2\cdot0.20975}{0.02296 + 2\cdot0.20975}
    = 2.9751 > 2,
\quad
  \frac{b''_2 - 2\lambda}{a''_2 + 2\lambda}
    = \frac{1.73678 - 2\cdot0.20975}{0.01888 + 2\cdot0.20975}
    = 3.0049 > 2.
\]
Thus the conditions of Theorem \ref{AM theorem1} are met. Define for any \(x\)
\[
  X(x) = \max\!\Bigl\{0,\frac{x - 2\lambda}{k}\Bigr\},
\quad
  Y(x) = k\,(x + 2\lambda).
\]
We compute the error bounds to be
\[
  (X(a''_1),Y(a''_1)) = (0,\,0.6257),\quad
  (X(b''_1),Y(b''_1)) = (0.9308,\,3.0481),
\]
\[
  (X(a''_2),Y(a''_2)) = (0,\,0.6200),\quad
  (X(b''_2),Y(b''_2)) = (0.9315,\,3.0494),
\]
\[
  (X(a''_3),Y(a''_3)) = (0,\,0.6257),\quad
  (X(b''_3),Y(b''_3)) = (0.9308,\,3.0481).
\]

Therefore Theorem \ref{AM theorem1} guarantees unique diagrams
\[
  (a_1,b_1),\,(a_2,b_2)\;\in\;\mathrm{dgm}^R_{1}\bigl(\SW_{d,\tau}f(2000),d_2\bigr),
\quad
  (a_3,b_3)\;\in\;\mathrm{dgm}^R_{2}\bigl(\SW_{d,\tau}f(2000),d_2\bigr)
\]
satisfying
\[
\begin{aligned}
  0 &\le a_1 \le 0.6257, & 0.9308 &\le b_1 \le 3.0481,\\
  0 &\le a_2 \le 0.6200, & 0.9315 &\le b_2 \le 3.0494,\\
  0 &\le a_3 \le 0.6257, & 0.9308 &\le b_3 \le 3.0481.
\end{aligned}
\]

Finally, we compute the actual persistence diagrams of the sliding‐window embedding for verification. By evaluating
\[
  \mathrm{dgm}^R_{i}\bigl(\SW_{d,\tau}f(2000),d_2\bigr),
  \quad i=1,2,
\]
one finds that the diagrams predicted by our 3G method are
\[
  (a_1,b_1)=(0.193,\,1.771),\quad
  (a_2,b_2)=(0.194,\,1.753),\quad
  (a_3,b_3)=(0.296,\,1.753).
\]
Each \((a_i,b_i)\) lies within its respective error-bound rectangle, as shown in Figure \ref{fig:example 1 3G}. For comparison, we also include two alternative approximations—the sliding-window landmark method \(\mathrm{SW}_L(n)\) and the landmark-based Künneth method \(\mathrm{K}_L(n)\) \cite{gakhar2019k}—together with their corresponding error-bound rectangles and measured computation times for varying sample sizes \(n\).

\begin{figure}[htb!]
    \centering
    \includegraphics[width=0.33\textwidth]{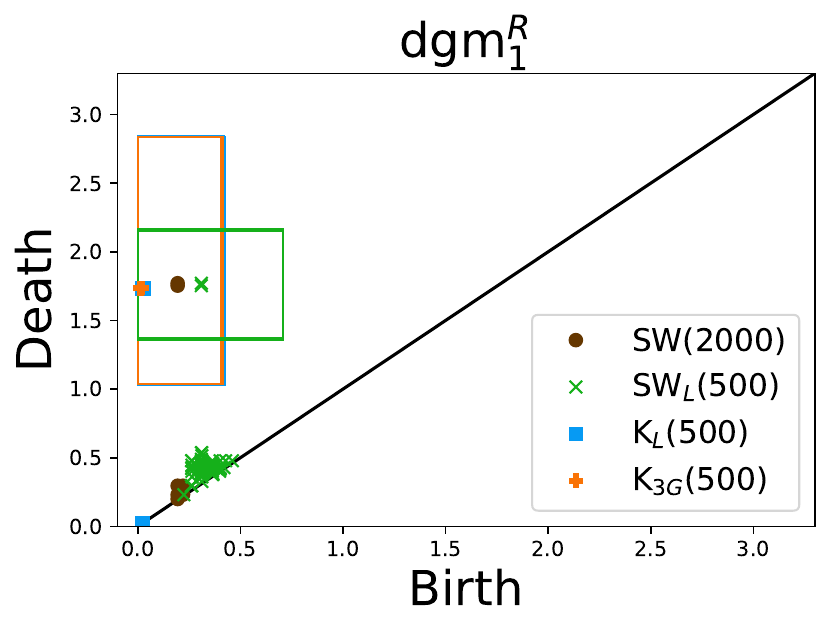}
    \hfill
    \includegraphics[width=0.33\textwidth]{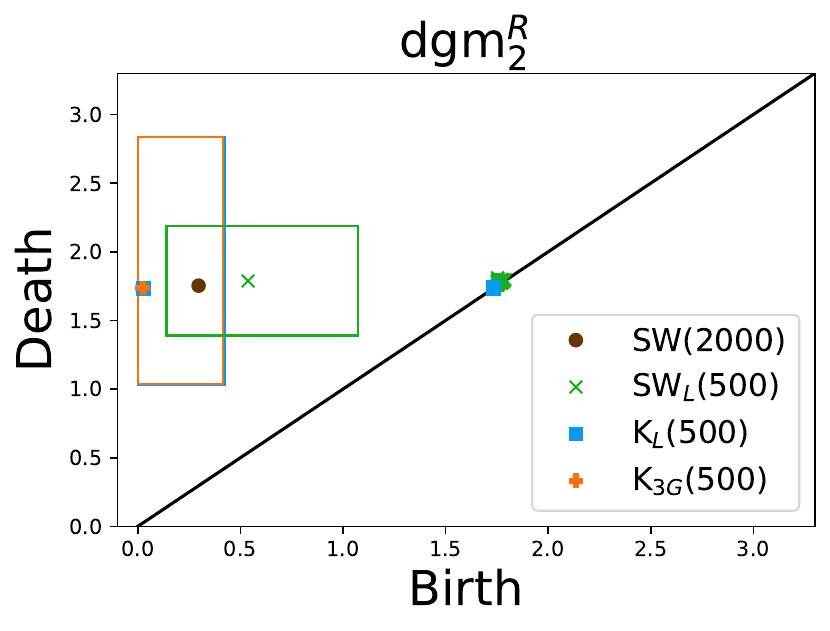}
    \hfill
    \includegraphics[width=0.31\textwidth]{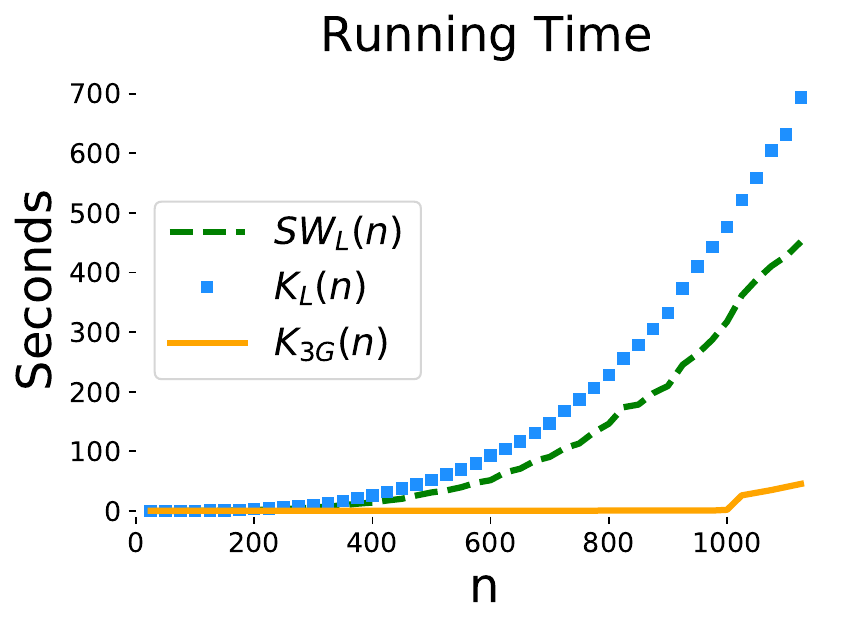}

    \caption{Persistence diagrams from Section \ref{ssec:Ex}. We also report the computation times for Ripser \cite{bauer2016ripser} on the sliding window embedding ($SW$) and the method ($K_L$). Our implementation ($K_{\mathrm{3G}}$) does not invoke Ripser at any stage.}
    \label{fig:example 1 3G}
\end{figure}

\section{Applications}
\label{sec: App}

In this section, we will apply our pipeline, 3G, to the following examples: synthetic tremor signals in nonlinear tuned vibration absorbers as indicators of safe or unsafe operations \cite{detroux2015performance}; neuroscience, where quasiperiodicity is associated with task-specific functions in certain brain areas \cite{kaslik2022stability}; and celestial mechanics, where quasiperiodicity translates to favorable trajectories for missions \cite{gomez2001dynamics}. We contribute to these fields by providing a faster alternative (see Figure \ref{fig:example 1 3G} and Table \ref{tab:RT}) to analyze the reconstructed signals of potential quasiperiodic systems.

\subsection{Methodology}
We focus on dynamical systems whose flow \(\Phi\) arises from a known system of ordinary differential equations. To generate a corresponding time series “observation,” we solve these ODEs from a chosen initial condition and record the resulting solution values as the signal \(f(t)\). We reconstruct the system by computing the sliding window point cloud $\{SW_{d,\tau}f(\epsilon t) \}_{t=0}^T$, where $\epsilon$ is chosen according to the Nyquist-Shannon sampling theorem \cite{shannon1949communication}. The framework detailed in \cite{gakhar2021sliding} provides a method for selecting $d$ and $\tau$ and approximating $ \text{dgm}^R_j(\{SW_{d,\tau}f(\epsilon t) \}_{t=0}^T,d_{2}) $ with $ \text{dgm}^R_j(\{SW_{d,\tau}S_1f(\epsilon t) \}_{t=0}^T,d_{2})$. Since $S_1f$ is a sum of the form shown in Theorem \ref{thm:3GT}, we can apply our result to approximate the latter, thus closely approximating the former. Indeed, in the examples we consider
$$ S_1f(\epsilon t) = \sum^2_{j=1} c_j(e^{i \epsilon \omega_jt}+e^{-i \epsilon \omega_jt}). $$
Thus, 3G can be used to approximate $ \text{dgm}^R_j(\{SW_{d,\tau}S_1f(\epsilon t) \}_{t=0}^T,d_{2})$ with a known error bound as detailed in Remark \ref{Ebound}.

\subsection{Results}
We consider systems with known quasiperiodic behavior, specifically those with two incommensurate frequencies. Hence, the sliding window point cloud $\{SW_{d,\tau}f(\epsilon t) \}_{t=0}^T$ samples a $2$-torus. We label the corresponding persistence diagrams as $SW(T)$ (shown as brown dots). By \(SW_{S_1}(T)\), we denote the persistence diagrams of the sliding‑window embedding \(S_1(f)\) (shown as blue squares). The approximation obtained from 3G is labeled $K_{3G}(T)$ (shown as orange crosses).

\subsubsection{Double Gyre}
\label{ssec:example1}

The driven Double Gyre system provides a model for patterns occurring in geophysical flows \cite{shadden2005definition}. A topological approach to it has shown great success. Indeed, as shown in \cite{charo2020topology}, there are multiple topological classifications that can be observed in the system representing the motion of a fluid particle. In this work, they noted that the motion of fluid can be sparse, meaning some initial conditions will imply a particle will be contained in a small region. Different initial conditions have shown trajectories with a topology of a standard strip, five-handle structure with a torsion, torus, and a m\"{o}bius strip. This result was achieved by calculating homologies in a branched manifold while keeping track of the orientability chains, allowing for the identification of the branches and the localization of twists or torsions. This approach is called Branched Manifold analysis through Homologies (BraMAH). At its core, it follows the principle of reconstructing a dynamical system from a time series. Yet the details at each step are completely different from SW. Nevertheless, for the corresponding initial conditions, we independently corroborated the presence of a torus, i.e., quasiperiodicity in the motion of the fluid particle, see Figure \ref{fig:DG}. This highlights the robustness offered by a topological approach to dynamical systems.
\begin{figure}[!b]
    \centering
    \includegraphics[width=0.29\textwidth, trim=.5cm .3cm .5cm .3cm, clip]{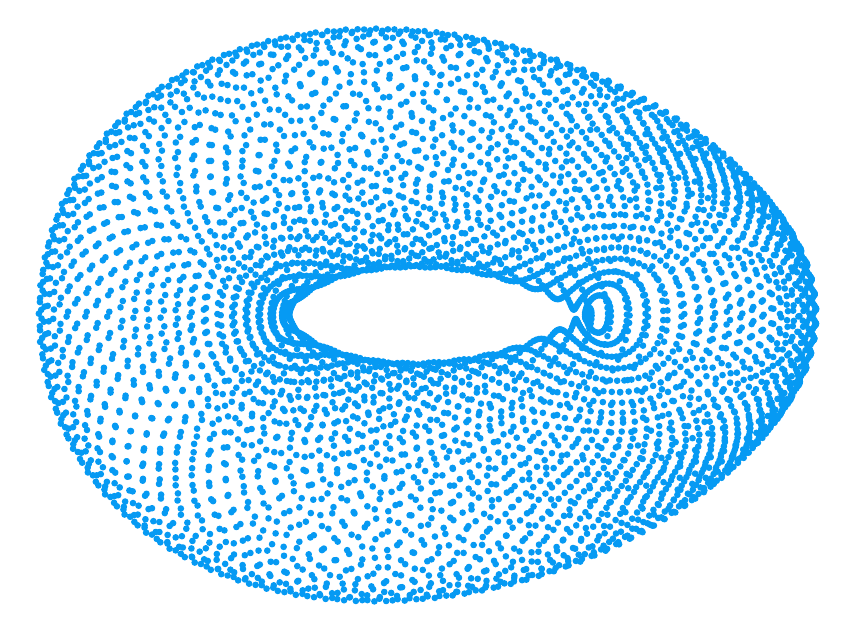}
    \hfill
    \includegraphics[width=0.32\textwidth]{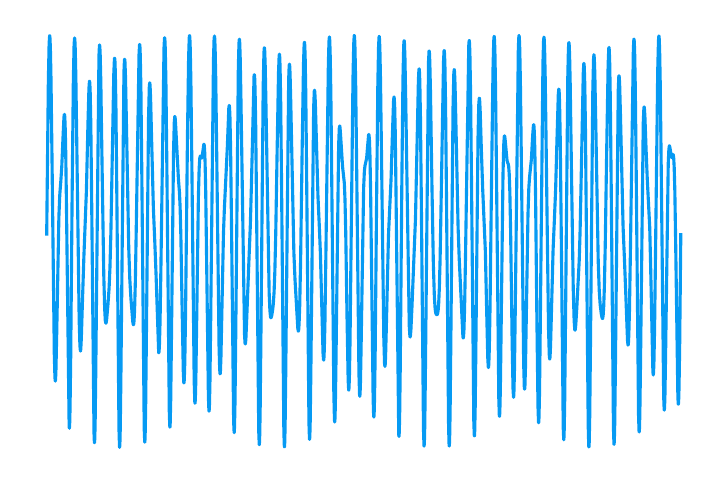}
    \hfill
    \includegraphics[width=0.32\textwidth]{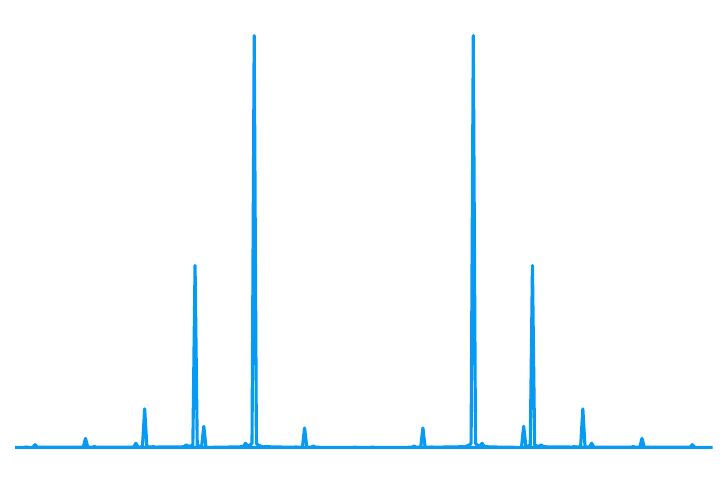}
    \hfill
    \includegraphics[width=0.47\textwidth]{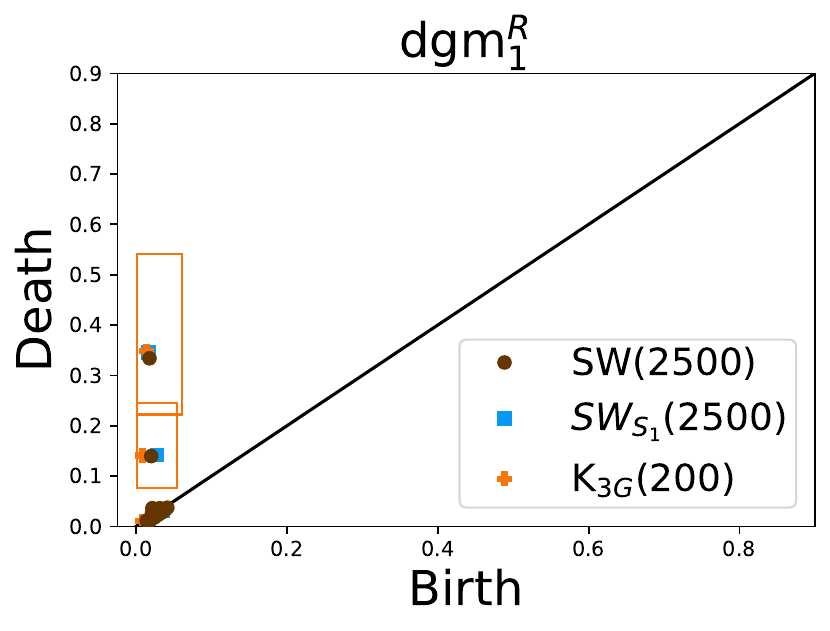}
    \hfill
    \includegraphics[width=0.47\textwidth]{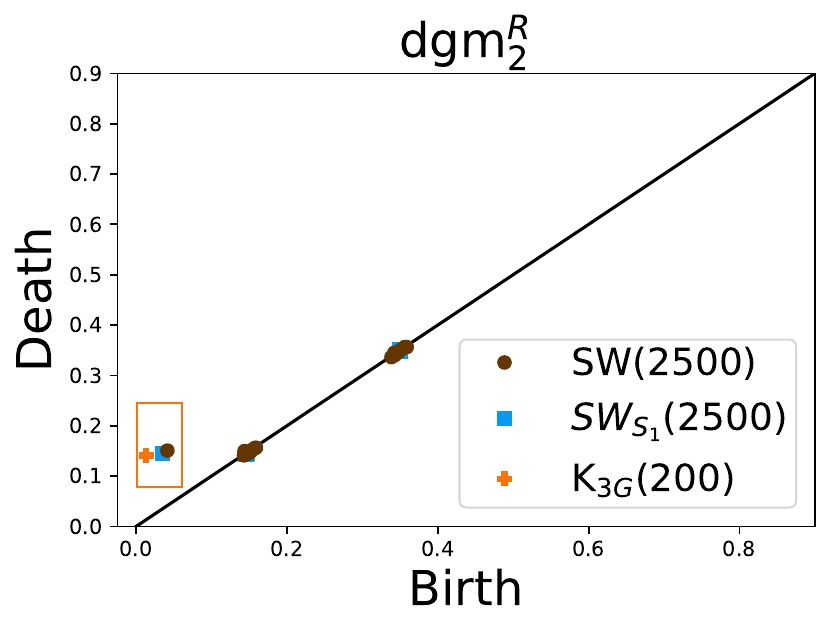}

    \caption{Top row: Phase portrait of the driven Double Gyre system with initial condition \(x_0\) (left), the corresponding trajectory \(x_1(t)\) used for sliding window embedding (center), and its Fourier power spectrum via FFT (right). Bottom row: The persistence diagrams of the resulting Rips filtrations for \(x_1(t)\).}
    \label{fig:DG}
\end{figure}

Concretely, the driven Double Gyre is given by the equation $\dot{x} = (-\frac{\partial \psi}{\partial x_2},\frac{\partial \psi}{\partial x_1} ) $ where
$$ \psi(x_1,x_2,t)=A \ \text{sin}(\pi g(x_1,t)) \ \text{sin}(\pi x_2) $$ with
$$ g(x_1,t) = \eta \ \text{sin}(\lambda t) (x^2_1-2x_1) + x_1,$$
and parameter values $A,\mu=0.1$ and $ \lambda=\pi/5.$ The system has a toroidal attractor for the initial condition $x_0=(0.5,0.625)$ \cite{charo2020topology}. We solve the system using $dt=0.1$ up to $t=800$. Furthermore, SW is done using $ f=x_1, \ d=4, \ \tau=119.03, \ \text{and} \ \epsilon=0.1. $

\subsubsection{The torus}
We now consider a toroidal attractor in $\R^4$ with coordinates $(x,y,z,r)$. The dynamical system is given by the set of equations:

$$ \dot{x} = -y + x(1 - \sqrt{x^2 + y^2}), \quad \dot{y} = x + y(1 - \sqrt{x^2 + y^2}), $$
$$ \dot{z} = -kr + z(4 - \sqrt{z^2 + r^2}), \quad \dot{r} = kz + r(4 - \sqrt{z^2 + r^2}). $$
When $k$ is irrational, the solutions span a torus \cite{penalva2018takens}. We considered the case $k = \sqrt{2}$ with the initial condition $(x_0, y_0, z_0, r_0) = (1, 0, 4, 0)$. We solve the system with $dt = 0.1$ up to $t = 800$. The sliding window is done using $f = x + z$, $d = 4$, $\tau = 87.56$, and $\epsilon = 0.1$, see Figure \ref{fig:Torus R4}.

\begin{figure}[!t]
    \centering
    \includegraphics[width=0.48\textwidth]{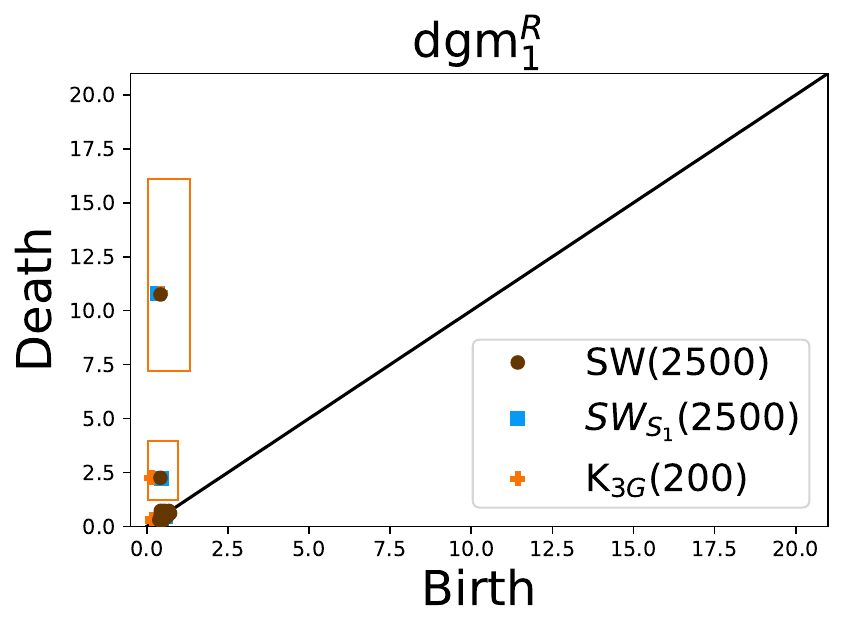}
    \hfill
    \includegraphics[width=0.48\textwidth]{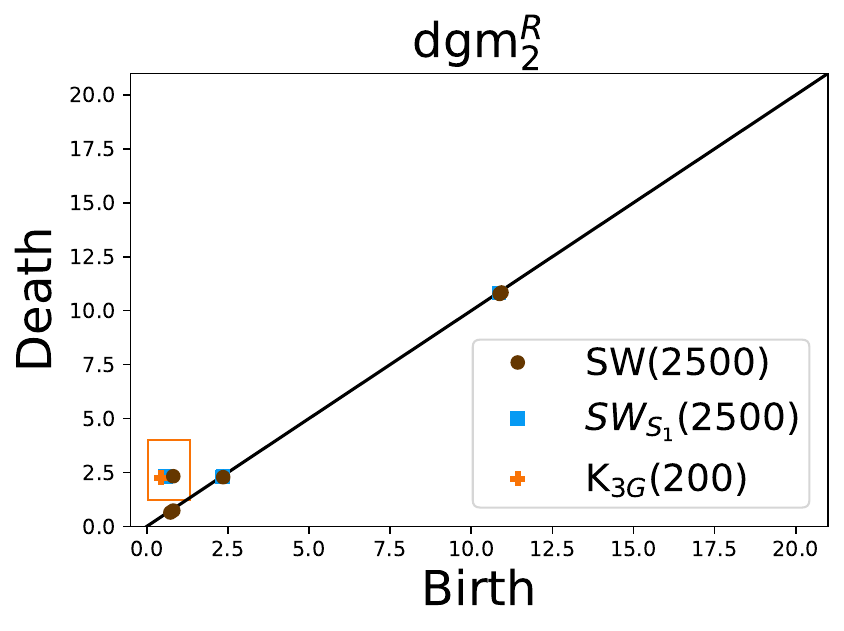}

    \caption{Diagrams obtained for the torus in $\mathbb{R}^4$.}
    \label{fig:Torus R4}
\end{figure}

\begin{figure}[!b]
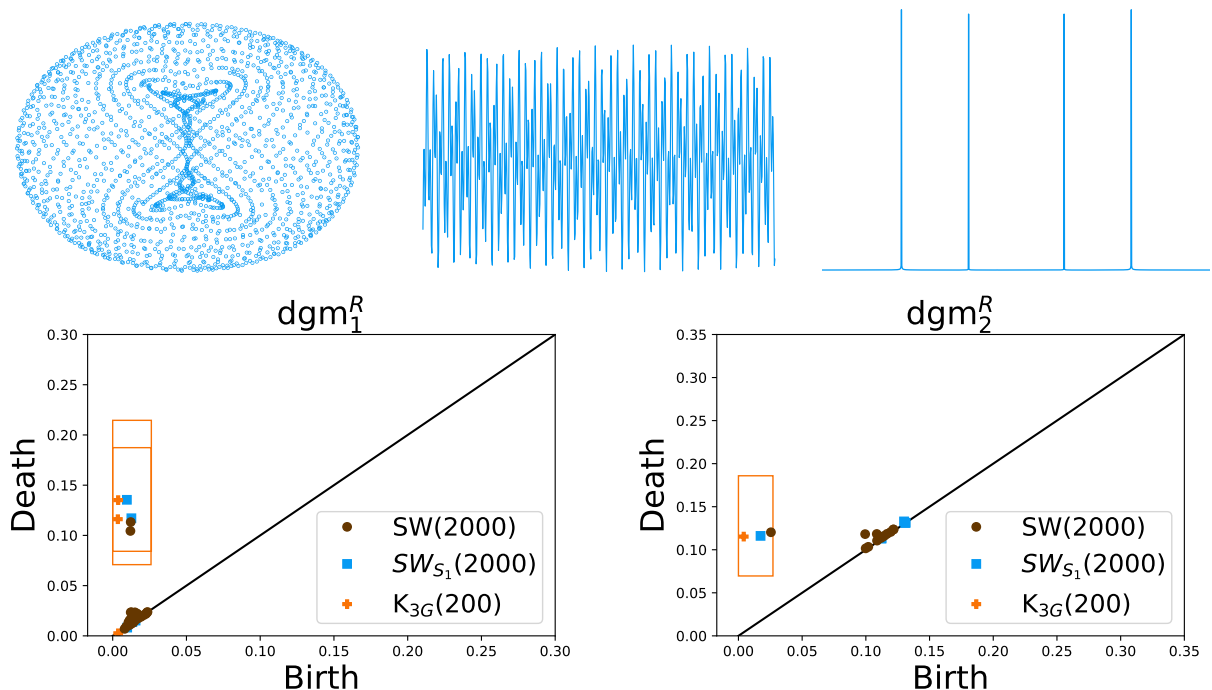

    \centering
    \includegraphics[width=0.31\textwidth]{Pend1PDF.pdf}
    \hfill
    \includegraphics[width=0.32\textwidth]{Pend2PDF.pdf}
    \hfill
    \includegraphics[width=0.32\textwidth]{FFTPendPDF.pdf}
    \hfill
    \includegraphics[width=0.48\textwidth]{R5_3D1PDF.pdf}
    \hfill
    \includegraphics[width=0.48\textwidth]{R5_3D2PDF.pdf}

    \caption{Top row: Phase portrait \((x,\dot x)\) of the pendulum on a sliding-block system (left), the scalar trajectory \(x(t)\) used for sliding window embedding (center), and its Fourier spectrum via FFT (right). Bottom row: Persistence diagrams of the Rips filtration on the sliding window point cloud derived from \(x(t)\).}
    \label{fig:Pendulum}
\end{figure}

\subsubsection{Pendulum Attached to a Sliding Block}
\label{ssec:example3}
We now consider a particular type of tuned mass damper (TMD). A TMD is a device used to suppress vibration by moving a mass attached to the main structure through springs and dampers \cite{wen2019quasi}. This model has been successfully used to dampen the effect of long-duration earthquake ground motions \cite{murudi2004seismic}. Indeed, one can consider the main structure being a skyscraper and the incoming wave being generated by the earthquake. Thus, a better understanding of this system translates to earthquake-resistant technologies.

We consider the case of a pendulum attached to a sliding block, see Figure \ref{fig:Pendulum}. It was shown to exhibit quasiperiodicity in \cite{wen2019quasi}. The governing equations are:

$$ \ddot{x} + \alpha^2 x - \bar{\epsilon} g \theta - \bar{\epsilon} L \dot{\theta}^2 \theta = 0, $$
$$ \ddot{\theta} + (1 + \bar{\epsilon}) \beta^2 \theta - \bar{\epsilon} h \alpha^2 x + \bar{\epsilon} \dot{\theta}^2 \theta = 0, $$
in which
$$ \bar{\epsilon} = \frac{m}{M}, \quad \alpha = \sqrt{\frac{k}{M}}, \quad \beta = \sqrt{\frac{g}{L}}, \quad h = \frac{1}{\bar{\epsilon} L}, $$
and $g$ is the acceleration of gravity. The parameter values are $m = 0.5$, $M = 1$, $L = 1$, and $k = 5$. We solve the system with the initial condition $(x_0, \dot{x}_0, \theta_0, \dot{\theta}_0) = (0.1, 0, -0.1, 0)$ and $dt = 0.27$ up to $t = 540$. The sliding window is done using $f = x$, $d = 4$, $\tau = 108.05$, and $\epsilon = 0.027$.

\begin{figure}[!b]
    \centering
    \includegraphics[width=0.29\textwidth, trim=.2cm .2cm .2cm .2cm, clip]{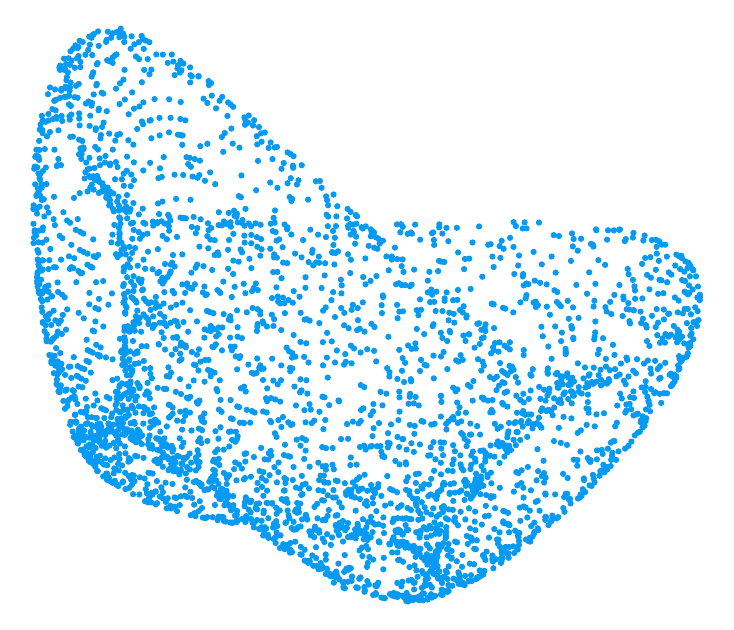}
    \hfill
    \includegraphics[width=0.32\textwidth]{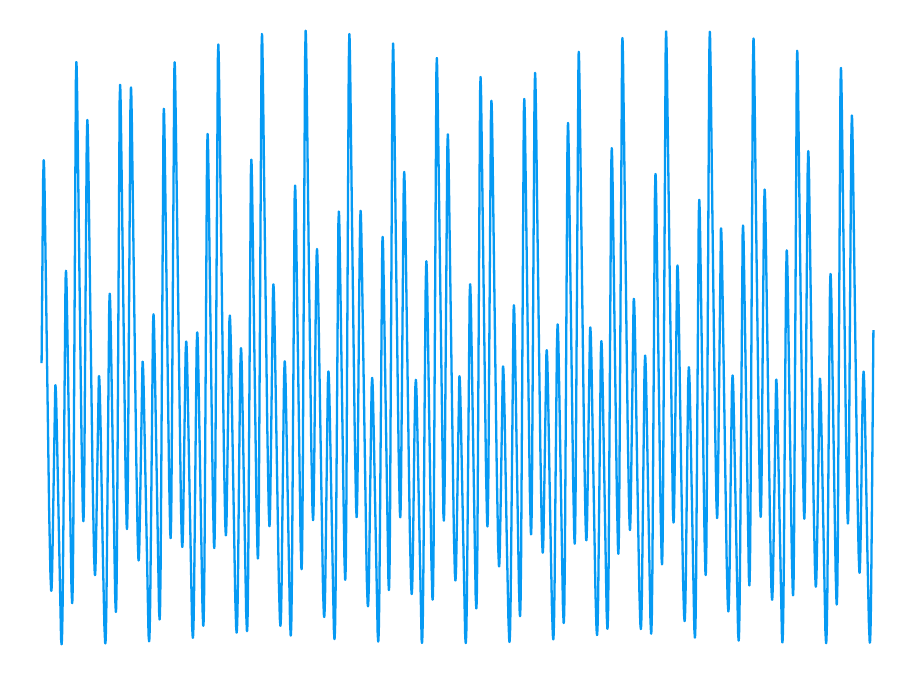}
    \hfill
    \includegraphics[width=0.32\textwidth]{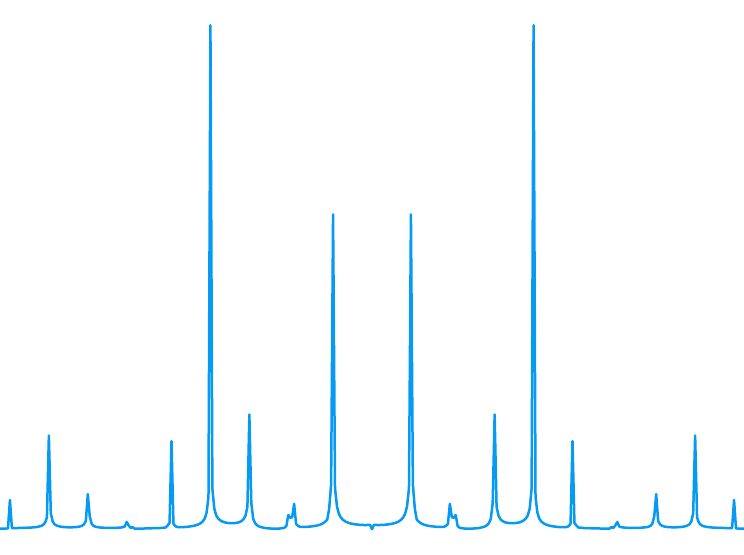}
    \hfill
    \includegraphics[width=0.48\textwidth]{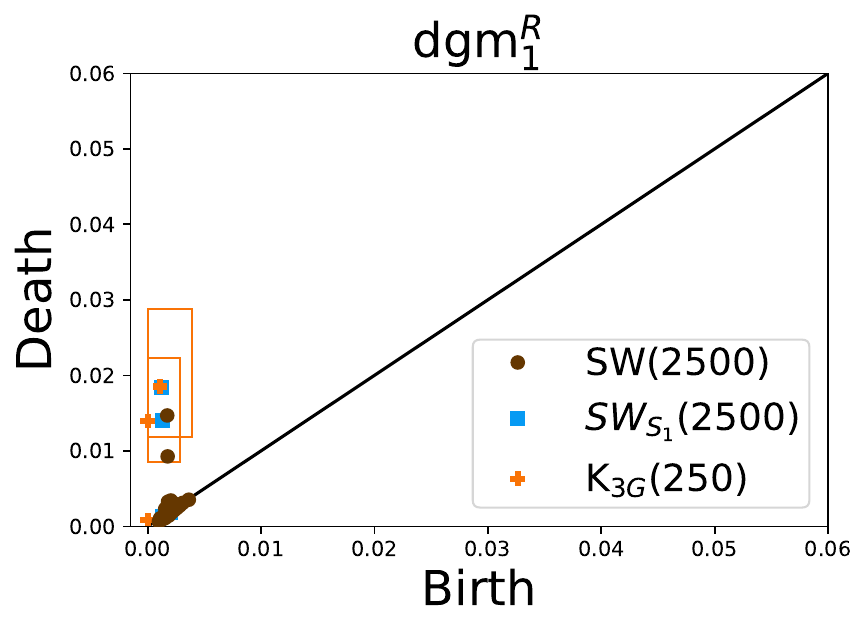}
    \hfill
    \includegraphics[width=0.48\textwidth]{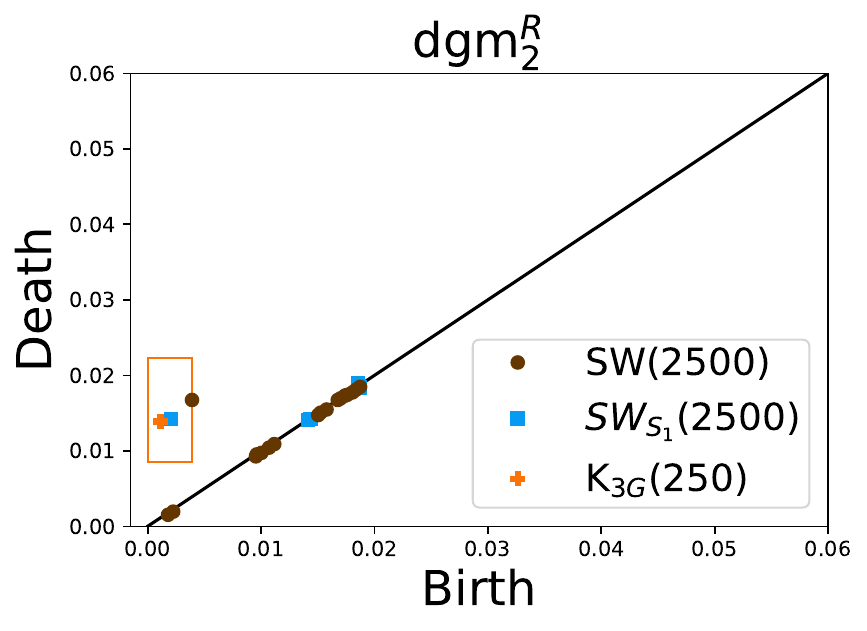}

    \caption{Top row: Phase portrait \((u,v)\) of the generalized Wilson–Cowan equations (left), the scalar trajectory \(u(t)\) used for sliding window embedding (center), and its Fourier power spectrum via FFT (right). Bottom row: Persistence diagrams of the Rips filtration on the sliding window point cloud derived from \(u(t)\).}
    \label{fig:QWplot}
\end{figure}

\subsubsection{Generalized Wilson-Cowan Equations}
\label{ssec:example4}
We consider a generalized version of the Wilson-Cowan equations. Traditionally, these equations are derived via a time-coarse graining technique that averages the response. They also restrict to a weak Gamma distribution of time delays. The extended model has been treated in \cite{kaslik2022stability}, it is given by
$$ \dot{u}(t) = -u(t) + f_1 \left( \theta_u + \int_{-\infty}^{t} h(t - s)(a u(s) + b v(s)) \, ds \right), $$
$$ \dot{v}(t) = -v(t) + f_2 \left( \theta_v + \int_{-\infty}^{t} h(t - s)(c u(s) + d v(s)) \, ds \right), $$
where $u(t)$ and $v(t)$ model the firing activity in two neuronal populations, $a, b, c,$ and $d$ are the connection weights, and $\theta_u$ and $\theta_v$ are background drives. The activation functions $f_1$ and $f_2$ are smooth and increasing on the real line. The authors considered three types of delay kernels $h: [0, \infty) \rightarrow [0, \infty)$, namely, weak Gamma, strong Gamma, and Dirac kernels. Their analysis indicates a preferable kernel based on the function of the model populations.

One can readily verify this system recovers the Wilson-Cowan equations in the case of a weak Gamma kernel. We consider the case of a Dirac kernel, see Figure \ref{fig:QWplot}. This system has been shown to exhibit quasiperiodicity \cite{kaslik2022stability,coombes2009delays}. Furthermore, it has applications to the subthalamic nucleus - globus pallidus network involved in Parkinson's Disease (guided by anatomical and electrophysiological research) \cite{holgado2010conditions}. The system of interest becomes
$$ \dot{u}(t) = -u(t) + f_1 \left( \theta_u + a u(t - \tau_1) + b v(t - \tau_2) \right), $$
$$ \dot{v}(t) = -v(t) + f_2 \left( \theta_v + c u(t - \tau_2) + d v(t - \tau_1) \right), $$
where
$$ f_1(x) = f_2(x) = \frac{1}{1 + e^{-\delta x}}. $$
The parameter values are $\theta_u = 0.1$, $\theta_v = 0.2$, $\tau_1 = \tau_2 = 0.152$, $a = d = -19$, $b = c = 10$, and $\delta = 10$. We solve the system with initial conditions $(u_0, v_0) = (0.05, 0.05)$ and $dt = 0.001$ up to $t = 50$. The sliding window is done using $f = u$, $d = 4$, $\tau = 1.712$, and $\epsilon = 0.01$.

\subsubsection{Electromagnetic Radiation on a Wilson Neuron Model}
\label{ssec:example5}
Wilson introduced a simplified model for a neocortical neuron by making assumptions on the Hodgkin-Huxley model \cite{wilson1999simplified}. The biophysics of these neurons is governed by the interplay of about a dozen ion currents. His model showed the need for only four ion currents to accurately replicate known spiking behavior. We analyze an extension of his model that incorporates the presence of electromagnetic radiation (EMR).

\begin{figure}[!b]
    \centering
    \includegraphics[width=0.27\textwidth, trim=2.7cm 2.7cm 2.7cm 2.7cm, clip]{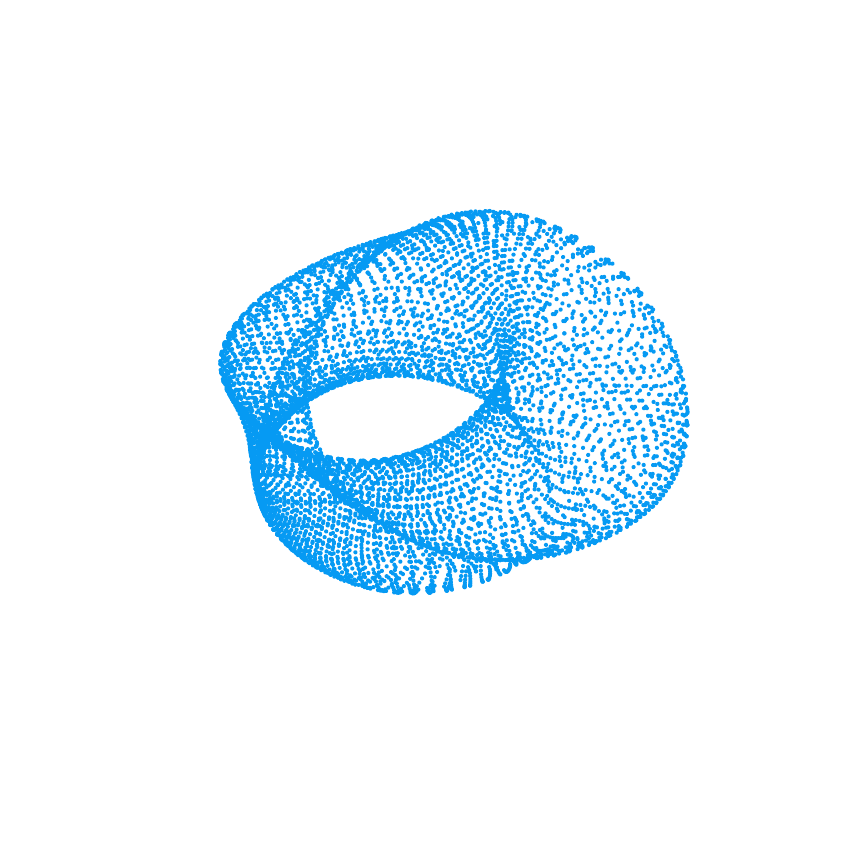}
    \hfill
    \includegraphics[width=0.32\textwidth]{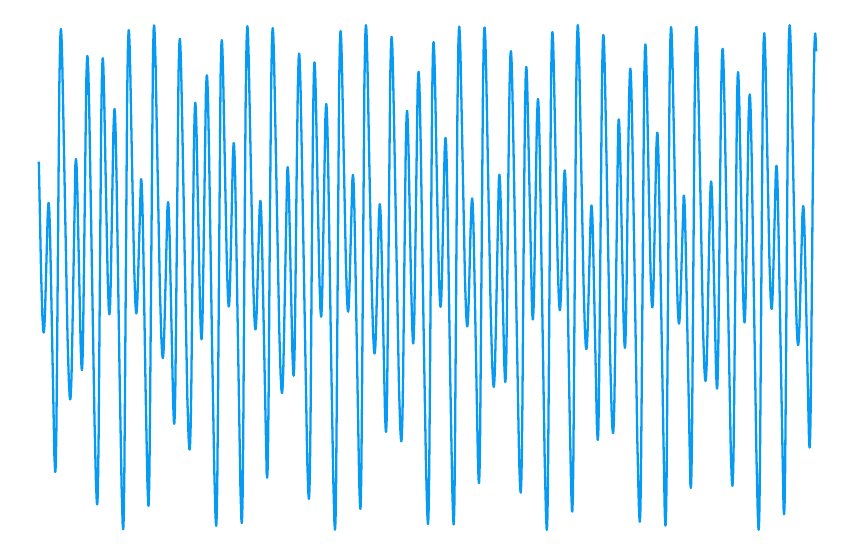}
    \hfill
    \includegraphics[width=0.32\textwidth]{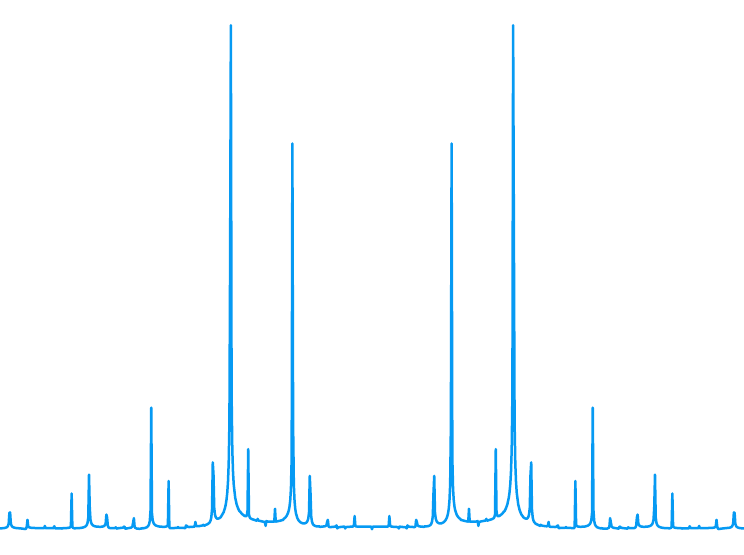}
    \hfill
    \includegraphics[width=0.48\textwidth]{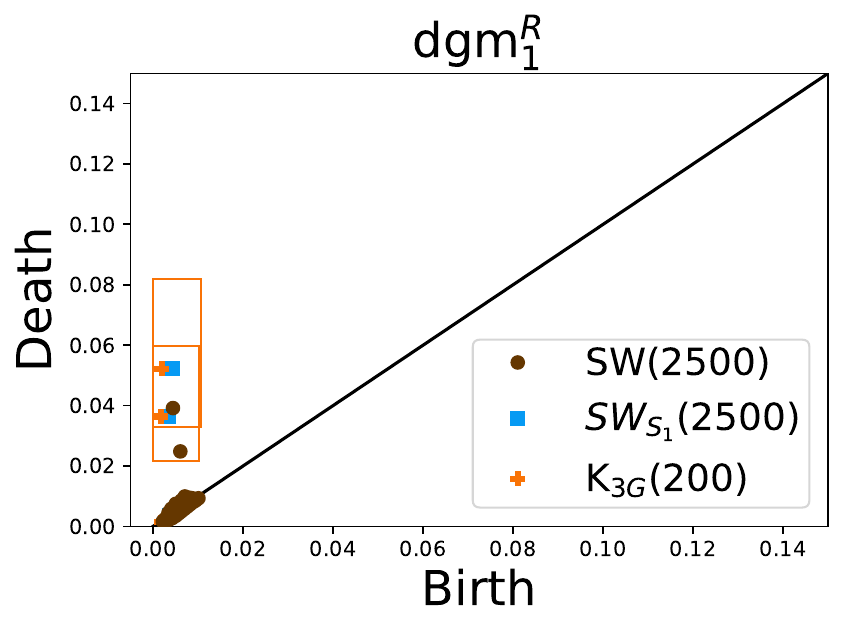}
    \hfill
    \includegraphics[width=0.48\textwidth]{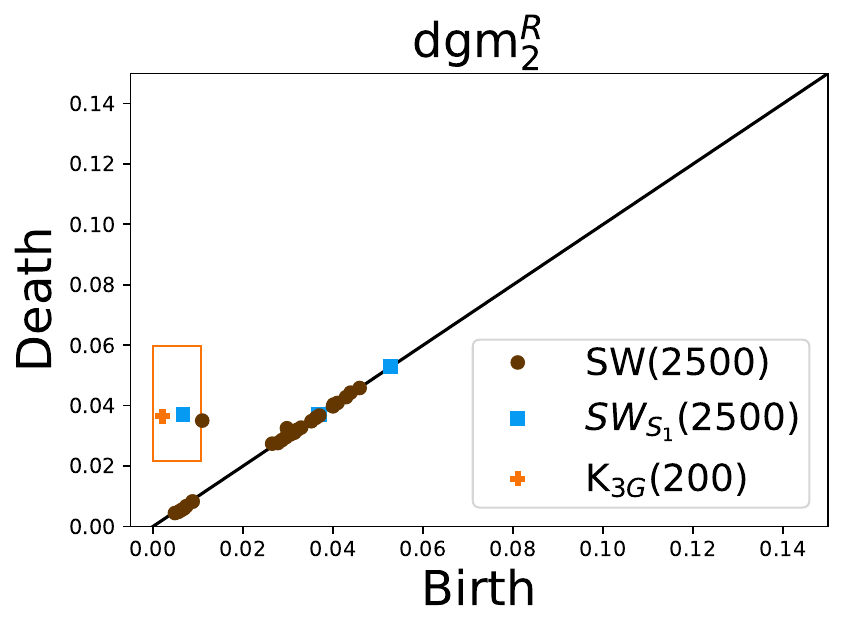}

    \caption{Top row: Phase portrait \((v,r,\phi)\) of the system (left), the radial coordinate trajectory \(r(t)\) used for sliding window embedding (center), and its Fourier power spectrum via FFT (right). Bottom row: Persistence diagrams of the Rips filtration on the sliding window point cloud derived from \(r(t)\).}
    \label{fig:EWplot}
\end{figure}

Commonplace presence of electronic devices introduces EMR exposure to neurons. The effects EMR has was imitated with the presence of a flux-controlled memristor in \cite{ju2022electromagnetic}. Their proposed model was shown to exhibit quasiperiodicity and their results were successfully replicated using a micro-controller unit-based hardware platform. Their mathematical model describes membrane potential $v$, recovery variable $r$, and inner state of the memristor $\phi$
$$ C_m \frac{dv}{dt} = -m_{\infty}(v)(v - E_{Na}) - g_K r(v - E_K) + I_{ext} - k_1 W(\phi) v, $$
$$ \frac{dr}{dt} = \frac{1}{\tau_r}(-r + r_{\infty}(v)), \quad \frac{d\phi}{dt} = v - k_2 \phi + \phi_{ext}, $$
where
$$ m_{\infty}(v) = 17.8 + 47.6v + 33.8v^2, $$
and
$$ r_{\infty}(v) = 1.24 + 3.7v + 3.2v^2. $$
We use typical model parameters for the membrane capacitor, $C_m = 1$, reverse potential for sodium and potassium, $E_{Na} = 0.5$ and $E_K = -0.95$, respectively, maximal conductance of potassium, $g_K = 26$, potassium ion channel activation time constant, $\tau_r = 5$, and external stimulus current $I_{ext} = 1$ \cite{xu2021analogy}. The EMR external contribution is given by $\phi_{ext} = A \sin(2\pi F t)$, the terms $k_1 W(\phi) v$ and $k_2 \phi$ denote the induction current caused by variation of magnetic flux and the leakage of magnetic flux. The memductance of the memristor is given by $W(\phi) = a - b \tanh(\phi)$ \cite{bao2017coexisting}. The remaining parameter values are $a = 1$, $b = 3$, $k_1 = 1.2$, $k_2 = 0.5$, $A = 0.35$, and $F = 0.22$. We solve the system using the initial conditions $(v_0, r_0, \phi_0) = (0, 0, 0)$ and $dt = 0.01$ up to $t = 500$. The sliding window was done with $f = r$, $d = 4$, $\tau = 72.21$, and $\epsilon = 0.07$, see Figure \ref{fig:EWplot}.

\begin{figure}[!b]
    \centering
    \includegraphics[width=0.25\textwidth, trim=2.1cm 2.1cm 2.1cm 2.1cm, clip]{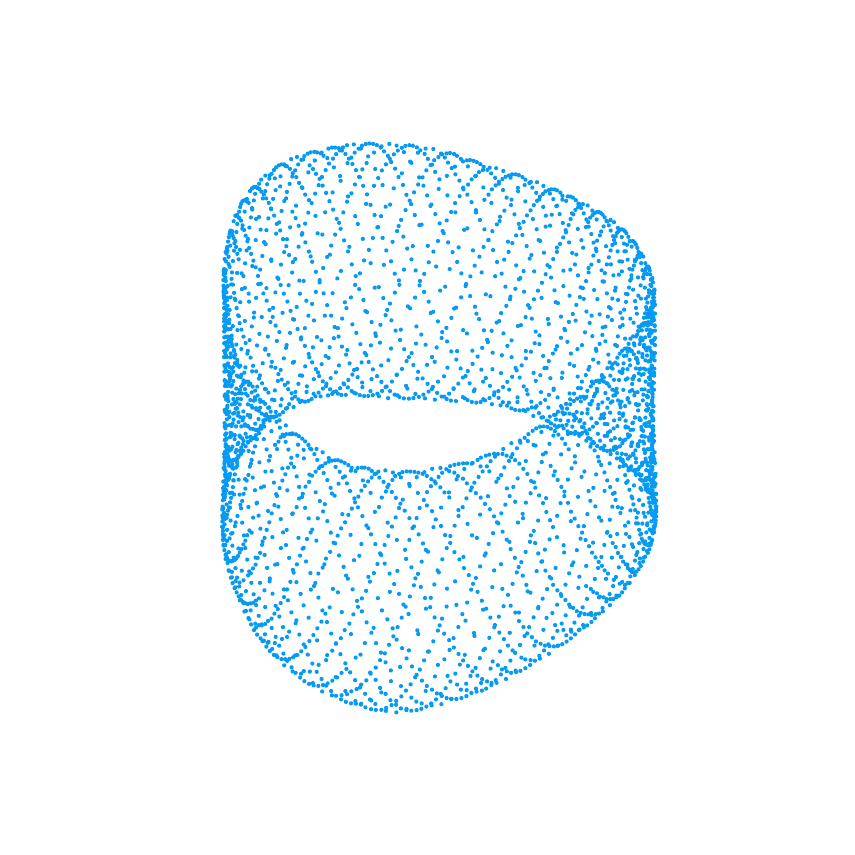}
    \hfill
    \includegraphics[width=0.32\textwidth]{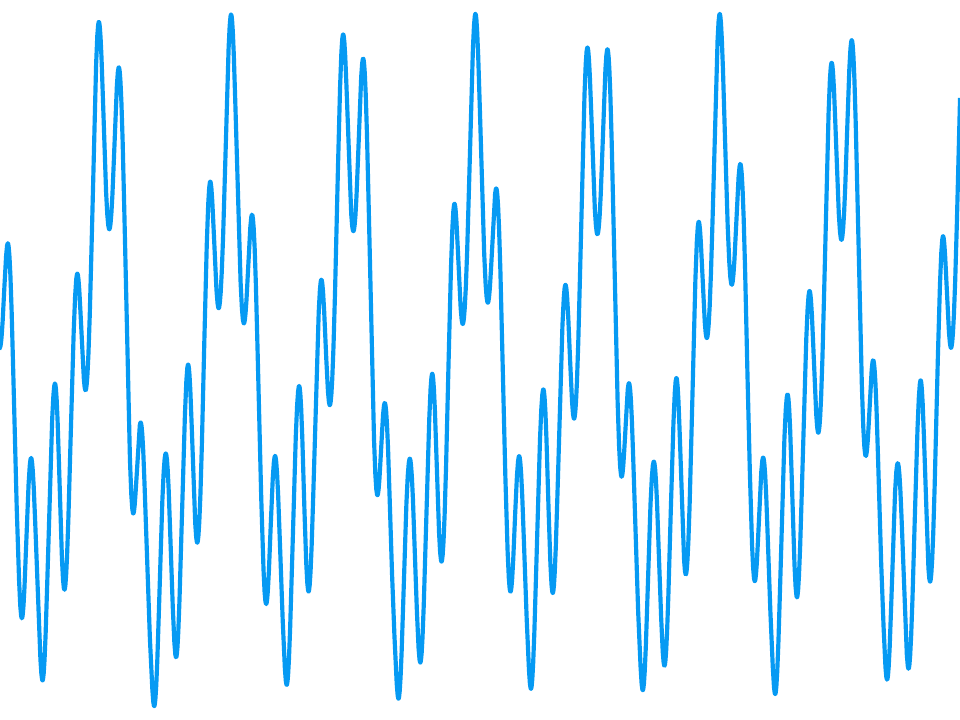}
    \hfill
    \includegraphics[width=0.32\textwidth]{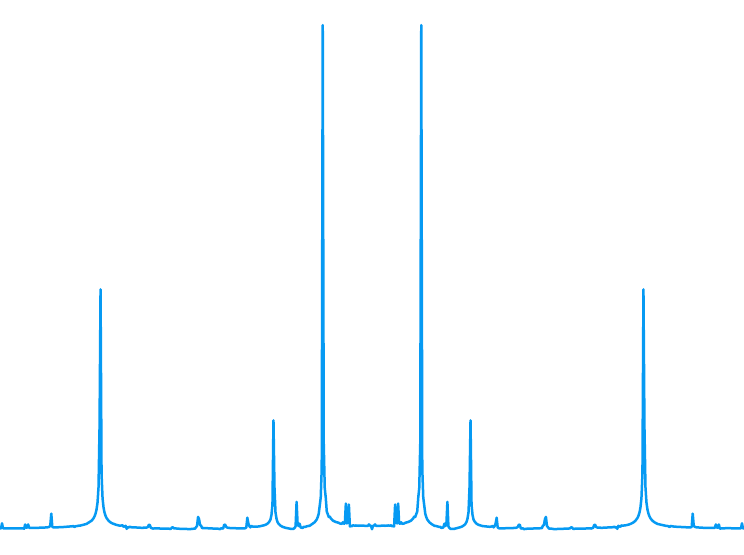}
    \hfill
    \includegraphics[width=0.48\textwidth]{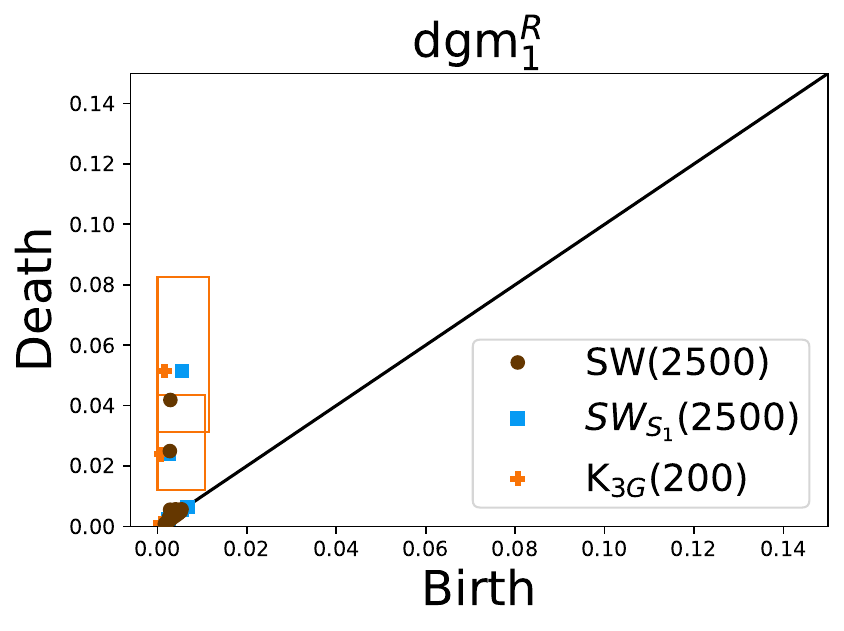}
    \hfill
    \includegraphics[width=0.48\textwidth]{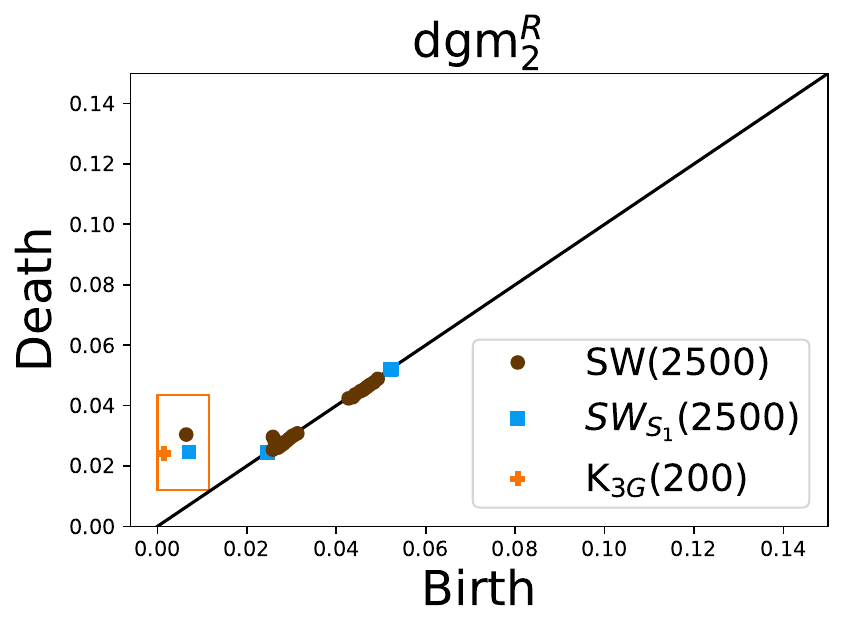}

    \caption{Top row: Three dimensional PCA projection of trajectories from the competitive TLN model (left), the scalar trajectory \(x_4(t)\) used for sliding window embedding (center), and its Fourier power spectrum via FFT (right). Bottom row: Persistence diagrams of the Rips filtration on the sliding window point cloud derived from \(x_4(t)\).}
    \label{fig:TLNplot}
\end{figure}

\subsubsection{Competitive Threshold-Linear Network}
\label{ssec:example6}
Threshold-linear networks (TLNs) provide an accessible model incorporating a threshold nonlinearity \cite{morrison2024diversity}. TLNs refine linear approximations of networks. The TLN model can be described by the equations:
$$ \frac{dx_i}{dt} = -x_i + \left[ \sum_{j=1}^{n} W_{ij}x_j + b_i \right]_+ $$
where $i = 1, \dots, n$ and $x_i$ represents the activity level of node $i$. The term $W_{ij}$ is the directed connection strength, $b_i$ is the external drive, and $[y]_+ = \max\{y, 0\}$ is the threshold nonlinearity.

We consider competitive TLNs, where $W_{ij} \leq 0$, $W_{ii} = 0$, and $b_i \geq 1$. Furthermore, we consider the connection strength:
$$ W_{ij} = \begin{cases}
0 & \text{if } i = j, \\
-1 + \lambda & \text{if node $j$ is connected to node $i$}, \\
-1 - \delta & \text{otherwise}.
\end{cases} $$

This model has shown complex behavior, including quasiperiodicity. We replicate the quasiperiodic behavior of Figure 2 in \cite{morrison2024diversity}. We refer to it for the initial condition and connection matrix. The remaining parameter values are $n = 25$, $b_i = 1$, $\lambda = 0.25$, and $\delta = 0.5$. We solve up to $t = 600$ and perform the sliding window with $f = x_4$, $d = 4$, $\tau = 27.53$, and $\epsilon = 0.07$, see Figure \ref{fig:TLNplot}.

\begin{figure}[!b]
    \centering
    \includegraphics[width=0.35\textwidth, trim=2.2cm 3.9cm 2.9cm 4.9cm, clip]{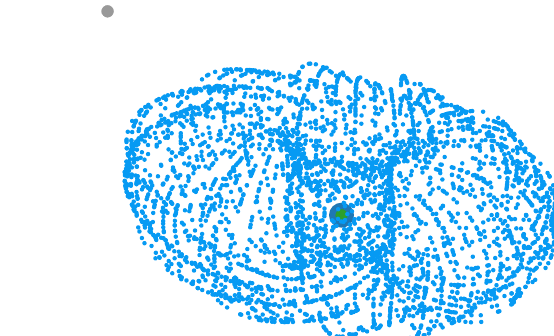}
    \hfill
    \includegraphics[width=0.27\textwidth]{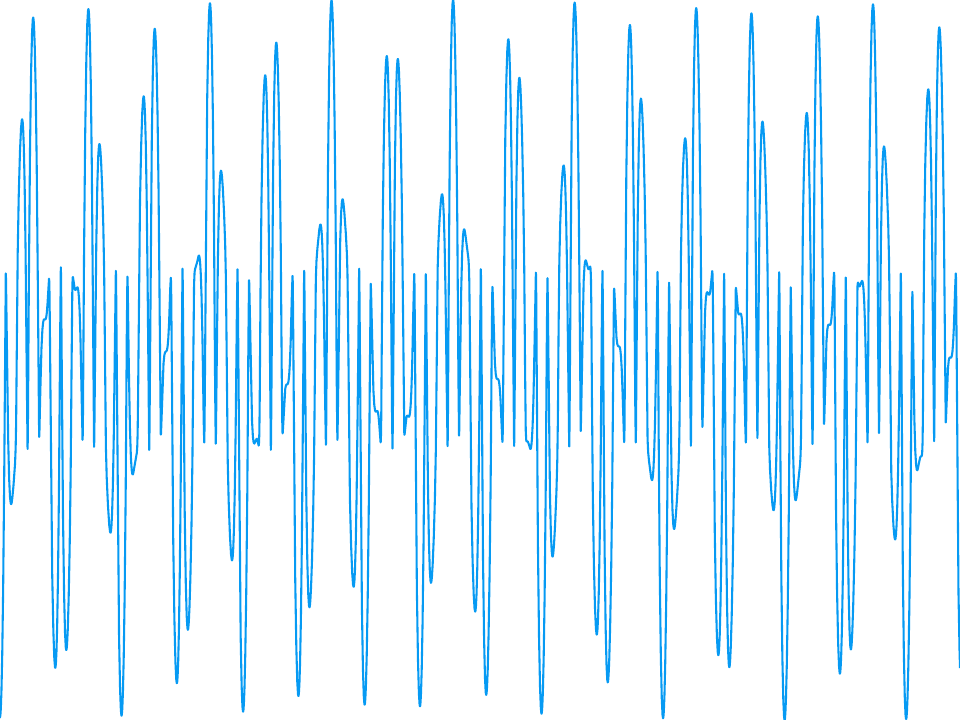}
    \hfill
    \includegraphics[width=0.27\textwidth]{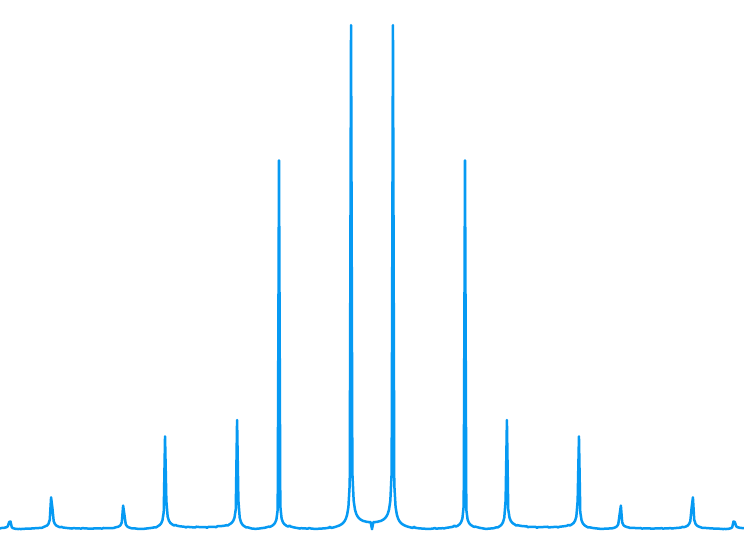}
    \hfill
    \includegraphics[width=0.47\textwidth]{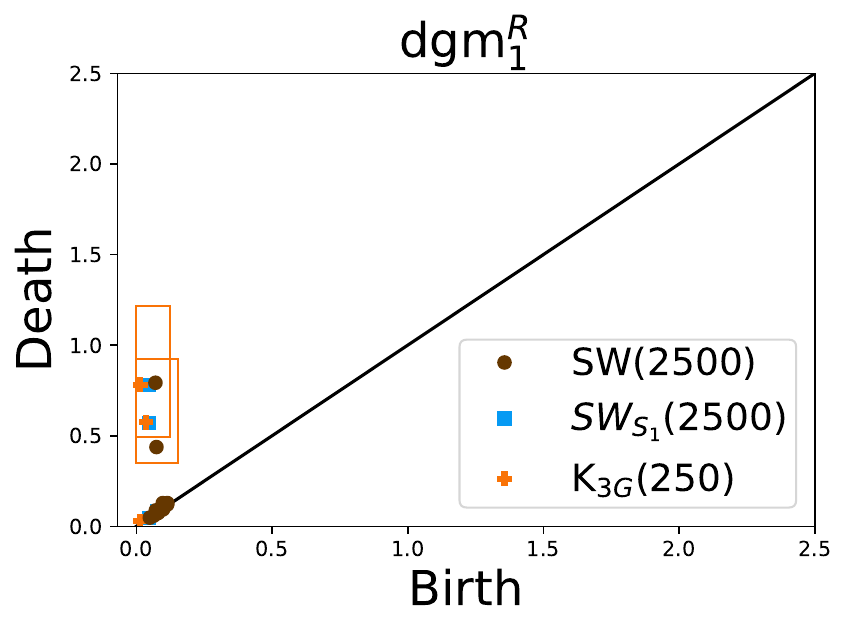}
    \hfill
    \includegraphics[width=0.47\textwidth]{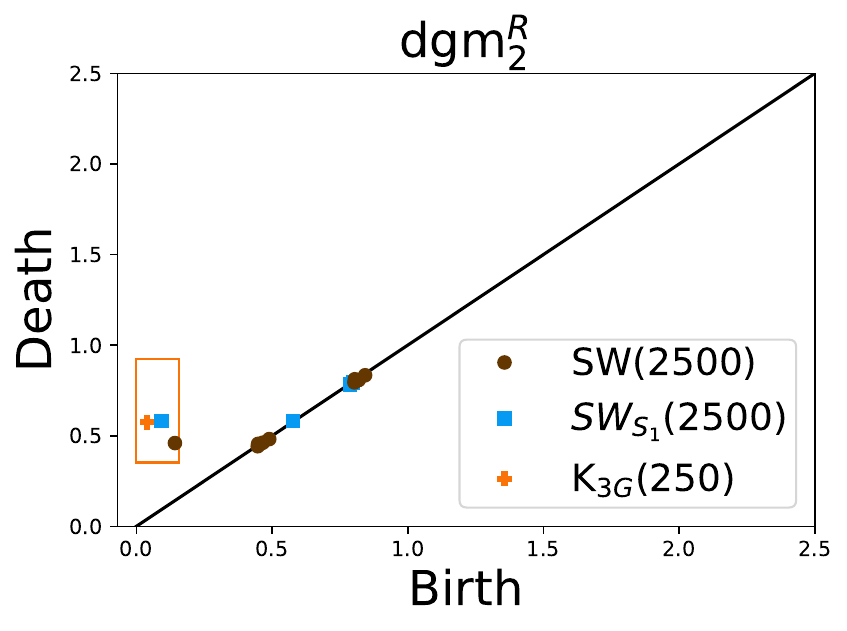}

    \caption{Top row: Phase portrait of the restricted three‐body problem in \((x,y,z)\)‐space, showing the Earth (blue/green) and the Moon (gray) as fixed primaries (left), the scalar trajectory \(x(t)\) used for sliding‐window embedding (center), and its Fourier power spectrum via FFT (right). Bottom row: Persistence diagrams of the Rips filtrations on the sliding window point cloud derived from \(x(t)\).}
    \label{fig:3BPplot}
\end{figure}

\subsubsection{Restricted Three Body Problem}
\label{ssec:example7}
In celestial mechanics, the restricted three body problem (RTBP) offers an accessible model with known equilibrium points \cite{koon2000heteroclinic}. The equations describe the progression of three celestial bodies in which one of them is considered a massless particle. The other two bodies, denotes as primaries, are assumed to move in circular orbits around their center of mass. By taking a coordinate system that rotates with the primaries and under the appropriate scaling, it can be assumed the primaries have masses $1-\mu$ and $\mu$, $\mu \in [0, 1/2]$, are fixed at $(\mu, 0, 0)$ and $(\mu - 1, 0, 0)$, respectively, and complete one revolution in $2\pi$ \cite{gomez2001dynamics}. This framework allows us to express the motion of the massless particle by the equations:
$$
\begin{aligned}
\ddot{x} &- 2\dot{y} = \Omega_x, \\
\ddot{y} &+ 2\dot{x} = \Omega_y, \\
\ddot{z} &= \Omega_z,
\end{aligned}
$$
where
$$
\Omega = \frac{1}{2}(x^2 + y^2) + \frac{1 - \mu}{r_1} + \frac{\mu}{r_2} + \frac{1}{2}\mu(1 - \mu),
$$
and
$$
r_1 = \sqrt{(x - \mu)^2 + y^2 + z^2}, \quad
r_2 = \sqrt{(x - \mu + 1)^2 + y^2 + z^2}
$$
are the distances from the particles to the primaries.

The case of the Earth-Moon system is of particular interest since it can aid in spacecraft missions interested in the Sun and the magnetosphere of the Earth \cite{gomez2001dynamics}. Indeed, near the equilibrium points of the system, quasiperiodicity is present which translates to nice trajectories for a mission. We replicate this behavior for the Earth-Moon system, see Figure \ref{fig:3BPplot}. In this case, $\mu = 0.0121506$. We solve the system with the initial condition $(x_0, y_0, z_0, \dot{x}, \dot{y}, \dot{z}) = (-0.5, 0, 0, 0, 0, 0.73)$ up to $t = 100$. The sliding window was done with $f = x$, $d = 4$, $\tau = 4.37$, and $\epsilon = 0.03$.

\begin{figure}[!b]
    \centering
    \includegraphics[width=0.30\textwidth, trim=2cm 2.5cm 2cm 2.3cm, clip]{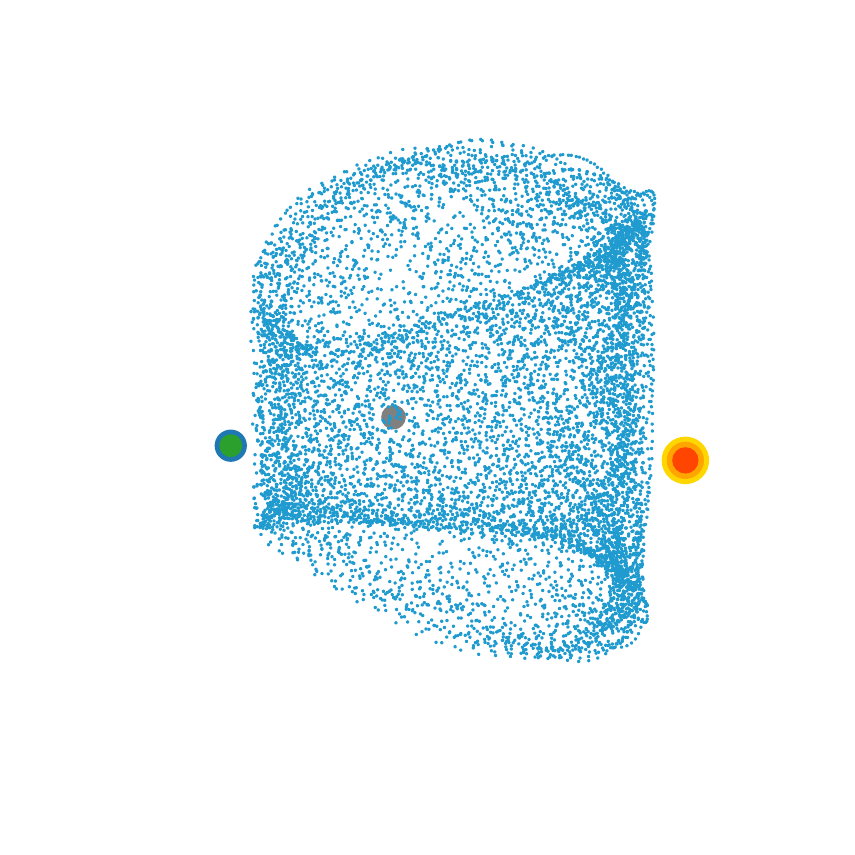}
    \hfill
    \includegraphics[width=0.32\textwidth]{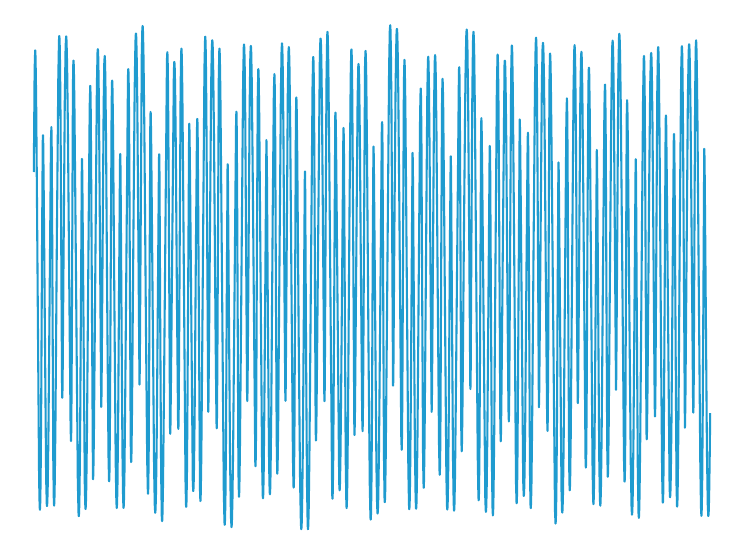}
    \hfill
    \includegraphics[width=0.30\textwidth]{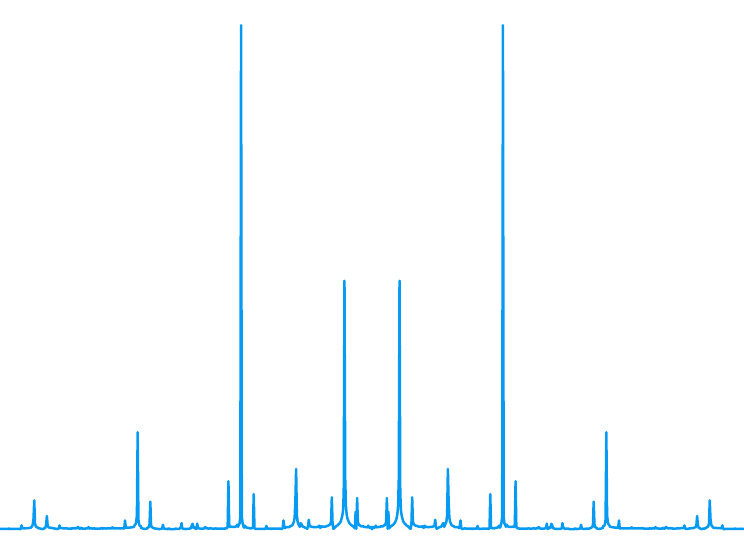}
    \hfill
    \includegraphics[width=0.48\textwidth]{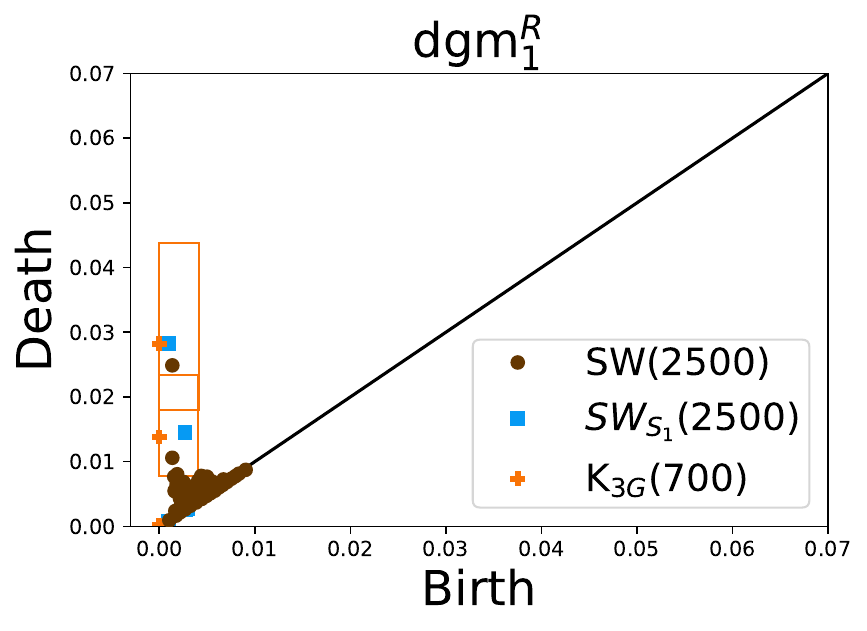}
    \hfill
    \includegraphics[width=0.48\textwidth]{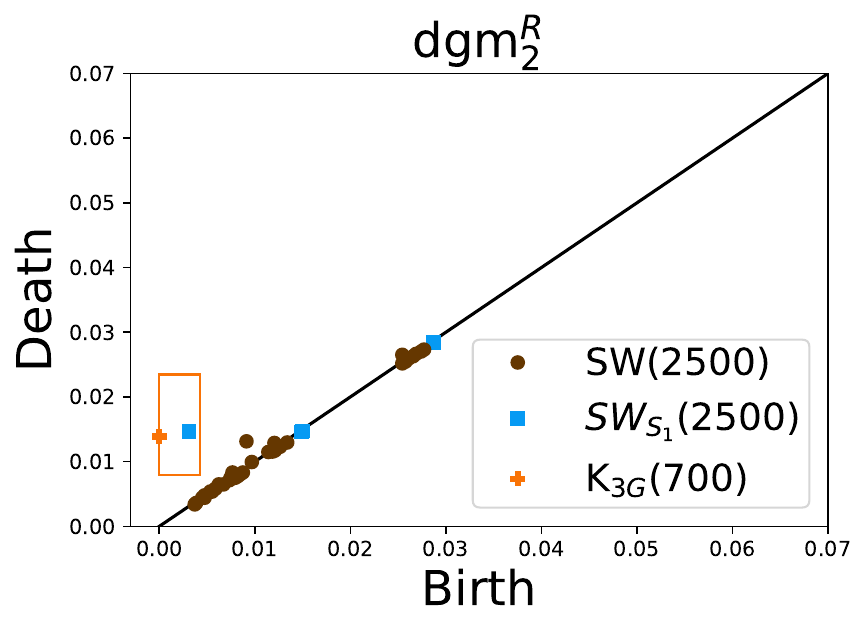}

    \caption{Top row: Phase portrait of the bicircular restricted four‐body problem in \((x,y,z)\)‐space, showing Earth (blue/green), Moon (gray), and Sun (red/yellow) as fixed primaries (positions not to scale). Center: the scalar trajectory \(z(t)\) used for sliding window embedding. Right: its Fourier power spectrum via FFT. Bottom row: Persistence diagrams of the Rips filtrations on the sliding window point cloud derived from \(z(t)\).}
    \label{fig:4BPplot}
\end{figure}

\subsubsection{Bicircular Restricted Four Body Problem}
\label{ssec:example9}
Considering the effects of the sun in the RTBP model gives rise to the bicircular restricted four body problem (BCR4BP). The model is of importance for exploiting the force from the sun in trajectory designs for lunar missions and has found applications for ballistic lunar transfers to the lunar region \cite{mccarthy2023four}. The new model uses the same coordinate axes as the RTBP and the same circular motion assumptions of the Earth and Moon but it now includes terms pertaining to the influence of the Sun. The Sun is assumed to lie in the x-y plane at a fixed distance to the origin, $a_4$, and moving with a constant angular velocity, $\dot{\theta}_S$, in circular motion. Furthermore, the model assumes the Earth and Moon are not perturbed by solar gravity. Under these assumptions, the equations are \cite{mccarthy2023four}
$$
\begin{aligned}
\ddot{x} &= 2\dot{y} + \frac{\partial \Upsilon}{\partial x}, \\
\ddot{y} &= -2\dot{x} + \frac{\partial \Upsilon}{\partial y}, \\
\ddot{z} &= \frac{\partial \Upsilon}{\partial z},
\end{aligned}
$$
where
$$
\Upsilon = \frac{1 - \mu}{r_{13}} + \frac{\mu}{r_{23}} + \frac{x^2 + y^2}{2}
+ \lambda \left( \frac{m_4}{r_{43}} - \frac{m_4}{a_4^3} (x_4 x + y_4 y + z_4 z) \right).
$$
and $(x_4, y_4, z_4)$ denotes the position of the sun, $r_{13}, r_{23}, r_{43}$ the distance of the Earth to the massless object, the Moon to the massless object, and the Sun to the massless object, respectively, and $m_4$ is the non-dimensional mass of the Sun. We note the case $\lambda = 0$ reduces to the RTBP. A study of the system as $\lambda$ increases to $1$ is done in \cite{mccarthy2023four}, where they showed the model exhibited quasiperiodicity. We replicate the quasiperiodic behavior, see Figure \ref{fig:4BPplot}, for parameter values $\lambda = 1$, $\mu = 0.012155$, $a_4 = 388.84$, $m_4 = 328950.69$, and $\dot{\theta}_S = -0.9251986$. The system was solved with the initial condition $(x_0, y_0, z_0, \dot{x}, \dot{y}, \dot{z}, \theta_S) = (1.09, 0, 0, 0, 0.19, 0.05, 2)$ up to $t = 200$. The sliding window was done with $f = z$, $d = 4$, $\tau = 39.99$, and $\epsilon = 0.009$.

\definecolor{lightgray}{RGB}{230, 230, 230}
\definecolor{darkgray}{RGB}{204, 204, 204}
\setlength{\arrayrulewidth}{0.5pt} 
\definecolor{navyblue}{RGB}{137, 207, 240}

\subsection{Concluding Remarks}
\label{sec:Con}

\begin{table}[ht]
\centering
\caption{Running Times}
\begin{tabular}{|c|c|c|c|}
\hline
\rowcolor{navyblue}
Example  & SW  & SW(S1) & K(3G) \\ \hline
\rowcolor{lightgray}
5.2.1 & 7008.66 sec & 7672.85 sec & 0.87 sec \\ \hline
\rowcolor{darkgray}
5.2.2 & 4351.15 sec & 5351.15 sec & 0.42 sec \\ \hline
\rowcolor{lightgray}
5.2.3 & 3126.81 sec & 3218.73 sec & 0.50 sec \\ \hline
\rowcolor{darkgray}
5.2.4 & 5556.86 sec & 7628.48 sec & 0.98 sec \\ \hline
\rowcolor{lightgray}
5.2.5 & 5137.54 sec & 6505.54 sec & 0.66 sec \\ \hline
\rowcolor{darkgray}
5.2.6 & 5805.28 sec & 7193.97 sec & 0.22 sec \\ \hline
\rowcolor{lightgray}
5.2.7 & 6900.29 sec & 7937.03 sec & 2.09 sec \\ \hline
\rowcolor{darkgray}
5.2.8 & 6615.33 sec & 8231.25 sec & 0.59 sec \\ \hline
\end{tabular}
\label{tab:RT}
\end{table}

In this section, we detailed how 3G can be implemented on general quasiperiodic functions. Our approximation method was used on dynamical systems known to exhibit quasiperiodicity. As our figures show, we successfully approximated the diagrams obtained from SW. Error bounds can also be computed for our method, illustrated in the plots as rectangles. Furthermore, our approach significantly reduces computational time, as shown in Table \ref{tab:RT}. We hope our work contributes to the implementation of the sliding window embedding technique on large data sets.

\section{Conclusions and Future Work}

We detailed how SW is a vital method used for the better understanding of quasiperiodic signals. The correspondence between a quasiperiodic function of $N$ incommensurate frequencies and an $N$-dimensional torus makes persistent homology an ideal component. However, the exponential computational cost of the latter limits the datasets that can be impacted by SW. Our contribution is providing an approximation to the persistence diagrams of interest (Section \ref{sec:AM}). Our method, 3G, achieves this within known error bounds. On average, our computation took less than 1 second to run for the examples depicted in Section \ref{sec: App}. This contrasts with an average computation time of 5,563 seconds when using a standard library, Ripser \cite{bauer2016ripser}, to compute the persistence diagrams of interest (Table \ref{tab:RT}).

Future work for our project would involve improving our approximation by tightening the error bounds. A more granular approach would be to inspect the specific transformation matrix instead of relying on a singular value argument. Another direction is to apply our work to real-world data, such as electrocardiogram datasets.



\begin{thebibliography}{50}

\bibitem{bao2017coexisting}
{\sc Bao, B., Qian, H., Xu, Q., Chen, M., Wang, J., and Yu, Y.}
\newblock Coexisting behaviors of asymmetric attractors in hyperbolic-type
  memristor based hopfield neural network.
\newblock {\em Frontiers in Computational Neuroscience 11\/} (2017), 81.

\bibitem{bauer2016ripser}
{\sc Bauer, U.}
\newblock {\em Ripser}, vol.~514.
\newblock URL: \url{https://github.com/Ripser/ripser}, 2016.

\bibitem{bauer2021ripser}
{\sc Bauer, U.}
\newblock Ripser: a lean {C++} code for the computation of vietoris--rips
  persistence barcodes.
\newblock {\em Journal of Applied and Computational Topology 5}, 3 (2021),
  391--423.

\bibitem{belloy2017dynamic}
{\sc Belloy, M., Naeyaert, M., Keliris, G., Abbas, A., Keilholz, S., Van
  Der~Linden, A., and Verhoye, M.}
\newblock Dynamic resting state fmri in mice: detection of quasi-periodic
  patterns.
\newblock {\em Proceeding of the International Soc. Magn. Reson. Med 961\/}
  (2017).

\bibitem{beresnevich2017sums}
{\sc Beresnevich, V., and Leong, N.}
\newblock Sums of reciprocals and the three distance theorem.
\newblock {\em arXiv preprint arXiv:1712.03758\/} (2017).

\bibitem{charo2020topology}
{\sc Char{\'o}, G.~D., Artana, G., and Sciamarella, D.}
\newblock Topology of dynamical reconstructions from lagrangian data.
\newblock {\em Physica D: Nonlinear Phenomena 405\/} (2020), 132371.

\bibitem{chazal2016structure}
{\sc Chazal, F., De~Silva, V., Glisse, M., and Oudot, S.}
\newblock {\em The structure and stability of persistence modules}, vol.~10.
\newblock Springer, 2016.

\bibitem{cohen2007stability}
{\sc Cohen-Steiner, D., Edelsbrunner, H., and Harer, J.}
\newblock Stability of persistence diagrams.
\newblock {\em Discrete \& Computational Geometry 37}, 1 (2007), 103.

\bibitem{coombes2009delays}
{\sc Coombes, S., and Laing, C.}
\newblock Delays in activity-based neural networks.
\newblock {\em Philosophical Transactions of the Royal Society A: Mathematical,
  Physical and Engineering Sciences 367}, 1891 (2009), 1117--1129.

\bibitem{crawley2015decomposition}
{\sc Crawley-Boevey, W.}
\newblock Decomposition of pointwise finite-dimensional persistence modules.
\newblock {\em Journal of Algebra and its Applications 14}, 05 (2015), 1550066.

\bibitem{detroux2015performance}
{\sc Detroux, T., Habib, G., Masset, L., and Kerschen, G.}
\newblock Performance, robustness and sensitivity analysis of the nonlinear
  tuned vibration absorber.
\newblock {\em Mechanical Systems and Signal Processing 60\/} (2015), 799--809.

\bibitem{gakhar2019k}
{\sc Gakhar, H., and Perea, J.~A.}
\newblock K{\"u}nneth formulae in persistent homology.
\newblock {\em arXiv preprint arXiv:1910.05656\/} (2019).

\bibitem{gakhar2021sliding}
{\sc Gakhar, H., and Perea, J.~A.}
\newblock Sliding window persistence of quasiperiodic functions.
\newblock {\em Journal of Applied and Computational Topology 8}, 1 (2024),
  55--92.

\bibitem{gomez2001dynamics}
{\sc G{\'o}mez, G., and Mondelo, J.~M.}
\newblock The dynamics around the collinear equilibrium points of the rtbp.
\newblock {\em Physica D: Nonlinear Phenomena 157}, 4 (2001), 283--321.

\bibitem{MR1867354}
{\sc Hatcher, A.}
\newblock {\em Algebraic topology}.
\newblock Cambridge University Press, Cambridge, 2002.

\bibitem{hlawka2012geometric}
{\sc Hlawka, E., Thomas, C., Schoi{\ss}engeier, J., and Taschner, R.}
\newblock {\em Geometric and Analytic Number Theory}.
\newblock Universitext. Springer Berlin Heidelberg, 2012.

\bibitem{holgado2010conditions}
{\sc Holgado, A. J.~N., Terry, J.~R., and Bogacz, R.}
\newblock Conditions for the generation of beta oscillations in the subthalamic
  nucleus--globus pallidus network.
\newblock {\em Journal of Neuroscience 30}, 37 (2010), 12340--12352.

\bibitem{ju2022electromagnetic}
{\sc Ju, Z., Lin, Y., Chen, B., Wu, H., Chen, M., and Xu, Q.}
\newblock Electromagnetic radiation induced non-chaotic behaviors in a wilson
  neuron model.
\newblock {\em Chinese Journal of Physics 77\/} (2022), 214--222.

\bibitem{kaslik2022stability}
{\sc Kaslik, E., Kokovics, E.-A., and R{\u{a}}dulescu, A.}
\newblock Stability and bifurcations in wilson-cowan systems with distributed
  delays, and an application to basal ganglia interactions.
\newblock {\em Communications in Nonlinear Science and Numerical Simulation
  104\/} (2022), 105984.

\bibitem{khasawneh2016chatter}
{\sc Khasawneh, F.~A., and Munch, E.}
\newblock Chatter detection in turning using persistent homology.
\newblock {\em Mechanical Systems and Signal Processing 70\/} (2016), 527--541.

\bibitem{kim2022exact}
{\sc Kim, K., and Jung, J.-H.}
\newblock Exact multi-parameter persistent homology of time-series data: Fast
  and variable one-dimensional reduction of multi-parameter persistence theory.
\newblock {\em arXiv preprint arXiv:2211.03337\/} (2022).

\bibitem{koon2000heteroclinic}
{\sc Koon, W.~S., Lo, M.~W., Marsden, J.~E., and Ross, S.~D.}
\newblock Heteroclinic connections between periodic orbits and resonance
  transitions in celestial mechanics.
\newblock {\em Chaos: An Interdisciplinary Journal of Nonlinear Science 10}, 2
  (2000), 427--469.

\bibitem{leong2017sums}
{\sc Leong, N.}
\newblock {\em Sums of Reciprocals and the Three Distance Theorem}.
\newblock PhD thesis, University of York, 2017.

\bibitem{lesnick2015theory}
{\sc Lesnick, M.}
\newblock The theory of the interleaving distance on multidimensional
  persistence modules.
\newblock {\em Foundations of Computational Mathematics 15}, 3 (2015),
  613--650.

\bibitem{levine1984quasicrystals}
{\sc Levine, D., and Steinhardt, P.~J.}
\newblock Quasicrystals: a new class of ordered structures.
\newblock {\em Physical review letters 53}, 26 (1984), 2477.

\bibitem{mccarthy2023four}
{\sc McCarthy, B.~P., and Howell, K.~C.}
\newblock Four-body cislunar quasi-periodic orbits and their application to
  ballistic lunar transfer design.
\newblock {\em Advances in Space Research 71}, 1 (2023), 556--584.

\bibitem{morozov2005}
{\sc Morozov, D.}
\newblock Persistence algorithm takes cubic time in worst case.
\newblock {\em BioGeometry News, Department of Computer Science, Duke
  University, Durham, NC\/} (2005).

\bibitem{morrison2024diversity}
{\sc Morrison, K., Degeratu, A., Itskov, V., and Curto, C.}
\newblock Diversity of emergent dynamics in competitive threshold-linear
  networks.
\newblock {\em SIAM Journal on Applied Dynamical Systems 23}, 1 (2024),
  855--884.

\bibitem{murudi2004seismic}
{\sc Murudi, M.~M., and Mane, S.~M.}
\newblock Seismic effectiveness of tuned mass damper (tmd) for different ground
  motion parameters.
\newblock In {\em 13th World Conference on Earthquake Engineering\/} (2004),
  vol.~2, pp.~1--8.

\bibitem{myers2022damping}
{\sc Myers, A.~D., and Khasawneh, F.~A.}
\newblock Damping parameter estimation using topological signal processing.
\newblock {\em Mechanical Systems and Signal Processing 174\/} (2022), 109042.

\bibitem{fractions1963cd}
{\sc Olds, C.}
\newblock Continued fractions.
\newblock {\em Mathematical Association of America\/} (1963).

\bibitem{penalva2018takens}
{\sc Penalva~Vadell, J.}
\newblock Takens' theorem: Proof and applications.
\newblock Master's thesis, Universitat de les Illes Balears, 2018.

\bibitem{perea2019topological}
{\sc Perea, J.~A.}
\newblock Topological time series analysis.
\newblock {\em Notices of the American Mathematical Society 66}, 5 (2019),
  686--694.

\bibitem{perea2015sw1pers}
{\sc Perea, J.~A., Deckard, A., Haase, S.~B., and Harer, J.}
\newblock Sw1pers: Sliding windows and 1-persistence scoring; discovering
  periodicity in gene expression time series data.
\newblock {\em BMC bioinformatics 16}, 1 (2015), 1--12.

\bibitem{PhysOrg2013Quake}
{\sc {Phys.org}}.
\newblock Japanese companies look to quake damping pendulums.
\newblock
  \url{https://phys.org/news/2013-08-japanese-companies-quake-damping-pendulums.html},
  August 2013.
\newblock Accessed: 2024-03-20.

\bibitem{rockett1992continued}
{\sc Rockett, A.~M., et~al.}
\newblock {\em Continued fractions}.
\newblock World Scientific, 1992.

\bibitem{shadden2005definition}
{\sc Shadden, S.~C., Lekien, F., and Marsden, J.~E.}
\newblock Definition and properties of lagrangian coherent structures from
  finite-time lyapunov exponents in two-dimensional aperiodic flows.
\newblock {\em Physica D: Nonlinear Phenomena 212}, 3-4 (2005), 271--304.

\bibitem{shannon1949communication}
{\sc Shannon, C.~E.}
\newblock Communication in the presence of noise.
\newblock {\em Proceedings of the IRE 37}, 1 (1949), 10--21.

\bibitem{skraba2012topological}
{\sc Skraba, P., De~Silva, V., and Vejdemo-Johansson, M.}
\newblock Topological analysis of recurrent systems.
\newblock In {\em NIPS 2012 Workshop on Algebraic Topology and Machine
  Learning, December 8th, Lake Tahoe, Nevada\/} (2012), pp.~1--5.

\bibitem{takens1981detecting}
{\sc Takens, F.}
\newblock Detecting strange attractors in turbulence.
\newblock In {\em Dynamical systems and turbulence, Warwick 1980}. Springer,
  1981, pp.~366--381.

\bibitem{tralie2018quasi}
{\sc Tralie, C.~J., and Perea, J.~A.}
\newblock (quasi) periodicity quantification in video data, using topology.
\newblock {\em SIAM Journal on Imaging Sciences 11}, 2 (2018), 1049--1077.

\bibitem{van1988three}
{\sc Van~Ravenstein, T.}
\newblock The three gap theorem (steinhaus conjecture).
\newblock {\em Journal of the Australian Mathematical Society 45}, 3 (1988),
  360--370.

\bibitem{weixing1993quasiperiodic}
{\sc Weixing, D., Wei, H., Xiaodong, W., and Yu, C.}
\newblock {Quasiperiodic transition to chaos in a plasma}.
\newblock {\em Physical review letters 70}, 2 (1993), 170.

\bibitem{wen2019quasi}
{\sc Wen, R., Li, T., and Zhen, B.}
\newblock Quasi-periodic motions of a pendulum with vibrating suspension point.
\newblock {\em Journal of Vibration Engineering \& Technologies 7\/} (2019),
  519--532.

\bibitem{wilden1998subharmonics}
{\sc Wilden, I., Herzel, H., Peters, G., and Tembrock, G.}
\newblock Subharmonics, biphonation, and deterministic chaos in mammal
  vocalization.
\newblock {\em Bioacoustics 9}, 3 (1998), 171--196.

\bibitem{wilson1999simplified}
{\sc Wilson, H.~R.}
\newblock Simplified dynamics of human and mammalian neocortical neurons.
\newblock {\em Journal of theoretical biology 200}, 4 (1999), 375--388.

\bibitem{xu2019twisty}
{\sc Xu, B., Tralie, C.~J., Antia, A., Lin, M., and Perea, J.~A.}
\newblock Twisty takens: A geometric characterization of good observations on
  dense trajectories.
\newblock {\em Journal of Applied and Computational Topology 3\/} (2019),
  285--313.

\bibitem{xu2021analogy}
{\sc Xu, Q., Ju, Z., Feng, C., Wu, H., and Chen, M.}
\newblock Analogy circuit synthesis and dynamics confirmation of a bipolar
  pulse current-forced 2d wilson neuron model.
\newblock {\em The European Physical Journal Special Topics 230}, 7 (2021),
  1989--1997.

\bibitem{zhigljavskykronecker}
{\sc Zhigljavsky, A., and Aliev, I.}
\newblock Kronecker sequences: Asymptotic distributions of the partition
  lengths.
\newblock URL:
  \url{https://ssa.cf.ac.uk/zhigljavsky/pdfs/number\%20theory/Kroneker\%20sequences.pdf}.

\bibitem{zomorodian2005computing}
{\sc Zomorodian, A., and Carlsson, G.}
\newblock Computing persistent homology.
\newblock {\em Discrete \& Computational Geometry 33}, 2 (2005), 249.

\end{thebibliography}

\begin{appendices}

\section{Appendix}
\begin{proposition}
\label{A1}
Let $\sigma \in C_1(R_\epsilon(S_{\omega,T}))$ and $\sigma \neq 0.$ If $\partial(\sigma)=0$, then there exists $N \in \mathbb{N}$ and $\sigma_i \in C_1(R_\epsilon(S_{\omega,T}))$, $\sigma_i \neq 0$, such that $$\sigma = \sum_{i=0}^{N-1}\sigma_i,$$ $\partial(\sigma_i)=0$, and if $\tau \subsetneq \sigma_i,$ $$ \partial(\tau)\neq0. $$ Moreover, $\sigma_i$ can be expressed as a sum of the form $$\sigma_i=\sum_{j=0}^{N_i-1}[x_j,x_{j+1}],$$ where $x_i \in S_{\omega,T}, N_i \in \mathbb{N},$ and $x_{N_i}=x_0.$
\end{proposition}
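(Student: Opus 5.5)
We argue by strong induction on the size of the support of $\sigma$, i.e.\ the number of edges $e$ with nonzero coefficient in $\sigma$. Think of $\sigma$ as a weighted graph $G_\sigma$ on the vertex set $S_{\omega,T}$, whose edges are the $1$-simplices appearing in $\sigma$. The condition $\partial\sigma=0$ says that at every vertex $x$ the signed sum of the coefficients of the edges incident to $x$ is zero. Since $\sigma\neq 0$, some vertex has an incident edge, and that edge's coefficient must be cancelled by another edge at the same vertex. Hence every vertex of $G_\sigma$ with positive degree has degree at least $2$.

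The first key step is to extract a cycle. Start at any vertex $x_0$ incident to an edge of $G_\sigma$ and walk along edges, never reusing the edge just traversed. Such an edge always exists because every visited vertex has degree at least $2$. Since $S_{\omega,T}$ is finite, the walk eventually revisits a vertex. Truncating at the first repetition gives a simple closed path $x_j,x_{j+1},\dots,x_{j+m}=x_j$ in $G_\sigma$. Relabel it as $x_0,\dots,x_{N_0}=x_0$ and set $\gamma=\sum_{l=0}^{N_0-1}[x_l,x_{l+1}]$, using oriented edges so that $[x_l,x_{l+1}]=-[x_{l+1},x_l]$. Each vertex of a simple cycle meets exactly two of its edges with opposite orientation, so $\partial\gamma=0$.

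The second step is to check that $\gamma$ is minimal: no nonzero proper subchain $\tau\subsetneq\gamma$ has $\partial\tau=0$. Such a $\tau$ omits at least one edge of the simple cycle, so its support is a disjoint union of paths. The endpoint of any such path meets exactly one edge of $\tau$, so $\partial\tau$ has a nonzero coefficient there.

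The third step, which I expect to be the main obstacle, is subtracting the cycle so as to keep the induction going. Over a general field the coefficients of $\sigma$ along $\gamma$ need not agree. Choose an edge $e_0$ of $\gamma$ and let $c$ be the coefficient of $\sigma$ on $e_0$, taken in the orientation $e_0$ has in $\gamma$. Set $\sigma_0=c\gamma$. Then $\sigma-\sigma_0$ is still a cycle, since $\partial\sigma_0=0$, and its support omits $e_0$. So its support is strictly smaller than that of $\sigma$, although new edges are never introduced. This scaling means that ``$\sigma_i$ of the form $\sum[x_j,x_{j+1}]$'' should be read up to a nonzero scalar (or, with $\F=\mathbb{Z}/2$ or unit coefficients, literally). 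We also have to confirm that minimality is preserved under scaling, and it is, since a scalar multiple has the same support. If $\sigma-\sigma_0=0$ we stop. Otherwise the induction hypothesis decomposes it into minimal cycles $\sigma_1,\dots,\sigma_{N-1}$, and so $\sigma=\sum_{i=0}^{N-1}\sigma_i$.

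Finally, all edges used lie in $\sigma$, so each $\sigma_i\in C_1(R_\epsilon(S_{\omega,T}))$. None of the argument uses the geometry of $S_{\omega,T}$ beyond its finiteness.
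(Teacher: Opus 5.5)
Your proof is correct, but it runs in the opposite direction from the paper's.

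\textbf{The paper's route.} The paper argues top-down. It picks a maximal proper subchain $\tau_1\subsetneq\sigma$ with $\partial\tau_1=0$ and sets $\sigma_1=\sigma-\tau_1$, which is minimal precisely because $\tau_1$ is maximal. It repeats this inside $\tau_1$. Only afterwards does it show that each minimal piece is a closed path: it follows edges from $[x_0,x_1]$ until the walk closes, then invokes minimality to conclude nothing is left over. This yields pieces that are honest subchains of $\sigma$ with disjoint supports partitioning the support of $\sigma$. However, the last step treats a unit-coefficient closed walk as a subchain of $\sigma_i$, so it tacitly works with coefficients $\pm1$, essentially over $\mathbb{Z}/2$.

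\textbf{Where that breaks over a general field.} A minimal zero-boundary subchain need not be supported on a simple cycle. On $K_{2,3}$ over $\mathbb{R}$, send $1$ and $\sqrt2$ from $u$ to $v$ through $w_1$ and $w_2$, and $1+\sqrt2$ back through $w_3$. Restricting this $\sigma$ to any proper subset of its six edges leaves a nonzero boundary. Yet $\sigma$ is not a scalar multiple of any unit-coefficient closed walk, since its coefficient ratios are irrational.

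\textbf{What your route buys.} You extract a simple cycle from a support graph of minimum degree at least $2$, check its minimality directly, subtract the scalar multiple that kills one edge, and induct on support size. This avoids the problem above and works over any field. It even produces simple cycles rather than closed walks with repeated vertices. The price is that your $\sigma_i$ may overlap in support and are only scalar multiples of unit-coefficient cycles.

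\textbf{Your caveat.} Your caveat about the ``moreover'' clause is a genuine feature of the statement, not an artifact of your method. Over $\mathbb{Q}$, $\tfrac12\gamma$ is not a sum of unit-coefficient closed walks, so the literal form holds only for $\mathbb{Z}/2$-type coefficients or up to scalar.
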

\begin{proof}
Let $\tau_1  \subsetneq \sigma $ be such that $\partial(\tau_1)=0$ and there are no $\bar{\tau}  \subsetneq \sigma $ such that $\partial(\bar{\tau})=0$  and $\tau_1  \subsetneq \bar{\tau}.$ If no such $\tau_1$ exists, then let $\sigma_1=\sigma$ and the result follows. Otherwise, let $\sigma_1=\sigma/\tau_1.$ By the maximality of $\tau_1,$ $\sigma_1$ satisfies the stated conditions. Now, let $\tau_2  \subsetneq \tau_1 $ be such that $\partial(\tau_2)=0$ and there are no $\bar{\tau}  \subsetneq \tau_1 $ such that $\partial(\bar{\tau})=0$  and $\tau_2  \subsetneq \bar{\tau}.$ If no such $\tau_2$ exists, then let $\sigma_2=\tau_1$ and the result follows. Otherwise, let $\sigma_2=\tau_1/\tau_2$. As before, one can check $\sigma_2$ has the desired conditions. After finitely many steps, the process will stop resulting in the desired sum.

We now show $\sigma_i$ can be expressed as stated. Note that since $\sigma_i \neq 0,$ there exists $[x_0,x_1] \in \sigma_i.$ Furthermore, since $\partial(\sigma_i) =0,$ there must exist a $[x_1,x_2] \in \sigma_i$ to cancel the term $x_1$. Similarly, there must also exist a term $[x_2,x_3] \in \sigma_i$ to cancel the term $x_2.$ After repeating this process finitely many times, say $N_i$, we must reach a point with a term $[x_{N_i-1},x_0]$ to cancel the term $x_0.$ Clearly, $$\partial\big(\sum_{j=0}^{N_i-2}[x_j,x_{j+1}]+[x_{N_i-1},x_0]\big)=0,$$ thus by the properties of $\sigma_i$ all of its terms must have been accounted for. The result follows.

\end{proof}

For $[x,y] \in C_1(R_\epsilon(S_{\omega,T})) $, we let $$L_{x,y}=1$$ if in the circle representation of $S_{\omega,T}$, $\bar{d}(x,y) $ is the length of the arc starting at $x$ and ending at $y$ obtained by traversing clockwise. Similarly, we let $$L_{x,y} = -1$$ if $\bar{d}(x,y) $ is the length of the arc traversing counterclockwise. Since $\bar{d}(x,y) \neq 1/2,$ for $x,y \in S_{\omega,T},$ $L_{x,y}$ is well defined. We now define $$f([x,y]) = L_{x,y}\bar{d}(x,y).$$ Using linearity, we extend this function to all of $C_1(R_\epsilon(S_{\omega,T})),$ i.e. if $\sigma = \sum_i[x_i,y_i] \in C_1(R_\epsilon(S_{\omega,T})),$ $$ f(\sigma)= \sum_{i}f([x_i,y_{i}]) = \sum_{i}L_{x_i,y_i}\bar{d}(x_i,y_i).$$

\begin{proposition}
\label{A2}
Let $\sigma \in C_1(R_\epsilon(S_{\omega,T})), \ f(\sigma) \neq 0,$ and $N\in \mathbb{N}$ such that $$\sigma = \sum_{i=0}^N [x_i,x_{i+1}].$$ If there exists an $i_0$ for which $$ L_{x_i,x_{i+1}} = \pm 1 $$ for $0 \leq i \leq i_0$ and  $$ L_{x_i,x_{i+1}} = \mp 1 $$ for $i_0 < i \leq N,$ then $\sigma$ is homologous to a $$ \bar{\sigma} = \sum_{i=0}^{\bar{N}}[\bar{x}_i,\bar{x}_{i+1}] \in C_1(R_\epsilon(S_{\omega,T})),$$ where $\bar{N} \in \mathbb{N}, \ \bar{x}_{i+1}$ is adjacent to $\bar{x}_i,$  and $$ L_{\bar{x}_i,\bar{x}_{i+1}} = \operatorname{sgn}(f(\sigma)).$$ for $0 \leq i \leq \bar{N}.$
\end{proposition}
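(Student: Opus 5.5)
We prove Proposition \ref{A2} by induction on the number of edges of $\sigma$ that are not between adjacent points, combined with a cancellation step for the turning point $i_0$. The basic move is a \emph{subdivision}. Let $[x,y]$ be an edge of $R_\epsilon(S_{\omega,T})$ with $L_{x,y}=1$, and let $x=z_0,z_1,\dots,z_m=y$ be the consecutive points of $S_{\omega,T}$ met when traversing clockwise from $x$ to $y$. Every pair $z_a,z_b$ lies on that arc, so $\bar d(z_a,z_b)\le \bar d(x,y)\le\epsilon$. Hence the 2-chain $\sum_{j=1}^{m-1}\{x,z_j,z_{j+1}\}$ lies in $C_2(R_\epsilon(S_{\omega,T}))$. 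Its boundary shows that $[x,y]$ is homologous to $\sum_{j=0}^{m-1}[z_j,z_{j+1}]$, a path of adjacent steps all with $L=1$. The same holds with orientations reversed. The function $f$ is unchanged by this replacement, since the arc lengths add: $\sum_j \bar d(z_j,z_{j+1})=\bar d(x,y)$.

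\textbf{Step 1.} Apply the subdivision to every edge of $\sigma$. By hypothesis $\sigma$ splits into a clockwise run $x_0,\dots,x_{i_0+1}$ followed by a counterclockwise run $x_{i_0+1},\dots,x_{N+1}$. After subdivision, $\sigma$ is homologous to a path of adjacent steps that goes clockwise (say) from $x_0$ to $x_{i_0+1}$ and then counterclockwise back to $x_{N+1}=x_0$ (the chain closes up as in Proposition \ref{A1}). Each run is a monotone walk of adjacent steps around the circle, and $f$ of a run is its signed total arc length. One caveat: a single run's total length may exceed $1/2$, or wrap around, even though each original edge has length below $1/2$. This does not matter, because subdivision is done edge by edge.

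\textbf{Step 2.} Cancel the back-and-forth portion. A clockwise adjacent step $[z,z']$ followed, along the walk, by the counterclockwise step $[z',z]$ contributes $[z,z']+[z',z]=0$ in $C_1$, using the orientation convention $[z',z]=-[z,z']$. Consider the turning point $x_{i_0+1}$. The clockwise walk arrives there through steps $\dots,[w,x_{i_0+1}]$ and the counterclockwise walk leaves through $[x_{i_0+1},w],\dots$, retracing the same adjacent points. Repeatedly cancelling these matching pairs, from the turning point outward, removes the shorter run entirely. What survives is a single monotone walk of adjacent steps. Its direction is that of the longer run, which is exactly the sign of the net arc length $f(\sigma)$. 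Here the hypothesis $f(\sigma)\neq0$ guarantees that the survivor is nonempty and has a definite direction. Relabel the remaining vertices as $\bar x_0,\dots,\bar x_{\bar N+1}$ to obtain $\bar\sigma$ with $L_{\bar x_i,\bar x_{i+1}}=\operatorname{sgn}(f(\sigma))$.

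\textbf{Main obstacle.} The delicate point is the bookkeeping in Step 2 when a run wraps around the circle one or more times. Then the cancellation pairs steps cyclically rather than along a line, and one must check that after cancellation the surviving monotone walk is still closed, with endpoint equal to its starting point, so that $\bar\sigma$ is a cycle. The plan is to track positions by lifting to $\R$: the clockwise run becomes an increasing walk from $\tilde x_0$ to $\tilde x_0+\ell_1$, and the counterclockwise run a decreasing walk from there down to $\tilde x_0+\ell_1-\ell_2$. Closure of $\sigma$ forces $\ell_1-\ell_2=f(\sigma)$ to be an integer. Cancellation on the lift leaves a monotone walk covering the interval between $\tilde x_0$ and $\tilde x_0+f(\sigma)$, which projects to a closed walk winding $|f(\sigma)|$ times in the direction $\operatorname{sgn}(f(\sigma))$.
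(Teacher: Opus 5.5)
Your proposal is correct and follows essentially the same route as the paper: subdivide each edge into adjacent steps along its short arc using a cone $2$-chain (the paper cones from the endpoint, you from the start point, which makes no difference), then cancel the reversed adjacent pairs outward from the turning point $x_{i_0+1}$, with $f(\sigma)\neq 0$ guaranteeing a nonempty monotone survivor in direction $\operatorname{sgn}(f(\sigma))$. Your ``main obstacle'' is not really one: the statement does not assume $\sigma$ is closed or require $\bar\sigma$ to be, and if $\sigma$ is a cycle then $\partial\bar\sigma=\partial\sigma=0$ follows directly from $\bar\sigma$ being homologous to $\sigma$, so the lifting bookkeeping is unnecessary.
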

\begin{proof}
W.L.O.G. we let $$ L_{x_i,x_{i+1}} = 1 $$ for $0 \leq i \leq i_0$. Consider the circle representation of $S_{\omega,T}$. Let $\{x^0_j\}_{j=0}^{N_0}$ denote the points in the arc starting at $x_0$ and ending at $x_1$ traversing clockwise contained in $S_{\omega,T}$. We note $x^0_0=x_0$ and $x^0_{N_0}=x_1.$ If $N_0 > 1$, one can verify $$ \sum_{j=0}^{N_0-1}[x^0_j,x^0_{j+1}] - [x_0,x_1] = \partial( \sum_{j=0}^{N_0-2}[x^0_j,x^0_{j+1},x^0_{N_0}] ),$$ i.e.
$[x_0,x_1]$ is homologous to $$ \sigma_0=  \sum_{j=0}^{N_0-1}[x^0_j,x^0_{j+1}].$$ If $N_0 = 1$ we let $\sigma_0= [x_0^0,x^0_{N_0}]=[x_0,x_1].$ By repeating this process, we construct $\sigma_i$ for $0 \leq i \leq i_0 $ such that $$\sum_{i=0}^{i_0}\sigma_i= \sum_{i=0}^{i_0}\sum_{j=0}^{N_i-1}[x^i_j,x^i_{j+1}]=\sum_{i=0}^{A_1}[y_i,y_{i+1}],$$ where $y_0=x_0$ and $y_{A_1+1}=x_{i_0+1},$ is homologous to $$ \sum_{i=0}^{i_0}[x_i,x_{i+1}].$$
Similarly, by considering the points in the arc starting at $x_i$ and ending at $x_{i+1}$, for $i_0 < i \leq N$, traversing counter-clockwise contained in $S_{\omega,T}$, we can construct $\sigma_i$ as before such that $$ \sum_{i=i_0+1}^{N}\sigma_i = \sum_{i=i_0+1}^{N}\sum_{j=0}^{N_i-1}[x^i_j,x^i_{j+1}]=\sum_{i=0}^{A_2}[\bar{y}_i,\bar{y}_{i+1}], $$ where $\bar{y}_0=x_{i_0+1}$ and $\bar{y}_{A_2+1}=x_{N+1},$ is homologous to $$ \sum_{i=i_0+1}^{N}[x_i,x_{i+1}].$$
We note $y_{A_1+1}=\bar{y}_0=x_{i_0+1}$ and $y_{A_1}=\bar{y}_1$ since the points of the clockwise arc, $\sigma_{i_0}$, ended were the points of the counter-clockwise arc, $\sigma_{i_0+1},$ started. Thus, $[y_{A_1},y_{A_1-1}]=-[\bar{y}_0,\bar{y}_1]$ and $$ \sum_{i=0}^{N}\sigma_i = \sum_{i=0}^{A_1-1}[y_i,y_{i+1}] + \sum_{i=1}^{A_2}[\bar{y}_i,\bar{y}_{i+1}].$$ Repeating this argument $A = \min\{A_1+1,A_2+1 \} $ times, we conclude $[y_{A_1-i},y_{A_1-{i+1}}]=-[\bar{y}_i,\bar{y}_{i+1}]$ for $0 \leq i < A.$ We note that by construction $$ f(\sum_{i=0}^{A_1}[y_i,y_{i+1}] + \sum_{i=0}^{A_2}[\bar{y}_i,\bar{y}_{i+1}])=f(\sigma) \neq 0,$$ thus $A_1 \neq A_2.$  i.e. $$ \sum_{i=0}^{N}\sigma_i = \sum_{i=0}^{A_1-A_2-1}[y_i,y_{i+1}] $$ or $$ \sum_{i=0}^{N}\sigma_i = \sum_{i=A_1+1}^{A_2}[\bar{y}_i,\bar{y}_{i+1}].$$ The result follows.
\end{proof}

\begin{corollary}
Let $\sigma \in C_1(R_\epsilon(S_{\omega,T}))$ and $\partial(\sigma)=0.$ Then $\sigma$ is homologous to $0$ if and only if $f(\sigma) = 0.$
\end{corollary}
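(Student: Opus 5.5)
The plan is to treat $f$ as a signed-length (winding) functional on $1$-chains and to prove the two implications separately. Throughout I work in the regime used in Theorem \ref{1st homology}, i.e.\ $\epsilon<\lambda$ with $\delta_C<\tfrac13$. Without some such restriction the statement cannot hold. Once $\epsilon\ge\lambda$ the fundamental cycle dies (Theorem \ref{1st homology}), while its $f$-value is still $1$. So I would make this standing assumption explicit at the start of the proof.

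\emph{($\Rightarrow$) Boundaries have $f=0$.} Since $f$ is linear, it suffices to show $f(\partial\{x,y,z\})=0$ for every $2$-simplex $\{x,y,z\}\in R_\epsilon(S_{\omega,T},\bar d)$. Write $\partial\{x,y,z\}=[y,z]-[x,z]+[x,y]$. Its $f$-value is the signed length of a closed path on the circle, so it is an integer (the winding number of the path $x\to y\to z\to x$). Each step has length at most $\epsilon$, so the total absolute length is $<1$ whenever $\epsilon<\tfrac13$. Hence the winding number is $0$. In the range $\tfrac13\le\epsilon<\lambda$ I would use the definition of $\Gamma$ and the minimality of $\lambda$ (as in Lemma \ref{A5}) to exclude ``wrapping'' triangles, i.e.\ those with winding number $\pm1$. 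These are precisely the triangles whose vertices realize a pair in $\Gamma$. Thus $f\circ\partial_2=0$, and a cycle homologous to $0$ has $f=0$.

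\emph{($\Leftarrow$) Cycles with $f=0$ are boundaries.} First, by Proposition \ref{A1}, write $\sigma=\sum_i\sigma_i$ with each $\sigma_i$ a closed edge path $\sum_j[x_j,x_{j+1}]$. Second, I straighten every edge $[x,y]$ of $\sigma$. Let $x=u_0,u_1,\dots,u_m=y$ be the points of $S_{\omega,T}$ on the short arc from $x$ to $y$. Then $[x,y]$ is homologous to $\sum_l[u_l,u_{l+1}]$ via the fan $\sum_l\{u_l,u_{l+1},y\}$, exactly as in the proof of Proposition \ref{A2}. These triangles lie in $R_\epsilon$ because all points of the arc are within $\bar d(x,y)\le\epsilon$ of one another. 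Straightening preserves $f$, since it preserves the signed arc length. After this step $\sigma$ is homologous to a cycle $\sigma'$ supported on the $T+1$ edges between \emph{adjacent} points. The adjacent-edge graph is a single cycle graph, so its cycle space is one-dimensional, spanned by the fundamental cycle $\sigma_{\mathrm{full}}=\sum_i[x_i,x_{i\oplus1}]$, which has $f(\sigma_{\mathrm{full}})=1$. Therefore $\sigma'=m\,\sigma_{\mathrm{full}}$ with $m=f(\sigma')=f(\sigma)=0$, so $\sigma'=0$ and $\sigma$ is a boundary.

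\emph{Main obstacle.} I expect the hard part to be the forward direction's claim that no $2$-simplex of $R_\epsilon$ has nonzero winding. For $\epsilon<\tfrac13$ this is the elementary length count above. Beyond $\tfrac13$ it really depends on $\epsilon<\lambda$ and on the characterization of wrapping triangles through $\Gamma$. I would handle it by showing that a triangle of winding number $\pm1$ has its longest edge $(x,y)$ in $\Gamma$, with the third vertex $z$ as witness, which forces $\epsilon\ge\bar d(x,y)\ge\lambda$.
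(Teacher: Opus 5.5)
Your argument is correct in the regime you impose, and the restriction is genuinely necessary. As printed, with no bound on $\epsilon$, the corollary is false: for $\epsilon\ge\tfrac12$ the complex $R_\epsilon(S_{\omega,T})$ is a full simplex, so the fundamental cycle $\sum_i[x_i,x_{i\oplus1}]$ bounds, although its $f$-value is $\pm1$ whenever all gaps are below $\tfrac12$.

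Your route also differs from the paper's. The paper writes $\sigma$ as a closed edge path (Proposition \ref{A1}) and then appeals to the straighten-and-cancel procedure of Proposition \ref{A2}. It asserts that $f(\sigma)=0$ iff the clockwise and counterclockwise runs cancel completely ($A_1=A_2$) iff $\sigma$ is homologous to $0$. That is a normal-form reduction with two weaknesses. It never explains why a surviving normal form (a nonzero multiple of the fundamental cycle) cannot bound, which is precisely where an upper bound on $\epsilon$ has to enter. And Proposition \ref{A2} is only set up for a path with a single change of orientation.

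You instead separate the two implications. The forward one becomes the cocycle condition $f\circ\partial_2=0$ on $R_\epsilon$, i.e.\ ``no wrapping triangle''; for $\epsilon<\tfrac13$ this is your integrality-plus-length count. The converse is edge-by-edge straightening, followed by the fact that the cycle space of the adjacent-edge graph is spanned by the fundamental cycle. You should add the case $\epsilon<\delta_C$, where that graph is a union of paths and its cycle space is zero.

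This buys several things. It isolates the only geometric input. It shows the converse holds for every $\epsilon$, and you do not even need Proposition \ref{A1} there. It also makes clear that the statement holds exactly when $R_\epsilon$ contains no wrapping triangle, because the boundary of a wrapping triangle is itself a null-homologous cycle with $f=\pm1$. Like the paper, you tacitly need coefficients in which the real-valued $f$ makes sense, e.g.\ $\mathbb{Q}$ or $\mathbb{R}$, for the step $m=f(\sigma')$.

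One caution for $\tfrac13\le\epsilon<\lambda$. You claim that a wrapping triangle has its longest edge in $\Gamma$ with the third vertex as witness. This is true only if you read ``$z\in[0,x)\cup(y,1)$'' as ``$z$ lies on the long arc between $x$ and $y$''. Taken literally it fails when the short arc from $x$ to $y$ passes through $0$. For example, with $x=0.1$, $y=0.65$, $z=0.375$ the triangle wraps (edge lengths $0.45,0.275,0.275$ sum to $1$), yet $z\notin[0,x)\cup(y,1)$. It is cleaner to take as your threshold the least longest-edge length of a wrapping triangle. By the remark above, that is exactly the value up to which the corollary holds.
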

\begin{proof}
By Proposition \ref{A1}, $\sigma$ can be expressed as $$\sum_{i=0}^{N-1}[x_i,x_{i+1}],$$ where $x_i \in S_{\omega,T}, N \in \mathbb{N},$ and $x_{N}=x_0.$ The result follows by noting that in Proposition \ref{A2}, $f(\sigma)=0$ if and only if $A_1=A_2,$ which happens if and only if $\sigma$ is homologous to $0$.
\end{proof}
For the following result we let  $\{x_i\}_{i=0}^T$ denote the elements of $S_{\omega,T}$ in ascending order, $\oplus$ denote$\mod{T+1}$ addition, and $k\in \mathbb{N}.$
\begin{lemma}
\label{A4}
Let $\sigma \in C_1(R_\epsilon(S_{\omega,T}))$ be such that $\partial(\sigma)=0$ and $\sigma$ is not homologous to $0$. Then $\sigma$ is homologous to $$k \sum_{i=0}^T [x_i,x_{i\oplus 1}].$$
\end{lemma}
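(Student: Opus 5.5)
My plan is to reduce $\sigma$ to a positively oriented adjacent-step cycle using Proposition \ref{A1} and Proposition \ref{A2}, and then read off the multiple $k$ from the winding functional $f$.

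\textbf{Step 1 (decomposition).} By Proposition \ref{A1}, I will write $\sigma=\sum_{i}\sigma_i$. Each $\sigma_i$ is a closed edge path $\sum_{j}[y_j,y_{j+1}]$ with $y_{N_i}=y_0$. Since homology is additive, it suffices to prove the claim for each $\sigma_i$ separately and then sum. By the Corollary following Proposition \ref{A2}, the summands with $f(\sigma_i)=0$ are null-homologous, so they can be discarded. The remaining summands have $f(\sigma_i)\neq 0$, and there is at least one such summand because $\sigma$ is not homologous to $0$.

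\textbf{Step 2 (normalizing orientation).} For a closed path $\sigma_i$, I will replace every edge $[y_j,y_{j+1}]$ by the chain of adjacent edges along the arc of length $\bar d(y_j,y_{j+1})$ in the direction $L_{y_j,y_{j+1}}$. This is the same filling used in the proof of Proposition \ref{A2}: the triangles $[y^j_l,y^j_{l+1},y_{j+1}]$ lie in $R_\epsilon$, since every vertex on that arc is within $\bar d(y_j,y_{j+1})\le\epsilon$ of $y_{j+1}$. The result is a homologous cycle made only of adjacent edges $[x_l,x_{l\oplus1}]$, each taken with sign $\pm1$, and $f$ is unchanged. Any oppositely oriented adjacent pair over the same edge cancels in the chain. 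Applying Proposition \ref{A2} repeatedly, block by block of constant orientation, I will arrive at a cycle in which every adjacent edge appears with a single orientation, $\operatorname{sgn} f(\sigma_i)$.

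\textbf{Step 3 (counting).} A cycle consisting only of adjacent edges $[x_l,x_{l\oplus1}]$ with coefficients $c_l$ satisfies $\partial=0$ only if all the $c_l$ are equal: the coefficient of the vertex $x_l$ in the boundary is $c_{l-1}-c_l$, so vanishing forces $c_{l-1}=c_l$ for every $l$. Hence each surviving $\sigma_i$ is homologous to $k_i\sum_{l=0}^T[x_l,x_{l\oplus1}]$. Evaluating $f$ on both sides gives $f(\sigma_i)=k_i$, since $f$ of the full loop is $1$; in particular $k_i\in\mathbb{Z}\setminus\{0\}$. Summing over $i$ then gives $k=\sum_i k_i$. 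Because $f$ is invariant under boundaries (Step 2 and the Corollary), $k=f(\sigma)\neq 0$. (The statement writes $k\in\mathbb{N}$; for negative $f(\sigma)$ I would either allow negative $k$ or reverse the orientation convention.)

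\textbf{Main obstacle.} I expect the hardest part to be Step 2, namely justifying that a general closed path can be reduced to adjacent edges of one orientation. Proposition \ref{A2} only treats paths with a single change of orientation. I would first try induction on the number of orientation switches, applying Proposition \ref{A2} to a maximal two-block subpath; this needs the auxiliary check that such a subpath, after closing it up, still satisfies the hypotheses of Proposition \ref{A2}. If that check fails, my fallback is the more direct coefficient argument of Step 3. Once every edge has been subdivided into adjacent edges, $\partial=0$ alone forces equal coefficients, and that argument bypasses the orientation bookkeeping entirely.
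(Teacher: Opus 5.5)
Your proof is correct, but its decisive step differs from the paper's.

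\textbf{The paper's route.} It writes $\sigma$ as a closed edge path (Proposition~\ref{A1}) and applies Proposition~\ref{A2} ``repeatedly'' to get an adjacent-step closed path of constant orientation $\operatorname{sgn}(f(\sigma))$. It then argues by irreducibility and minimal length that each simple piece is the full loop $\sum_{i=0}^T[x_i,x_{i\oplus1}]$: such a path needs at least $T+1$ edges, hence visits every point.

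\textbf{Your route.} You subdivide every edge separately with the fan triangles. These lie in $R_\epsilon$ for every $\epsilon$, because points on a short arc of length at most $\epsilon$ are pairwise within $\epsilon$. You then note that on the span of the adjacent edges, the coefficient of $x_l$ in $\partial\bigl(\sum_l c_l[x_l,x_{l\oplus1}]\bigr)$ is $c_{l-1}-c_l$ (indices mod $T+1$). So the only cycles are multiples of the full loop.

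\textbf{What this buys.} Proposition~\ref{A1}, the Corollary, the block-by-block use of Proposition~\ref{A2}, and the minimality argument all become unnecessary. This matters because Proposition~\ref{A2} only handles one orientation switch, so the paper's ``repeated application'' is left unjustified. Your argument also works over any field $\mathbb{F}$, giving $k\in\mathbb{F}\smallsetminus\{0\}$, whereas $f$ only makes sense for integer or characteristic-zero coefficients.

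\textbf{Recovering the winding number.} The paper's route emphasizes that $k$ is a winding number. You recover this too: each fan boundary has $f=0$, so $f(\sigma)=\pm k$. For this identification, rely only on the invariance proved in your Step 2, not on the Corollary. The Corollary's implication ``null-homologous $\Rightarrow f=0$'' fails as soon as a 2-simplex wraps around the circle. For instance, once $\epsilon\ge\lambda$ the full loop bounds but has $f=\pm1$, although in that regime the lemma is vacuous.

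So your ``fallback'' is the actual proof, and Step 1 together with the repeated appeals to Proposition~\ref{A2} can be dropped.
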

\begin{proof}
By Proposition \ref{A1}, we can express $\sigma$ as $$\sum_{i=0}^{N-1}[\bar{x}_i,\bar{x}_{i+1}],$$ where $\bar{x}_i \in S_{\omega,T}, N \in \mathbb{N},$ and $\bar{x}_{N}=\bar{x}_0.$ Furthermore, by repeated application of Proposition \ref{A2}, $\sigma$ is homologous to $$\bar{\sigma}=\sum_{i=0}^{N_0}[y_i,y_{i+1}],$$ where $y_i \in S_{\omega,T}, N_0 \in \mathbb{N}, \ y_{N_0+1}=y_0, \ y_{i+1}$ is adjacent to $y_i,$ and $$L_{y_i,y_{i+1}} =\operatorname{sgn}(f(\sigma))$$ for $0 \leq i \leq N_0.$ We note that a necessary condition to cancel a term $[y_i,y_{i+1}]$ is the existence of term $[y_{n_i},y_{n_i+1}]$ for which $$L_{y_{n_i},y_{n_i+1}} =-\operatorname{sgn}(f(\sigma)).$$ Since this can't happen, we conclude the expression for $\bar{\sigma}$ can't be reduced. Furthermore, we note that by construction $T \leq N_0.$ Indeed, the minimal number $j_0$ of terms needed to obtain $$ \partial(\sum_{i=0}^{j_0}[y_i,y_{i+1}])=0$$ is $j_0=T$ since this covers all of the points in $S_{\omega,T}.$ We conclude $$ \sum_{i=0}^{T}[y_i,y_{i+1}]=\sum_{i=0}^T [x_i,x_{i\oplus 1}].$$ In fact, an application of Proposition \ref{A1} to $\bar{\sigma},$ shows $$ \bar{\sigma} = \sum_{i=0}^{N_2-1}\sigma_i$$ for some $N_2 \in \mathbb{N}.$ By the properties of $\sigma_i$ and what has been shown, we conclude $$\sigma_i = \sum_{j=0}^T [x_j,x_{j\oplus 1}]$$ for all $0 \leq i \leq N_2-1$. The result follows.
\end{proof}
\begin{lemma}
\label{A5}
Let $T\in \mathbb{N}$ and $\omega\in \mathbb{R}\symbol{92}\mathbb{Q}$ with continued fraction expansion $[a_1,a_2,a_3,\cdots]$, $i$-th convergent $\frac{p_i}{q_i}$, and $k,r,s$ be the unique numbers for which
$$ q_k+q_{k-1} \leq T < q_k + q_{k+1} $$
and $$ T=rq_k+q_{k-1}+s, \ \ \ \ \ 1\leq r \leq a_{k+1}, \ \ \ \ \ 0 \leq s \leq q_k-1.$$ Let
\[
  D_i = q_i\,\omega - p_i.
\]
We assume
\[
  s + 1 < q_k,
  \quad\text{and}\quad
  \lvert D_{k+1}\rvert + (a_{k+1} - r + 1)\,\lvert D_k\rvert < \tfrac13.
\]
Let $ \Gamma $ be the set containing the pairs $(x,y), \ x,y\in S_{\omega,T} \ \text{and} \ x<y$, for which there exists a $z \in ([0,x)\cup (y,1)) \cap S_{\omega,T}$ such that $ 1-\bar{d}(x,y) \leq 2\max\{\bar{d}(y,z),\bar{d}(x,z)\} \leq 2\bar{d}(x,y)$. If $$ \lambda = \min \{\bar{d}(x,y) | \ (x,y) \in \Gamma\},$$ then for $\epsilon \in (|D_{k+1}|+(a_{k+1}-r+1)|D_k|,\lambda)$ $$[\sigma]=0 $$ in $H_1(R_{\epsilon}(S_{\omega,T},\bar{d});\mathbb{F}).$
\end{lemma}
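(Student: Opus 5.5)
The plan is to show that $\sigma=\sum_{i=0}^{T}\{x_i,x_{i\oplus1}\}$ is a boundary in $R_\epsilon(S_{\omega,T},\bar d)$ for every $\epsilon$ in the stated range. The tool is the explicit fan construction used in the proof of Theorem \ref{1st homology} to bound $\sigma$ at $\epsilon=\lambda$. That construction needs a triangle $\{x_{n_1},x_{n_2},x_{n_3}\}$ whose three vertices split the circle into arcs. Each arc has consecutive gaps at most $\delta_C=|D_{k+1}|+(a_{k+1}-r+1)|D_k|<\epsilon$, and the triangle has diameter at most $\epsilon$. Given such a triangle, the chains $\tau_1,\tau_2,\tau_3$ fan each arc to an endpoint. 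Their sum with the triangle has boundary $\sigma$, exactly as checked in the proof of Theorem \ref{1st homology}. So the task reduces to producing such a triangle inside $R_\epsilon$.

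\textbf{Step 1 (normalisation).} Use the isometry $\phi$ from the proof of Theorem \ref{1st homology} and Proposition \ref{isomorphic diagrams} to assume $x_0=0$ and $\bar d(0,x_1)=\delta_C$. Using Proposition \ref{three gaps computation}, record the three gap lengths and the combinatorics of which gap follows which. All consecutive edges $\{x_i,x_{i\oplus1}\}$ then lie in $R_\epsilon$ because $\epsilon>\delta_C$.

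\textbf{Step 2 (homological reformulation).} By the Corollary following Proposition \ref{A2}, a $1$-cycle in $R_\epsilon$ is null-homologous if and only if its signed length $f$ vanishes. I will therefore try to find a cycle $\sigma'$ homologous to $\sigma$ in $R_\epsilon$ with $f(\sigma')=0$. Concretely, I will replace runs of consecutive edges by single chords of length at most $\epsilon$ via Proposition \ref{A2}. I will then look for three chords $\{x,y\},\{y,z\},\{z,x\}$ whose signed lengths cancel, meaning they wrap around the circle in the opposite orientation.

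\textbf{Step 3 (the obstacle).} The hard step is exhibiting the spanning triangle. Suppose $x<y$ and $z$ lies outside $[x,y]$, all three pairwise distances are at most $\epsilon$, and the triangle wraps the circle. Then $1-\bar d(x,y)\le \bar d(x,z)+\bar d(y,z)\le 2\max\{\bar d(x,z),\bar d(y,z)\}\le 2\bar d(x,y)$, which is precisely the condition defining $\Gamma$. That would force $\lambda\le\bar d(x,y)\le\epsilon$. I would first try to reconcile this with $\epsilon<\lambda$ by arguing that the minimum in the definition of $\lambda$ can be undercut once $\epsilon>\delta_C$. The idea is to shift a near-optimal pair by multiples of $q_k$, each shift of size $|D_k|$ by Lemma \ref{closest point}, so as to lower $\bar d(x,y)$. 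If that fails, this inequality chain shows that no wrapping triangle exists in $R_\epsilon$ for $\epsilon<\lambda$, and so no cycle of the required form can be reached. Deciding which of these two outcomes holds is the point I expect to be decisive, and it is where the argument must be checked most carefully against the definition of $\Gamma$.
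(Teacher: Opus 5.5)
\textbf{Gap: the proposal aims at a misprinted, false claim.} The conclusion as printed, $[\sigma]=0$, is a misprint for $[\sigma]\neq 0$, and your plan inherits it. The paper's proof opens by reducing to showing that \emph{no} $\tau\in C_2(R_\epsilon(S_{\omega,T}))$ satisfies $\partial\tau=\sigma$. Theorem~\ref{1st homology}, whose final step cites this lemma, asserts that $[\sigma]$ generates $H_1(R_\epsilon(S_{\omega,T},\bar d);\mathbb{F})\cong\mathbb{F}$ for $\delta_C\le\epsilon<\lambda$. So you are trying to fill a cycle that the paper proves is not a boundary.

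Your Step 3 is precisely why the fan cannot be built. A fan needs a triangle of $R_\epsilon$ that wraps the circle. Label that triangle so that $\{x,y\}$ is its longest edge, which your last inequality $2\max\{\bar d(x,z),\bar d(y,z)\}\le 2\bar d(x,y)$ silently requires. This puts $(x,y)$ in $\Gamma$ with $\bar d(x,y)\le\epsilon<\lambda$. The ``undercutting'' escape is void: $\lambda$ is by definition the minimum of $\bar d$ over $\Gamma$, so no pair of $\Gamma$ lies below it, and shifting by multiples of $q_k$ cannot change that.

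In fact the literal claim fails outright. Every $(x,y)\in\Gamma$ satisfies $1-\bar d(x,y)\le 2\bar d(x,y)$, so $\lambda\ge 1/3$. A wrapping triangle has perimeter $1$, hence an edge of length at least $1/3$. So for $\epsilon\in(\delta_C,1/3)$, which is nonempty since $\delta_C<1/3$, $R_\epsilon$ has no wrapping triangle at all, and by the cocycle argument below $\sigma$ is not a boundary. Step 2 is self-defeating for the same reason: without wrapping triangles, $f(\partial\{x,y,z\})=0$ for every $2$-simplex, so every cycle homologous to $\sigma$ has $f$-value $f(\sigma)=\pm1$, never $0$.

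\textbf{What is needed for the correct claim $[\sigma]\neq 0$.} Read in the right direction, your Step 3 is the core contradiction of the paper's proof. What is still missing is the passage from ``$R_\epsilon$ contains no wrapping triangle'' to ``no $2$-chain has boundary $\sigma$'', since an arbitrary filling need not be a fan. The paper takes an edge $\{y_1,y_2\}$ of maximal length $\lambda_0\le\epsilon$ among the simplices of a putative $\tau$, and a triangle $\{y_1,y_2,y_3\}$ of $\tau$ containing it. It argues that $y_3$ may be taken outside the arc between $y_1$ and $y_2$, then compares the orientations $L$ to get $(y_1,y_2)\in\Gamma$, contradicting $\lambda_0<\lambda$.

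A cleaner route is cohomological. Fix $b\notin S_{\omega,T}$ and let $w(\{x,y\})\in\{0,\pm1\}$ be the signed number of times the geodesic from $x$ to $y$ passes $b$. Then $w(\partial\{x,y,z\})$ is the winding number of the triangle, which is zero unless the triangle wraps. So, absent wrapping triangles, $w$ is a cocycle on $R_\epsilon$ with $w(\sigma)=\pm1\neq0$ in every field.

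Either way, note that the step ``wrapping triangle in $R_\epsilon$ $\Rightarrow$ element of $\Gamma$ of length $\le\epsilon$'' uses that the third vertex lies outside $[x,y]$. When the geodesic of the longest edge passes through $0$ this fails, and the literal definition of $\Gamma$ must be checked separately there. The paper's W.L.O.G.\ glosses over the same point.
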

\begin{proof}
Let us denote $\delta_C=|D_{k+1}|+(a_{k+1}-r+1)|D_k|.$ Since $\delta_C < 1/3$ and $\omega$ is irrational, $\lambda > \delta_C,$ thus $\epsilon$ is well defined. To show the result, it is sufficient to show there does not exists a $
\tau \in C_2(R_\epsilon(S_{\omega,T}))$ such that $$\sigma = \partial(\tau).$$
We will assume such $\tau$ exists and arrive at a contradiction. Thus, suppose there exists a $\tau \in C_2(R_\epsilon(S_{\omega,T}))$ such that $$\sigma = \partial(\tau).$$ Let $$\lambda_0=\max\{ \bar{d}(x,y),\bar{d}(y,z),\bar{d}(z,y)\} | \ \text{there exists} \ [x,y,z] \in \tau\}.$$
Let $y_1,y_2\in S_{\omega,T}, \ y_1 < y_2,$ be such that $\bar{d}(y_1,y_2)=\lambda_0.$ By assumption there exists $y_3 \in S_{\omega,T}$ such that $[y_1,y_2,y_3] \in \tau$ or $[y_1,y_3,y_2] \in \tau$. We note that by construction $$\max\{\bar{d}(y_2,y_3),\bar{d}(y_1,y_3)\} \leq \bar{d}(y_1,y_2).$$ Furthermore, W.L.O.G. we can assume $y_3 \in ([0,y_1) \cup (y_2,1)).$ Indeed, let us suppose $y_3 \in (y_1,y_2).$ There must exist a term of the form $[y_1,y_2,z_0]$ to cancel the term $-[y_1,y_2]$ in the case of the term $[y_1,y_3,y_2].$ In the case of the term $[y_1,y_2,y_3]$ take $z_0=y_3$. If $z_0 \in ([0,y_1) \cup (y_2,1))$ we are done. Thus, we assume $z_0 \in (y_1,y_2).$ This implies there must exists a term $[y_1,z_0,z_1] \in \tau $
to cancel the term $-[y_1,z_0]$ from $\partial([y_1,y_2,z_0]).$ If $z_1 \in (y_1,y_2),$ there must be a term of the form $[y_1,z_1,z_2] \in \tau$ to cancel the term $-[y_1,z_1]$ from $\partial([y_1,z_0,z_1]).$ On the other hand, if $z_1 \in ([0,y_1) \cup (y_2,1))$ we note $$L_{y_1,z_1}=-L_{y_1,y_2}$$ or otherwise $$\bar{d}(y_1,z_1) > \bar{d}(y_1,y_2),$$ a contradiction. As before, there must exists a $[z_0,z_2,z_1]$ to cancel the term $[z_0,z_1]$ from $\partial([y_1,z_0,z_1]).$ Since in both scenarios we end up in a case analogues to the term $[y_1,y_2,z_0],$ we conclude this process won't end. Thus, W.L.O.G. we assume $y_3 \in ([0,y_1) \cup (y_2,1))$ as stated.
We note $$L_{y_1,y_2}=L_{y_2,y_3} $$ since otherwise we would have $\bar{d}(y_2,y_3) > \bar{d}(y_1,y_2) = \lambda_0,$ a contradiction. Similarly, it must be the case that $$L_{y_1,y_2}=L_{y_3,y_1} $$ since otherwise we would have $\bar{d}(y_3,y_1) > \bar{d}(y_1,y_2) = \lambda_0$. These two conditions imply $$1-\bar{d}(y_1,y_2) = \bar{d}(y_2,y_3) + \bar{d}(y_3,y_1) \leq 2\max\{\bar{d}(y_2,y_3),\bar{d}(y_1,y_3)\},$$ i.e. $(y_1,y_2) \in \Gamma,$ but since $$\lambda_0 \leq \epsilon < \lambda$$ this is impossible by the minimality of $\lambda.$ Thus, no such $\tau$ exists.
\end{proof}

\end{appendices}

\end{document}